\numberwithin{equation}{section}
\newtheorem{teo}{Theorem}[section]
\newtheorem{pro}[teo]{Proposition}
\newtheorem{lem}[teo]{Lemma}
\newtheorem{cor}[teo]{Corollary}
\newtheorem{wrn}[teo]{Warning}
\newtheorem{condition}[teo]{Condition}
\newtheorem{teoalpha}{Theorem}
\theoremstyle{definition}
\theoremstyle{remark}
\newtheorem{rem}[teo]{Remark}
\newenvironment{alphabetize}{\begin{enumerate}[label=(\alph*)]
}{\end{enumerate}}
\newif\ifHideFoot
\newcommand{\Yano}[1]{}
\newcommand{\Jeff}[1]{}
\newcommand{\Charles}[1]{}
\newcommand{\marg}[1]{\normalsize{{
\color{red}\footnote{{\color{blue}#1}}}{\marginpar[\vskip
-.25cm{\color{red}\hfill\tiny\thefootnote$\implies$}]{\vskip
-.2cm{\color{red}$\impliedby$\tiny\thefootnote}}}}}
\newcommand{\Yano}[1]{\marg{(Yano) #1}}
\newcommand{\Jeff}[1]{\marg{(Jeff) #1}}
\newcommand{\Charles}[1]{\marg{(Charles) #1}}
\global\let\hom\undefined
\DeclareMathOperator{\hom}{Hom}
\newcommand{\til}[1]{{\widetilde{#1}}}
\def\et{{\rm \acute et}}
\def\ord{{\rm ord}}
\def\mmu{{\pmb\mu}}
\def\inv{^{-1}}
\def\cx{{\mathbb C}}
\def\ff{{\mathbb F}}
\def\rat{{\mathbb Q}}
\def\integ{{\mathbb Z}}
\def\iso{\cong}
\renewcommand{\bar}[1]{{\overline{#1}}}
\DeclareMathOperator{\aut}{Aut}
\DeclareMathOperator{\Inv}{Inv}
\DeclareMathOperator{\End}{End}
\DeclareMathOperator{\gal}{Gal}
\DeclareMathOperator{\spec}{Spec}
\DeclareMathOperator{\pic}{Pic}
\DeclareMathOperator{\A}{A}
\DeclareMathOperator{\chow}{CH}
\DeclareMathOperator{\tr}{tr}
\DeclareMathOperator{\K}{K}
\DeclareMathOperator{\coker}{coker}
\DeclareMathOperator{\absfrob}{fr}
\DeclareMathOperator{\im}{im}
\def\iff{\quad\Longleftrightarrow\quad}
\def\nori{\mathrm{Nori}}
\def\ab{\mathrm{ab}}
\def\et{\mathrm{et}}
\def\abvar{\mathrm{ab}}
\def\abelian{\mathrm{abn}}
\def\finflat{\mathrm{ff}}
\def\nonabelian{\mathrm{n-ab}}
\newcommand{\st}[1]{\left\{#1\right\}}
\newcommand{\abs}[1]{{\left|#1\right|}}
\newcommand{\powser}[1]{[\![#1]\!]}
\newcommand{\invlim}[1]{\lim_{\stackrel{\leftarrow}{#1}}}
\newcommand{\rest}[1]{|_{#1}}
\title[Prym varieties]{Putting the p back in Prym}
\author{Jeffrey D. Achter}
\address{Colorado State University, Department of Mathematics,
Fort Collins, CO 80523,
USA}
\email{j.achter@colostate.edu}
\author{Sebastian Casalaina-Martin }
\address{University of Colorado, Department of Mathematics, 
Boulder, CO 80309, USA }
\email{casa@math.colorado.edu}
\thanks{
  The first- and second-named authors were partially supported by respective Simons Foundation grants 637075 and 581058.}
\date{\today}
\begin{document}

\maketitle

\begin{abstract}
After Jacobians of curves, 
Prym varieties are perhaps the next most studied abelian varieties.  They turn out to be quite useful in a number of contexts.  For technical reasons, there does not appear to be any systematic treatment of Prym varieties in characteristic 2, and due to our recent interest in this topic, the purpose of this paper is to fill in that gap.  Our main result is a classification of branched covers of curves in characteristic 2 that give rise to Prym varieties.  We are also interested in the case of Prym varieties in the relative setting, and so we develop that theory here as well, including an extension of Welters' Criterion. 
\end{abstract}

\section*{Introduction}

Consider an \'etale double cover of smooth complex projective curves $f\colon C\to C'$.  It turns out
that the principal polarization of $\pic^0(C)$ defines a
canonical principal polarization on the Prym variety $P(C/C')$, a
sub-abelian variety of $\pic^0(C)$ complementary to
$f^*\pic^0(C')$. This phenomenon, discovered in the late 19th century
by complex geometers,  was recast in purely
algebro-geometric language by Mumford in \cite{mumford74}; see \cite{farkas-prymsurvey} for a
  social and mathematical history of these ideas.

After Jacobians of curves themselves, Pryms are one of the most
accessible classes of principally polarized abelian
varieties. It is  difficult to improve on the assessment by
  Welters \cite{welters87}: \emph{Jacobi varieties are certainly the best understood
    principally polarized abelian varieties (PPAV).  In this sense
    they are followed immediately by Prym varieties.  Beyond these two
    classes, however, darkness predominates generally in the geometry
    of PPAV.}    In
particular, for small values of $g$, the Prym construction lets one
understand the geometry $\mathcal A_g$ in terms of curves.  The
Torelli map identifies $\mathcal M_g$ with an open subspace of
$\mathcal A_g$ only when $g \le 3$; but for $g \in \st{4,5}$, one at
least knows that a dense subset of principally polarized abelian varieties are 
Prym varieties.  For details of this and similar geometric insights afforded by Pryms, we refer the reader to \cite{beauville-prymsurvey} and \cite{farkas-prymsurvey}.

On one hand, the literature is rich with arithmetic applications of
the theory of Prym varieties.  For example, in \cite{beauvilleschottky},
Beauville constructs a moduli stack of Prym covers over $\integ[1/2]$, together with a so-called Prym map to the moduli stack of principally polarized abelian schemes,
and uses Prym varieties in  \cite{beauville77}  to study the Chow groups and rationality of fibrations in quadrics over algebraically
closed fields whose characteristic is not two.  Faber and van der Geer
\cite{fabervandergeer04} build on Beauville's work to analyze complete families of
curves, again away from characteristic two.  Over finite fields of odd
characteristic, point counts sometimes are able to distinguish Prym
varieties from arbitrary abelian varieties (e.g., \cite{perret06}).

On the other hand, 
while it is natural, of course, to hope to study these
phenomena in characteristic two as well, 
 the foundational literature is notably quiet on the
question of constructing Pryms in general contexts.  For example,
Mumford's original algebraic treatment \cite{mumford74} works over an
algebraically closed field $k$ of characteristic other than two. Beauville works in the same setting in \cite{beauville77}  and \cite{beauvilleschottky}, although in the former, he does allude to results over an algebraically closed field of characteristic $2$.   
Tyurin \cite{tyurin72} retreats to the case where $k = \cx$, while
Welters \cite{welters87} allows an arbitrary algebraically closed
field of characteristic zero.  The most self-contained exposition the
authors are aware of, namely the treatment by Birkenhake and Lange
\cite{BL}, is resolutely over the complex numbers.

Speaking from personal experience, it seems to the authors that the
further the base field is from the complex numbers (for example, if
the field is not algebraically closed, or not even perfect, or has
characteristic two), the less certain one might be about the status of
certain constructions and assertions regarding Prym varieties.
Our intention in this paper is to develop a comprehensive theory of
Prym, and Prym--Tyurin, varieties that works over an arbitrary base.
Even in the special case where the base scheme is $S =
\spec \bar\ff_2 $ our results, while perhaps not surprising, do not
appear in the literature.

For better or worse, our main result shows that there is
nothing new under the sun, even if the landscape is a 
scheme $S$ on which $2$ is not invertible.

\begin{teoalpha}
  Let $S$ be a connected scheme.  Let $f\colon C \to C'$ be a
  finite $S$-morphism of degree $d$ of smooth proper curves over $S$ with
  geometrically connected fibers of respective genera  $g > g' \ge 1$, and suppose that $f$ is fiberwise separable. 
  \begin{enumerate}
    \item There exists a canonical complementary abelian scheme
      $P(C/C')$ for $f^*\pic^0_{C'/S}$ in $\pic^0_{C/S}$ of relative
      dimension $h = g-g'$ over $S$.
    \item The principal polarization $\Theta_{C/S}$ of $\pic^0_{C/S}$ restricts to
        a multiple $e\cdot \Xi$ of a principal polarization $\Xi$ of $P(C/C')$ if and
        only if, for some geometric point $s$ of $ S$, the morphism $f_s$ has one of the
        following types:
\begin{enumerate}        
        \item $d=2$ and $f_s$ is \'etale, in which case $e=2$ and
          $h=g'-1$;
          
\item   $d=2$ and the ramification divisor of $f_s$ has degree $2$, in
  which case $e=2$ and $h= g'$;

\item $d=3$, $f_s$ is \'etale and noncyclic, and $g'=2$, in which case $e=3$ and $h=2$;
    
  \item $g = 2$ and $g'=1$, in which case $e=\deg f_s/\deg f_s^\abelian$ and $h=g'=1$, where $f_s^\abelian$ is the maximal abelian subcover of $f_s$ 
 (see \eqref{E:MaxAbCovFact} and  \Cref{L:newBLprop11.4.3}).
       \end{enumerate}
\end{enumerate}
\end{teoalpha}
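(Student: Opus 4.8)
The plan is to obtain part (1) from the relative theory of complementary abelian subschemes, applied to the pullback homomorphism $f^*\colon \pic^0_{C'/S}\to\pic^0_{C/S}$. Because $\mathrm{Nm}\circ f^* = [\deg f]$ is an isogeny, $f^*$ has finite kernel, so its image is an abelian subscheme of relative dimension $g'$ and the complementary abelian scheme $P(C/C')$ has relative dimension $h=g-g'$; fiberwise separability ensures that the classical fiberwise picture glues to the asserted canonical construction over $S$. For part (2) I would first reduce to the case $S=\spec k$ with $k$ algebraically closed. Whether $\Theta_{C/S}|_{P}$ is a multiple of a principal polarization, together with the value of the multiple $e$, is constant on the connected base $S$ (the degree of $\Theta_{C/S}|_P$ is locally constant and divisibility of a polarization is a closed condition), so it suffices to test the assertion at one geometric fiber -- exactly the ``for some geometric point $s$'' in the statement.

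Working over $k=\bar k$, I would characterize the identity $\Theta|_P=e\cdot\Xi$ via complementarity: the kernel $\ker(\lambda_{\Theta|_P})$ is canonically the scheme-theoretic intersection $f^*\pic^0(C')\cap P$, so the identity is equivalent to $f^*\pic^0(C')\cap P = P[e]$, the full $e$-torsion group scheme. The governing formula, valid in every characteristic because $\mathrm{Nm}\circ f^*=[\deg f]$ and $f^*,\mathrm{Nm}$ are adjoint for the autodualities of the two Jacobians, is
\begin{equation*}
(f^*)^\vee\circ\lambda_{\pic^0(C)}\circ f^* = d\cdot\lambda_{\pic^0(C')},
\end{equation*}
so the polarization pulled back to $\pic^0(C')$ has type $(d,\ldots,d)$. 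Writing $N=f^*\mathrm{Nm}\in\End(\pic^0(C))$ for the norm endomorphism, one has $N|_{f^*\pic^0(C')}=[\deg f]$ and $N|_P=0$, and the extension of Welters' Criterion expresses the intersection $f^*\pic^0(C')\cap P$ -- hence the type of $\Theta|_P$ -- through $\ker f^*$ together with $\pic^0(C')[\deg f]$ and the correspondence $C\times_{C'}C\subset C\times C$. The requirement that this intersection be all of $P[e]$ thereby becomes a constraint on the ramification and Galois structure of $f$.

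The classification drops out by feeding this constraint into Riemann--Hurwitz, $2g-2=d(2g'-2)+\deg R$, organized by the value of $h$. When $h=1$ the complement $P$ is an elliptic curve, so $\Theta|_P$ is automatically a multiple of a principal polarization; Riemann--Hurwitz then forces either $g'=2$ with $d=2$ \'etale (case (a) with $h=1$) or $g'=1$, $g=2$ (case (d)), the latter absorbing all admissible higher degrees, and there the exponent is read off from the maximal abelian subcover factorization \eqref{E:MaxAbCovFact} via \Cref{L:newBLprop11.4.3} as $e=\deg f/\deg f^{\abelian}$. When $h\ge 2$ the condition is genuinely restrictive: the constraint on $\ker f^*$, combined with the numerical identity, admits only $d=2$ \'etale (case (a), $e=2$, $h=g'-1$), $d=2$ with $\deg R=2$ (case (b), $e=2$, $h=g'$), and $d=3$ \'etale noncyclic with $g'=2$ (case (c), $e=3$, $h=2$); every other numerical possibility is excluded because the induced polarization then fails to have the form $e\cdot\Xi$.

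The main obstacle is characteristic $2$. There the $[2]$-isogeny on Jacobians is inseparable, $\mmu_2$ is non-\'etale, and \'etale double covers come from Artin--Schreier rather than Kummer theory, so neither the structure of $\ker f^*$ nor the self-duality of the Prym polarization may be imported from the classical arguments. The displayed identity survives untouched, since it only uses $\mathrm{Nm}\circ f^*=[\deg f]$, but extracting the type of $\Theta|_P$ from it -- and proving the extended Welters' Criterion without inverting $2$ -- forces one to argue with the finite group schemes $\ker f^*$, $f^*\pic^0(C')\cap P$, and their Cartier duals directly, rather than with na\"ive $2$-torsion points. Wild ramification is the remaining characteristic-$2$ subtlety, and it is precisely the hypothesis in case (b) that the ramification divisor have degree $2$ -- rather than a count of branch points -- that keeps Riemann--Hurwitz delivering $g=2g'$ and isolates the correct family of covers.
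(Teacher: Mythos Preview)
Your overall strategy matches the paper's: construct $P(C/C')$ as the complement of $Y=f^*\pic^0_{C'/S}$, use the identity $(f^*)^*\lambda_C=d\lambda_{C'}$, identify $\ker(\iota_Z^*\lambda_C)$ with $Y\cap Z$ inside $\pic^0_{C/S}$, and feed everything into Riemann--Hurwitz. But there is a genuine gap in your classification argument, and one misdirection.

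The gap is that you never derive the inequality $\dim Y\ge\dim Z$, i.e.\ $2g'\ge g$, which is the engine of the whole case analysis. The paper obtains it as follows: if $\K(\iota_Z^*\lambda_C)=Z[e]$, then since $\K(\iota_Y^*\lambda_C)\cong\K(\iota_Z^*\lambda_C)$ (both equal $Y\times_X Z$) and $\K(\iota_Y^*\lambda_C)\subseteq Y[e]$, comparing orders gives $e^{2\dim Z}\le e^{2\dim Y}$. Combining $2g'\ge g$ with Riemann--Hurwitz yields $(d-1)\ge (d-2)g'$, and \emph{this} is what forces $d\le 3$ when $g'\ge 2$ and allows the finite case split. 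Your organization ``by $h$'' does not substitute for this: when $h\ge 2$ you simply assert that only (a), (b), (c) survive, but without $2g'\ge g$ there is nothing bounding $d$ or $g'$, and the phrase ``the induced polarization then fails to have the form $e\cdot\Xi$'' is exactly what needs proving. Similarly, in the cyclic $d=3$ case the paper gives an actual argument (comparing $\abs{\K((f^*)^*\lambda_C)}$ with $\abs{\pic^0_{C'}[3]}$ and using that a nontrivial factorization $f^*=\iota_Y h$ strictly increases the degree) to exclude it; you have not.

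The misdirection is the appeal to ``the extension of Welters' Criterion.'' Neither the paper nor your argument actually uses Welters' criterion to classify Prym--Tyurin Pryms; the classification runs entirely through $\K(\iota_Y^*\lambda_C)\cong\K(\iota_Z^*\lambda_C)$, the identity $(f^*)^*\lambda_C=d\lambda_{C'}$, the computation of $\ker f^*$ via the maximal abelian subcover (\Cref{L:newBLprop11.4.3}), and degree counts. Welters' criterion in the paper serves a different purpose (recognizing Prym--Tyurin varieties via curve classes). Finally, your reduction to a single geometric point is too quick: ``divisibility of a polarization is a closed condition'' is not what is used. The paper instead shows (\Cref{T:classifyprymS}) that the \emph{type of the cover}---degree, ramification degree, and degree of the maximal abelian subcover---is constant on connected $S$ (the last via \Cref{L:maxabcover}), so the fiberwise classification of \Cref{T:classifyprym} is uniform across $S$.
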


This is proven in \Cref{T:classifyprym} and
\Cref{T:classifyprymS}. The abelian scheme $P(C/C')$ of part (1) is constructed as follows.
The morphism $f:C \to C'$ induces a morphism of abelian schemes
$f^*:\pic^0_{C'/S} \to \pic^0_{C/S}$, whose image $Y$ is an abelian scheme.  The principal polarization
of $\pic^0_{C/S}$ restricts to a polarization on $Y$, whose exponent we denote by $e$ (\S \ref{S:DgExpTp}).  Using the
polarization on $\pic^0_{C/S}$ we define a norm endomorphism  $N_Y:
\pic^0_{C/S} \to \operatorname{Pic}^0_{C/S}$ with image $Y$ (\S \ref{S:normendomorphism}), and define $P(C/C')$ to be the image
$\operatorname{Im}([e]_X - N_Y)$.  In this context, we say that
$P(C/C')$ is a Prym scheme of exponent $e$ (embedded in
$\pic^0_{C/S}$).  In the situation of part (2), when the induced
polarization on $P(C/C')$ is the $e^{th}$
multiple of a principal polarization $\xi$, we call the principally
polarized abelian scheme $(P(C/C'),\xi)$ a Prym--Tyurin Prym  scheme of exponent
$e$.
    In much of the
literature, as in the beginning of this introduction, the phrase
``Prym variety'' often refers to case 2(a).

We also recover an arithmetic version of a criterion of Welters; here we work over a field $K$.

\begin{teoalpha}
Let $C$ be a smooth pointed proper curve over a field $K$, which we take to be embedded via the Abel--Jacobi map in its principally polarized Jacobian $(\operatorname{Pic}^0_{C/K},\Theta_{C/K})$, let $Z\hookrightarrow \operatorname{Pic}^0_{C/K}$ be a sub-abelian variety of dimension $g_Z$ with principal polarization $\Xi$, and let $\beta$ be the canonical composition 
$$\xymatrix{
\beta: C\ar[r]& \operatorname{Pic}^0_{C/K} \ar[r]^\sim &
\widehat{\operatorname{Pic}^0_{C/K}} \ar[r] &  \widehat Z  \ar[r]^\sim&
   Z.}
$$
Then $\Theta_{C/K}|_Z=e\cdot \Xi$ if and only if there is a numerical (or equivalently,
  homological) equivalence
  \[
\beta_*[C_{\bar K}] \equiv e
    \frac{[\Xi]^{g_Z-1}}{(g_Z-1)!}
  \]
  of 1-cycles on $Z_{\bar K}$.
\end{teoalpha}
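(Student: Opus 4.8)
The plan is to reduce the polarization identity $\Theta_{C/K}|_Z = e\cdot\Xi$ to an identity of endomorphisms of $Z$, and then to evaluate the claimed cycle equivalence by transporting the computation into the cohomology ring $\bigwedge^\bullet H^1$ of the abelian varieties involved, where everything becomes the linear algebra of (inverse) polarization forms. Throughout I abbreviate $J := \operatorname{Pic}^0_{C/K}$, write $\iota\colon Z\hookrightarrow J$ for the inclusion, $\lambda_\Theta, \lambda_\Xi$ for the principal polarizations, and $j\colon C\to J$ for the Abel--Jacobi embedding; note that $\beta = \psi\circ j$ where $\psi := \lambda_\Xi^{-1}\circ\widehat\iota\circ\lambda_\Theta\colon J\to Z$, and that the class $\beta_*[C_{\bar K}]$ is translation-invariant, so the statement is independent of the chosen base point.

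First I would record the translation of the polarization condition. The standard functoriality $\lambda_{\iota^*\Theta} = \widehat\iota\circ\lambda_\Theta\circ\iota$ of polarizations under pullback shows that $\Theta_{C/K}|_Z = e\cdot\Xi$ holds if and only if the endomorphism $\phi := \lambda_\Xi^{-1}\circ\widehat\iota\circ\lambda_\Theta\circ\iota = \psi\circ\iota \in\End(Z)$ equals $[e]_Z$, equivalently $\lambda_{\iota^*\Theta} = e\,\lambda_\Xi$.

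Next comes the main computation. By Poincaré's formula (over $\bar K$, in $\ell$-adic or numerical form), the Abel--Jacobi class is the minimal class $j_*[C_{\bar K}] = [\Theta]^{g-1}/(g-1)!$, which under the identification $H_2(J_{\bar K})\cong\bigwedge^2 H_1(J_{\bar K})$ is exactly the inverse polarization $\lambda_\Theta^{-1}$. Applying $\psi_*$, and using that pushforward on $H_2 = \bigwedge^2 H_1$ is $\bigwedge^2$ of the induced map on $H_1$ together with the transformation rule $B\mapsto TBT^{t}$ for an antisymmetric form $B$ under a homomorphism $T$, one computes $\beta_*[C_{\bar K}] = \psi_*\lambda_\Theta^{-1} = \lambda_\Xi^{-1}\circ\lambda_{\iota^*\Theta}\circ\lambda_\Xi^{-1}$ as a homological $1$-cycle on $Z_{\bar K}$, where $\lambda_{\iota^*\Theta}=\widehat\iota\,\lambda_\Theta\,\iota$ is the polarization induced on $Z$. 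Now the minimal class $[\Xi]^{g_Z-1}/(g_Z-1)!$ corresponds under the same identification to $\lambda_\Xi^{-1}$; this last point is pure linear algebra (a Pfaffian/comatrix identity for a nondegenerate alternating form inside $\bigwedge^\bullet H^1$) and is valid for any principally polarized abelian variety, so it does \emph{not} require $Z$ to be a Jacobian. Hence $\beta_*[C_{\bar K}] = e\cdot[\Xi]^{g_Z-1}/(g_Z-1)!$ if and only if $\lambda_\Xi^{-1}\lambda_{\iota^*\Theta}\lambda_\Xi^{-1} = e\,\lambda_\Xi^{-1}$, i.e.\ $\lambda_{\iota^*\Theta}=e\,\lambda_\Xi$, i.e.\ $\phi = [e]_Z$. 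Combined with the first step, this is precisely the asserted equivalence.

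The hard part is making the cohomological computation legitimate over an arbitrary field $K$ rather than over $\cx$, where both the identification $H^{2\dim-2}\cong\bigwedge^2 H_1$ and Poincaré's formula are classical. For general $K$ I would work with $\ell$-adic étale cohomology of the base changes to $\bar K$ (carrying the appropriate Tate twists, which I suppressed above), using that for an abelian variety $H^\bullet = \bigwedge^\bullet H^1$, that cycle class maps are compatible with pushforward, and that the minimal-class computations descend. One must also justify Poincaré's formula in this setting and the asserted equivalence of numerical and homological equivalence for the classes in question; I would either cite the standard references or reduce to them, as these are exactly the inputs the relative theory developed earlier in the paper supplies.
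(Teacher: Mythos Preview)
Your argument is correct and takes a genuinely different route from the paper's.  The paper does not compute in $\bigwedge^\bullet H^1$ at all; instead it develops, in \S\ref{S:cyclefield}, Matsusaka's construction which to cycles $\alpha,\beta$ of complementary dimension on an abelian variety associates an endomorphism $\delta(\alpha,\beta)\in\End(X)$.  The proof of the equivalence (Lemma~\ref{L:BL-12.2.3}) then goes through an intermediate condition $\delta(\beta_{\bar K*}[C_{\bar K}],\Xi_{\bar K}) = -[e]_{Z_{\bar K}}$: one direction uses the explicit formula (\Cref{P:BL1161}) expressing $\delta(\beta_*[C],D)$ as a composite of polarizations and $\widehat{\til\beta}$, and the other uses the computation $\delta(\frac{e}{(g_Z-1)!}\Xi^{g_Z-1},\Xi) = -[e]$ (\Cref{P:BL544-6-7}\ref{P:BL547}) together with the nondegeneracy statement (\Cref{T:BL1164a}) that $\delta(\Gamma,\Xi)=0$ forces $\Gamma$ numerically trivial.

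Your linear-algebra computation $\psi_*E_\Theta^{-1} = E_\Xi^{-1}E_{\iota^*\Theta}E_\Xi^{-1}$ (with $\psi_* = E_\Xi^{-1}\iota^t E_\Theta$ on $H_1$) is clean and, together with the identification of the minimal class with the inverse polarization form in $\bigwedge^2 H_1$, gives the homological equivalence directly, bypassing all of the $\delta$-machinery.  This is shorter for the present statement.  What the paper's approach buys is modularity: the $\delta$-construction and \Cref{T:BL1164a} are general tools (used, for instance, to recover the Matsusaka--Ran criterion in \Cref{C:WMR}), whereas your exterior-algebra computation is tailored to the specific shape of the minimal class.  Two points you flag in your last paragraph are indeed the external inputs you need: Poincar\'e's formula for $j_*[C]$ over an arbitrary algebraically closed field, and the coincidence of homological and numerical equivalence for $1$-cycles on abelian varieties (the paper invokes \cite{sebastianSmash} for the latter in \S\ref{SS:EqRel}); both are available, but should be cited explicitly rather than folded into ``the standard references''.
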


In fact, our result does not require $C$ to be pointed, and is actually
phrased in terms of a morphim $\beta:C \to T$ to a torsor under $Z$;
see Theorem \ref{T:Welters} for the complete statement.

The reader simply curious about what sorts of Prym--Tyurin and Prym
constructions are valid over arbitary fields is invited to directly
turn to the results of  \S \ref{S:prymfield}.  Here is how we work
up to these statements.
In \S \ref{S:rudiments}, we start by collecting some basic facts on
abelian schemes, their polarizations and their endomorphisms.
 
In \S \ref{S:morenotes}, we define the norm endomorphism mentioned
above, and use this to get a general theory of complements for
polarized abelian schemes.  Our treatment here follows
\cite[Ch.~5]{BL}, except that at each stage we make sure our
constructions are valid over in the relative setting.
     
In \S \ref{S:abelmap}, we collect some results on the Abel map
$\alpha_{C/S}: C \to
\pic^{(1)}_{C/S}$ associated to a smooth proper curve, and the
compatibility of $\alpha_{C/S}$ and polarizations under finite morphisms $C
\to C'$.   Since we work in a setting where $C$ need not admit a
section, $\pic^{(1)}_{C/S}$ might be a nontrivial torsor under
$\pic^{0}_{C/S}$, and so our results are necessarily expressed in
terms of maps between \emph{torsors} under Jacobian varieties, rather
than the Jacobians themselves.

In \S \ref{S:prymfield}, we follow the development in
\cite[Ch.~12]{BL} of the theory of Prym--Tyurin and Prym varieties, as
well as Welters' Criterion.  One of the tools used in
\emph{op.~cit.}~is a construction due to Matsusaka \cite{matsusaka59}
which, given cycles $\alpha$ and $\beta$ of complementary dimension on
an abelian variety $X$, produces an endomorphism
$\delta(\alpha,\beta)\in \operatorname{End}(X)$.  However, Matsusaka's
work is built upon Weil's foundations for algebraic geometry, which
now seem ill-suited to analyzing objects over a field that is not 
algebraically closed.  Consequently, we take a detour in \S
\ref{S:cyclefield} to recast this work using a modern reformulation of
Samuel's notion of a regular homomorphism (\cite{ACMVfunctorial}).  In \S \ref{S:prymscheme},
we consider the existence of Prym--Tyurin and Prym schemes relative to
an arbitrary base $S$.  Finally, some sample applications are sketched
out in \S \ref{S:applications}.

After we distributed a first draft of this paper, 
Tudor Ciurca kindly shared with us his recent work on Prym varieties and cubic threefolds over
$\integ$ \cite{ciurcaprym24}.  As part of this work Ciurca gives a proof
-- completely independent from ours -- 
that if $C \to C'$ is an \'etale cover of curves over an arbitrary
field $K$, then the induced morphism on Jacobians $\pic^0_{C/K} \to
\pic^0_{C'/K}$ is smooth; he uses this to deduce the existence of a Prym
variety $P(C/C')$ for an \'etale double cover of curves in
characteristic two.  We encourage the interested reader to consult
Ciurca's work directly.

\subsection*{Acknowledgments} We thank Rachel Pries for a helpful
discussion of wild ramification, and Patrick Brosnan for alerting us
to a serious oversight in a first draft of this paper.

 \section{Preliminaries on abelian schemes}\label{S:rudiments}
In this section we review some standard results about abelian schemes.  The main purpose is to fix notation, as well as to collect some results for which we could not find a reference.

We follow the conventions in \cite[Ch.~6]{GIT}.  In particular, we
work over a connected locally Noetherian scheme $S$.  An \emph{abelian scheme}
over $S$, or an \emph{abelian $S$-scheme}, is a smooth proper group
scheme $X \to S$ with geometrically connected fibers.  Recall that a
smooth proper group scheme $X \to S$ with connected fibers has
geometrically connected fibers \cite[Rem.~1.2(c)]{faltingschai}, and
so is an abelian scheme.  We refer to abelian schemes over fields as
abelian varieties, and recall that every abelian variety is projective;
 in fact, more generally,  if $S$ is geometrically
unibranch (e.g., normal), then every abelian scheme over $S$ is
projective over $S$ (see \cite[p.3, (c)]{faltingschai}).
    On the other hand, we recall that, for instance,  an abelian scheme over an Artinian ring need not be projective \cite[XII.4, p.189]{raynaud_faisceaux}.

A \emph{morphism of abelian schemes} over $S$ is a morphism $X \to Y$ of $S$-schemes which carries the identity section of $X$ to that of $Y$.  It follows from rigidity  \cite[Prop.~6.1]{GIT}  that such a morphism is automatically a homomorphism of group schemes  \cite[Cor.~6.4]{GIT} .
Recall that given  a morphism  $f\colon X \to Y$ of abelian schemes over $S$, the \emph{kernel} $\ker(f)$ is defined to be the pull-back  of $f$ along  the $0$-section of $Y$.    This is naturally a sub-$S$-group scheme of $X$, and by construction, its formation is compatible with base change.

For an abelian scheme $X$ over $S$ and an integer $e$, we denote by
$[e]_X:X\to X$ the multiplication by $e$ map, and by $X[e]$ its
kernel.  We denote by $\widehat X:=\operatorname{Pic}^0_{X/S}$ the
dual abelian $S$-scheme (\cite[Cor.~6.8]{GIT} when $X/S$ is
projective, and \cite[p.3]{faltingschai} in general), and for an
$S$-homomorphism $f\colon X\to Y$ of abelian $S$ schemes, we denote by
$\widehat f\colon \widehat Y\to \widehat X$ the $S$-homomorphism induced by pull-back of line bundles.  We denote by $\pi:X\to S$ the structure map to $S$,  $\mu:X\times_S X\to X$ the group law, $p_i:X\times _SX \to X$, $i=1,2$ the projections, and by $\epsilon:S\to X$ the zero section.

\subsection{Polarizations of abelian schemes}\label{S:Pol}
Given an abelian scheme $X$ over $S$ and a line bundle $L$ on $X$, one obtains a morphism
\begin{equation}\label{E:phi_L}
\xymatrix{\phi_L:X\ar[r]& \widehat X,}
\end{equation}
whose definition we recall on points.  Given 
a morphism  $a:T\to X$ of $S$-schemes, i.e., an element $a\in X(T)$, denote by $\pi_T:X_T=T\times_SX \to T$ the pull-back of $X/S$ along $T\to S$, by $L_T$ the pull-back of $L$ along $X_T\to X$, and we denote the translation by $a$ map by  $$\xymatrix{t_a: X_T=T\times_TX_T\ar[r]^-{a\times \operatorname{Id}}&  X_T\times_TX_T \ar[r]^-{\mu_T}& X_T.}$$
 Then we have 
\begin{equation}\label{E:phi_L2}
  \xymatrix@R=.5em{
\phi_L(T):X(T)\ar[r]&\widehat X(T)\\
a\ar@{|->}[r]& t_a^*L_T\otimes L_T^{-1}\otimes \pi_T^*a^*L_T^{-1}\otimes \pi_T^*\epsilon_T^*L_T.
}
\end{equation}
  By construction, the formation of $\phi_L$ is compatible with base change.
In the usual way, the morphism of schemes $\phi_L$ in \eqref{E:phi_L} is identified with $\phi_L(X)(\operatorname{Id}_X)$ in \eqref{E:phi_L2}
   and, viewing $\widehat X$ as a moduli space of line bundles on $X$, we have that $\phi_L$ is the morphism associated to the line bundle 
$$\mu^*L\otimes p_1^*L^{-1}\otimes p_2^*L^{-1},
 $$
viewing this line bundle on $X\times_S X$ as a family of line bundles on $X$ parameterized by the first copy of $X$.  Moreover, if $S$ is the spectrum of an algebraically closed field $k$, then for $a\in X(k)$, one has  $\phi_L(a)=t_a^*L\otimes L^{-1}$.   Note that, \emph{a priori}, the definition of $\phi_L(T)$ above only gives a map to  $\operatorname{Pic}_{X/S}(T)$; to check that the image is in $\operatorname{Pic}^0_{X/S}(T)=\widehat X(T)$ it suffices to check on geometric fibers, and this is standard (e.g., \cite[p.79]{mumfordAV}).  

We use the notation 
 $$\K(L) := \ker \phi_L.$$
 We have $\K(L)=X$ if and only if $L\in \widehat X(S)$, i.e., the map $S\to \operatorname{Pic}_{X/S}$ induced by $L$ has image in the connected component of the identity  $\widehat X= \operatorname{Pic}^0_{X/S}$. Again, it suffices to check this on geometric fibers, which is standard (e.g., \cite[Def.~p.74 and Diag.~p.80]{mumfordAV}).  
 At the opposite extreme, if $\pi_*L\ne 0$ then  $\K(L)$ is finite over $S$ if and only if $L$
is ample; since $\phi_L$ is stable under base change by construction, and $X/S$ is proper, it suffices to show that on the geometric  fibers $X_s$ we have that  $H^0(X_s,L_s)\ne 0$ and $\K(L_s)$ is quasi-finite if and only if $L_s$ is ample, which is a standard result (e.g., \cite[p.60, Appl.~1]{mumfordAV}).

Given an abelian scheme $X$ over $S$, a \emph{polarization} on $X$ is a homomorphism
$$
\xymatrix{
\lambda : X\ar[r]& \widehat X}
$$
such that for every geometric point $s$ of $S$, there is an ample line
bundle $L_s$ on $X_s$ such that the morphism
$\lambda_s=\phi_{L_s}:X_s\to \widehat X_s$.  This is equivalent to the
existence of an \'etale cover $S'\to S$ such that there exists a
relatively ample line bundle $L'$ on $X_{S'}=S'\times_S X $ such that
$\lambda_{S'}=\phi_{L'}:X_{S'}\to \widehat X_{S'}$
\cite[p.4]{faltingschai}. Note in particular that: (1) every polarized
abelian scheme is \'etale locally projective, (2) if $X$ is projective
and $L$ is relatively ample, then $\phi_L$ is a polarization, and (3)
every polarization on an abelian variety $X$ over an
algebraically
 closed field is of the form $\phi_L$ for an ample line bundle $L$ on
$X$.  Finally, given a polarization $\lambda$, there is a moduli space parameterizing 
bundles $L$ on $X$ for which $\phi_L=\lambda$; this admits a coarse moduli space which, if nonempty, defines a closed subscheme of $\operatorname{Pic}_{X/S}$  that is a 
torsor under $\widehat X$ \cite[p.4]{faltingschai}.

A polarization is necessarily finite and surjective \cite[p.120]{GIT},
as well as faithfully flat, being a surjective morphism of abelian
schemes \cite[Lem.~6.12]{GIT}.  Note that this implies that
$\ker \lambda$ is a finite flat commutative group scheme over $S$.  A
polarization is called principal if it is an isomorphism.  We
say that a line bundle $L$ (resp.~Cartier divisor $D$) on $X$ that is
ample over $S$ induces the polarization $\lambda$ if
$\lambda =\phi_{L}$ (resp.~$\lambda = \phi_{\mathcal O_{X}(D)}$).

\subsubsection{Degree, exponent, and type of a
  polarization} \label{S:DgExpTp} A polarized abelian scheme
$(X,\lambda)$ over $S$ has an associated \emph{degree} $d$.  Recall
that this is defined so that the degree of the finite flat morphism
$\lambda$ is $d^2$, or equivalently, that the order of the finite flat
commutative group scheme $\ker \lambda$ over $S$ is $d^2$, and that
after an \'etale cover $S'\to S$ such that $\lambda =\phi_L$ for some
ample line bundle $L$ on $X_{S'}$, one has that $\pi_{S'*}L$ is a
vector bundle of rank $d$ on $S$, and the finite flat group scheme $\K(L)$
has order $d^2$ over $S'$ \cite[p.4]{faltingschai},
\cite[p.150]{mumfordAV}.  The degree of a polarization is stable under
base change.  We denote by $\mathcal A_{g,d}$ the separated algebraic
stack of finite type over $\operatorname{Spec}\mathbb Z$ whose objects
over $S$ are polarized abelian schemes $(A,\lambda)$ of dimension $g$
and degree $d$ over $S$ (see e.g., \cite[p.486]{djAV}).

For a finite flat commutative group scheme $G$ of order $n$ over $S$,
one has that $G$ is contained in $G[n]$, the kernel of the
multiplication by $n$ map on $G$ \cite[Thm.~(Deligne), p.4]{TateOort}.
One defines the \emph{exponent} of $G$ to be the smallest positive
integer $e$ such that $G \subseteq G[e]$.  The exponent of a polarized
abelian scheme $(X,\lambda)$ is defined to be the exponent of the
finite flat commutative group scheme $\ker (\lambda)$.  Since the
multiplication map is proper, and therefore closed, by considering
generic fibers it is easy to see that the exponent is lower
semi-continuous on fibers, and stable under faithfully flat base
change.

In order to describe the behavior of the exponent under base change in
more detail, it is helpful to discuss the type of a polarization.  We
refer to \cite{djAV} for references.  First recall that for a
polarized abelian scheme $(A,\lambda)$ of dimension $g$ and degree $d$
over $S$ there is a nondegenerate alternating pairing (e.g., \cite[p.5]{faltingschai}, \cite[p.487]{djAV}):
 $$
\xymatrix{e^\lambda :\ker (\lambda)\times_S\ker (\lambda)\ar[r]& \mathbb G_{m,S}.}
$$
For every sequence of positive integers $\delta = (\delta_1,\dots,\delta_g)$ with $\delta_1\mid \delta_2\cdots \mid \delta_g$ and $\prod_{i=1}^g\delta_i=d$, define a scheme over $\operatorname{Spec}\mathbb Z$:
$$
\K(\delta):=\prod_{i=1}^g \mu_{\delta_i,\mathbb Z}\times_{\mathbb Z}(\mathbb Z/\delta_i\mathbb Z)
$$
and the nondegenerate alternating pairing
$$
e_\delta:\K(\delta)\times_{\mathbb Z}\K(\delta)\longrightarrow \mathbb G_{m,\mathbb Z}
$$
$$
((\zeta_i,n_i)_{i=1,\dots,g}, (\eta_i,m_i)_{i=1,\dots,g})\mapsto \prod_{i=1}^g\zeta_i^{m_i}\cdot \eta_i^{-n_i}.
$$
We denote by $\mathcal A_{g,\delta}$ the separated algebraic stack of finite type over $\operatorname{Spec}\mathbb Z$ whose objects over $S$ are polarized abelian schemes $(A,\lambda)$ of dimension $g$ and degree $d$ over $S$ such that there exists a faithfully flat base change $S'\to S$ such that $(\ker (\lambda)_{S'},e^\lambda_{S'})\cong (\K(\delta)_{S'},e_{\delta,S'})$; it is equivalent to require simply that $\ker (\lambda)_{S'}\cong \K(\delta)_{S'}$ (\cite[Rem.~1.4(2)]{djAV}).  We say that such a polarization $\lambda$ is of \emph{type} $\delta$.  For a polarization of type $\delta=(\delta_1,\dots,\delta_g)$, the exponent is $\delta_g$.  

The stacks $\mathcal A_{g,\delta}$ are locally closed substacks of
$\mathcal A_{g,d}$ \cite[Prop.~1.5]{djAV}, and the irreducible
components of $\mathcal A_{g,d}$ are the closures
$(\mathcal A_{g,\delta})^c$ of the $\mathcal A_{g,\delta}$ in
$\mathcal A_{g,d}$ \cite[Thm.~1.12(1)]{djAV}.  If $k$ is an
algebraically closed field of characteristic $p=\operatorname{char}(k)$ and $p\nmid d$, then the $\mathcal A_{g,\delta,k}$ are
closed in $\mathcal A_{g,d,k}$, and
$\mathcal A_{g,d,k}$ is the disjoint union of the
$\mathcal A_{g,\delta,k}$; if
$\operatorname{char}(k)=0$, this is classical, and for
$\operatorname{char}(k)=p>0$, this is \cite[Thm.~1.12(2),
Rem.~1.13(2)]{djAV}.  If $p\mid d$, then one sees from the definition of $\K(\delta)$
that each $\mathcal A_{g,\delta,k}$ avoids the $p$-rank zero locus of 
$\mathcal A_{g,d,k}$, and thus the union of the various
$\mathcal A_{g,d,k}$ cannot be all of $\mathcal A_{g,d,k}$.  (See,
e.g., \Cref{R:defprank} for a reminder on the notion of $p$-rank.)

Moreover, if $p\mid d$, but $p^2\nmid d$, then the
$(\mathcal A_{g,\delta,k})^c$ are still disjoint, but if $p^2\mid d$,
then the $(\mathcal A_{g,\delta,k})^c$ are not disjoint
\cite[Rem.~1.13(2)]{djAV}.  In particular, there are examples due to
Norman (\cite{norman78}; see also \cite[Exa.~3.5.1]{djAV}, and Remark
\ref{R:normanEx} below) showing that
$(\mathcal A_{2,(p,p),\bar\ff_p})^c$
and
$(\mathcal A_{2,(1,p^2),\bar\ff_p})^c$
have a nontrivial intersection.

In summary, in terms of exponents, this implies that in characteristic
$0$, or in positive characteristic $p$ if $p\nmid d$, the exponent is
stable under base change. (One can also see this directly, since then
the kernel of the polarization is an \'etale group scheme over $S$.)
However, in positive characteristic, if $p^2\mid d$, then there are examples where the exponent is \emph{not} stable under base change. 
The following lemma shows, however, that in the situation we frequently will be in, namely the case where our polarization is a multiple of a principal polarization, the exponent is in fact stable under base change:

\begin{lem}[Exponent under base change]\label{L:ExpBC}
Let $(X,\lambda)$ be a polarized abelian scheme of dimension $g$ and degree $d$ over $S$.  The exponent of $\lambda$ is stable under base change if one of the following conditions hold:
\begin{alphabetize}
\item The associated morphism $S\to \mathcal A_{g,d}$ factors through
  the natural inclusion $\mathcal A_{g,\delta}\subseteq \mathcal
  A_{g,d}$ for some polarization type $\delta$; e.g., $S$ is a scheme over a field $K$ of characteristic $p$ and $p\nmid d$.
\item We have $\lambda=e\xi$ for a positive integer $e$ and a principal polarization $\xi$ on $X$.
\end{alphabetize}
\end{lem}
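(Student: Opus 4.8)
The plan is to treat the two cases separately, since they require genuinely different arguments. The common theme is that the failure of base-change stability illustrated by Norman's examples comes precisely from the polarization \emph{type} jumping between distinct components $(\mathcal A_{g,\delta})^c$ of $\mathcal A_{g,d}$ upon specialization, and each hypothesis rules this out in its own way. Throughout I will use the already-recorded fact that the exponent is stable under \emph{faithfully flat} base change, so that the only real content is controlling what happens under non-flat base changes (e.g.\ restriction to a fiber).

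For (a), I would simply unwind definitions. The hypothesis that $S\to\mathcal A_{g,d}$ factors through $\mathcal A_{g,\delta}$ is exactly the statement that $(X,\lambda)$ is of type $\delta$, and since $\mathcal A_{g,\delta}$ is a substack, for any base change $T\to S$ the classifying morphism of $(X,\lambda)_T$ is the composite $T\to S\to\mathcal A_{g,\delta}$ and hence still factors through $\mathcal A_{g,\delta}$; thus the type is preserved under arbitrary base change. As recorded above, a polarization of type $\delta=(\delta_1,\dots,\delta_g)$ has exponent $\delta_g$, so the exponent equals $\delta_g$ both over $S$ and over $T$, proving stability. Concretely, the value $\delta_g$ arises because, after a faithfully flat cover $S'\to S$ trivializing the kernel, $\ker(\lambda)_{S'}\cong \K(\delta)_{S'}=\prod_i \mu_{\delta_i,S'}\times(\mathbb Z/\delta_i)_{S'}$, on which $[m]$ vanishes exactly when $\delta_i\mid m$ for all $i$, i.e.\ when $\delta_g\mid m$.

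For (b), the point is that $\lambda=e\xi$ need \emph{not} be of any type $\delta$ in positive characteristic: if $p\mid e$ and some fiber $X_s$ is non-ordinary, then $\ker(\lambda)_s=X_s[e]$ contains infinitesimal subgroups of $\alpha_p$-type and is therefore not fppf-locally isomorphic to any $\K(\delta)$, so (a) does not apply. Instead I would argue directly with $\ker(\lambda)$. Since $\xi$ is an isomorphism we have $e\xi=\xi\circ[e]_X$, whence $\ker(\lambda)=\ker([e]_X)=X[e]$. I would then compute the exponent of $X[e]$ purely from orders: using a B\'ezout relation $\gcd(e,m)=ae+bm$ one checks on functors of points (in any characteristic) that $X[e]\cap X[m]=X[\gcd(e,m)]$, so $X[e]\subseteq X[m]$ forces $X[e]=X[\gcd(e,m)]$; comparing the orders $|X[k]|=k^{2g}$ then gives $e=\gcd(e,m)$, i.e.\ $e\mid m$. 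Hence the smallest $m$ with $X[e]\subseteq X[m]$ is $m=e$, so the exponent is $e$.

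Finally, the exponent computed in (b) is manifestly stable under base change because it was extracted entirely from the orders $|X[k]|=k^{2g}$ and the scheme-theoretic identity $X[e]\cap X[m]=X[\gcd(e,m)]$, both of which commute with arbitrary base change. The main thing to be careful about---and the one genuinely non-formal step---is establishing $X[e]\cap X[m]=X[\gcd(e,m)]$ \emph{scheme-theoretically} and the order formula $|X[k]|=k^{2g}$ in characteristics dividing $e$; once these are in hand the rest is bookkeeping. I expect no difficulty in (a), which is essentially a restatement of facts already collected above, so the substance of the lemma lies in the order computation of (b).
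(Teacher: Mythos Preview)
Your proposal is correct and follows essentially the same approach as the paper. For (a) both arguments observe that the type $\delta$ is preserved under base change and the exponent is read off as $\delta_g$; for (b) both identify $\ker\lambda=X[e]$ and then bound the exponent below using the order formula $\lvert X[k]\rvert=k^{2g}$, the paper doing so by a direct inequality $e^{2g}\le n^{2g}$ while you take the slightly more refined route through $X[e]\cap X[m]=X[\gcd(e,m)]$ to conclude $e\mid m$.
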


\begin{proof}
  Case (a) is clear as the exponent of $\K(\delta)$ is $\delta_g$, and
  thus the exponent of $(X,\lambda)$ is $\delta_g$ after any base
  change.  For case (b) we have $\ker \lambda = X[e]$.  Clearly the
  exponent is at most $e$, and since $\deg [n]_X=n^{2g}$ (e.g.,
  \cite[p.65]{mumfordAV}), the exponent of $X[e]$ is not less than
  $e$.
\end{proof}

\begin{rem}\label{R:normanEx}
Condition (b) does not imply condition (a). 
Indeed, Norman's example \cite{norman78} showing that $(\mathcal
A_{2,(p,p),\ff_p})^c$ and $(\mathcal A_{2,(1,p^2),\ff_p})^c$ are not
disjoint is obtained in the following way.  Let $(E,\lambda)/k$ be a
(principally polarized)
supersingular elliptic curve over an algebraically closed field of
characteristic $p>0$.  Then $(E\times_kE, p\cdot(\lambda\times
\lambda))$ deforms into both $\mathcal A_{2,(p,p),k}$ and $\mathcal A_{2,(1,p^2),k}$.
\end{rem}

\subsection{Short exact sequences of abelian schemes}

The following seems to be well-known, but we are not aware of a reference:

\begin{lem}\label{L:dualexact}
  If $0 \to X \stackrel{\iota} \to Y \to Z \to 0$ is a short exact sequence
  of abelian schemes over $S$, then so is $$\xymatrix{0 \ar[r]&
    \widehat Z \ar[r]&  \widehat Y \ar[r]^{\widehat\iota} &  \widehat X \ar[r]& 0}.$$
\end{lem}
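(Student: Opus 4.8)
The plan is to reduce to the classical exactness of duality for abelian varieties over a field, and then to globalize by the flatness-and-fibers arguments used throughout \S\ref{S:rudiments}. Writing $p\colon Y\to Z$ for the quotient map, I would first pass to a geometric point $s$ of $S$, where $0\to X_s\to Y_s\to Z_s\to 0$ is a short exact sequence of abelian varieties; the classical fact that duality is exact on abelian varieties over a field (e.g.\ \cite{mumfordAV}) then gives exactness of $0\to \widehat{Z}_s\to \widehat{Y}_s\to \widehat{X}_s\to 0$. Concretely, $\widehat\iota_s$ is surjective and $\widehat p_s$ is a closed immersion with image $\ker(\widehat\iota_s)$.

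Next I would show $\widehat\iota\colon \widehat Y\to \widehat X$ is faithfully flat. Surjectivity of the finite-type morphism $\widehat\iota$ can be checked on geometric fibers, where it holds by the previous step, and a surjective homomorphism of abelian schemes is faithfully flat \cite[Lem.~6.12]{GIT}. Let $W:=\ker(\widehat\iota)$ be the preimage of the zero section of $\widehat X$. Then $W$ is closed in $\widehat Y$, hence proper over $S$, and it is flat over $S$, being the base change of the flat morphism $\widehat\iota$ along the zero section $S\to\widehat X$. Each geometric fiber $W_s=\ker(\widehat\iota_s)$ is an abelian variety, so $W\to S$ is flat with smooth, geometrically connected fibers and is therefore smooth; thus $W$ is an abelian scheme over $S$.

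Finally I would identify $\widehat{Z}$ with $W$. Since $p\circ\iota=0$, dualizing gives $\widehat\iota\circ\widehat p=0$, so $\widehat p\colon \widehat Z\to\widehat Y$ factors canonically through a homomorphism of abelian schemes $g\colon \widehat Z\to W$. By the first step $g$ is an isomorphism on every geometric fiber, and I would invoke the principle that a homomorphism of abelian schemes which is an isomorphism on all geometric fibers is an isomorphism: it is surjective, hence faithfully flat \cite[Lem.~6.12]{GIT}; its kernel is flat over $S$ with trivial geometric fibers, hence trivial, so $g$ is a monomorphism; and a faithfully flat monomorphism is an isomorphism. Thus $\widehat Z\cong W=\ker(\widehat\iota)$, which together with the faithful flatness of $\widehat\iota$ gives the desired short exact sequence.

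The fiberwise statement is classical, so the real content—and the main obstacle—is the globalization: ensuring that $\ker(\widehat\iota)$ is again an abelian scheme (so that the fiberwise-isomorphism criterion applies) and that forming kernels and testing isomorphy on fibers behave well over the base $S$. Each of these rests on the faithful flatness of $\widehat\iota$ and the flatness of its kernel; once that flatness is secured, the remaining steps follow from the same fiber criteria used for polarizations and isogenies earlier in this section.
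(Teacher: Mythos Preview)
Your argument is correct, but the emphasis is essentially complementary to the paper's. The paper dispatches the globalization in a single sentence (``it suffices to verify that the sequence is exact on fibers over geometric points'') and then spends the entire proof on the algebraically closed field case, which it works out from scratch via torsion: the snake lemma gives $0 \to X[N] \to Y[N] \to Z[N] \to 0$, Cartier duality plus $\operatorname{Ext}^1(G,\mathbb G_m)=0$ yields $0 \to \widehat Z[N] \to \widehat Y[N] \to \widehat X[N] \to 0$, and density of torsion finishes. You do the reverse: you take the field case as classical (citing \cite{mumfordAV}) and instead carefully justify the globalization --- faithful flatness of $\widehat\iota$, flatness and smoothness of $W=\ker(\widehat\iota)$, and the fiberwise-isomorphism criterion for $\widehat Z\to W$. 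Each approach buys something: the paper's torsion argument is self-contained precisely because, as they note, a clean reference for exactness of duality even over a field is hard to pin down (so your appeal to \cite{mumfordAV} may not land on an explicit statement there); on the other hand, your globalization spells out what the paper leaves implicit, and is exactly the kind of flatness-plus-fibers reasoning used elsewhere in \S\ref{S:rudiments}.
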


\begin{proof}
It suffices to verify that the sequence is exact on fibers over
geometric points, so we assume that $S$
is the spectrum of an algebraically closed field.  While this case appears to be well-known, for lack of a better reference we include a proof,  following Conrad on 
\href{https://mathoverflow.net/questions/37536/quotient-of-abelian-variety-by-an-abelian-subvariety}{{\tt MathOverflow (37536)}}:
  Let $N$ be any positive integer. 
  Since the multiplication by $N$ map $[N]$ is
surjective, by the snake
lemma we have an exact sequence of group schemes $0 \to X[N] \stackrel{\iota[N]}\to Y[N]
\to Z[N] \to 0$.  Taking Cartier duals, and
recalling that $\operatorname{Ext}^1(G,\mathbb G_m) = 0$ for a finite commutative
group scheme $G$ over an algebraically closed field, we obtain an exact sequence  of finite group schemes $0 \to Z[N]^\vee \to Y[N]^\vee
\to X[N]^\vee \to 0$.  Using the canonical identifications $\widehat X[N]
\iso X[N]^\vee$, etc.~(e.g., \cite[Cor.~1.3(ii)]{odaderham}), we find that  we have an exact sequence $0 \to \widehat Z[N] \to \widehat
Y[N] \stackrel{\hat \iota [N]}{\to} \widehat X[N] \to 0$.  In particular, for each $N$ we have
$\ker(\widehat \iota)[N] = \widehat Z[N]$, and thus the canonical morphism  $\widehat Z\to \ker \widehat \iota$ obtained via pull-back of line bundles is an isomorphism, i.e., $
\widehat Z= \ker\widehat \iota$, as the union of the $N$-torsion  in a sub-group scheme of an abelian variety is dense in the sub-group scheme.  A similar argument using the surjection $Y[N] \stackrel{\hat \iota [N]}{\twoheadrightarrow} \widehat X[N]$ implies that the image of $\widehat \iota$ is equal to $X$.  
\end{proof}

One application of this is to provide a condition for a morphism of abelian schemes to be a closed embedding:

\begin{cor}
  \label{C:hatf}
  Let $f:X \to Y$ be a morphism of abelian schemes over $S$.  Then the following are equivalent:
  \begin{alphabetize}
  \item $f$ is a closed embedding;
  \item $\widehat f$ is surjective, and $\ker(\widehat f)$ is an abelian scheme. \label{I:C:hatf-b}
  \end{alphabetize} 
\end{cor}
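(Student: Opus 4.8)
The plan is to obtain both implications as formal consequences of \Cref{L:dualexact} together with biduality, so that the proof is almost entirely a matter of bookkeeping. The inputs I will need beyond \Cref{L:dualexact} are: that a surjective homomorphism of abelian schemes is faithfully flat (\cite[Lem.~6.12]{GIT}), so that such a map realizes its target as the fppf quotient by its kernel; that the quotient of an abelian scheme by a closed sub-abelian scheme is represented by an abelian scheme; and the naturality of the canonical double-duality isomorphism $X\iso\widehat{\widehat X}$.

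First I would treat (a) $\Rightarrow$ (b). Since a morphism of abelian schemes is automatically a homomorphism, if $f$ is a closed embedding then its image is a closed sub-abelian scheme $f(X)\subseteq Y$ isomorphic to $X$. Forming the quotient $Z:=Y/f(X)$, which is again an abelian scheme, gives a short exact sequence $0\to X \sra{f} Y \to Z\to 0$. Applying \Cref{L:dualexact} produces $0\to \widehat Z \to \widehat Y \sra{\widehat f} \widehat X\to 0$, whence $\widehat f$ is surjective and $\ker\widehat f=\widehat Z$ is an abelian scheme.

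For the converse (b) $\Rightarrow$ (a), I would run the same machine on the dual side. Writing $W:=\ker\widehat f$, which is an abelian scheme by hypothesis, the surjectivity of $\widehat f$ (hence its faithful flatness) makes $0\to W\to \widehat Y \sra{\widehat f}\widehat X\to 0$ a short exact sequence of abelian schemes. Then \Cref{L:dualexact} yields $0\to \widehat{\widehat X}\sra{\widehat{\widehat f}}\widehat{\widehat Y}\to \widehat W\to 0$. Identifying $\widehat{\widehat X}\iso X$ and $\widehat{\widehat Y}\iso Y$ via biduality, which carries $\widehat{\widehat f}$ to $f$, this becomes $0\to X \sra{f} Y \to \widehat W\to 0$; in particular $f$ is a closed embedding.

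The hard part is not in the logic, which is entirely formal, but in confirming the two ambient facts invoked above: the representability of the quotient $Y/f(X)$ by an abelian scheme in the relative setting (needed for (a) $\Rightarrow$ (b)), and the naturality of biduality, i.e.\ that under the canonical isomorphism $X\iso\widehat{\widehat X}$ the bidual $\widehat{\widehat f}$ agrees with $f$ (needed for (b) $\Rightarrow$ (a)). Both are standard for abelian schemes over a locally Noetherian base and can be cited; with them in hand, the corollary follows immediately from \Cref{L:dualexact}.
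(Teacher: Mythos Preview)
Your proof is correct and follows exactly the same approach as the paper: deduce each implication by forming the relevant short exact sequence and applying \Cref{L:dualexact}, using biduality for the converse direction. The paper's proof is terser but substantively identical, so there is nothing to add.
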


\begin{proof}
  If $f$ is a closed embedding, then we obtain a short exact sequence of abelian schemes over $S$, $0\to X\stackrel{f}{\to} Y\to X/Y\to 0$.  Dualizing this short exact sequence,   Lemma \ref{L:dualexact} implies that $\widehat f$ is surjective and $\ker(\widehat f) \iso \widehat {X/Y}$ is an abelian scheme over $S$.  Conversely, if (b) holds, then we have an exact sequence of abelian varieties $0 \to \ker(\widehat f) \to \widehat Y \stackrel{\widehat f}{\to}\widehat X \to 0$; now invoke Lemma \ref{L:dualexact} again.
\end{proof}

In \S \ref{SubS:KerAbSch}, we review some criteria via Nori fundamental groups for checking the condition in \Cref{C:hatf}\ref{I:C:hatf-b}, that the kernel of a morphism of abelian schemes be an abelian scheme.

\subsection{Isogenies and endomorphisms of abelian schemes}\label{S:isog}
Given abelian schemes $X/S$ and $Y/S$, an \emph{isogeny} $f\colon X\to Y$ is a finite surjective homomorphism of abelian $S$-schemes.  The kernel $\ker (f)$ is a finite flat commutative $S$-group scheme, being the kernel of a surjective morphism of abelian
schemes \cite[Lem.~6.12]{GIT},  and therefore has an \emph{exponent}, $e=e(f)$, i.e., 
the smallest integer $e$ such that $\ker(f) \subseteq \ker([e]_X)$ (see \S \ref{S:DgExpTp}).

\begin{rem}[Exponent of isogenies and base change]
Since the multiplication map is proper, and therefore closed, by considering generic fibers it is easy to see that the exponent of an isogeny is lower semi-continuous on fibers, and stable under faithfully flat base change. 
However, the exponent of an isogeny need not be stable under base change, even if the isogeny is a polarization (see \S \ref{S:DgExpTp}).  Nevertheless, our primary interest will be exponents of polarizations, and there are cases of interest where the exponent is stable under base change (see 
\Cref{L:ExpBC}). 
\end{rem}

A basic tool we will use is:

\begin{pro}\label{T:factor}
Let $X$, $Y$, and $Y'$ be abelian $S$-group schemes,
 and suppose that we are given $S$-homomorphisms $\phi\colon X\to Y$ and $\phi'\colon X\to Y'$. If $\phi$ is surjective, then there is an $S$-homomorphism $\psi\colon Y\to Y'$ making the following diagram commute
$$
\xymatrix{
X\ar[r]^{\phi'} \ar@{->>}[d]_{\phi}& Y'\\
Y \ar@{-->}[ru]_\psi& 
}
$$
if and only if
$
\ker \phi\subseteq \ker \phi'
$. 
Moreover, in this situation, $\psi$ is unique.  
\end{pro}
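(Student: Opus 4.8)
The plan is to dispatch uniqueness and the forward implication formally, and then to build $\psi$ by faithfully flat descent along $\phi$. First I would record that, since $\phi$ is a surjective homomorphism of abelian schemes, it is faithfully flat \cite[Lem.~6.12]{GIT}, and since $S$ is locally Noetherian and $X,Y$ are of finite type, $\phi$ is of finite presentation; being also quasi-compact (as $X$ is proper), $\phi$ is an fppf covering, and in particular an epimorphism of $S$-schemes. Epimorphy gives at once the uniqueness clause. For necessity, if an $S$-homomorphism $\psi$ with $\psi\circ\phi=\phi'$ exists, I would restrict along the closed immersion $\ker\phi\hookrightarrow X$: by definition of the kernel, $\phi|_{\ker\phi}$ factors through the zero section of $Y$, so $\phi'|_{\ker\phi}=\psi\circ\phi|_{\ker\phi}=0$, which says exactly that $\ker\phi\subseteq\ker\phi'$.

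For the substantive direction, assume $\ker\phi\subseteq\ker\phi'$. By descent for morphisms along the fppf covering $\phi$, producing $\psi\colon Y\to Y'$ with $\psi\circ\phi=\phi'$ is equivalent to checking that $\phi'$ equalizes the two projections $p_1,p_2\colon X\times_Y X\to X$, i.e.\ that $\phi'\circ p_1=\phi'\circ p_2$. The key computation is that, because $\phi$ is a group homomorphism, the map $(x,k)\mapsto (x,x+k)$ identifies $X\times_S\ker\phi$ with $X\times_Y X$ (the inverse being $(x,x')\mapsto(x,x'-x)$, which lands in $X\times_S\ker\phi$ precisely because $\phi(x)=\phi(x')$). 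Under this identification $\phi'\circ p_1$ and $\phi'\circ p_2$ become the two morphisms $X\times_S\ker\phi\to Y'$ sending $(x,k)$ to $\phi'(x)$ and to $\phi'(x)+\phi'(k)$, respectively. These coincide exactly when $\phi'$ restricted to $\ker\phi$ is trivial, i.e.\ exactly under the hypothesis $\ker\phi\subseteq\ker\phi'$. Descent then yields the desired $S$-morphism $\psi$.

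It remains to see that $\psi$ is a homomorphism. As $\phi$ is surjective and $\phi,\phi'$ carry zero sections to zero sections, the relation $\psi\circ\phi=\phi'$ forces $\psi$ to carry the zero section of $Y$ to that of $Y'$; by the rigidity principle recalled in \S\ref{S:rudiments} (see \cite[Cor.~6.4]{GIT}), any $S$-morphism of abelian schemes doing so is automatically a homomorphism. I expect the main obstacle to be the descent step itself: being careful that $\phi$ really is an fppf covering and correctly identifying $X\times_Y X\cong X\times_S\ker\phi$ so that the equalizer condition becomes literally $\ker\phi\subseteq\ker\phi'$. An equivalent and perhaps cleaner packaging, which I would mention as an alternative, is to work in the abelian category of fppf sheaves: the fppf epimorphism $\phi$ exhibits $Y$ as the sheaf quotient $X/\ker\phi$, and then $\psi$ is induced by the universal property of the cokernel, automatically represented by a morphism of schemes since $Y'$ is representable.
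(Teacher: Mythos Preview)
Your argument is correct and is essentially the same approach as the paper's: the paper observes that $\phi$ is faithfully flat, so $Y$ represents the fppf quotient $X/\ker\phi$, and then invokes \cite[Exp.~V, Cor.~10.1.3]{sga3-1} for the universal property---which is precisely the descent argument you carry out by hand (and also the ``cleaner packaging'' you mention at the end). Your explicit identification $X\times_Y X \cong X\times_S \ker\phi$ and the rigidity step are exactly what one unpacks from that citation.
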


\begin{proof}
 Since $\phi$ is surjective it is flat \cite[Lem.~6.12]{GIT}, and so $\ker(\phi)$ is flat over $S$.  
 The proposition then  follows from \cite[Exp.~V, Cor.~10.1.3]{sga3-1}, observing that $Y=X/\ker (\phi)$ and represents the fppf quotient sheaf. 
  \end{proof}

\begin{cor}[{\cite[Prop.~1.2.6]{BL}}]\label{P:BL1.2.6}
    
 Let $f\colon X\to Y$ be an isogeny of abelian schemes over $S$.  If $\ker(f)\subseteq X[N]$ for some positive integer $N$,  then there is an isogeny $g_N\colon Y\to X$, unique up to isomorphisms,  such that 
$g_Nf=[N]_{X}$ and $fg_N=[N]_{Y}$.  The formation of $g_N$ is
compatible with base change.
\end{cor}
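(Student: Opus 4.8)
The plan is to apply Proposition~\ref{T:factor} to factor $[N]_X$ through $f$, and then to deduce the remaining identity, the isogeny property, and base-change compatibility formally. First I would construct $g_N$: since $f$ is an isogeny it is surjective, and by hypothesis $\ker(f)\subseteq X[N]=\ker([N]_X)$, so applying Proposition~\ref{T:factor} with $\phi=f$ and $\phi'=[N]_X\colon X\to X$ (so that $Y'=X$) produces a unique $S$-homomorphism $g_N\colon Y\to X$ with $g_Nf=[N]_X$. This already yields the asserted uniqueness of $g_N$ among maps satisfying $g_Nf=[N]_X$.

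Next I would verify $fg_N=[N]_Y$. As $f$ is a homomorphism it commutes with multiplication by $N$, i.e.\ $f[N]_X=[N]_Yf$, whence $(fg_N)f=f(g_Nf)=f[N]_X=[N]_Yf$. Thus $fg_N$ and $[N]_Y$ are two $S$-homomorphisms $Y\to Y$ that agree after precomposition with the surjection $f$, so by the uniqueness clause of Proposition~\ref{T:factor} (now with $\phi=f$, $\phi'=[N]_Yf$, and $Y'=Y$) they coincide.

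It then remains to see that $g_N$ is an isogeny and that its formation commutes with base change. From $fg_N=[N]_Y$ we get $\ker(g_N)\subseteq Y[N]$, which is finite over $S$, so $g_N$ is quasi-finite; from $g_Nf=[N]_X$ and the surjectivity of $[N]_X$ we get that $g_N$ is surjective, hence faithfully flat \cite[Lem.~6.12]{GIT}. Being a morphism from the proper $S$-scheme $Y$ to the separated $S$-scheme $X$, the map $g_N$ is proper, and a proper quasi-finite morphism is finite; therefore $g_N$ is a finite surjective homomorphism, i.e.\ an isogeny. For base change along any $T\to S$, the defining identities pull back to the corresponding identities for $(g_N)_T$, so by the uniqueness in Proposition~\ref{T:factor} over $T$ the morphism $(g_N)_T$ agrees with the map constructed directly over $T$. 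The only step demanding more than a formal manipulation is confirming that $g_N$ is finite and surjective — that is, genuinely an isogeny rather than merely a homomorphism — but this follows from the standard \emph{proper plus quasi-finite implies finite} principle together with \cite[Lem.~6.12]{GIT}.
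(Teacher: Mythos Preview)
Your proof is correct and follows the same overall strategy as the paper: both apply Proposition~\ref{T:factor} to factor $[N]_X$ through the surjection $f$, invoke uniqueness there, and deduce base-change compatibility from uniqueness. Your derivation of $fg_N=[N]_Y$ is slightly more direct than the paper's (which instead factors $[N]_Y$ through $g_N$ to obtain some $f_N$ and then shows $f_N=f$ using surjectivity of $[N]_X$), and you additionally supply an explicit check that $g_N$ is an isogeny, which the paper leaves implicit.
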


\begin{proof}    For existence, one applies \Cref{T:factor} to  $f$ and $[N]_X$ to see that  $[N]_X$ factors through $f$ to give $g_N$ such that $g_Nf=[N]_X$. 
\Cref{T:factor} also gives the uniqueness statement for $g_N$.
That the formation of $g_N$ is compatible with base change follows from the uniqueness of $g_N$.

To show that $fg_N=[N]_Y$, one argues as follows.  One can check from the construction that $\ker g_N\subseteq Y[N]$, and so by the same argument, $[N]_Y$ factors through $g_N$ to give $f_N$ such that $f_Ng_N=[N]_Y$.  Then we have $f_N[N]_X= f_Ng_Nf= [N]_Yf=f[N]_X$.  Since $[N]_X$ is surjective, we can conclude that $f_N=f$, completing the proof.  
 \end{proof}

\begin{rem}\label{R:gPrim} If the exponent of the isogeny $f\colon X\to Y$ is $e(f)$, then 
the homomorphism $g_{e(f)}$ is primitive. (A morphism $a\colon X \to Y$ is called
primitive if there is no morphism $b\colon X \to Y$ and integer $n\ge 2$
such that $a = nb$.)  Indeed, if there were an integer $n\ge 2$ such that $h=\frac{g_{e(f)}}{n}\in \operatorname{Hom}(Y,X)$, then we would have $hf= [e(f)/n]_X$, and so $\ker f\subseteq \ker [e(f)/n]_X$, contradicting the definition of the exponent of $f$.  Recall that while the exponent $e(f)$ of $f$ is stable under \emph{faithfully flat} base change, the exponent $e(f)$  is not necessarily stable under \emph{arbitrary} base change (\S \ref{S:DgExpTp}).  In particular, if $S'\to S $ is a morphism of schemes and $f_{S'}:X_{S'}\to Y_{S'}$ is the morphism obtained by base change, it is possible that $(g_{e(f)})_{S'}$ may not equal $g_{e(f_{S'})}$.  
 \end{rem}

\begin{rem}
The exponent of $g_e$ must divide the exponent of $f$ by construction.  But considering the case $f=[n]_X\colon X\to X$ for any natural number $n>1$, which obviously has exponent $e=n$ and $g_e=\operatorname{Id}_X$,  one sees that the exponent of $g_e$ (equal to $1$ in this case) may be strictly smaller than the exponent of $f$.
\end{rem}

 Recall that for abelian $S$-schemes $X$ and $Y$, we have that $\hom(X,Y)$ ($:=\operatorname{Hom}_S(X,Y)$)  is a
free $\integ$-module of finite rank; indeed, by rigidity \cite[Ch.~6, \S 1, Prop.~6.1, p.115]{GIT}, we have an injection $\hom(X,Y)\hookrightarrow \hom(X_s,Y_s)$ for any geometric point $s$ in $S$, and then one concludes using the case of abelian varieties 
(e.g., \cite[\S 19, Thm.~3, p.176]{mumfordAV}). 
  We set $\hom_\rat(X,Y) = \hom(X,Y)\otimes_\integ\rat$.

\begin{cor}[{\cite[Cor.~1.2.7]{BL}}]\label{C:BL1.2.7} 
In the notation of  \Cref{P:BL1.2.6}:
\begin{alphabetize}
\item Isogenies define an equivalence relation for 
 abelian $S$-schemes.

\item  \label{C:BL1.2.7b}  An element of $\operatorname{End}(X/S)$ is an isogeny if and only if it is invertible in $\operatorname{End}_{\mathbb Q}(X/S)$. 
\end{alphabetize}
\end{cor}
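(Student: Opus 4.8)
The plan is to derive both parts from \Cref{P:BL1.2.6}, which serves as the main engine.

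For part (a), I would verify the three axioms of an equivalence relation. Reflexivity is immediate, since $\operatorname{Id}_X$ is a finite surjective homomorphism. Transitivity is equally routine: if $f\colon X\to Y$ and $h\colon Y\to Z$ are isogenies, then $h\circ f$ is a homomorphism of abelian $S$-schemes that is finite and surjective, being a composite of finite surjective morphisms. The only substantive point is symmetry. Given an isogeny $f\colon X\to Y$, its kernel is finite flat over $S$ (as the kernel of a surjective morphism of abelian schemes, \cite[Lem.~6.12]{GIT}), so it has a well-defined exponent $N$, meaning $\ker(f)\subseteq X[N]$. Applying \Cref{P:BL1.2.6} to $f$ then produces an isogeny $g_N\colon Y\to X$ with $g_N f=[N]_X$ and $f g_N=[N]_Y$, which witnesses that $Y$ is isogenous to $X$.

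For the forward implication of part (b), suppose $f\in\operatorname{End}(X/S)$ is an isogeny, with exponent $N$. Then \Cref{P:BL1.2.6} furnishes $g_N\in\operatorname{End}(X/S)$ with $g_N f=f g_N=[N]_X$, so $\tfrac1N g_N$ is a two-sided inverse of $f$ in $\operatorname{End}_{\mathbb Q}(X/S)$, and $f$ is invertible. Conversely, suppose $f$ is invertible in $\operatorname{End}_{\mathbb Q}(X/S)$. Writing its inverse as $\tfrac1N g$ with $g\in\operatorname{End}(X/S)$ and $N$ a positive integer, and clearing denominators, we obtain $g f=f g=[N]_X$. From this I would extract the two properties defining an isogeny. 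Surjectivity of $f$ follows because $[N]_X=f\circ g$ is surjective (multiplication by $N$ being faithfully flat), so that $\operatorname{im}(f)=X$. For finiteness, $\ker(f)\subseteq\ker([N]_X)=X[N]$ is finite over $S$; since the geometric fibers of the homomorphism $f$ are translates of $\ker(f)$, they are all finite, so $f$ is quasi-finite, and being a morphism between proper $S$-schemes it is proper, whence quasi-finite plus proper gives finite. Thus $f$ is a finite surjective homomorphism, i.e., an isogeny.

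The step I expect to demand the most care is the finiteness assertion in the converse of (b). Because $S$ is an arbitrary (locally Noetherian) base rather than a field, I cannot simply argue on a single abelian variety; the cleanest route is the quasi-finite-plus-proper-implies-finite criterion, for which I need that $X\to S$ proper forces the $S$-morphism $f$ to be proper, and that the fibers of the group homomorphism $f$ are torsors under the finite group scheme $\ker(f)$ and hence finite. Everything else reduces formally to the existence and compatibility statements already packaged in \Cref{P:BL1.2.6}.
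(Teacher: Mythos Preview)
Your argument is correct and is precisely the unpacking of the paper's one-line proof ``This is clear from what is above,'' which invokes \Cref{P:BL1.2.6} for symmetry in (a) and for both directions of (b). Your extra care with the finiteness step over a general base (quasi-finite plus proper implies finite) is appropriate and is implicit in the paper's claim that the deduction is immediate.
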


\begin{proof}
This is clear from what is above.
\end{proof}

In light of \Cref{C:BL1.2.7}\ref{C:BL1.2.7b}, 
for an isogeny $f\colon X\to Y$ of abelian schemes over $S$ with $\ker (f)\subseteq X[N]$, if $g_N\colon Y\to X$ is the isogeny from \Cref{P:BL1.2.6}, we use the notation
\begin{equation}
f^{-1}:= \frac{g_N}{N}\in \operatorname{Hom}_{\mathbb Q}(Y,X);
\end{equation}
i.e., $f^{-1}$ is the inverse in $\operatorname{Hom}_{\mathbb Q}(Y,X)$ of $f$.   Note that this definition of $f^{-1}$ is independent of the choice of $N$ such that $\ker (f)\subseteq X[N]$.  Indeed, if $\ker (f)\subseteq X[M]$, then $(Mg_N)f =[NM]_X= (Ng_M)f$ and $f(Mg_N)=[NM]_Y=f(Ng_M)$, implying by uniqueness of $g_{NM}$ that  $Mg_N=g_{NM}= Ng_M$, so that dividing by $NM$ we have  $\frac{g_N}{N}=\frac{g_{NM}}{NM}= \frac{g_M}{M}$.  
Since the formation of $g_N$ is stable under base change, so is the formation of $f^{-1}$.

  Every $h\in \operatorname{End}_{\mathbb Q}(X/S)$ can be written as
 $h=rf$ for some $r\in \mathbb Q$ and some $f\in
 \operatorname{End}(X/S)$, and since $\widehat h:=r\widehat f \in \operatorname{End}_{\mathbb Q}(X/S)$ does
 not depend on the choice of $r$ and $f$ such that $h=rf$, the assignment $h\mapsto \widehat h$ defines an involution
 on $\operatorname{End}_{\mathbb Q}(X/S)$.   The involution is stable under base change.  
   
From this one obtains the Rosati involution for $h\in \operatorname{End}_{\mathbb Q}(X/S)$ associated to a polarization $\lambda$:
$$
h^{(\dagger)}:=\lambda^{-1} \widehat h \lambda \in \operatorname{End}_{\mathbb Q}(X/S).
$$
If $\lambda$ is principal, then $\operatorname{End}(X/S)$ is stable
under the Rosati involution; and in all cases, formation of 
the Rosati involution is compatible with base change.

\subsection{Multiples of principal polarizations}\label{S:eXi-X[e]}
One of our main focuses is studying abelian schemes together with polarizations that are multiples of a principle polarization.   We will frequently use the following elementary lemma, whose proof we include for lack of a reference:

\begin{lem}\label{L:eXi-X[e]}
Let $(X,\lambda)$ be a polarized abelian scheme over $S$ and let $e$ be a positive integer.  The following are equivalent:
\begin{alphabetize}
\item \label{L:eXi-X[e]Gl}  $\lambda$ is equal to $e$ times a principal polarization $\xi$ on $X$.
\item \label{L:eXi-X[e]Fib} For every point $s$ of $S$ we have $\lambda_s$ is $e$ times a principal polarization on $X_s$.

\item \label{L:eXi-X[e]GFib} For every  geometric point $\bar s$ of  $S$ we have $\lambda_{\bar s}$ is $e$ times a principal polarization on $X_{\bar s}$.

\item \label{L:eXi-X[e]Ker} $\ker \lambda = X[e]$.
\end{alphabetize}
\end{lem}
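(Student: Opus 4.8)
The plan is to run the cycle $(a)\Rightarrow(d)\Rightarrow(a)$, $(a)\Rightarrow(b)\Rightarrow(c)$, and $(c)\Rightarrow(a)$, so that $(a)\Leftrightarrow(d)$ is the core equivalence and the passage from the fibrewise condition $(c)$ back to $(a)$ is the only delicate point. For $(a)\Rightarrow(d)$: if $\lambda=e\xi$ with $\xi$ a principal polarization, then $\xi$ is an isomorphism and $\lambda=\xi\circ[e]_X$, so $\ker\lambda=\ker(\xi\circ[e]_X)=\ker[e]_X=X[e]$ scheme-theoretically.

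For $(d)\Rightarrow(a)$: assuming $\ker\lambda=X[e]=\ker[e]_X$, apply \Cref{T:factor} to the surjection $[e]_X\colon X\to X$ and the homomorphism $\lambda$; since the kernels coincide there is a unique homomorphism $\xi\colon X\to\widehat X$ with $\lambda=\xi\circ[e]_X=e\xi$. This $\xi$ is surjective (because $\lambda$ is), and a surjective homomorphism between abelian schemes of equal relative dimension has finite flat kernel, so $\xi$ is an isogeny. Comparing degrees in $\lambda=\xi\circ[e]_X$ and using that the order of $\ker\lambda=X[e]$ is $e^{2g}=\deg[e]_X$ forces $\deg\xi=1$, so $\xi$ is an isomorphism. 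It remains to see $\xi$ is a polarization, which by definition (\S\ref{S:Pol}) is checked on geometric fibres: on each $X_{\bar s}$ the identity $e\xi_{\bar s}=\lambda_{\bar s}$ together with torsion-freeness of $\hom(X_{\bar s},\widehat X_{\bar s})$ shows $\xi_{\bar s}$ is symmetric, hence $\xi_{\bar s}=\phi_L$ for a line bundle $L$; and since $\phi_{L^{\otimes e}}=\lambda_{\bar s}$ is a polarization, $L^{\otimes e}$, and therefore $L$, is ample. Thus $\xi$ is a principal polarization.

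The implications $(a)\Rightarrow(b)\Rightarrow(c)$ are immediate: restricting $\lambda=e\xi$ to a fibre preserves ``$e$ times a principal polarization,'' and passing from a point $s$ to a geometric point over it is a field extension, under which an isomorphism stays an isomorphism and ampleness is preserved. For $(c)\Rightarrow(a)$ I would first treat the case where $S$ is integral and normal with generic point $\eta$: homomorphisms of abelian schemes over such a base are determined by (and extend from) the generic geometric fibre, so $\hom(X,\widehat X)\xrightarrow{\sim}\hom(X_{\bar\eta},\widehat X_{\bar\eta})$. Condition $(c)$ at $\bar\eta$ gives $\xi_{\bar\eta}\in\hom(X_{\bar\eta},\widehat X_{\bar\eta})$ with $\lambda_{\bar\eta}=e\xi_{\bar\eta}$; the isomorphism produces a global $\xi$ with $e\xi$ and $\lambda$ agreeing at $\eta$, hence everywhere, and torsion-freeness of $\hom$ together with $(c)$ makes $\xi_{\bar s}$ the principal polarization on every fibre, so $\xi$ is a principal polarization. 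For a general reduced $S$ one runs this on the normalization of each irreducible component and glues using the uniqueness of $\xi=\lambda/e$ furnished by \Cref{T:factor}.

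The main obstacle is the case of a non-reduced base in residue characteristic dividing $e$ --- precisely the regime this paper cares about, $e=p=2$. There, homomorphisms are no longer detected on the generic fibre, and neither ``$\lambda\in e\cdot\hom(X,\widehat X)$'' nor ``$X[e]\subseteq\ker\lambda$'' is a fibrewise condition: a homomorphism out of a finite flat group scheme can vanish on every geometric fibre without vanishing (e.g.\ $\epsilon\cdot\mathrm{id}$ on $\alpha_p$ over $k[\epsilon]$), and the obstruction to dividing a N\'eron--Severi class by $e$ lives in an $H^2(X_{\bar s},\mathcal O)$ that is killed by $p$. I would attack this by descending fppf-locally (legitimate, since $\xi=\lambda/e$ is unique and hence descends) to the case $\lambda=\phi_L$ with $L$ relatively ample, and then trying to show that the class of $L$ is $e$ times a principal class in $\mathrm{NS}_{X/S}$, equivalently that $\ker\phi_L=X[e]$ as finite locally free subgroup schemes of common order $e^{2g}$. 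Making this divisibility genuinely global --- rather than merely fibrewise --- is the step I expect to require by far the most care, and it is where one must use that $\lambda$ comes from a line bundle rather than being an arbitrary symmetric homomorphism lifting $\lambda_{\bar s}$.
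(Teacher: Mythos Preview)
Your treatment of $(a)\Leftrightarrow(d)$ and $(a)\Rightarrow(b)\Rightarrow(c)$ is correct and matches the paper's in all essentials (the paper produces the ample $e$-th root on each geometric fibre via Mumford's theorem \cite[\S 23, Thm.~3]{mumfordAV} rather than by first writing $\xi_{\bar s}=\phi_L$ and then observing $L^{\otimes e}$ ample $\Rightarrow L$ ample, but these are equivalent maneuvers).

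The genuine gap is your handling of $(c)\Rightarrow(a)$. You try to go directly, stratifying by the complexity of $S$ (normal integral, then reduced, then non-reduced), and you correctly diagnose that the non-reduced case in residue characteristic dividing $e$ is the sticking point --- and then do not resolve it. The $\alpha_p$-style counterexample you yourself mention shows that ``vanishes on every geometric fibre'' does not imply ``vanishes'' for homomorphisms out of a finite flat group scheme, so there really is something to do; but the programme you sketch (fppf-local reduction to $\lambda=\phi_L$ and showing the NS-class is divisible by $e$) is circular, since divisibility of the NS-class is exactly the condition $(a)$ you are trying to establish.

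The paper avoids all of this by proving $(c)\Rightarrow(d)$ rather than $(c)\Rightarrow(a)$, with a short argument that works uniformly over any base. The point is that to get $\ker\lambda\subseteq X[e]$ it suffices to show $[e]\ker\lambda=0$, i.e., that the structure map $[e]\ker\lambda\to S$ is a closed immersion. Now $[e]\ker\lambda\to S$ is proper and finitely presented (being a closed subscheme of the finite flat $\ker\lambda$), and for such a morphism ``closed immersion'' is equivalent to ``monomorphism'', which in turn can be checked on geometric fibres (a monomorphism is a map whose fibres are either empty or isomorphisms; and closed immersions satisfy fppf descent). On each geometric fibre, $(c)$ gives $\ker\lambda_{\bar s}=X_{\bar s}[e]$, so $[e]\ker\lambda_{\bar s}=0$. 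This yields $\ker\lambda\subseteq X[e]$; since both are finite flat over $S$ of the same fibrewise order $e^{2g}$, the inclusion is an equality. Then $(d)\Rightarrow(a)$, which you already have, closes the loop. The moral is that while a \emph{homomorphism} out of a finite flat group scheme is not determined by its geometric fibres, the condition that a \emph{proper finitely presented} map be a closed immersion is --- and that is the handle the paper grabs.
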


\begin{proof}
The implications \ref{L:eXi-X[e]Gl} $\implies$ \ref{L:eXi-X[e]Fib} $\implies$ \ref{L:eXi-X[e]GFib} are immediate (as is \ref{L:eXi-X[e]Gl} $\implies$ \ref{L:eXi-X[e]Ker}).   

Next let us show \ref{L:eXi-X[e]GFib} $\implies$ \ref{L:eXi-X[e]Ker}.
First, we claim that $e\ker \lambda =0$.  We just need to check that
the zero section $\epsilon\colon S\to e\ker \lambda$ and the projection
$\pi|_{e\ker \lambda}\colon  e\ker \lambda\to S$ are inverses to one
another.  In fact, it suffices to show that $\pi|_{e\ker \lambda}$ is
a closed immersion.  This can be checked on every fiber
(\cite[17.2.6]{EGAIV4} gives the equivalence of monomorphisms with
maps with fibers that are empty or isomorphisms, and
\cite[18.12.6]{EGAIV4} gives that proper monomorphisms are closed
immersions).   Since being a closed immersion satisfies faithfully flat descent (e.g., \cite[Prop.~14.53]{GW}), it in fact suffices to check after base change to the geometric fiber.
   Therefore we have that $e\ker \phi =0$, and we can conclude that  $\ker \lambda \subseteq X[e]$.   The degree of $\ker \lambda$ over $S$ can be checked on fibers, and therefore, since both $\ker \lambda$ and $X[e]$ are finite flat groups schemes of the same degree, the inclusion is an isomorphism.

We now show \ref{L:eXi-X[e]Ker} $\implies$ \ref{L:eXi-X[e]Gl}.  Assuming \ref{L:eXi-X[e]Ker}, then \Cref{T:factor} implies that $\lambda$ factors as $\lambda = \psi\circ  [e]$ for a unique  morphism  of abelian $S$-schemes $\psi\colon X\to \widehat X$.  We just need to show that $\psi$ is a principal polarization, and  for degree reasons, it suffices to show that $\psi$ is a polarization. From the definition, we must show that for each $s$ in $S$ there is an ample line bundle $M_{\bar s}$ on the geometric fiber $X_{\bar s}$ such that $\psi_{\bar s}=\phi_{M_{\bar s}}$.  
 By definition, there exists an ample line bundle $L_{\bar s}$ on $X_{\bar s}$ such that $\lambda_{\bar s}= \phi_{L_{\bar s}}$.  From \cite[\S 23, Thm.~3, p.231]{mumfordAV},  the containment  $\K(L_{\bar s})\supseteq X_{\bar s}[e]$ implies that there exists a line bundle $M_{\bar s}$ such that $M_{\bar s}^{\otimes e}\cong L_{\bar s}$.  We then have that $\lambda_{\bar s} = \phi_{L_{\bar s}}= \phi_{M^{\otimes e}_{\bar s}}=  \phi_{M_{\bar s}}\circ [e]$.
 Since $\psi$ is unique, and stable under base change, we have that $\psi_{\bar s}=\phi_{M_{\bar s}}$, and we are done. 
\end{proof}

\begin{rem}\label{R:CEx-1FbSuf} 
Given a polarized abelian scheme $(X,\lambda)$, for degree reasons one may conclude that if for one geometric point $\bar s$ of $S$ the fiber $\lambda_{\bar s}$ is a principal polarization, then $\lambda$ is a principal polarization. 
In characteristic $0$,  if for one geometric fiber $\lambda_{\bar s}$ is $e$ times a  principal polarization, then $\lambda$ is $e$ times a principal polarization; this follows, for instance,  from the consideration of the type of the polarization (see \S \ref{S:DgExpTp}).
 However, if we do not restrict to characteristic $0$, Norman's example (see
\Cref{R:normanEx}) shows that it is possible  for one fiber
$\lambda_s$ to be  $e$ times a  principal polarization ($e \ge 2$),
while  $\lambda$ is \emph{not} $e$ times a principal
polarization.
\end{rem}

\subsection{Kernels, abelian schemes, and the geometric Nori fundamental group}\label{SubS:KerAbSch}

In order to apply \Cref{C:hatf}\ref{I:C:hatf-b}, it will be convenient  to have a criterion for when the kernel of a surjective morphism
of abelian schemes $f:X\to Y$ is actually an abelian scheme.  
As motivation, consider the case of complex abelian varieties.  In this case having the kernel be an abelian scheme is equivalent to the kernel being  connected; said in another way, the question is whether $f$ factors through any isogeny over $Y$.  
For complex abelian varieties this question can 
can be understood purely
in terms of its effect on the period lattices of the abelian
varieties, or, equivalently, on their fundamental groups via the
theory of covering spaces; i.e., the kernel of $f$ is an abelian
scheme if and only if  $f_*(\pi_1(X,0_X))=\pi_1(Y,0_Y)$.
 
  In positive
characteristic, one can study the effect of a morphism of abelian
varieties on their \'etale fundamental groups, but this misses
phenomena related to inseparability; e.g., over an algebraically closed field of characteristic $p>0$, one may have $\ker f$ being connected but non-reduced, so that the kernel would fail to be an abelian scheme, but the fundamental group would not detect this.  Working instead with Nori's
fundamental group scheme allows one to recover the connection between fundamental groups and kernels.
Recall that for an abelian variety $X/K$ over a field $K$, its geometric \'etale fundamental group may be be
computed as $\pi_1^\et(X_{\bar K},0) = \invlim N X[N](\bar K)$, while its geometric Nori fundamental group \emph{scheme} is
$\pi_1^\nori(X_{\bar K}) = \invlim N X[N]$ \cite{nori83}.  In characteristic zero,
these notions coincide; in positive characteristic, the geometric Nori fundamental
group scheme is a strictly richer object.  Note that if $f\colon Y \to
X$ is an isogeny of abelian varieties, then $Y$ is a torsor over $X$
under the finite group scheme $\ker(f)$; and that $f$ is separable if
and only if $\ker(f)$ is \'etale.

  More generally,  recall that if $(X,P)/K$ is a proper connected reduced pointed scheme, then
the Nori fundamental group scheme $\pi_1^\nori(X,P)$ is a profinite
group scheme over $K$ which classifies torsors over $X$ (pointed over
$P$) under finite group schemes; such a torsor $(\widetilde
X,\widetilde P)$ under a group scheme $G$ corresponds to 
homomorphisms $\pi_1^\nori(\widetilde X, \widetilde P) \hookrightarrow
    \pi_1^\nori(X,P) \twoheadrightarrow G$ \cite[Thm.~4]{garuti09} \cite{nori83}.
   Similarly, there is a profinite group
scheme $\pi_1^\nori(X,P)^\abelian$ which classifies torsors under
finite \emph{commutative} group schemes \cite[\S
3]{antei11}.
 There is a surjection
$\pi_1^\nori(X,P) \twoheadrightarrow \pi_1^\nori(X,P)^\abelian$, and a cover $(\widetilde X,\widetilde P)\to (X,P)$
is abelian, i.e., is a torsor under a finite commutative group
scheme $G$,  if and only if the corresponding classifying map $\pi_1^\nori(X,P) \twoheadrightarrow G$
factors through $\pi_1^\nori(X,P)^\abelian$.  In a given setting (such
as for abelian varieties, or pointed curves), if the choice of
basepoint is clear from context, we will often write $\pi_1^\nori(X)$
for $\pi_1^\nori(X,P)$.

Recall
 that if $f\colon X \to Y$ is a morphism of abelian varieties over a
field, then $\ker(f)$ admits a maximal subabelian variety
$\ker(f)^\ab$ (\Cref{L:maxab}).

\begin{lem}
\label{L:nori}
Let $f:X \to Y$ be a surjective morphism of abelian schemes over
$S$.  Then $\ker(f)$ is an abelian scheme over $S$ if and only if for each geometric point $s$ of $S$, the induced
map $$\pi_1(f_s)\colon \pi_1^\nori(X_s) \to \pi_1^\nori(Y_s)$$ is
surjective. 
\end{lem}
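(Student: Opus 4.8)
The plan is to reduce the statement to geometric fibers, and then, over an algebraically closed field, to translate the condition that $\ker(f_s)$ be an abelian variety into a divisibility condition on $N$-torsion, exploiting the description $\pi_1^\nori(X_s)=\invlim N X_s[N]$ recalled above. The conceptual content of the fiberwise statement is that $\ker(f_s)$ is an abelian variety if and only if $f_s$ does not factor through any nontrivial isogeny over $Y_s$, which is the scheme-theoretic avatar of the covering-space picture sketched in the motivation; the Nori fundamental group scheme is exactly what detects this.

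First I would record that $\ker(f)$ is flat over $S$. Since $f$ is a surjective morphism of abelian schemes it is faithfully flat by \cite[Lem.~6.12]{GIT}, and by construction $\ker(f)$ is the base change of $f$ along the zero section $\epsilon_Y\colon S\to Y$; as flatness is preserved by base change, $\ker(f)\to S$ is flat. It is also proper, being a closed subscheme of the proper $S$-scheme $X$, and locally of finite presentation, as $S$ is locally Noetherian. For a flat, finitely presented morphism, smoothness may be checked on geometric fibers, and the formation of $\ker(f)$ commutes with base change, so that $\ker(f)_s=\ker(f_s)$. Hence $\ker(f)$ is a smooth proper $S$-group scheme with geometrically connected fibers, i.e.\ an abelian scheme, if and only if each $\ker(f_s)$ is a smooth connected proper group scheme over $\overline{k(s)}$, that is, an abelian variety. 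This reduces the lemma to the following assertion over an algebraically closed field $k$: for a surjective homomorphism $f\colon X\to Y$ of abelian varieties with $G:=\ker(f)$, the group scheme $G$ is an abelian variety if and only if $\pi_1^\nori(f)$ is surjective.

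Next I would pass to finite level. A homomorphism of affine group schemes over $k$ is surjective (faithfully flat) precisely when the induced map of coordinate rings is injective, and $\mathcal O(\pi_1^\nori(X))=\dirlim N\mathcal O(X[N])$, with injective transition maps since the transition morphisms $X[N]\to X[M]$ (multiplication by $N/M$) are faithfully flat. Because the transition maps in both systems are injective, the colimit map $\mathcal O(\pi_1^\nori(Y))\to\mathcal O(\pi_1^\nori(X))$ is injective if and only if each $f[N]^\ast$ is; thus $\pi_1^\nori(f)$ is surjective if and only if $f[N]\colon X[N]\to Y[N]$ is faithfully flat for every $N$. To compute the latter, apply the snake lemma to multiplication by $N$ on the short exact sequence $0\to G\to X\xrightarrow{f}Y\to 0$ of fppf abelian sheaves. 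Since $[N]$ is surjective on $X$ and on $Y$, the six-term sequence collapses to the exact sequence $0\to G[N]\to X[N]\to Y[N]\to G/NG\to 0$, so $f[N]$ is faithfully flat if and only if $[N]\colon G\to G$ is surjective. Therefore $\pi_1^\nori(f)$ is surjective if and only if $[N]_G$ is surjective for all $N$.

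Finally I would analyze when $[N]_G$ is surjective for all $N$, using that $G$ is a \emph{proper} commutative group scheme. If $\pi_0(G)$ denotes the finite component group, then any prime $\ell$ dividing $|\pi_0(G)|$ makes $[\ell]$ nonsurjective on $\pi_0(G)$, hence on $G$; so surjectivity for all $N$ forces $G$ connected. Assuming $G$ connected, over the perfect field $k$ the reduced subscheme $A:=G_{\mathrm{red}}$ is a smooth connected proper subgroup scheme, hence an abelian variety, and $I:=G/A$ is infinitesimal. From $0\to A\to G\to I\to 0$ together with the surjectivity of $[N]_A$ one obtains $G/NG\cong I/NI$; for $\ell\ne p$ this vanishes, while $I/pI=0$ forces $I=0$, since a nontrivial infinitesimal group scheme is killed by a power of $p$ and so has nontrivial $p$-torsion, whence $[p]_I$ is not surjective. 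Thus $[N]_G$ is surjective for all $N$ exactly when $G$ is connected and reduced, i.e.\ an abelian variety, which completes the equivalence. I expect this last step to be the main obstacle: it is precisely the task of controlling the component-group and non-reduced (infinitesimal) contributions to $\ker(f_s)$ through the behavior of the separable multiplications and of $[p]$, which is the phenomenon that distinguishes the Nori fundamental group scheme from its étale counterpart.
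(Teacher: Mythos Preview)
Your proof is correct and shares the same skeleton as the paper's (reduction to geometric fibers, then the snake-lemma computation of $\coker(f[N])\cong G/NG$), but you organize the fiberwise argument differently. The paper treats the two implications asymmetrically: for the converse it factors $f$ through $X/\ker(f)^{\abvar}$ to reduce to the isogeny case, and then exhibits a specific $N$ (the exponent of the finite kernel) for which $f[N]$ fails to be surjective. You instead prove the single chain of equivalences
\[
\pi_1^\nori(f)\text{ surjective}\iff f[N]\text{ surjective for all }N\iff [N]_G\text{ surjective for all }N\iff G\text{ is an abelian variety},
\]
handling the last step by a direct structural analysis of the proper commutative group scheme $G$ via its component group and the infinitesimal quotient $G/G_{\mathrm{red}}$. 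Your route is tidier as a self-contained proof and makes the role of the Nori (as opposed to \'etale) fundamental group transparent through the treatment of the infinitesimal part; the paper's route, by explicitly identifying $\coker(f[N])$ with $\ker(f)/\ker(f)^{\abvar}$ for $N$ divisible by the exponent, feeds directly into the sharper statement of \Cref{L:kerhatf}. Your reduction to fibers is also more carefully justified (flatness of $\ker(f)$ and the fiberwise criterion for smoothness) than the paper's one-line reduction.
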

 
\begin{proof}
It suffices to prove this for the geometric fibers, so we may and do assume that $S=\operatorname{Spec} k$ for an algebraically closed field $k$.  

If $\ker(f)$ is an abelian variety, then we have a short exact
sequence  of abelian varieties $$\xymatrix{0\ar[r] &\ker (f)\ar[r]&
  X\ar[r]^f & Y\ar[r]& 0,}$$ and in the proof of Lemma \ref{L:dualexact}, we saw that this induces a short exact sequence of $k$-group schemes  
$$\xymatrix{0\ar[r]& \ker (f)[N]\ar[r]& X[N]\ar[r]^{f[N]}& Y[N]\ar[r]& 0}$$ for each $N\ge 2$.  Since $N$-torsion in an abelian variety forms a surjective system, we see that in the limit, the exact sequence stays exact, and we obtain a surjection $\pi_1(f)\colon \pi_1^\nori(X) \to \pi_1^\nori(Y)$.

Conversely, suppose that $\ker(f)$ is not an abelian variety.   We want to show that  $\pi_1(f)$ is not surjective.  
Initially, suppose $f$ is an isogeny.    Then we may assume that $\ker(f)$ is nontrivial, and suppose it has exponent $N>1$.  From the exact
sequence of group schemes 
\[\xymatrix{
0 \ar[r]&\ker (f)  \ar[r] & X \ar[r] & Y\ar[r] & 0
}
\]
and the snake lemma applied to the multiplication-by-$N$ maps, we
deduce that there is an exact sequence of group schemes 
\begin{equation}
  \label{E:gpschemeN}
\xymatrix{
0 \ar[r] &   \ker (f) \ar[r] &X[N]  \ar[r]& Y[N] \ar[r] & \ker(f)/[N]\ker(f) \iso \ker(f) \ar[r]
&  0,
}
\end{equation}
where we are using that the multiplication by $N$ maps are surjective for abelian varieties, and that we have chosen $N$ so that $\ker(f)[N] = \ker (f)$.  (It is important here that we are using the group schemes, and not just the $k$-points, since otherwise the argument would fail if $f$ were a purely inseparable isogeny.)  
Therefore, $\pi_1(f)\colon\pi_1^\nori(X) \to \pi_1^\nori(Y)$ is not
surjective.

For the general case, let $Z = \ker(f)^\abvar \subsetneq \ker(f)$ be the maximal abelian
subvariety (\cref{L:maxab}). By factoring $f$ as 
$X \to X/Z \to Y$, we find that $X/Z\to Y$ is a necessarily non-trivial isogeny, and therefore, from the previous paragraph, we have that 
$\pi_1^\nori(X/Z) \to \pi_1^\nori(Y)$ is not surjective, so that 
$f_*\colon\pi_1^\nori(X) \to \pi_1^\nori(X/Z) \to \pi_1^\nori(Y)$ fails to be 
surjective, as well.
\end{proof}

More generally we have the following result, which is well-documented
in the case where $f$ is an isogeny.

\begin{lem}
   \label{L:kerhatf}
  Let $f\colon X \to Y$ be a surjective morphism of abelian schemes over
  $S$.  Then for each geometric point  $s$ of $S$, we have  canonical isomorphisms
     $$\coker(\pi_1^{\operatorname{Nori}}(f_s)) \iso \ker(f_s)/\ker(f_s)^\abvar \ \ \text{ and } \ \ \ker(\widehat
  f_s) \iso (\ker(f_s)/\ker(f_s)^\abvar)^\vee \iso \coker(\pi_1^{\operatorname{Nori}}(f_s))^\vee.$$
\end{lem}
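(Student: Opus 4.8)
The plan is to reduce everything to the case where $f$ is an isogeny, which the preamble to the lemma notes is already well understood, by factoring $f$ through the quotient by the maximal abelian subvariety of its kernel. Since both assertions concern geometric fibers, I first reduce to $S=\spec k$ with $k$ algebraically closed, so that $f\colon X\to Y$ is a surjection of abelian varieties. I then set $Z:=\ker(f)^\abvar$ to be the maximal abelian subvariety of $\ker(f)$ (\Cref{L:maxab}) and factor $f$ as $X\xrightarrow{q}X/Z\xrightarrow{g}Y$, where $q$ is the quotient by the abelian variety $Z$ and $g$ is an isogeny whose kernel is the finite group scheme $G:=\ker(f)/\ker(f)^\abvar$.

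For the first isomorphism I would use that $\ker(q)=Z$ is an abelian variety, so that \Cref{L:nori} gives the surjectivity of $\pi_1^\nori(q)\colon\pi_1^\nori(X)\to\pi_1^\nori(X/Z)$; hence $\im\pi_1^\nori(f)=\im\pi_1^\nori(g)$ and therefore $\coker\pi_1^\nori(f)\iso\coker\pi_1^\nori(g)$. The isogeny $g$ presents $X/Z$ as a pointed, connected $G$-torsor over $Y$, so the torsor description of the Nori fundamental group scheme recalled above (\cite[Thm.~4]{garuti09}, \cite{nori83}) supplies the exact sequence $\pi_1^\nori(X/Z)\hookrightarrow\pi_1^\nori(Y)\twoheadrightarrow G$, whence $\coker\pi_1^\nori(g)\iso G=\ker(f)/\ker(f)^\abvar$.

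For the second isomorphism I would dualize the factorization to obtain $\widehat f=\widehat q\circ\widehat g$. Dualizing the short exact sequence $0\to Z\to X\to X/Z\to 0$ and invoking \Cref{L:dualexact} shows that $\widehat q\colon\widehat{X/Z}\to\widehat X$ is a closed embedding, in particular injective, so that $\ker\widehat f\iso\ker\widehat g$. As $g$ is an isogeny, the classical duality for isogenies identifies $\ker\widehat g$ with the Cartier dual $\ker(g)^\vee=G^\vee$, yielding $\ker\widehat f\iso(\ker(f)/\ker(f)^\abvar)^\vee$. The remaining isomorphism $\ker\widehat f\iso\coker\pi_1^\nori(f)^\vee$ then drops out by combining the two identifications, $G$ and $G^\vee$ being Cartier dual.

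I expect the main subtlety to lie in handling the cokernel in the category of (pro-)finite group schemes rather than merely on $k$-points: precisely because Nori's fundamental group scheme, unlike the \'etale one, detects inseparable phenomena, one must ensure that the identification $\coker\pi_1^\nori(g)\iso G$ genuinely comes from the representable torsor quotient, and that the duality $\ker\widehat g\iso\ker(g)^\vee$ is applied to the scheme-theoretic (Cartier) kernel, so that the argument remains valid when $g$ is inseparable.
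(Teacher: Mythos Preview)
Your proof is correct and follows essentially the same strategy as the paper's: both reduce to an algebraically closed field, factor $f$ through $X/\ker(f)^\abvar$, invoke \Cref{L:nori} and \Cref{L:dualexact} to reduce both claims to the isogeny $g$ (the paper's $\bar f$), and then identify $\coker\pi_1^\nori(g)\iso G$ and $\ker\widehat g\iso G^\vee$. The only cosmetic difference is that where you appeal directly to the torsor-classifying property of $\pi_1^\nori$ and to the classical isogeny duality $\ker\widehat g\iso\ker(g)^\vee$, the paper re-derives both facts from the snake-lemma exact sequence on $N$-torsion \eqref{E:barf} and its Cartier dual.
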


\begin{proof}
  It clearly suffices to prove this when $S = \spec(k)$ is the
  spectrum of an algebraically closed field.
     Let $\bar X =
X/\ker(f)^\abvar$, and factor $f$ as a composition of surjections $f:X \stackrel{\phi}{\to}
\bar X \stackrel{\bar f}{\to} Y$. 

First observe that from \Cref{L:nori}, since by construction the kernel of $\phi$ is an abelian scheme,  $\pi_1^\nori(\phi)$ is surjective, so that  $\coker(\pi_1^{\operatorname{Nori}}(f))= \coker(\pi_1^{\operatorname{Nori}}(\bar f))$. 
Let $N$ be any integer which annihilates the finite flat
group scheme $\ker(\bar f) = \ker(f)^{\finflat}$; i.e., $\ker(\bar f) \subseteq \bar X[N]$.  As in \eqref{E:gpschemeN}, we obtain an
exact sequence of finite flat group schemes
\begin{equation}
  \label{E:barf}
  \xymatrix{0 \ar[r] & \ker(\bar f) \ar[r] & \bar X[N] \ar[r]^{\bar f[N]} & Y[N]
    \ar[r] & \ker(\bar f) \ar[r] & 0.
  }
\end{equation}
 so that for sufficiently
divisible $N$, we have that $\coker(\pi_1^\nori (\bar f)) = \coker(\bar X[N] \to Y[N]) \iso
\ker(\bar f)$.  To prove the first claim of the lemma, it now suffices to recall that $\ker(\bar f) = \ker(f)
/ \ker(f)^\abvar$.

Now, to prove the second claim of the lemma,  take Cartier duals in \eqref{E:barf} and recall
\cite[Cor.~1.3(ii)]{odaderham} that this yields an exact sequence
\[
  \xymatrix{0 \ar[r] & \ker(\bar f)^\vee \ar[r] & \widehat Y[N]
    \ar[r]^{\widehat{\bar f}[N]} & \widehat{\bar X}[N]
    \ar[r] & \ker(\bar f)^\vee \ar[r] & 0.
  }
\]
Since this is true for all $N$ divisible by the exponent of $\ker(\bar
f)$, we find that $\ker(\widehat{\bar f})$ is canonically isomorphic
to $\ker(\bar f)^\vee$.  Because $\widehat f$ factors as $\widehat f: \widehat Y
\stackrel{\widehat{\bar f}}{\to} \widehat{\bar X} \stackrel{\widehat \phi}{\hookrightarrow} \widehat
X$ (with the inclusion coming by virtue of \Cref{L:dualexact}), we conclude that $\ker(\widehat f) = \ker (\widehat{\bar f}) \iso \ker(\bar f)^\vee$.
\end{proof}

\subsection{Images and preimages of abelian schemes}
\label{S:imab}

Consider a morphism $f:X \to Y$ of abelian schemes over $S$.  We will
tentatively say that $f$ is \emph{epi-abelian} if the schematic image
of $f$ is again an abelian scheme.  Certain arguments we develop here
are more transparent under the following hypothesis on a scheme $S$.
\begin{condition}
  \label{alwaysepi}
  Any morphism of $S$-abelian schemes is epi-abelian.
\end{condition}

On one hand, we have:
\begin{lem}[{\cite[Thm.~A]{ACMimab}}]\label{L:ACMimab}
  Suppose that $S$ is either:
  \begin{alphabetize}
  \item the spectrum of a field, or
    \item the spectrum of an unramified DVR of mixed characteristic,
      or
    \item of characteristic zero.
    \end{alphabetize}
Then $S$ satisfies \Cref{alwaysepi}.    
\end{lem}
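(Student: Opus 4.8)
The plan is to study the schematic image $Z := \overline{f(X)} \subseteq Y$ of a morphism $f\colon X \to Y$ of abelian $S$-schemes and to prove directly that $Z$ is an abelian scheme. First I would record the structure that is present over an arbitrary base: $Z$ is a closed subscheme of the proper $S$-scheme $Y$, hence proper over $S$; and because $f$ is a homomorphism and $X$ is $S$-flat, so that $X \times_S X \to Z \times_S Z$ remains schematically dominant, one checks that $Z$ is a closed $S$-subgroup scheme of $Y$. Moreover $Z$ is reduced whenever $X$ is, since the schematic image is cut out by the kernel of $\mathcal{O}_Y \to f_*\mathcal{O}_X$, so that $\mathcal{O}_Z \hookrightarrow f_*\mathcal{O}_X$ and the latter has no nilpotents. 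Thus the entire problem reduces to showing that $Z \to S$ is smooth with geometrically connected fibers: granting this, a proper smooth $S$-group scheme with geometrically connected fibers is an abelian scheme, and it agrees fiberwise with $\operatorname{im}(f_s)$.

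I would then reduce to the case of a trait. Case (a) is the classical statement that the image of a homomorphism of abelian varieties is an abelian subvariety, and in the characteristic-zero case (c) one may, after standard localization arguments, likewise assume $S = \spec R$ with $R$ a discrete valuation ring (now with residue characteristic $0$). Over such a trait, flatness of $Z$ is automatic: $\mathcal{O}_Z$ embeds into the $R$-torsion-free module $f_*\mathcal{O}_X$ (torsion-free because $X$ is $R$-flat), and a torsion-free module over a discrete valuation ring is flat. Since smoothness of a finitely presented morphism is flatness together with smoothness of the geometric fibers, the decisive remaining point is that the closed geometric fiber $Z_{\bar s}$ is smooth, equivalently reduced.

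To control the fiber dimensions I would pass to $\ell$-adic Tate modules, fixing $\ell$ invertible on $S$ (any $\ell$ different from the residue characteristic $p$ in case (b); any $\ell$ in case (c)). Then $T_\ell(X/S)$ and $T_\ell(Y/S)$ are lisse $\mathbb{Z}_\ell$-sheaves and $f$ induces a morphism between them; on a connected base such a morphism is a single $\pi_1$-equivariant map, so the rank of its image is constant on stalks. As $\ell \neq p$, for each geometric point $s$ this rank equals $2\dim \operatorname{im}(f_s)$ irrespective of any inseparability of $X_s \to \operatorname{im}(f_s)$, whence $\dim \operatorname{im}(f_{\bar\eta}) = \dim \operatorname{im}(f_{\bar s})$.

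The main obstacle — and the sole place where the three hypotheses are genuinely needed — is the reducedness of the closed fiber $Z_{\bar s}$. In case (c) this is free, as $Z_{\bar s}$ is a group scheme over a field of characteristic zero and hence smooth by Cartier's theorem; in case (a) there is only one fiber. In the mixed-characteristic case (b), however, $Z_{\bar s}$ is a commutative group scheme over a field of characteristic $p$ and can a priori carry an infinitesimal (for instance $\mu_p$- or $\alpha_p$-type) thickening along the reduced abelian subvariety $\operatorname{im}(f_{\bar s})$; ruling this out is where unramifiedness is indispensable, and indeed the statement fails over ramified discrete valuation rings, where the schematic image may acquire a non-reduced special fiber. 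I expect the real work to lie here, invoking the rigidity of flat prolongations of finite flat group schemes over an unramified base (in the spirit of Raynaud) to force $Z_{\bar s}$ to be reduced. Once this is established, $Z_{\bar s}$ is a reduced, connected, proper group scheme over an algebraically closed field, hence an abelian variety equal to $\operatorname{im}(f_{\bar s})$, and together with flatness this exhibits $Z \to S$ as smooth and proper with geometrically connected fibers, i.e.\ as an abelian scheme.
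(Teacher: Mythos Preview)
The paper does not supply its own proof of this lemma; it is simply quoted as \cite[Thm.~A]{ACMimab}, so there is nothing in the present paper to compare your argument against.

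Your outline is a reasonable strategy, and the reductions you sketch --- that the schematic image $Z$ is a proper closed $S$-subgroup scheme of $Y$, that fiber dimensions are constant via the $\ell$-adic Tate module, and that Cartier's theorem handles smoothness of fibers in characteristic zero --- are on the right track. But you explicitly stop short of the decisive step: in case~(b) you write only that you ``expect the real work to lie'' in showing the special fiber $Z_{\bar s}$ is reduced, and gesture at Raynaud-type rigidity without carrying out any argument. This is exactly the heart of the result. As you yourself observe, the statement fails over ramified traits such as $\spec\integ_{(p)}[\zeta_p]$, so genuine arithmetic input from unramifiedness is required, and your proposal does not supply it. What you have written is therefore a reduction to the hard case rather than a proof. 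A smaller point: your reduction of case~(c) to a discrete valuation ring also deserves justification, since formation of the schematic image does not in general commute with base change, and flatness of $Z$ over an arbitrary characteristic-zero base is not obviously a question that localizes to traits.
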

On the other hand we note that, in contrast, both  $\spec \ff_p
\powser T$ and $\spec \integ_{(p)}[\zeta_p]$ \emph{fail} to satisfy
\Cref{alwaysepi} \cite[\S 4]{ACMimab}.

Fortunately, it turns out that for the construction of Prym schemes,
we can often circumvent such charming pathologies.
   
Let $X/S$ be a group scheme over $S$.  A maximal abelian subscheme
of $X$ is an abelian subscheme $X^\ab$, maximal among all abelian
subschemes of $X$, whose construction is compatible with base change $T
\to S$.

We will use the following criterion to assert the existence of a
maximal abelian subscheme in the context of constructing complementary
abelian subschemes of a separably polarized abelian scheme.

\begin{lem}
  \label{L:maxab}
  Let $\lambda:X \to Y$ be a surjection of abelian schemes over
  $S$, and let $Z \hookrightarrow Y$ be an abelian subscheme.  Let
  $\tilde Z = \lambda\inv(Z) = Z \times_Y X$.  If $\lambda$ is smooth
  or if $S$ is the spectrum of a field, then $\tilde{Z}$ admits
  a maximal abelian subscheme, $\tilde Z ^\ab$.
\end{lem}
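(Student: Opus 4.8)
The plan is to pin down what $\tilde Z^\ab$ must be fiberwise, and then show that the fiberwise recipe globalizes. First I would record the structure of $\tilde Z$: since $\tilde Z = Z\times_Y X$ is the pullback of the closed immersion $Z\hookrightarrow Y$ along $\lambda$, the map $\tilde Z\hookrightarrow X$ is a closed immersion, so $\tilde Z$ is a closed subgroup scheme of $X$ and in particular $\tilde Z\to S$ is proper. When $\lambda$ is smooth, $\tilde Z\to Z$ is a base change of $\lambda$, hence smooth, so $\tilde Z\to S$ is a smooth proper group scheme. Over any geometric point $s$ the fiber $\tilde Z_s$ is a closed subgroup of the abelian variety $X_s$; its maximal abelian subvariety is the identity component $(\tilde Z_s)^0$ in the smooth case (a smooth proper connected group scheme is an abelian variety), since any abelian subvariety is smooth, connected and contains the identity, hence lies in $(\tilde Z_s)^0$. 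Thus the only candidate for $\tilde Z^\ab$ is the relative identity component, and the whole content is to show this is an abelian subscheme whose formation commutes with base change.

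In the smooth case I would invoke \cite[Exp.~VI$_{\mathrm B}$, Thm.~3.10]{sga3-1}: the relative identity component $\tilde Z^0\subseteq \tilde Z$ is an open subgroup scheme, smooth over $S$ with geometrically connected fibers, and its formation commutes with base change. The one nontrivial point is that $\tilde Z^0\to S$ is \emph{proper}. For this I would pass to the Stein factorization $\tilde Z\to S''\to S$ of the smooth proper morphism $\tilde Z\to S$: because this morphism is proper, flat, and has geometrically reduced fibers, $f_*\mathcal O_{\tilde Z}$ is a finite locally free $\mathcal O_S$-algebra whose formation commutes with base change, $S''\to S$ is finite \'etale (each fiber $H^0(\tilde Z_s,\mathcal O)$ is an \'etale $k(s)$-algebra, since $\tilde Z_s$ is smooth and proper), and $\tilde Z\to S''$ has geometrically connected fibers. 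The identity section of $\tilde Z$ induces a section $S\to S''$ of a finite \'etale map, hence a clopen immersion onto a component $S''_0\iso S$, and $\tilde Z^0$ is precisely the preimage of $S''_0$. Therefore $\tilde Z^0$ is clopen in $\tilde Z$ and maps properly to $S''_0\iso S$, so it is a smooth proper group scheme with geometrically connected fibers, i.e.\ an abelian scheme. Maximality follows because any abelian subscheme $A\hookrightarrow \tilde Z$ lands fiberwise in $\tilde Z^0$ and hence factors through the open subscheme $\tilde Z^0$; base change compatibility follows from that of the Stein factorization.

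For $S=\spec k$, where $\lambda$ need not be smooth, I would argue by descent from $\bar k$. Over the perfect field $\bar k$ the subgroup scheme $(\tilde Z_{\bar k})_{\mathrm{red}}$ is smooth, its identity component $A:=((\tilde Z_{\bar k})_{\mathrm{red}})^0$ is a smooth proper connected closed subgroup of the abelian variety $X_{\bar k}$, hence an abelian variety, and it is the unique maximal abelian subvariety of $\tilde Z_{\bar k}$. Uniqueness makes $A$ canonical, hence stable under the descent datum for $\bar k/k$; Galois descent produces a model over $k^{\mathrm{sep}}$, and the remaining purely inseparable descent to $k$ is again forced by canonicity of the maximal abelian subvariety. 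The resulting closed subscheme $\tilde Z^\ab\subseteq \tilde Z$ is a $k$-form of an abelian variety, hence an abelian variety over $k$, and it is maximal with formation commuting with field extension by construction.

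The main obstacle is the properness of the relative identity component in the smooth case: fiberwise the maximal abelian subvariety is classical, and openness and the subgroup-scheme structure of $\tilde Z^0$ are off-the-shelf, but to obtain an honest abelian \emph{scheme} one must rule out the component group jumping, i.e.\ show $\tilde Z^0$ is also closed. This is exactly what the Stein factorization supplies, by realizing $\pi_0(\tilde Z/S)$ as a finite \'etale cover in which the identity component appears as a clopen section. The secondary delicate point is the purely inseparable descent in the field case over an imperfect $k$, which I would settle using the canonicity (uniqueness) of the maximal abelian subvariety.
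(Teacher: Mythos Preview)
Your argument is essentially correct but takes a different route from the paper. The paper invokes a criterion of Brochard \cite[Prop.~2.16]{brochard21}: a proper flat finitely presented $S$-group scheme admits a maximal abelian subscheme if and only if it is cohomologically flat in dimension zero. Since any scheme over a field is cohomologically flat, the field case is immediate; in the smooth case, $\tilde Z\to S$ is smooth and proper, hence cohomologically flat by \cite[Prop.~7.8.6]{egaIII2}, and the criterion applies directly. Your approach unpacks this black box: in the smooth case you explicitly identify $\tilde Z^{\ab}$ with the relative identity component $\tilde Z^0$ and use the Stein factorization to show it is clopen (note that your Stein factorization argument implicitly uses the same cohomological flatness input, since you need formation of $f_*\mathcal O_{\tilde Z}$ to commute with base change). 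In the field case you construct $\tilde Z^{\ab}$ by descent from $\bar k$. Your approach has the virtue of being self-contained and making the object concrete; the paper's is shorter by outsourcing to Brochard. One minor slip: in your field case, the tower is $k\subset k^{\mathrm{sep}}\subset \bar k$ with $k^{\mathrm{sep}}/k$ Galois and $\bar k/k^{\mathrm{sep}}$ purely inseparable, so the order of your two descent steps is reversed. It is cleaner to argue in one step: the maximal abelian subvariety of $\tilde Z_{\bar k}$ is a canonical closed subscheme, hence carries an fpqc descent datum for $\bar k/k$, and closed subschemes satisfy fpqc descent; the descended object is then checked to be an abelian variety by fpqc-local verification of smoothness, properness, and geometric connectedness.
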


\begin{proof}
Call a morphism $f:W \to S$ cohomologically flat if it is
cohomologically flat in dimension zero, i.e., if formation of
$f_*\mathcal O_X$ is compatible with base change.  If $f$ is proper,
flat and smoooth, then it is cohomologically flat as well
\cite[Prop.~7.8.6]{egaIII2}.  If $W$ is a proper,
flat finitely presented $S$-group scheme, then $W$ admits a maximal
abelian subscheme if and only if it is cohomologically flat
\cite[Prop.~2.16]{brochard21}.

With these preliminaries dispatched, the proof goes quickly.  The case
where $S$ is the spectrum of a field is immediate -- indeed, any 
scheme over a field is cohomologically flat -- so suppose 
$\lambda$ is smooth.  
It is also proper and flat, thus cohomologically flat.
Pulling back $Z$ by $\lambda$ shows that $\tilde Z \to Z$ is also
proper, flat and smooth (and cohomologically flat).  Since $Z \to S$
is also proper, flat and smooth, the structural morphism $\tilde Z \to
Z \to S$ is proper, flat and smooth, thus cohomologically flat.
In particular, $\tilde Z$ admits a maximal abelian subscheme.
\end{proof}

\begin{rem}\label{R:LAS}
If $X$ is an abelian variety, and $G\subseteq X$ is a closed algebraic subgroup, then $G^{\abvar}=(G^\circ)_{\operatorname{red}}$, the reduction of the connected component of the identity.  Indeed, over a perfect field, the reduction of an algebraic group is an algebraic group, while over an arbitrary field, the same holds for proper algebraic groups \cite[Exp.~VI, Prop.~3.1]{FGA}.
\end{rem}

\subsection{Picard and Albanese schemes}
\label{S:picalbbackground}
We now turn our attention to Picard and Albanese schemes.  

\subsubsection{Picard schemes for families of smooth projective varieties with vanishing $H^2$}\label{S:PicVanH2}
Assume  that $f\colon C\to S$ is a smooth and projective  morphism with geometrically integral fibers, and $R^2f_*\mathcal O_C=0$; e.g., $C/S$ is a smooth projective curve over $S$.
 We use \cite{kleimanPic} as a reference for the following standard results on Picard schemes.  First,
  using  that $C/S$ is projective and flat  with geometrically integral fibers,
  we
  can invoke  \cite[Thm.~9.4.8]{kleimanPic} to conclude the \'etale
  sheafification
  of the Picard functor is representable by a separated scheme
  $\operatorname{Pic}_{C/S}$.
  Then using the additional assumption that $R^2f_*\mathcal O_C=0$, we can
  invoke
  \cite[Prop.~9.5.19]{kleimanPic} to conclude that $\operatorname{Pic}_{C/S}$ is
  smooth.  Denoting by $\operatorname{Pic}^0_{C/S}$ the connected component of
  the
  identity, we can use \cite[Thm.~9.5.4]{kleimanPic} to conclude that the fiber
  of
  $\operatorname{Pic}^0_{C/S}$ over every point is projective.  It then follows
  from \cite[Prop.~9.5.20]{kleimanPic} that $\operatorname{Pic}^0_{C/S}$ is
  proper
  over $S$.  In other words, it is an abelian scheme over $S$. By construction it is stable under base change.

  In fact, even if $C \to S$ is merely proper (but not projective;
  this can happen if the fibers of $C$ have genus one), it is
  stil true that $\pic^0_{C/S}$ is an abelian scheme.  Indeed, after
  an \'etale base change $T \to S$, $C_T \to T$ is projective, and
  thus $\pic^0_{C_T/T}$ is an abelian scheme.  A descent argument (see Cesnavicius on \href{https://mathoverflow.net/q/204182}{{\tt MathOverflow (204182)}})
 then shows that $\pic^0_{C_T/T}$ descends to
  $S$ as an abelian scheme.

\subsubsection{Picard schemes for torsors under abelian schemes}\label{S:PicTors}
Let $T/S$ be a torsor under an abelian scheme $X$ over $S$.  The same argument as  above
implies the existence of the Picard scheme
$\operatorname{Pic}_{T/S}$ representing  the \'etale
  sheafification
  of the Picard functor.  The fact that in this situation the
connected component $\operatorname{Pic}^0_{T/S}$ is an abelian scheme
is for instance \cite[Prop.~2.1.5]{olssonAV}.  In fact, given any
$S$-scheme $S'\to S$, the set of isomorphisms of $X_{S'}$-torsors
$\iota\colon  X_{S'}\to T_{S'}$ is canonically in bijection with the set
$T(S')$.  Choosing an \'etale cover $S'/S$ over which  $T_{S'}$ is isomorphic to $X_{S'}$  
 we obtain a non-empty set $T(S')$, and by descent, any such isomorphism $\iota$ then induces an isomorphism $\iota_*\colon \widehat X\to \operatorname{Pic}^0_{T/S}$.  This construction gives a  canonical morphism $\widehat X\times_S T\to \operatorname{Pic}^0_{T/S}$, $(L, \iota)\mapsto \iota_*L$, which factors uniquely through the first  projection 
$
\xymatrix{
\widehat X\times_S T \ar[r]^<>(0.5){pr_1} & \widehat  X \ar[r]^<>(0.5)\zeta & \operatorname{Pic}^0_{T/S}
}
$  \cite[Prop.~2.1.5]{olssonAV}, 
giving a canonical isomorphism:
\begin{equation}\label{E:PicTors}
\xymatrix{
\zeta\colon \widehat X \ar[r]& \operatorname{Pic}^0_{T/S}}.
\end{equation}

Note that given a Poincar\'e line bundle over $\operatorname{Pic}^0_{T/S}\times_S T$, we can view it alternatively as a family of line bundles over $\operatorname{Pic}^0_{T/S}= \widehat X$ parameterized by $T$, and we obtain a morphism $T\to \operatorname{Pic}_{\widehat X/S}$, which identifies $T$ with a component of $\operatorname{Pic}_{\widehat X/S}$.  Indeed, it suffices to check this on geometric fibers, in which case the torsor is trivial, and the assertion comes down to the fact that $\widehat{\widehat{X}}=X$.  Note that if the universal object for $\operatorname{Pic}^0_{T/S}$ is not a line bundle, so that there is not a Poincar\'e line bundle, one can make an \'etale base change, and then use descent, to obtain the same result.  In other words, a torsor under $X$ over $S$ can be viewed as a component of $\operatorname{Pic}_{\widehat X/S}$.

  \subsubsection{Albanese schemes for familes with $\operatorname{Pic}^0$ an abelian scheme} \label{S:AlbTors}
  We now assume that $f\colon C\to S$ is a smooth projective  morphism with
  geometrically integral fibers, or that it is a smooth proper
  morphism whose fibers are geometrically integral curves; either implies that the \'etale sheafification of the Picard functor
  is representable by a scheme $\operatorname{Pic}_{C/S}$ (see the discussion in  \S
  \ref{S:PicVanH2}), and we
  assume further that the connected component
  $\operatorname{Pic}^0_{C/S}$ is an abelian scheme (e.g., this holds in the
  situation of \S \ref{S:PicVanH2} or \ref{S:PicTors}).
     
  Given an abelian scheme $X/S$,  then via the definition of the Picard scheme,  
 there is a bijection between:
\begin{itemize}
\item $S$-morphisms $C\to T$ where $T/S$ is some torsor under $X$.
\item $S$-morphisms $ \widehat X \to \pic_{C/S}$.
\end{itemize}
Indeed, given a Poincar\'e line bundle on $\operatorname{Pic}_{C/S}\times_S C$, a morphism $\widehat X\to \operatorname{Pic}_{C/S}$ corresponds to a line bundle $L$ on the product $\widehat X\times_S C$.  Now viewing $L$ instead as a family of line bundles on $\widehat X$ parameterized by $C$, we have a morphism $C\to \operatorname{Pic}_{\widehat X/S}$.  Looking at the component containing the image of $C$ under this map, we get a map $C\to T$ for some torsor $T$ under $\operatorname{Pic}^0_{\widehat X/S}=X$. Again, if the universal object for $\operatorname{Pic}_{C/S}$ is not a line bundle, so that there is not a Poincar\'e line bundle, one can obtain the same result by taking an \'etale base change, and then using descent.   As every torsor under $X$ over $S$ can be viewed as a component of $\operatorname{Pic}_{\widehat X/S}$ (\S \ref{S:PicTors}), one can reverse the above construction, giving the asserted equivalence.

    From this equivalence and the identity map $\widehat {\widehat {\operatorname{Pic}^0_{C/S}}} = \operatorname{Pic}^0_{C/S} \to {\operatorname{Pic}^0_{C/S}}
    \subseteq \operatorname{Pic}_{C/S}$, one obtains a canonical morphism 
    $$\operatorname{alb}^1\colon C\to \operatorname{Alb}^1_{C/S}$$ 
    where $\operatorname{Alb}^1_{C/S}$ is a torsor under $\operatorname{Alb}^0_{C/S}:=\widehat {\operatorname{Pic}^0_{C/S}}$, and moreover,          
    one obtains 
     \cite[Exp.~VI, Thm.~3.3(iii)]{FGA} that $\operatorname{alb}^1\colon C\to \operatorname{Alb}^1_{C/S}$   is an Albanese morphism,  
        meaning that it 
     is initial for $S$-morphisms of $C$ into torsors under abelian $S$-schemes.  A section of $C/S$ gives an identification $\operatorname{Alb}^1_{C/S}\cong \operatorname{Alb}^0_{C/S}$ so that the composition of $\operatorname{alb}^1$ with this isomorphism gives an $S$-morphism $$\operatorname{alb}^0\colon C\to \operatorname{Alb}^0_{C/S}$$ that is initial for $S$-morphisms of $C$ into abelian $S$-schemes taking the section of $C/S$ to the zero section.  These constructions are stable under base change.

\subsubsection{Albanese schemes for curves}\label{S:AlbCurve} 
By a \emph{curve} we will mean a geometrically integral separated scheme of dimension one over a field, and by a curve over $S$ we will mean a flat morphism $C\to S$ such that every fiber is a curve.

Suppose $f:C \to S$ is a smooth proper  curve over $S$,
and  let $\operatorname{Div}_{C/S}$ be the functor of relative
effective divisors on $C$ \cite[Def.~9.3.6]{kleimanPic}; there is a
natural transformation of functors $C \to \operatorname{Div}_{C/S}$,
where $\sigma \in C_T(T)$ gets sent to the corresponding relative
effective divisor on $C_T$.  Composing this with the Abel map, a
natural transformation  $\operatorname{Div}_{C/S} \to \pic_{C/S}$
\cite[Def.~9.4.6]{kleimanPic}, yields a morphism of functors, and thus
of schemes, $\alpha: C \to \pic_{C/S}$.  Let $\pic^{(1)}_{C/S}$ be the
component of $\pic_{C/S}$ which contains the image of $\alpha$; it is
a torsor under $\pic^0_{C/S}$.
  We denote the induced $S$-morphism $$\xymatrix{\alpha^{(1)}\colon C\ar[r] & \operatorname{Pic}^{(1)}_{C/S}},$$
 and this agrees  with the  Albanese torsor discussed in \S \ref{S:AlbTors}.
 Moreover, a section $P\colon S\to C$ of $C/S$ gives an identification $\operatorname{Pic}^{(1)}_{C/S}\cong \operatorname{Pic}^0_{C/S}$ so that the composition of $\alpha^{(1)}$ with this isomorphism gives an $S$-morphism 
 $$
 \xymatrix{\alpha_P\colon C\ar[r]& \operatorname{Pic}^0_{C/S}}
 $$
 which agrees with the pointed Albanese defined above
   (see also 
\cite[Prop.~III.6.1]{milneAV} in the case where $S$ is the spectrum of
a field).  When $S=\operatorname{Spec}k$ for an algebraically closed field $k$, then $\alpha^{(1)}$ is given by $Q\in C(k)\mapsto \mathcal O_C(Q)$, the identification $\operatorname{Pic}^{(1)}_{C/k}\cong \operatorname{Pic}^0_{C/k}$ is given by $\mathcal L\mapsto \mathcal L(-P)$, and $\alpha_P$ is given by $Q\in C(k)\mapsto \mathcal O_C(Q-P)$.

\section{Norm endomorphisms, complements, and Prym schemes}
\label{S:morenotes}

The goal of this section is to review the notion of  projectors for sub-abelian schemes of an abelian scheme.   More precisely, given an inclusion  $Y \hookrightarrow X$ of abelian schemes over $S$, 
one would like to have an endomorphism $\epsilon_Y$ of $X$ such that $\operatorname{Im}(\epsilon_Y)= Y$ and $\epsilon_Y^2=\epsilon_Y$.  The norm 
endomorphism $N_Y$ of $Y$, constructed via an auxiliary polarization $\lambda$ of $X$, achieves this, up to the exponent $e$ of the polarization restricted to $Y$, in the sense that $\operatorname{Im}N_Y=Y$ and 
$N_Y^2=eN_Y$.    Consequently, one may take $\epsilon_Y:=\frac{1}{e}N_Y\in \operatorname{End}_{\mathbb Q}(X/S)$   
and obtain a projector in the rational endomorphism ring.  
If the exponent $e$ of the polarization restricted to $Y$ is stable under base change, e.g., the restricted polarization is a positive integer multiple of a principal polarization (see \Cref{L:ExpBC}), then so is the norm map $N_Y$ and the projector $\epsilon_Y$. 
We review this here, as we are not aware of a reference for this in
the relative setting.

To carry out the full construction, we will need to make at least one
of the following assumptions:

\begin{condition}
  \label{comphyp}
The abelian scheme $X/S$ is equipped with a polarization $\lambda:X
\to \hat X$ and at least one of the following holds:
\begin{alphabetize}
\item $S$ satisfies \Cref{alwaysepi};
 \item $\deg(\lambda)$ is invertible on $S$.
\end{alphabetize}
\end{condition}
Note that if $\lambda$ is principal then $(X,\lambda)/S$ automatically
satisfies \Cref{comphyp} without additional hypothesis.

\subsection{Norm endomorphisms and complements}
\label{S:normendomorphism}
Let $\iota= \iota_Y\colon Y \hookrightarrow X$ be an inclusion of
abelian schemes over $S$, and assume that $X$ admits a polarization
$\lambda\colon X \to \widehat X$. \emph{Starting in \Cref{L:kerNab},
  and until the end of this section, we will assume that
  \Cref{comphyp} holds.}

Let 
\begin{equation}\label{E:eY}
e = e_Y = e_{Y\subseteq X, \lambda}
\end{equation}
 be the exponent (\S \ref{S:DgExpTp}) of the polarization $\iota_Y^*\lambda:= \widehat \iota \circ \lambda \circ \iota$, 
       the pullback of $\lambda$ by $\iota_Y$:
\[
\xymatrix{
  Y \ar[r]^{\iota^*\lambda} \ar[d]^{\iota}& \widehat Y \\
  X \ar[r]^\lambda & \widehat X\ar[u]^{\widehat\iota}
}
\]
Define the norm map $N_Y = N_{Y\subseteq X, \lambda}: X\to X$ as the composition
\begin{equation}\label{E:NmY}
\xymatrix@C=3.5em{
Y \ar[r]^{\iota} &  X \ar[r]^\lambda \ar@/^5ex/^{N_Y}[rrrr]\ar@/_3ex/_{M_Y}[rrr]& \widehat X
  \ar[r]^{\widehat\iota} & \widehat Y \ar[r]^<>(0.5){e_Y\cdot  (\iota^*\lambda)\inv} & Y \ar@^{(->}[r]^\iota& X}
\end{equation}
We similarly define the map $M_Y:X\to Y$ as indicated in the diagram.

\begin{rem}[Norm maps and base change]
The norm map $N_Y$ is stable under faithfully flat base change (since the exponent $e_Y$ is), and 
if the exponent $e_Y$ is stable under arbitrary base extension, e.g.,
$\iota_Y^*\lambda$ is a positive integer multiple of a principal
polarization  (see \Cref{L:ExpBC}), then so is the norm map $N_Y$.
All of the constructions and results that follow in \S
\ref{S:normendomorphism} have the same base change properties.  
\end{rem}

\begin{lem}\label{BL:L531}
  We have
  \begin{alphabetize}
  \item  \label{BL:L531a} $M_Y \iota = N_Y|_Y= [e_Y]_Y$.
  \item \label{BL:L531b} $N_Y^2 = e_Y N_Y$.
     \end{alphabetize}
\end{lem}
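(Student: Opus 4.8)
The plan is to unwind the definition of the norm map and reduce both identities to the defining relation of the isogeny produced by \Cref{P:BL1.2.6}. Write $\iota^*\lambda=\widehat\iota\circ\lambda\circ\iota\colon Y\to\widehat Y$; as the pullback of the polarization $\lambda$ along the closed immersion $\iota$ it is itself a polarization, hence an isogeny, and by the definition of the exponent its kernel lies in $Y[e_Y]$. Thus \Cref{P:BL1.2.6} supplies an honest isogeny $g_{e_Y}\colon\widehat Y\to Y$ with $g_{e_Y}\circ(\iota^*\lambda)=[e_Y]_Y$, and by the definition $(\iota^*\lambda)\inv=g_{e_Y}/e_Y$ in $\operatorname{Hom}_{\mathbb{Q}}(\widehat Y,Y)$ we have $e_Y\cdot(\iota^*\lambda)\inv=g_{e_Y}$ as a genuine morphism of abelian schemes. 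Consequently both $M_Y=g_{e_Y}\circ\widehat\iota\circ\lambda\colon X\to Y$ and $N_Y=\iota\circ M_Y\colon X\to X$ are honest $S$-morphisms.

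For part~\ref{BL:L531a}, first I would compute
\[
M_Y\circ\iota=g_{e_Y}\circ\widehat\iota\circ\lambda\circ\iota=g_{e_Y}\circ(\iota^*\lambda)=[e_Y]_Y,
\]
the last equality being exactly the defining property of $g_{e_Y}$. Then, using $N_Y=\iota\circ M_Y$ and that $\iota$ is a homomorphism, $N_Y\circ\iota=\iota\circ(M_Y\circ\iota)=\iota\circ[e_Y]_Y=[e_Y]_X\circ\iota$, which under the identification of $Y$ with its image in $X$ is precisely $[e_Y]_Y$; this yields $N_Y|_Y=[e_Y]_Y$.

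Part~\ref{BL:L531b} then follows formally from part~\ref{BL:L531a}: inserting the identity just proved,
\[
N_Y^2=(\iota\circ M_Y)\circ(\iota\circ M_Y)=\iota\circ(M_Y\circ\iota)\circ M_Y=\iota\circ[e_Y]_Y\circ M_Y=e_Y\,(\iota\circ M_Y)=e_Y N_Y,
\]
where the penultimate step uses that $[e_Y]_Y\circ M_Y=e_Y M_Y$ for the homomorphism $M_Y$.

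There is no genuine obstacle here; the entire content is bookkeeping. The one point to handle with care is the identification $e_Y\cdot(\iota^*\lambda)\inv=g_{e_Y}$ — that multiplying the \emph{a priori} merely rational inverse by the exponent yields the integral isogeny of \Cref{P:BL1.2.6} — together with its relation $g_{e_Y}\circ(\iota^*\lambda)=[e_Y]_Y$. Once this is in place, both identities are immediate from associativity and the fact that homomorphisms commute with multiplication by $e_Y$. I note also that this argument does not invoke \Cref{comphyp}, only the existence of $g_{e_Y}$, which is automatic from the exponent.
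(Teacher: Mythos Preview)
Your proof is correct and is essentially the same direct unwinding of the definition that the paper indicates (citing \cite[Lem.~5.3.1]{BL}). The only cosmetic difference is order: you establish (a) first and derive (b) from it, whereas the paper states that (b) is the straightforward computation and (a) then follows; both orders amount to the same one-line manipulation using $g_{e_Y}\circ(\iota^*\lambda)=[e_Y]_Y$.
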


\begin{proof} Assertion (b) is a straight forward computation from the definitions (see \cite[Lem.~5.3.1]{BL}, where the same argument works verbatim); 
(a) then follows  (see e.g., \cite[p.~125, eq.~(1)]{BL}).
\end{proof}

\begin{lem}\label{L:newZvsWTZ}
We have
  \[
    [e_Y](\ker N_Y) \subseteq \im([e_Y]-N_Y) \subseteq \ker N_Y.
  \]
\end{lem}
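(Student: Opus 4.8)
The plan is to deduce both inclusions purely formally from the relation $N_Y^2 = e_Y N_Y$ established in \Cref{BL:L531}\ref{BL:L531b}, together with the fact that $[e_Y]$ commutes with $N_Y$ (since $N_Y$ is a homomorphism, $N_Y \circ [e_Y] = e_Y N_Y$). Throughout I interpret $\ker N_Y$ as the closed subscheme obtained by pulling $N_Y$ back along the zero section $\epsilon$, and $\im(\cdot)$ and $[e_Y](\ker N_Y)$ as schematic images. Because $X/S$ is separated and of finite type over the locally Noetherian base $S$, the relevant morphisms are quasi-compact, so these schematic images enjoy the usual universal property: a morphism factors through a closed subscheme $W \hookrightarrow X$ if and only if its schematic image is contained in $W$. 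This is the one foundational point I would flag at the outset, as it is what makes the field-case argument of \cite[Ch.~5]{BL} go through in the relative setting.

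First I would treat the right-hand inclusion $\im([e_Y]-N_Y) \subseteq \ker N_Y$. Composing on the left with $N_Y$ gives
\[
N_Y \circ ([e_Y] - N_Y) = e_Y N_Y - N_Y^2 = e_Y N_Y - e_Y N_Y = 0.
\]
Hence $[e_Y] - N_Y$ factors through the closed subscheme $\ker N_Y$, and therefore so does its schematic image, yielding $\im([e_Y]-N_Y) \subseteq \ker N_Y$.

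For the left-hand inclusion $[e_Y](\ker N_Y) \subseteq \im([e_Y]-N_Y)$, I would restrict $[e_Y]-N_Y$ to $\ker N_Y$. Writing $\iota\colon \ker N_Y \hookrightarrow X$ for the inclusion, the defining property of the kernel gives $N_Y \circ \iota = 0$, so
\[
[e_Y] \circ \iota = ([e_Y] - N_Y)\circ \iota
\]
as morphisms $\ker N_Y \to X$. The right-hand side factors through $\im([e_Y]-N_Y)$ because $[e_Y]-N_Y$ does; hence so does $[e_Y]\circ\iota$, and passing to schematic images gives $[e_Y](\ker N_Y) \subseteq \im([e_Y]-N_Y)$.

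The only real subtlety is the relative setting itself: one must be sure the kernel is formed scheme-theoretically and that the schematic-image formalism applies, which is why I emphasize separatedness and quasi-compactness before starting. Once those points are in place the argument is a formal manipulation of the quasi-idempotent relation $N_Y^2 = e_Y N_Y$ and requires no further input; in particular it needs neither flatness of $\ker N_Y$ nor any of the base-change hypotheses in \Cref{comphyp}, so it holds over an arbitrary connected locally Noetherian base.
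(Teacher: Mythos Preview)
Your proof is correct and follows essentially the same approach as the paper: both arguments use $N_Y([e_Y]-N_Y)=e_YN_Y-N_Y^2=0$ for the right inclusion and the observation that $[e_Y]$ and $[e_Y]-N_Y$ agree on $\ker N_Y$ for the left inclusion. Your added remarks on schematic images and quasi-compactness in the relative setting are a welcome clarification, and you are right that \Cref{comphyp} is not needed here.
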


\begin{proof}
  On one hand, since $N_Y|_{\ker(N_Y)} = [0]$, we have
\[
    [e_Y](\ker N_Y) = ([e_Y]-N_Y)(\ker N_Y) \subseteq ([e_Y]-N_Y)(X) =
    \im([e_Y]-N_Y).
  \]
  On the other hand, using \Cref{BL:L531}(b), we see that
  \[
    N_Y([e_Y]-N_Y) = N_Y[e_Y] - N_Y^2 = e_YN_Y - e_YN_Y = 0,
  \]
  and so $N_Y(\im([e_Y]-N_Y)) = (0)$.
\end{proof}

To proceed, we place a modest condition on the polarized abelian
scheme $(X,\lambda)$.

\begin{lem}
  \label{L:kerNab}
If $(X,\lambda)/S$ satisfies \Cref{comphyp}, then the group scheme $\ker(N_Y)$ admits a maximal abelian subscheme
  $\ker(N_Y)^\ab$, and $\dim \ker(N_Y)^\ab = \dim \ker(N_Y)$.
  \end{lem}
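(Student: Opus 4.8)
The plan is to construct $\ker(N_Y)^\ab$ explicitly, handling the two clauses of \Cref{comphyp} by separate devices while reading off the dimension uniformly. First I would reduce to a convenient description of $\ker N_Y$. In the notation of \eqref{E:NmY} we have $N_Y=\iota\circ M_Y$ with $\iota$ a closed immersion, and by \Cref{BL:L531}(a) the composite $M_Y\iota=[e_Y]_Y$ is surjective, so $M_Y\colon X\to Y$ is surjective. Hence $\ker N_Y=\ker M_Y$ and $\im N_Y=Y$, and on each geometric fibre $\dim\ker N_Y=\dim X-\dim Y=:h$. I will also repeatedly use the trivial fact that any abelian subvariety $A$ of a fibre of $\ker N_Y$ satisfies $A=[e_Y]A$.

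Under the first clause of \Cref{comphyp} I would set $P:=\im([e_Y]-N_Y)$. By \Cref{alwaysepi} this schematic image is an abelian scheme, and \Cref{L:newZvsWTZ} gives both $P\subseteq\ker N_Y$ and $[e_Y](\ker N_Y)\subseteq P$. The second inclusion forces $\dim P=h$ fibrewise, and for any fibrewise abelian subvariety $A\subseteq\ker N_Y$ we get $A=[e_Y]A\subseteq[e_Y](\ker N_Y)\subseteq P$; thus $P$ is abelian of dimension $h$ and contains every abelian subvariety of $\ker N_Y$, so it is the fibrewise maximal one. Finally $[e_Y]-N_Y$ is a faithfully flat surjection onto the abelian scheme $P$, so the formation of $P$ commutes with base change; hence $\ker(N_Y)^\ab=P$ with $\dim\ker(N_Y)^\ab=h=\dim\ker N_Y$.

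Under the second clause, $\im([e_Y]-N_Y)$ need not be abelian, so I would instead feed a genuinely smooth surjection into \Cref{L:maxab}. Since $\deg(\lambda)$ is invertible on $S$, the kernel of $\lambda$ is étale and $\lambda\colon X\to\widehat X$ is a smooth surjection; and since $\iota$ is a closed immersion, $\widehat\iota\colon\widehat X\to\widehat Y$ is surjective with abelian-scheme kernel $Z:=\ker\widehat\iota$ (\Cref{C:hatf}). Applying \Cref{L:maxab} to $\lambda$ and $Z$ shows that $K:=\lambda^{-1}(Z)=\ker(\widehat\iota\circ\lambda)$ is smooth and admits a maximal abelian subscheme $K^\ab$, of fibre dimension $\dim Z=h$. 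To identify this with $\ker(N_Y)^\ab$, write $N_Y=\iota\circ\big(e_Y(\iota^*\lambda)^{-1}\big)\circ\widehat\iota\circ\lambda$, where $e_Y(\iota^*\lambda)^{-1}$ is an isogeny with finite kernel. Given a fibrewise abelian subvariety $A\subseteq\ker N_Y$, the morphism $\widehat\iota\circ\lambda$ carries $A$ into this finite kernel; as $A$ is an abelian variety, $\widehat\iota\circ\lambda(A)=0$, i.e. $A\subseteq K$. Thus $K$ contains every abelian subvariety of $\ker N_Y$, so $\ker(N_Y)^\ab=K^\ab$, which exists, is compatible with base change (being produced by \Cref{L:maxab} from the base-change-stable data $\lambda$ and $\widehat\iota$), and has fibre dimension $h=\dim\ker N_Y$.

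The main obstacle is precisely the inseparability that clause (b) is meant to tolerate: $\deg(\iota^*\lambda)$ can be divisible by the residue characteristic even when $\deg(\lambda)$ is invertible, so the isogeny $e_Y(\iota^*\lambda)^{-1}$---equivalently $M_Y$---may fail to be smooth. Then $\ker N_Y$ is not smooth and \Cref{L:maxab} cannot be applied to $M_Y$ directly; passing instead to the smooth preimage $\ker(\widehat\iota\circ\lambda)$, which shares the same abelian part, is the crucial move. The remaining points---fibrewise maximality, the dimension equality, and base-change compatibility---are then routine given \Cref{L:newZvsWTZ}, \Cref{C:hatf}, and the base-change stability of $\lambda$ and $\widehat\iota$.
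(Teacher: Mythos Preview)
Your proof is correct. In case (b) you do exactly what the paper does: note that $\deg(\lambda)$ invertible makes $\lambda$ smooth, apply \Cref{L:maxab} to the abelian subscheme $\ker\widehat\iota=\widehat{X/Y}\subset\widehat X$ to obtain $(\lambda^{-1}(\widehat{X/Y}))^{\ab}$, and then (more explicitly than the paper) verify that this is maximal inside the slightly larger $\ker N_Y$ by the connectedness argument that any abelian subscheme of $\ker N_Y$ maps to zero under $\widehat\iota\lambda$.

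In case (a) you take a genuinely different route. The paper simply invokes \Cref{L:maxab} again, whereas you bypass it entirely: \Cref{alwaysepi} makes $P=\im([e_Y]-N_Y)$ an abelian subscheme directly, and \Cref{L:newZvsWTZ} together with the trivial identity $A=[e_Y]A$ for abelian subschemes gives maximality and base-change compatibility. This is precisely the identification the paper proves \emph{afterwards} in \Cref{L:Zisimage}, so you are anticipating that lemma rather than reproducing the paper's argument for \Cref{L:kerNab}. Your route has the advantage of handling all of case (a) uniformly (not just the field case to which the hypotheses of \Cref{L:maxab} literally apply) and of making maximality explicit; the paper's route is terser and treats both clauses of \Cref{comphyp} with a single construction, deferring the image description to the next lemma.
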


\begin{proof}
  The exact sequence of abelian schemes
  \[
  \xymatrix{
    0 \ar[r] & Y \ar[r]^\iota & X \ar[r] & X/Y \ar[r] & 0}
  \]
  yields a dual exact sequence
  \[
  \xymatrix{
    0 \ar[r] & \widehat{X/Y} \ar[r] & \widehat X \ar[r] & \widehat Y
    \ar[r] & 0.}
  \]
  Let $Z = \lambda\inv(\widehat{X/Y})^\abvar$ (Lemma \ref{L:maxab}).  Then $Z \subseteq
  \ker(N_Y)$, and $\dim Z = \dim \lambda\inv(\widehat{X/Y}) = \dim
  \ker(N_Y)$.  (The last equality holds because $e_Y
  (\iota^*\lambda)\inv$ is an isogeny.)
               \end{proof}

For reasons we explain below in \S\ref{S:compl}, we call the abelian
scheme
\begin{equation}
  \label{E:defcomplement}
Z := \ker(N_Y)^\abvar = \lambda\inv(\widehat{X/Y})^\abvar
\end{equation}
constructed in Lemma \ref{L:kerNab} \emph{the complement of $Y$ in $X$
(with respect to $\lambda$)}.

Alternatively, we could define $Z$ using the following equivalent
formulation.
\begin{lem}
  \label{L:Zisimage}
  We have
  \[
    \ker(N_Y)^{\abvar} = \im([e_Y]_X - N_Y).
  \]
  In particular, $N_Y$ and $[e_Y]_X - N_Y$ are epi-abelian.
\end{lem}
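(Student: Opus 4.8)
The plan is to compute the schematic image of $\Phi := [e_Y]_X - N_Y$ by transporting the computation onto the abelian scheme $Z = \ker(N_Y)^\abvar$ already produced in \Cref{L:kerNab}, using that $\Phi$ annihilates $Y$. First I would record that $\Phi|_Y = 0$: by \Cref{BL:L531}\ref{BL:L531a} we have $N_Y|_Y = [e_Y]_Y$, so $\Phi|_Y = [e_Y]_Y - [e_Y]_Y = 0$, i.e. $Y \subseteq \ker\Phi$. Since the quotient map $q\colon X \to X/Y$ is a surjection of abelian schemes with $\ker q = Y \subseteq \ker\Phi$, \Cref{T:factor} yields a unique homomorphism $\bar\Phi\colon X/Y \to X$ with $\Phi = \bar\Phi \circ q$. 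Being a surjection of abelian schemes, $q$ is faithfully flat \cite[Lem.~6.12]{GIT}, hence scheme-theoretically dominant, so the schematic image of $\Phi$ equals that of $\bar\Phi$.

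The key step is to precompose $\bar\Phi$ with a well-chosen isogeny into $X/Y$. By \Cref{L:kerNab} the complement $Z = \ker(N_Y)^\abvar$ is an abelian subscheme with $\dim Z = \dim\ker N_Y = \dim X - \dim Y$. Set $c := q|_Z\colon Z \to X/Y$. Its kernel is $Z \cap Y \subseteq \ker N_Y \cap Y = \ker([e_Y]_Y) = Y[e_Y]$, which is finite over $S$; combined with $\dim Z = \dim(X/Y)$ this makes $c$ fibrewise an isogeny, hence surjective, hence (again by \cite[Lem.~6.12]{GIT}) faithfully flat and scheme-theoretically dominant. Therefore the schematic image of $\bar\Phi$ agrees with that of $\bar\Phi \circ c = \Phi|_Z$. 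But $Z \subseteq \ker N_Y$ gives $N_Y|_Z = 0$, so $\Phi|_Z = [e_Y]_Z$ as a map $Z \to Z \hookrightarrow X$, whose schematic image is plainly $Z$. Chaining these gives $\im\Phi = \im\bar\Phi = \im(\bar\Phi\circ c) = Z = \ker(N_Y)^\abvar$; in particular $\Phi$ is epi-abelian. The same recipe handles $N_Y$: writing $N_Y = \iota \circ M_Y$, the identity $M_Y\iota = [e_Y]_Y$ of \Cref{BL:L531}\ref{BL:L531a} forces $M_Y$ to be surjective, hence faithfully flat, so $\im N_Y = \im\iota = Y$.

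The step I expect to be the crux is the one where ordinary intuition fails: the formation of the scheme-theoretic image of $\Phi$ need not commute with passage to a fibre $X_s$ — this is exactly the pathology that \Cref{comphyp} is meant to exclude — so the fibrewise fact $\im\Phi_s = Z_s$ does not by itself identify $\im\Phi$ as an $S$-flat subscheme. The device that resolves this is the factorization through the faithfully flat isogeny $c$: since faithfully flat morphisms are scheme-theoretically dominant, computing the image of $\bar\Phi$ against the completely explicit map $\Phi|_Z = [e_Y]_Z$ sidesteps any flatness question about $\im\Phi$ itself, and the only input from \Cref{comphyp} actually used is the existence of the abelian scheme $Z = \ker(N_Y)^\abvar$ furnished by \Cref{L:kerNab}. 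In writing this up I would take care to check that \cite[Lem.~6.12]{GIT} genuinely applies to $q$, $c$, and $M_Y$ (all surjective homomorphisms of abelian schemes) and that surjectivity of $c$ may be verified fibrewise, since these faithful-flatness claims are precisely what make the dominance arguments valid.
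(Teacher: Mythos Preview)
Your proof is correct, and it takes a genuinely different route from the paper's.  The paper argues via the sandwich of \Cref{L:newZvsWTZ}: applying $[e_Y]$ to $Z\subseteq\ker N_Y$ gives $Z=[e_Y]Z\subseteq\im([e_Y]-N_Y)\subseteq\ker N_Y$, and then, since $\dim_S Z=\dim_S\ker N_Y$, invokes an external result \cite[Thm.~C]{ACMimab} to collapse the chain.  You instead factor $\Phi=[e_Y]-N_Y$ through the quotient $q\colon X\to X/Y$ (using $\Phi|_Y=0$), then precompose with the isogeny $c=q|_Z\colon Z\to X/Y$, so that the schematic image of $\Phi$ coincides with that of $\Phi|_Z=[e_Y]_Z$, which is visibly $Z$.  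Your argument is more self-contained: it relies only on the faithful-flatness lemma \cite[Lem.~6.12]{GIT} (applied to the surjective homomorphisms $q$, $c$, $M_Y$) and the elementary fact that a faithfully flat morphism is scheme-theoretically dominant, thereby bypassing the external black box.  The paper's approach is shorter on the page but outsources the delicate point; yours makes that point explicit, and your concluding paragraph correctly identifies \emph{why} the naive fibrewise argument is insufficient and how the isogeny trick circumvents it.  The only uses of \Cref{comphyp} in either proof are implicit, via the existence of $Z$ from \Cref{L:kerNab}.
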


\begin{proof}
  Using \Cref{L:newZvsWTZ}, we see that
  \[
    [e]Z = Z \subseteq \im([e]-N_Y) \subseteq \ker(N_Y).
  \]
  Since $\dim_SZ = \dim_S\ker(N_Y)$, the result follows from \cite[Thm.~C]{ACMimab}.
\end{proof}

\begin{rem}
When $S=\operatorname{Spec}K$ for a field $K$, we have $Z=  ((\ker N_Y)^\circ)_{\operatorname{red}}$ (\Cref{R:LAS}), and in particular, over a field of characteristic $0$ we have $Z=(\ker N_Y)^\circ$. 
\end{rem}

\begin{lem}\label{L:KYcapY}
  We have $(\ker N_Y) \times_X Y = Y[e]$.
   \end{lem}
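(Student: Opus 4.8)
The plan is to prove the scheme-theoretic identity $(\ker N_Y)\times_X Y = Y[e]$ by unwinding the definition of the norm map $N_Y$ restricted to $Y$. The key observation is \Cref{BL:L531}\ref{BL:L531a}, which tells us that the restriction $N_Y|_Y = [e_Y]_Y$ is just multiplication by $e$ on $Y$. Since the fiber product $(\ker N_Y)\times_X Y$ is exactly the subscheme of $Y$ on which the composite $Y \inject X \xrightarrow{N_Y} X$ vanishes, and this composite is precisely $\iota \circ [e_Y]_Y$ (or equivalently $[e_Y]_X \circ \iota$, since $\iota$ is a homomorphism), the vanishing locus should be $\ker([e_Y]_Y) = Y[e]$.

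Concretely, I would first form the Cartesian square exhibiting $(\ker N_Y)\times_X Y$ as the pullback of $\ker N_Y \hookrightarrow X$ along $\iota \colon Y \to X$. Recalling that $\ker N_Y$ is by definition the pullback of the zero section $\epsilon \colon S \to X$ along $N_Y \colon X \to X$, the scheme $(\ker N_Y)\times_X Y$ is then identified with the pullback of $\epsilon$ along the composite $N_Y \circ \iota \colon Y \to X$. Now I invoke \Cref{BL:L531}\ref{BL:L531a} to rewrite $N_Y \circ \iota = N_Y|_Y = [e_Y]_Y$ as a map $Y \to Y$ followed by $\iota$; since $\iota$ is a closed immersion, composing with $\iota$ does not change the kernel, so the pullback is exactly $\ker([e_Y]_Y) = Y[e_Y] = Y[e]$. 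This is an equality of subschemes of $Y$, compatible with base change because all the ingredients ($N_Y$, $\iota$, and the formation of kernels) are.

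The main subtlety to handle carefully is the bookkeeping with fiber products and the precise meaning of $N_Y|_Y$. The notation $N_Y|_Y$ in \Cref{BL:L531}\ref{BL:L531a} refers to the composite $N_Y \circ \iota \colon Y \to X$, but the assertion $N_Y|_Y = [e_Y]_Y$ implicitly factors this composite through $\iota$ as $\iota \circ [e_Y]_Y$. I would make sure to distinguish $M_Y = e_Y(\iota^*\lambda)^{-1}\circ \widehat\iota \circ \lambda \circ \iota \colon X \to Y$ from the full norm map, and use that $M_Y \circ \iota = [e_Y]_Y$ as a genuine endomorphism of $Y$ (this is the content of \Cref{BL:L531}\ref{BL:L531a}). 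Since $N_Y = \iota \circ M_Y$ and $\iota$ is a monomorphism, the equality $\ker(N_Y \circ \iota) = \ker(M_Y \circ \iota)$ holds, and the latter is $\ker([e_Y]_Y) = Y[e]$.

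I expect the proof to be short, so the only real obstacle is formal: ensuring that the identification of $(\ker N_Y)\times_X Y$ as a \emph{scheme-theoretic} kernel (not merely on points) is justified, which follows because kernels are defined as pullbacks of the zero section and pullbacks compose, i.e.\ $(\ker N_Y)\times_X Y = \big(X\times_{N_Y,X,\epsilon} S\big)\times_X Y = Y \times_{N_Y\circ\iota, X, \epsilon} S = \ker(N_Y|_Y)$. Because $\iota$ is a closed immersion and $N_Y|_Y = \iota\circ[e_Y]_Y$, we conclude $\ker(N_Y|_Y) = \ker([e_Y]_Y) = Y[e]$, completing the argument.
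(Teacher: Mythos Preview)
Your proposal is correct and follows exactly the same approach as the paper: identify $(\ker N_Y)\times_X Y$ with $\ker(N_Y|_Y)$ and then invoke \Cref{BL:L531}\ref{BL:L531a} to conclude this is $Y[e]$. The paper's proof is a single line to this effect, so your more detailed unwinding of the fiber products and the role of $\iota$ being a monomorphism is simply a careful expansion of what the paper leaves implicit.
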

\begin{proof}
  We have $(\ker N_Y) \times_X Y = \ker(N_Y\rest Y) = Y[e]$ (\Cref{BL:L531}\ref{BL:L531a}).
\end{proof}

\subsubsection{Complements are complementary}\label{S:compl}
We now establish a few results justifying the terminology that $Z$ is complementary to $Y$, and showing that $Y$ is the complement of $Z$.

\begin{lem}\label{L:compl}
  The sub-abelian schemes $Y$ and $Z$ are complementary          in the sense that the addition map 
 \begin{equation}\label{E:muDef}
     \mu := \iota_Y +
  \iota_Z: Y\times_S Z \longrightarrow X
\end{equation}
  is an isogeny of abelian
  schemes over $S$.
  
  The kernel of $\mu$ is  
  \begin{equation}\label{E:kerMu}
  \ker(\mu) = Y\times_XZ \subseteq
  Y\times_X (\ker N_Y)= Y[e],
\end{equation}
so that $\deg \mu  \le e^{2\dim Y}$, and moreover, 
  equality holds in \eqref{E:kerMu}, as well as for the degree inequality, if $\ker N_Y$ is an abelian $S$-scheme; i.e., if
  the natural inclusion $Z\subseteq \ker N_Y$
  is an equality. 
   \end{lem}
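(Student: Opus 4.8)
The plan is to establish the three assertions of \Cref{L:compl} in sequence, relying on the norm-map machinery already developed. First I would show that $\mu = \iota_Y + \iota_Z$ is an isogeny. Since $Y$ and $Z$ are both abelian subschemes of $X$, the source $Y\times_S Z$ and target $X$ have the same relative dimension $\dim Y + \dim Z = \dim Y + \dim \ker(N_Y) = \dim X$, where the last equality comes from the fact that $\dim Z = \dim\ker(N_Y)^{\abvar} = \dim\ker(N_Y)$ (by \Cref{L:kerNab}) together with $\dim\ker(N_Y) = \dim X - \dim Y$ (since $N_Y$ has image $Y$, or equivalently since $Z = \lambda\inv(\widehat{X/Y})^{\abvar}$ has the dimension computed in the proof of \Cref{L:kerNab}). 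Because source and target have equal relative dimension, it suffices to check that $\mu$ is finite with finite kernel, and to verify this I would reduce to geometric fibers; there the claim is that $Y$ and $Z$ meet in a finite group scheme, which follows once I identify $\ker(\mu)$ in the next step.

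Next I would identify the kernel. An element of $\ker(\mu)$ over a test scheme is a pair $(y,z)$ with $\iota_Y(y) + \iota_Z(z) = 0$ in $X$, i.e.\ $\iota_Y(y) = -\iota_Z(z)$; this is precisely a point of the fiber product $Y\times_X Z$ taken along the two inclusions (up to the sign, which is harmless since $Z$ is a subgroup), so $\ker(\mu) = Y\times_X Z$. Since $Z \subseteq \ker N_Y$ by construction \eqref{E:defcomplement}, I get the inclusion $Y\times_X Z \subseteq Y\times_X(\ker N_Y)$, and the latter equals $Y[e]$ by \Cref{L:KYcapY}. This gives the chain \eqref{E:kerMu}. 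The degree bound $\deg\mu \le e^{2\dim Y}$ then follows because $Y[e]$ has order $e^{2\dim Y}$ (as $\deg[e]_Y = e^{2\dim Y}$, e.g.\ \cite[p.65]{mumfordAV}), and $\ker(\mu)$ is a closed subgroup scheme of $Y[e]$, hence of order dividing that of $Y[e]$.

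For the final equality assertion, suppose $\ker N_Y$ is an abelian $S$-scheme, so that the inclusion $Z = \ker(N_Y)^{\abvar} \subseteq \ker N_Y$ is an equality. Then $Y\times_X Z = Y\times_X(\ker N_Y) = Y[e]$ by \Cref{L:KYcapY}, which gives equality throughout \eqref{E:kerMu}, and correspondingly $\deg\mu = e^{2\dim Y}$. The main obstacle I anticipate is the dimension count and the finiteness of $\mu$ in the relative setting: I must be careful that $\dim Z = \dim\ker(N_Y)$ holds (guaranteed by \Cref{L:kerNab}) and that finiteness of the fiberwise intersection $Y\cap Z$ can be checked on geometric fibers and then propagated, using that a morphism of abelian schemes with fiberwise-finite kernel and equal relative dimension is an isogeny; the identification $\ker(\mu) = Y\times_X Z$ as scheme-theoretic kernels, rather than merely on points, is what makes the degree bookkeeping rigorous, and that is the step I would write out most carefully.
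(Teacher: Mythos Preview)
Your proposal is correct and follows essentially the same route as the paper: identify $\ker(\mu)$ with $Y\times_X Z$, use $Z\subseteq\ker N_Y$ and \Cref{L:KYcapY} to land inside $Y[e]$, then combine the resulting finiteness of $\ker(\mu)$ with the dimension count $\dim_S Y+\dim_S Z=\dim_S X$ to conclude $\mu$ is an isogeny; the equality case is exactly your argument via $Z=\ker N_Y$. The only cosmetic difference is ordering (the paper identifies the kernel first and then does the dimension count), and the paper cites \Cref{L:Zisimage} rather than the definition for the implication ``$\ker N_Y$ abelian $\Rightarrow Z=\ker N_Y$''.
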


\begin{proof}
  Since $\ker(\mu) = Y\times_XZ \subseteq
  Y\times_X(\ker N_Y)= Y[e]$ (\Cref{L:KYcapY}) has relative dimension zero over $S$,
it follows that   $\mu$ is an isogeny onto its image.  Considering the
definition of the norm map \eqref{E:NmY}, and \Cref{BL:L531}\ref{BL:L531b}, it
follows that $\operatorname{Im}N_Y=Y$, and therefore $\dim_S Y+\dim_S (\ker N_Y)=\dim_S X$.  Thus, as $\dim_S(Y) + \dim_S(Z) = \dim_S(Y)+\dim_S(\ker N_Y) =
  \dim_S(X)$, the image of $\mu$ is all of $X$.
  The assertion on degrees now follows from the containment $\ker(\mu)
  \subseteq Y[e]$. Finally, if $\ker N_Y$ is an abelian scheme, then we have $Z=\ker N_Y$ (\Cref{L:Zisimage}), so that $Y\times_XZ= Y\times_X(\ker N_Y)= Y\times_X (\ker N_Y)$, which equals $Y[e]$ by \Cref{L:KYcapY}.
\end{proof}

Now consider the norm map $N_Z = N_{Z\subseteq X,\lambda}$ for the complement $Z$ of $Y$ in $X$.  For later reference, we collect a few results about the interactions of $N_Y$ and $N_Z$:

\begin{lem}\label{L:BL125(23)}
  We have 
  \begin{alphabetize}
  \item $N_Y|_Y=[e_Y]$ and $N_Z|_Z=[e_Z]$.
  
    \item  $N_Y^{(\dagger)} = N_Y$, and $N_Z^{(\dagger)} = N_Z$, where $(\dagger)$ is the Rosati involution induced by $\lambda$.
    
\item   $N_Z N_Y=0$ and $N_YN_Z = 0$, so that $N_Y|_Z=0$ and $N_Z|_Y=0$.

\item $e_YN_Z+e_ZN_Y=[e_Ye_Z]_X$.
\end{alphabetize}

\end{lem}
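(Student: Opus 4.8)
The plan is to establish the four assertions in sequence, since each rests on the previous. Part (a) is immediate: $N_Y|_Y=[e_Y]$ is precisely \Cref{BL:L531}\ref{BL:L531a}, and $N_Z|_Z=[e_Z]$ is that same statement applied to the inclusion $\iota_Z\colon Z\hookrightarrow X$ with the polarization $\lambda$, whose associated exponent is $e_Z$. For part (b) I would dualize the defining composition. Write $N_Y=\iota_Y\circ E\circ\widehat{\iota_Y}\circ\lambda$, where $E:=e_Y(\iota_Y^*\lambda)\inv$ is the genuine isogeny $g_{e_Y}\colon\widehat Y\to Y$ of \Cref{P:BL1.2.6}. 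Taking duals and invoking three standard symmetries --- that a polarization is symmetric ($\widehat\lambda=\lambda$), biduality of morphisms ($\widehat{\widehat{\iota_Y}}=\iota_Y$), and the resulting symmetry $\widehat{\iota_Y^*\lambda}=\iota_Y^*\lambda$ of the restricted polarization (so that $\widehat E=E$) --- one computes $\widehat{N_Y}=\lambda\circ\iota_Y\circ E\circ\widehat{\iota_Y}$, whence $N_Y^{(\dagger)}=\lambda\inv\widehat{N_Y}\lambda=N_Y$. The identical computation with $Z$ in place of $Y$ gives $N_Z^{(\dagger)}=N_Z$.

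Part (c) is where the only real subtlety lies, and it is the main obstacle. By construction $Z=\ker(N_Y)^{\abvar}\subseteq\ker N_Y$ (see \eqref{E:defcomplement}), so $N_Y|_Z=N_Y\iota_Z=0$ and therefore $N_YN_Z=N_Y\iota_Z M_Z=0$. The reverse vanishing $N_ZN_Y=0$, however, is not visible from the definitions --- there is no a priori reason for $Y$ to sit inside $\ker N_Z$ --- so I would bootstrap it from part (b): since the Rosati involution is an anti-involution, $N_ZN_Y=N_Z^{(\dagger)}N_Y^{(\dagger)}=(N_YN_Z)^{(\dagger)}=0$. This is exactly why I prove the self-adjointness (b) before (c). Finally, factoring $N_Y=\iota_Y M_Y$ with $M_Y=E\circ\widehat{\iota_Y}\circ\lambda$ surjective --- it is a composite of the surjections $\lambda$, $\widehat{\iota_Y}$ (surjective by \Cref{L:dualexact}), and the isogeny $E$ --- the identity $N_Z\iota_Y M_Y=N_ZN_Y=0$ forces $N_Z|_Y=N_Z\iota_Y=0$.

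For part (d) I would set $\Phi:=e_YN_Z+e_ZN_Y-[e_Ye_Z]_X\in\operatorname{End}(X/S)$ and show $\Phi=0$ by precomposing with the isogeny $\mu=\iota_Y+\iota_Z\colon Y\times_SZ\to X$ of \Cref{L:compl}. Using (a) and (c), the computations $\Phi\iota_Y=e_Z[e_Y]_X\iota_Y-[e_Ye_Z]_X\iota_Y=0$ and, symmetrically, $\Phi\iota_Z=e_Y[e_Z]_X\iota_Z-[e_Ye_Z]_X\iota_Z=0$ show that $\Phi\circ\mu=0$. Because $\mu$ is surjective and, by rigidity, $\operatorname{End}(X/S)$ embeds into the endomorphisms of any geometric fiber $X_s$ (on which $\mu_s$ is surjective), this forces $\Phi=0$, which is the asserted identity. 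Everything beyond the bootstrap in part (c) is formal manipulation once the vanishing statements and the surjectivity of $M_Y$ are in hand.
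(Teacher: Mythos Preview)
Your proof is correct and follows essentially the same approach as the paper: (a) is a restatement of \Cref{BL:L531}\ref{BL:L531a}; (b) is the standard duality computation; for (c) both you and the paper first obtain $N_YN_Z=0$ from $Z\subseteq\ker N_Y$ and then deduce $N_ZN_Y=0$ via the Rosati involution; and for (d) both arguments check the identity on $Y$ and $Z$ separately and conclude by surjectivity of $\mu$. You have simply spelled out more of the details (in particular the explicit dualization in (b) and the derivation of $N_Z|_Y=0$ from the surjectivity of $M_Y$), which the paper leaves to the reader or to \cite[Lem.~5.3.1]{BL}.
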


\begin{proof}  (a) is just a restatement of \Cref{BL:L531}(a). 
(b) is a straight forward computation from the definitions (see \cite[Lem.~5.3.1]{BL}, where the same argument works verbatim).
 For (c), 
 since the image of $N_Z$ is $Z$, and $Z$ is contained in the kernel of $N_Y$, we have $N_Y N_Z = 0$.  For the other claim, use the fact that $(N_Z N_Y)^{(\dagger)} = N_Y^{(\dagger)} N_Z^{(\dagger)} = N_Y N_Z$.
 For (d), this follows immediately on restriction to $Y$ and $Z$ from (a) and (c).  Then, using the fact that the addition map $\mu\colon Y\times_SZ\to X$ is surjective, (d) follows for all of $X$.    (See also  \cite[p.~125, eqs.~(1)--(4)]{BL} for $S = \spec \cx$.)
\end{proof}

\begin{lem}
  The image of $[e_Z]-N_Z$ is $Y$.
\end{lem}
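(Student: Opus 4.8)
The plan is to exploit the symmetry between $Y$ and $Z$, reducing the claim to \Cref{L:Zisimage} applied to the complement. Since $Z$ is an abelian subscheme of $X$ and $(X,\lambda)$ satisfies \Cref{comphyp}, the entire construction of \S\ref{S:normendomorphism} applies verbatim with the inclusion $\iota_Z\colon Z \hookrightarrow X$ in place of $\iota_Y\colon Y\hookrightarrow X$; in particular the norm map $N_Z$, its exponent $e_Z$, and \Cref{L:Zisimage} are all available on the $Z$-side. Thus
\[
\im([e_Z]_X - N_Z) = \ker(N_Z)^{\abvar},
\]
and it suffices to identify $\ker(N_Z)^{\abvar}$ with $Y$.

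First I would establish the inclusion $Y \subseteq \ker(N_Z)^{\abvar}$. By \Cref{L:BL125(23)}(c) we have $N_Z|_Y = 0$, so $Y \subseteq \ker(N_Z)$; since $Y$ is itself an abelian subscheme of $X$ and $\ker(N_Z)^{\abvar}$ is the \emph{maximal} abelian subscheme of $\ker(N_Z)$, this forces $Y \subseteq \ker(N_Z)^{\abvar}$. Next I would match relative dimensions. Applying \Cref{L:kerNab} to $Z$ gives $\dim_S \ker(N_Z)^{\abvar} = \dim_S \ker(N_Z)$; since $\im N_Z = Z$ (by the same computation used for $\im N_Y = Y$ in the proof of \Cref{L:compl}), we get $\dim_S \ker(N_Z) = \dim_S X - \dim_S Z$. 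Combining this with the relation $\dim_S Y + \dim_S Z = \dim_S X$ from \Cref{L:compl} yields $\dim_S \ker(N_Z)^{\abvar} = \dim_S Y$.

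Finally I would upgrade the equality of dimensions to an equality of schemes: the closed immersion $Y \hookrightarrow \ker(N_Z)^{\abvar}$ of abelian schemes over $S$ has quotient an abelian scheme of relative dimension zero, hence trivial, so $Y = \ker(N_Z)^{\abvar}$ (equivalently, one checks fiberwise that an abelian subvariety of full dimension in a connected abelian variety is the whole variety). Combining with the displayed formula gives $\im([e_Z]_X - N_Z) = Y$, as claimed. The only step requiring genuine care is verifying that the $Z$-side hypotheses of \Cref{comphyp} and \Cref{L:kerNab} are inherited from $(X,\lambda)$ — they are, since those conditions depend only on $(X,\lambda)$ and on $Z$ being an abelian subscheme — so that the symmetric transcription of the $Y$-side results is legitimate; everything else is a dimension bookkeeping argument.
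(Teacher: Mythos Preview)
Your proof is correct and follows essentially the same approach as the paper: both apply \Cref{L:Zisimage} (with $Z$ in place of $Y$) to identify $\im([e_Z]-N_Z)$ with $\ker(N_Z)^{\abvar}$, and then use a dimension count to conclude this equals $Y$. The paper's proof is terser (it just says ``now use a dimension count''), while you spell out explicitly the inclusion $Y\subseteq\ker(N_Z)^{\abvar}$ via \Cref{L:BL125(23)}(c) and the matching of dimensions via \Cref{L:kerNab} and \Cref{L:compl}.
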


\begin{proof}
  The image of $[e_Z]-N_Z$ is the largest abelian scheme contained in the kernel of $N_Z$ (apply \Cref{L:newZvsWTZ} and \Cref{L:Zisimage} to $Z\subseteq X$); now use a dimension count.
\end{proof}

Thus, $(Y_0,Y_1) :=   (Y,Z)$ form a complementary pair in a symmetric
sense:
\begin{align} \label{E:Sym1}
N_{Y_i}(Y_i) &= Y_i,\\
([e_{Y_i}]-N_{Y_i})(Y_i) &= Y_{1-i}. \label{E:Sym2}
\end{align}

 \begin{rem}
The map
\[
  \xymatrix{
(N_Y,N_Z): X\ar[r] &  Y\times_S Z}\]
is an isogeny.  In fact, this follows from \Cref{L:BL125(23)}(d),
which says  that $(e_YN_Y,e_ZN_Z)$ is an isogeny inverse of $\mu$, in
the sense that $\mu\circ (e_YN_Y,e_ZN_Z)=[e_Ye_Z]_X$, or
alternatively, that $$\mu^{-1}=\frac{1}{e_Ye_Z} (e_YN_Y,e_ZN_Z)\in
\operatorname{Hom}_{\mathbb Q}(X,Y\times_S Z).$$ (If $S = \spec \cx$,
this is \cite[Thm.~5.3.5]{BL}.)
\end{rem}

\subsection{Poincar\'e reducibility}
 We can use these ideas to prove Poincar\'e reducibility for
 polarizable abelian schemes (abelian schemes that admit a
 polarization).      The starting point is:

\begin{teo}\label{T:C536}
For a polarized abelian scheme $(X,\lambda)$ over $S$, and an abelian subscheme $Y\subseteq X$ with complement $Z\subseteq X$, 
the addition map
\[\xymatrix{
\mu\colon (Y,\iota_Y^*\lambda)\times (Z,\iota_Z^*\lambda)\ar[r]& (X,\lambda)}\]
is an isogeny of polarized abelian schemes.
\end{teo}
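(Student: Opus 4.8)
The plan is to leverage \Cref{L:compl}, which already establishes that $\mu=\iota_Y+\iota_Z$ is an isogeny of abelian schemes; the only remaining content is to check that $\mu$ is compatible with the polarizations, i.e.\ that the pulled-back polarization $\mu^*\lambda:=\widehat\mu\circ\lambda\circ\mu$ on $Y\times_S Z$ coincides with the product polarization $\iota_Y^*\lambda\times\iota_Z^*\lambda$. As a preliminary point, I would record that $\iota_Y^*\lambda$ and $\iota_Z^*\lambda$ are genuine polarizations: on a geometric fiber $\lambda_s=\phi_{L_s}$ for an ample $L_s$, and $\iota_Y^*\lambda=\widehat\iota_Y\lambda\iota_Y=\phi_{\iota_Y^*L}$ with $\iota_Y^*L$ ample since $\iota_Y$ is a closed immersion; hence the product is a polarization on $Y\times_S Z$, and it suffices to prove the identity of homomorphisms $\mu^*\lambda=\iota_Y^*\lambda\times\iota_Z^*\lambda$.

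Next I would make the computation explicit. Using the canonical identification $\widehat{Y\times_S Z}=\widehat Y\times_S\widehat Z$, one has $\widehat\mu=(\widehat\iota_Y,\widehat\iota_Z)$, and writing $\mu^*\lambda$ as a block matrix of homomorphisms gives
\[
\mu^*\lambda=\begin{pmatrix}\widehat\iota_Y\lambda\iota_Y & \widehat\iota_Y\lambda\iota_Z\\ \widehat\iota_Z\lambda\iota_Y & \widehat\iota_Z\lambda\iota_Z\end{pmatrix}=\begin{pmatrix}\iota_Y^*\lambda & \widehat\iota_Y\lambda\iota_Z\\ \widehat\iota_Z\lambda\iota_Y & \iota_Z^*\lambda\end{pmatrix}.
\]
The diagonal is precisely the product polarization, so the theorem reduces to the vanishing of the two cross terms $\widehat\iota_Z\lambda\iota_Y\colon Y\to\widehat Z$ and $\widehat\iota_Y\lambda\iota_Z\colon Z\to\widehat Y$.

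The vanishing of the cross terms is where I would invoke \Cref{L:BL125(23)}(c), which gives $N_Z|_Y=0$ and $N_Y|_Z=0$. Unwinding the definition \eqref{E:NmY} of the norm map, $N_Z|_Y=\iota_Z\circ\bigl(e_Z(\iota_Z^*\lambda)^{-1}\bigr)\circ\widehat\iota_Z\circ\lambda\circ\iota_Y$; since $\iota_Z$ is a monomorphism and $e_Z(\iota_Z^*\lambda)^{-1}$ is invertible in $\operatorname{Hom}_{\mathbb{Q}}(\widehat Z,Z)$, the vanishing of $N_Z|_Y$ forces $\widehat\iota_Z\lambda\iota_Y=0$ in $\operatorname{Hom}_{\mathbb{Q}}(Y,\widehat Z)$, hence as an honest homomorphism via $\operatorname{Hom}(Y,\widehat Z)\hookrightarrow\operatorname{Hom}_{\mathbb{Q}}(Y,\widehat Z)$. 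The symmetric computation with $N_Y|_Z=0$ kills the other cross term, yielding $\mu^*\lambda=\iota_Y^*\lambda\times\iota_Z^*\lambda$ and, together with \Cref{L:compl}, completing the proof that $\mu$ is an isogeny of polarized abelian schemes.

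I expect no serious obstacle here: the argument is essentially formal once \Cref{L:compl} and \Cref{L:BL125(23)} are in hand. The one point demanding care is the bookkeeping of duals—correctly identifying $\widehat{Y\times_S Z}$ with $\widehat Y\times_S\widehat Z$, the expression $\widehat\mu=(\widehat\iota_Y,\widehat\iota_Z)$, and matching each off-diagonal entry to the norm-map relation that annihilates it—so I would take care that these identifications are the ones compatible with base change, as all the constructions in \S\ref{S:normendomorphism} are.
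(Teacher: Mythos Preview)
Your argument is correct and is essentially the proof the paper has in mind: the paper's own proof just says that the argument of \cite[Cor.~5.3.6]{BL} goes through using \Cref{L:BL125(23)}, and what you have written is exactly that argument spelled out---the block-matrix computation of $\mu^*\lambda$ with the off-diagonal terms killed by $N_Z|_Y=0$ and $N_Y|_Z=0$. The bookkeeping with $\widehat\mu=(\widehat\iota_Y,\widehat\iota_Z)$ and the passage through $\operatorname{Hom}_{\mathbb Q}$ to conclude $\widehat\iota_Z\lambda\iota_Y=0$ are handled correctly.
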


\begin{proof}
  The proof of \cite[Cor.~5.3.6]{BL} holds in this situation, using  \Cref{L:BL125(23)}.
      \end{proof}

From this one obtains: 

\begin{cor}\label{C:PoincRed}
 Suppose that  $S$ satisfies \Cref{alwaysepi}.  Given a polarizable abelian scheme $X$ over $S$, there is an isogeny
\begin{equation}\label{E:PoincRed}
\xymatrix{X_1^{n_1}\times_S\cdots \times_SX_r^{n_r} \ar[r]& X}
\end{equation}
with simple polarizable abelian schemes $X_\nu$ over $S$ not isogenous to each other.  Moreover, the polarizable abelian schemes $X_\nu$ and the integers $n_\nu$ are uniquely determined up to isogenies and permutations.  Finally, if $X$ is projective over $S$, so are the $X_{\nu}/S$.  
\end{cor}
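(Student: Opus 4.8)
The plan is to obtain existence by induction on the relative dimension $g=\dim_S X$, and to deduce uniqueness from the structure of the rational endomorphism algebra $A:=\operatorname{End}_{\mathbb{Q}}(X/S)$. Fix a polarization $\lambda$ on $X$; since $S$ satisfies \Cref{alwaysepi}, the pair $(X,\lambda)$ satisfies \Cref{comphyp}, so the norm-endomorphism and complement constructions of \S\ref{S:normendomorphism} are all available. If $X$ is relatively simple there is nothing to prove. Otherwise choose a nonzero proper abelian subscheme $Y\subsetneq X$ and let $Z$ be its complement with respect to $\lambda$; by \Cref{T:C536} the addition map $\mu\colon (Y,\iota_Y^*\lambda)\times_S(Z,\iota_Z^*\lambda)\to(X,\lambda)$ is an isogeny of polarized abelian schemes. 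As $0<\dim_S Y<\dim_S X$ and $\dim_S Z=\dim_S X-\dim_S Y$, both $Y$ and $Z$ are polarizable of strictly smaller relative dimension, so the inductive hypothesis writes each, up to isogeny, as a product of simple polarizable abelian schemes. Composing these isogenies with $\mu$, and using that isogeny is an equivalence relation (\Cref{C:BL1.2.7}), yields an isogeny $\prod_i W_i\to X$ with every $W_i$ simple; grouping the $W_i$ into isogeny classes and selecting one representative $X_\nu$ per class produces the asserted isogeny $\prod_\nu X_\nu^{n_\nu}\to X$ with the $X_\nu$ pairwise non-isogenous.

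For the projectivity assertion, each $W_i$ arising above is an abelian subscheme of $X$, hence a closed subscheme; a closed immersion into an $S$-projective scheme is projective, so if $X/S$ is projective then every $W_i$---and in particular every chosen representative $X_\nu$---is projective over $S$.

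For uniqueness I would work through $A=\operatorname{End}_{\mathbb{Q}}(X/S)$. First, $A$ is semisimple: the Rosati involution $(\dagger)$ attached to $\lambda$ preserves $A$ and is compatible with base change, so for a geometric point $s$ of $S$ the rigidity injection $A\hookrightarrow\operatorname{End}_{\mathbb{Q}}(X_s)$ identifies $A$ with a $(\dagger)$-stable subalgebra. Classical positivity of the Rosati involution on the fiber supplies a positive-definite form $(a,b)\mapsto\operatorname{Tr}(ab^{(\dagger)})$ on $\operatorname{End}_{\mathbb{Q}}(X_s)$, whose restriction to the $(\dagger)$-stable subalgebra $A$ remains positive definite; a finite-dimensional $\mathbb{Q}$-algebra with a positive involution can have no nonzero nilpotent ideal (the trace of $aa^{(\dagger)}$ would vanish on it), so $A$ is semisimple. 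Any decomposition $\prod_\nu X_\nu^{n_\nu}\to X$ into pairwise non-isogenous simple factors induces an isomorphism $A\cong\prod_\nu M_{n_\nu}(D_\nu)$ with $D_\nu=\operatorname{End}_{\mathbb{Q}}(X_\nu/S)$ a division algebra (\Cref{C:BL1.2.7}\ref{C:BL1.2.7b}) and $\operatorname{Hom}_{\mathbb{Q}}(X_\mu,X_\nu)=0$ for $\mu\neq\nu$. The central primitive idempotents of the semisimple algebra $A$, together with its simple factors, are canonical; since the image of the $\nu$-th central idempotent is isogenous to $X_\nu^{n_\nu}$, Wedderburn's theorem forces any two such decompositions to match, with equal multiplicities $n_\nu$ and isogenous representatives $X_\nu$, giving uniqueness up to isogeny and permutation.

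The main obstacle is the uniqueness half, and within it the division-algebra and $\operatorname{Hom}$-vanishing statements for simple factors: these cannot be read off a single geometric fiber, since a relatively simple $X_\nu$ need not have simple fibers and relatively non-isogenous factors may become isogenous over a point of $S$. Instead one argues relatively: under \Cref{alwaysepi} a nonzero homomorphism of simple abelian schemes has abelian-scheme image equal to its target, hence is surjective, and a fiberwise dimension count then shows it is either an isogeny (forcing $X_\mu\sim X_\nu$, and giving the division-algebra property when $\mu=\nu$) or exhibits a nonzero proper abelian subscheme of the source, contradicting simplicity. Making this dichotomy rigorous---in particular controlling the maximal abelian subscheme of the kernel over a general base satisfying \Cref{alwaysepi}, for which \Cref{L:maxab} applies only when $\lambda$ is smooth or $S$ is a field---is the delicate point; the semisimplicity of $A$, by contrast, reduces cleanly to Rosati positivity on a fiber.
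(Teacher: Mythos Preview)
Your existence argument and the projectivity claim match the paper's (the paper pulls back a relatively ample bundle along the isogeny rather than using that the factors are closed in $X$, but both work).  For uniqueness you take a genuinely different route: the paper simply compares two decompositions directly by choosing, via \Cref{P:BL1.2.6}, an isogeny $X\to Y_1^{m_1}\times_S\cdots\times_S Y_t^{m_t}$ and then studying the composition
\[
X_1^{n_1}\times_S\cdots\times_S X_r^{n_r}\longrightarrow X\longrightarrow Y_1^{m_1}\times_S\cdots\times_S Y_t^{m_t}
\]
together with its projections to each $Y_j$, invoking only the fact that a nonzero morphism of simple abelian $S$-schemes is an isogeny.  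Your approach through the semisimple structure of $\End_\rat(X/S)$ is correct and more informative (it actually identifies the Wedderburn decomposition), but is heavier than what is needed, and in the end rests on the \emph{same} key fact about morphisms between simples.

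You are right to flag that fact as the crux, but your worry about \Cref{L:maxab} and maximal abelian subschemes of kernels is unnecessary: under \Cref{alwaysepi} the claim follows cleanly by duality.  If $f\colon A\to B$ is a nonzero morphism of simple abelian $S$-schemes, its image is an abelian subscheme (by \Cref{alwaysepi}), nonzero, hence all of $B$; so $f$ is surjective and $\dim_S A\ge \dim_S B$.  Duals of simples are simple (a proper nonzero abelian subscheme $W\subset\widehat A$ would give, via \Cref{L:dualexact}, a proper nonzero abelian subscheme $\widehat{\widehat A/W}\hookrightarrow A$), so applying the same argument to the nonzero morphism $\widehat f\colon\widehat B\to\widehat A$ shows $\widehat f$ is surjective and $\dim_S B\ge\dim_S A$.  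Thus $\dim_S A=\dim_S B$ and $f$ is a surjective morphism between abelian schemes of the same relative dimension, hence an isogeny.  This closes the gap you identified and makes either uniqueness argument go through without appealing to existence of maximal abelian subschemes of kernels.
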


\begin{proof}  (The cases $S = \spec \cx$, $\spec \bar K$ and $\spec
  K$ may be found, respectively, in \cite[Thm.~5.7]{BL}, \cite[\S 19
    Thm.~1]{mumfordAV}, \cite[Cor.~3.20]{conradtrace}.)
  We may and do fix a polarization on $X$. Utilizing \Cref{T:C536}, 
one obtains such a decomposition by induction on dimension (since a
non-simple abelian scheme admits a nontrivial sub-abelian scheme, which in turn
admits a complement), and replacing isogenous abelian schemes with copies of one another.  

The uniqueness can be shown as follows.  Given another such isogeny $Y_1^{m_1}\times_S\cdots \times_SY_t^{m_t}\to X$,
one obtains an isogeny $X\to Y_1^{m_1}\times_S\cdots \times_SY_t^{m_t} $ from \Cref{P:BL1.2.6};  then by considering the composition $X_1^{n_1}\times_S\cdots \times_SX_r^{n_r} \to X \to Y_1^{m_1}\times_S\cdots \times_SY_t^{m_t}$ 
and the 
projections onto each component, one concludes uniqueness, as a nonzero map between simple abelian schemes is an isogeny.
Note that this also shows that the decomposition is independent of the choice of polarization of $X$.

Finally, if $L$ on $X$ is relatively ample, then the pull-backs of $L$ via \eqref{E:PoincRed} are also relatively ample. 
\end{proof}

\begin{rem} Note that the decomposition \eqref{E:PoincRed} in \Cref{C:PoincRed} is \emph{not} stable under base change.  For instance one can construct families of principally polarized abelian varieties over $\mathbb C$ (e.g., Jacobians of curves) whose very general fibers are simple,  but which degenerate to products of abelian varieties.  Such a family must be simple, since the very general fiber is simple, but the special fiber is  not simple.   
In fact, in contrast to other base change properties we have been
discussing, the decomposition in \Cref{C:PoincRed} is not even stable
under faithfully flat base change.  Indeed, there are examples of
abelian varieties that are simple, but not geometrically simple.  For
instance, let $L/K$ be a nontrivial finite separable extension of
fields, and let $X$ be a 
geometrically simple abelian variety over a field $L$ which admits
no model over $L$ (e.g., let $K=\rat$, let $L = \rat(j)$ for some
irrational algebraic number $j$, and let
$X/L$ be an elliptic curve with $j$-invariant $j$).  Let $Y=\operatorname{R}_{L/K}X$ be the Weil restriction of
$X$ to $K$.  Then $Y$ is simple, but not geometrically
simple. For example, if $L/K$ is Galois and $K$ is finitely
generated, then $H^1(Y_{\bar K},\rat_\ell)$ is the induced
representation $\operatorname{Ind}_{\gal(L)}^{\gal(K)}(H^1(X_{\bar
  L},\rat_\ell))$, and thus irreducible as a representation of $\gal(K)$, while $Y_L \iso \prod_{\sigma
  \in \gal(L/K)}X^\sigma$; see, e.g., \cite[\S 1]{milne72}.   \end{rem}

We also note the following consequence of \Cref{T:C536}; recall that a polarized abelian scheme is said to be \emph{indecomposable} if it is not isomorphic as a polarized abelian scheme to a product of polarized abelian schemes.  

\begin{cor}\label{C:e=1}
If  $e_Y=1$, then $(X,\lambda)\cong (Y,\iota_Y^*\lambda)\times_S (Z,\iota_Z^*\lambda)$.  In particular, if $(X,\lambda)$ is indecomposable, then $e_Y=1$  if and only  if $Y=0$ or $Y=X$; if $Y=X$, then $\lambda$ is principal.  
\end{cor}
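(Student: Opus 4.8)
The plan is to read the corollary off from \Cref{T:C536} together with the kernel estimate in \Cref{L:compl}, with no genuinely new input. First I would recall that, by \Cref{T:C536}, the addition map $\mu = \iota_Y + \iota_Z$ exhibits $(Y,\iota_Y^*\lambda)\times_S(Z,\iota_Z^*\lambda) \to (X,\lambda)$ as an isogeny of polarized abelian schemes, and that by \Cref{L:compl} its kernel satisfies $\ker(\mu) \subseteq Y[e_Y]$. Imposing $e_Y = 1$ forces $Y[e_Y] = Y[1] = \ker([1]_Y)$ to be the trivial group scheme (the zero section), so that $\ker(\mu) = 0$ and $\deg \mu = 1$; since $\mu$ is finite and faithfully flat, a morphism of degree one is an isomorphism. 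As $\mu$ is simultaneously an isogeny of \emph{polarized} abelian schemes, this isomorphism carries $\iota_Y^*\lambda \times \iota_Z^*\lambda$ to $\lambda$, yielding the desired isomorphism $(X,\lambda) \cong (Y,\iota_Y^*\lambda)\times_S (Z,\iota_Z^*\lambda)$ of polarized abelian schemes.

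For the ``in particular'' part I would assume $(X,\lambda)$ indecomposable. For the forward implication, if $e_Y = 1$ then the splitting just proved expresses $(X,\lambda)$ as a product of two polarized abelian schemes, so indecomposability forces one factor to be zero-dimensional, i.e.\ $Y = 0$ or $Z = 0$. Here I would invoke the dimension identity $\dim_S Y + \dim_S Z = \dim_S X$ recorded in the proof of \Cref{L:compl} (which follows from $\operatorname{Im} N_Y = Y$ together with $\dim_S Z = \dim_S \ker(N_Y)$ of \Cref{L:kerNab}); it shows $Z = 0$ is equivalent to $\dim_S Y = \dim_S X$, hence, checking on geometric fibers where an abelian subvariety of full dimension is everything, to $Y = X$. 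Thus $e_Y = 1$ gives $Y = 0$ or $Y = X$. Conversely, the case $Y = 0$ is immediate, as the polarization of the zero abelian scheme has trivial kernel and hence exponent $1$; and in the case $Y = X$ one has $\iota_Y = \operatorname{Id}_X$, so $\iota_Y^*\lambda = \lambda$ and $e_Y$ is exactly the exponent of $\lambda$, which equals $1$ precisely when $\ker(\lambda) \subseteq X[1] = 0$, i.e.\ precisely when $\lambda$ is principal. This last equivalence is what records the trailing clause: whenever $Y = X$ is realized with $e_Y = 1$, the polarization $\lambda$ must be principal.

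I do not expect a serious obstacle, since the argument is formal once \Cref{T:C536} and \Cref{L:compl} are available. The single point demanding care is that $e_Y = 1$ must be used at the level of group schemes and not merely of $\bar K$-points: it is the scheme-theoretic triviality of $Y[1]$ that makes $\mu$ an honest isomorphism rather than a possibly purely inseparable isogeny of degree one, and this is exactly the subtlety that would otherwise fail in characteristic $2$. A secondary, purely logical point is the correct parsing of the biconditional at $Y = X$, where $e_Y = 1$ is equivalent to principality of $\lambda$ rather than being automatic; the trailing clause ``if $Y = X$, then $\lambda$ is principal'' is precisely this equivalence in the relevant direction.
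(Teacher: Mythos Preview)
Your proposal is correct and follows exactly the approach the paper intends: the one-line proof in the paper simply cites \Cref{T:C536} and \Cref{L:compl}, and you have correctly unpacked what those citations mean, namely that $\ker(\mu)\subseteq Y[1]=0$ forces the polarized isogeny $\mu$ to be an isomorphism, after which indecomposability gives the dichotomy. Your careful parsing of the trailing clause (that $Y=X$ with $e_Y=1$ forces $\lambda$ principal, so the biconditional is to be read with this constraint built in) is also the right reading; the only superfluous remark is the worry about a ``purely inseparable isogeny of degree one,'' since any isogeny of degree one already has scheme-theoretically trivial kernel and is an isomorphism.
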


\begin{proof}
This is a direct consequence of \Cref{T:C536} and \Cref{L:compl}.
\end{proof}

\subsection{Complements inside prinicpally polarized abelian varieties}
\label{SS:complementppav}

In the special, but extremely useful, case where $\lambda$ is a principal polarization, we can say more.  Since $\lambda$ is an isomorphism, the complement $Z$ of $Y$ is simply
\begin{equation}\label{E:BL1213-1}
Z=
\lambda^{-1}(\ker \widehat{\iota_Y})\cong \widehat{X/Y}.
\end{equation}
In fact, there is a short exact sequence
\begin{equation}\label{E:BL1213-2}
\xymatrix{
0\ar[r]& Y \ar[r]^{\iota_Y}& X \ar[r]^{\widehat \iota_Z \lambda }& \widehat Z \ar[r]& 0
}
\end{equation}
and under the isomorphism $\widehat Z\cong X/Y$, the composition $\iota_Z^*\lambda: Z\stackrel{\iota_Z}{\to}X \stackrel{\widehat {\iota_Z}\lambda}{\to} \widehat Z $ is identified with the composition $Z\stackrel{\iota_Z}{\to}X \to X/Y$. 

Moreover, we can characterize the kernel of the induced polarization on $Y$.

\begin{cor}\label{P:BL-1214}
  \label{L:kerpullbackpolarization}
 If $\lambda$ is principal, then $\K(\iota_Y^*\lambda) = Y\times_XZ \subseteq Y$.
\end{cor}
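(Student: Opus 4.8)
The plan is to identify both sides of the asserted equality with the same scheme-theoretic preimage under the closed immersion $\lambda\circ\iota_Y$. First I would observe that the kernel of a composition of homomorphisms of group schemes is the scheme-theoretic preimage of the kernel of the outer map (this is just associativity of fiber products, kernels being pullbacks along the zero section). Since $\iota_Y^*\lambda = \widehat{\iota_Y}\circ\lambda\circ\iota_Y$, this gives
$$\K(\iota_Y^*\lambda) = \ker\bigl(\widehat{\iota_Y}\circ\lambda\circ\iota_Y\bigr) = (\lambda\circ\iota_Y)^{-1}\bigl(\ker\widehat{\iota_Y}\bigr),$$
where the preimage is the fiber product $Y\times_{\widehat X}\ker\widehat{\iota_Y}$ taken along $\lambda\circ\iota_Y$.

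Next I would analyze the right-hand side $Y\times_X Z$. Because $\iota_Z\colon Z\hookrightarrow X$ is a closed immersion, the fiber product $Y\times_X Z$ is simply the scheme-theoretic preimage $\iota_Y^{-1}(Z)$, a closed subscheme of $Y$ (this is exactly the assertion ``$\subseteq Y$''). Now I invoke \eqref{E:BL1213-1}, which in the principal case records the \emph{exact} equality $Z = \lambda^{-1}(\ker\widehat{\iota_Y})$ as subschemes of $X$ — an honest preimage, not merely up to passage to the maximal abelian subscheme, precisely because $\lambda$ is an isomorphism and $\ker\widehat{\iota_Y}$ is already an abelian scheme by \Cref{L:dualexact} (applied to $0\to Y\xrightarrow{\iota_Y} X\to X/Y\to 0$). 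Substituting this description of $Z$ and again using associativity of fiber products yields
$$Y\times_X Z = \iota_Y^{-1}\bigl(\lambda^{-1}(\ker\widehat{\iota_Y})\bigr) = (\lambda\circ\iota_Y)^{-1}\bigl(\ker\widehat{\iota_Y}\bigr).$$

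Comparing the two displays gives $\K(\iota_Y^*\lambda) = Y\times_X Z$, completing the proof. I do not anticipate a genuine obstacle here: the entire argument is a chain of identifications of fiber products, driven by the fact that $\lambda$ is an isomorphism, so that passing back and forth through $\lambda$ loses no information and the complement $Z$ is literally a $\lambda$-preimage. The only point requiring care is bookkeeping of the scheme structures — checking that the kernel of the composite is genuinely the scheme-theoretic preimage and that \eqref{E:BL1213-1} is an equality of subschemes of $X$ rather than merely an identity on geometric points — but both hold for the reasons just indicated.
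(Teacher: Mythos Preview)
Your proof is correct and follows essentially the same approach as the paper: the paper's proof is the single chain of equalities $\K(\iota_Y^*\lambda)=\ker(\widehat{\iota_Y}\lambda\iota_Y)=\iota_Y^{-1}\lambda^{-1}(\ker\widehat{\iota_Y})=\iota_Y^{-1}Z=Y\times_XZ$, which is exactly your argument with less commentary. Your additional care in justifying that \eqref{E:BL1213-1} gives an honest equality of subschemes (via \Cref{L:dualexact} and the fact that $\lambda$ is an isomorphism) is a useful elaboration but not a different idea.
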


\begin{proof} (See {\cite[Cor.~12.1.4]{BL}} for the case $S = \spec \cx$.)
We have $\K(\iota_Y^*\lambda):= \ker \iota_Y^*\lambda=\ker (\widehat \iota_Y~\lambda \iota_Y)= \iota_Y^{-1}\lambda^{-1}(\ker \widehat \iota_Y)=\iota_Y^{-1}Z:= Y\times_XZ$.
\end{proof}

\begin{rem}\label{R:Ki*Theta}
Note by symmetry (e.g., \eqref{E:Sym2}) that if $\lambda$ is principal we also have $$\operatorname{K}(\iota_Z^*\lambda)=Z\times_XY  = Z \times_X Y \subseteq Z,
$$ 
so that  $$\operatorname{K}(\iota_Z^*\lambda)\cong \operatorname{K}(\iota_Y^*\lambda).$$ In particular, the exponents of the kernels of $\iota_Y^*\lambda $ and $\iota_Z^*\lambda$ are the same; i.e., $$e_Y=e_Z.$$  
In addition, if follows that if either of $\iota_Z^*\lambda$ or
$\iota_Y^*\lambda$ has a type (see \S \ref{S:DgExpTp}), then so does
the other.   In fact, without loss of generality, if $\dim Z\le \dim Y$,
and $Z$ has type $\delta = (\delta_1,\dots,\delta_{\dim Z})$, then
$Y$ has type  $(1,\dots,1,\delta_1,\dots,\delta_{\dim Z})$.  (See
{\cite[Cor.~12.1.5]{BL}} for the case $S = \spec \cx$.)
\end{rem}

\begin{cor}\label{C:e=1-ppav}
If $\lambda$ is principal and  $e_Y=1$  (or equivalently,  $e_Z=1$), then  $\iota_Y^*\lambda$ and $\iota_Z^*\lambda$ are principal polarizations and  $(X,\lambda)\cong (Y,\iota_Y^*\lambda)\times_S (Z,\iota_Z^*\lambda)$.  In particular, if $(X,\lambda)$ is indecomposable, then $e_Y=e_Z=1$  if and only  if  $Y=X$ or $Z=X$.
\end{cor}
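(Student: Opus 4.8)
The plan is to deduce everything from the results already established in the principal case, so the argument will be short. First I would record the equivalence of the two hypotheses: since $\lambda$ is principal, \Cref{R:Ki*Theta} gives $e_Y = e_Z$, so $e_Y = 1$ if and only if $e_Z = 1$, which justifies the parenthetical assertion in the statement.

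Next I would upgrade the two factor polarizations to principal ones. By \Cref{P:BL-1214} the kernel $\K(\iota_Y^*\lambda) = Y\times_X Z$ is a sub-group scheme of $Y$ whose exponent is $e_Y$; when $e_Y = 1$ this forces $\K(\iota_Y^*\lambda)\subseteq Y[e_Y] = Y[1] = 0$, so $\iota_Y^*\lambda$ is an isogeny with trivial kernel, hence an isomorphism, i.e., a principal polarization. Applying the same argument to $Z$ (using $e_Z = e_Y = 1$ together with the symmetric statement $\K(\iota_Z^*\lambda) = Z\times_X Y$ of \Cref{R:Ki*Theta}) shows that $\iota_Z^*\lambda$ is principal as well. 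The product decomposition $(X,\lambda)\cong (Y,\iota_Y^*\lambda)\times_S (Z,\iota_Z^*\lambda)$ is then precisely the content of \Cref{C:e=1}, which applies verbatim as soon as $e_Y = 1$; the only new information added here is that the two factor polarizations are principal, which we have just verified.

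For the ``in particular'' clause I would argue as follows. Assume $(X,\lambda)$ is indecomposable. If $e_Y = e_Z = 1$, then the decomposition above exhibits $(X,\lambda)$ as a product of polarized abelian schemes, so indecomposability forces one factor to be trivial: either $Y = 0$ or $Z = 0$. Since $Y$ and $Z$ are complementary, \Cref{L:compl} gives $\dim_S Y + \dim_S Z = \dim_S X$, so $Y = 0$ forces $Z = X$ and $Z = 0$ forces $Y = X$; hence $Y = X$ or $Z = X$. Conversely, if $Y = X$ then $\iota_Y$ is the identity and $\iota_Y^*\lambda = \lambda$ is principal, whence $e_Y = 1$, and then $e_Z = e_Y = 1$ by \Cref{R:Ki*Theta}; the case $Z = X$ is symmetric.

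There is no genuine obstacle in this proof: the entire statement is a formal consequence of \Cref{C:e=1}, \Cref{P:BL-1214}, and the symmetry $e_Y = e_Z$ recorded in \Cref{R:Ki*Theta}. The only points demanding a little care are the elementary interpretation of ``exponent $1$'' as ``trivial kernel'' (so that $\iota_Y^*\lambda$ is forced to be an isomorphism), and the bookkeeping of complementary dimensions via \Cref{L:compl} needed to pass between the vanishing of a factor and the assertions $Y = X$, $Z = X$.
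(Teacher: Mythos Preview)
Your proof is correct and follows essentially the same approach as the paper, which simply cites \Cref{C:e=1} and \Cref{R:Ki*Theta}; you have just unpacked these references in more detail, in particular making explicit why exponent $1$ forces the factor polarizations to be principal and spelling out the ``in particular'' clause rather than quoting it from \Cref{C:e=1}.
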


\begin{proof}
This follows immediately from \Cref{C:e=1} and \Cref{R:Ki*Theta}.
\end{proof}

\subsection{Symmetric idempotents, projectors, and abelian subschemes}

We now show that norm endomorphisms and abelian subschemes are equivalent.
 
           We start with the observation that if we set $\epsilon_Y= \frac{1}{e_Y}N_Y\in \operatorname{End}_{\mathbb Q}(X)$, then as a consequence of \Cref{BL:L531}(b) and \Cref{L:BL125(23)}(b), we have
$$
\epsilon_Y^{(\dagger)}= \epsilon_Y \ \ \ \epsilon_Y^2= \epsilon_Y.
$$
We call any endomorphism $\epsilon\in \operatorname{Hom}_{\mathbb
  Q}(X)$ such that $\epsilon^{(\dagger)}=\epsilon$ and
$\epsilon^2=\epsilon$ a \emph{symmetric idempotent}, or a symmetric
projector.   For a symmetric idempotent $\epsilon$ there is a positive
integer $n$ such that $n\epsilon \in \operatorname{End}(X)$, and we
define $$X^\epsilon:= \operatorname{Im}(n\epsilon)$$ to be the image
of $n\epsilon$; this is independent of the choice of $n$.  This
construction is only useful to us if the image is an abelian scheme,
in which case we will say that $\epsilon$ is a \emph{symmetric
  epi-abelian idempotent}.  (The reader should not be too alarmed at this apparent restriction.  On one hand, in many natural settings, an idempotent, like any morphism of abelian schemes, is epi-abelian (\Cref{L:ACMimab}); on the other hand, the authors have never encountered a symmetric idempotent which is not epi-abelian.)

We therefore have maps of sets 
$$
\xymatrix{
\{\text{Abelian subschemes of } X\}
\ar@/^1pc/[r]^<>(0.5){\epsilon_{(-)}}&  \{\text{Symmetric epi-abelian idempotents in } \operatorname{End}_{\mathbb Q}(X)\} \ar@/^1pc/[l]^<>(0.5){X^{(-)}}
}
$$

\begin{teo}\label{BL:T532}
The maps $X^{(-)}\circ \epsilon_{(-)}$ and $\epsilon_{(-)}\circ X^{(-)}$ are the identity maps.  
 \end{teo}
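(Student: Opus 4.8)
The plan is to verify the two round-trips separately. The composition $X^{(-)}\circ\epsilon_{(-)}$ is immediate: starting from an abelian subscheme $Y\subseteq X$, we have by definition $\epsilon_Y=\frac{1}{e_Y}N_Y$, so we may take $n=e_Y$ in the definition of $X^{\epsilon_Y}$, giving $X^{\epsilon_Y}=\operatorname{Im}(e_Y\epsilon_Y)=\operatorname{Im}(N_Y)$. As recorded in the proof of \Cref{L:compl} (via \Cref{BL:L531}\ref{BL:L531a}, which gives $N_Y|_Y=[e_Y]_Y$), one has $\operatorname{Im}(N_Y)=Y$, whence $X^{\epsilon_Y}=Y$.

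The substance is in the other composition $\epsilon_{(-)}\circ X^{(-)}$. Fix a symmetric epi-abelian idempotent $\epsilon\in\operatorname{End}_{\mathbb Q}(X)$, set $Y=X^\epsilon=\operatorname{Im}(n\epsilon)$ (an abelian subscheme, since $\epsilon$ is epi-abelian), and let $Z$ be the complement of $Y$ with respect to $\lambda$ (see \eqref{E:defcomplement}). I will prove $\epsilon_Y=\epsilon$ in $\operatorname{End}_{\mathbb Q}(X)$. Because $Y\times_SZ\to X$ is an isogeny (\Cref{L:compl}), a rational endomorphism of $X$ is determined by its restrictions to $Y$ and to $Z$; hence it suffices to check that $\epsilon$ and $\epsilon_Y$ agree on each. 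On $Y$, write $\epsilon=\iota_Y\circ p_Y$ with $p_Y\colon X\to Y$ surjective (possible since $\operatorname{Im}(n\epsilon)=Y$). The relation $\epsilon^2=\epsilon$ reads $\iota_Y(p_Y\iota_Y)p_Y=\iota_Y\operatorname{id}_Y p_Y$, and cancelling the monomorphism $\iota_Y$ and the epimorphism $p_Y$ in the isogeny category yields $p_Y\iota_Y=\operatorname{id}_Y$; thus $\epsilon\iota_Y=\iota_Y$, i.e.\ $\epsilon$ acts as the identity on $Y$. By \Cref{L:BL125(23)}(a) the same holds for $\epsilon_Y$, since $N_Y|_Y=[e_Y]$.

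The crux is to show $\epsilon|_Z=0$, matching $\epsilon_Y|_Z=0$ from \Cref{L:BL125(23)}(c). This is precisely where the symmetry hypothesis $\epsilon^{(\dagger)}=\epsilon$ must be invoked. Rewriting the Rosati relation as $\lambda\epsilon=\widehat\epsilon\lambda$ and dualizing the factorization $\epsilon=\iota_Y p_Y$ to $\widehat\epsilon=\widehat{p_Y}\,\widehat{\iota_Y}$, I compute $\lambda\epsilon\iota_Z=\widehat\epsilon\lambda\iota_Z=\widehat{p_Y}(\widehat{\iota_Y}\lambda\iota_Z)$. By construction $Z=\lambda^{-1}(\widehat{X/Y})^{\abvar}$, and $\widehat{X/Y}=\ker(\widehat{\iota_Y})$ by the dual exact sequence appearing in the proof of \Cref{L:kerNab}; hence $\lambda(Z)\subseteq\ker(\widehat{\iota_Y})$, i.e.\ $\widehat{\iota_Y}\lambda\iota_Z=0$. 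Therefore $\lambda\epsilon\iota_Z=0$, and since $\lambda$ is an isogeny (so injective in $\operatorname{Hom}_{\mathbb Q}$), we conclude $\epsilon\iota_Z=0$. Combining the two restrictions with the isogeny $Y\times_SZ\to X$ gives $\epsilon=\epsilon_Y$, completing the round-trip.

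I expect the main obstacle to be exactly this vanishing $\epsilon|_Z=0$: it is where the symmetry of $\epsilon$ is indispensable (a non-symmetric idempotent would have image $Y$ but kernel an arbitrary complement, not the $\lambda$-orthogonal complement $Z$), and one must take care that every manipulation—the factorization $\epsilon=\iota_Y p_Y$, the dualization, and the cancellations of $\iota_Y$, $p_Y$, and $\lambda$—is carried out in $\operatorname{End}_{\mathbb Q}(X)$ and justified by injectivity or surjectivity in the isogeny category. By contrast, the agreement-on-$Y$-and-$Z$ reduction rests only on $Y\times_SZ\to X$ being an isogeny, which is already available from \Cref{L:compl}, and so presents no real difficulty.
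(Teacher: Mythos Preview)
Your proof is correct, but it takes a genuinely different route from the paper's.

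The paper argues as follows for the second round-trip: since $X^{(-)}\circ\epsilon_{(-)}=\operatorname{id}$ already shows that $X^{(-)}$ is surjective, it suffices to prove that $X^{(-)}$ is injective. Given two symmetric epi-abelian idempotents $\epsilon_1,\epsilon_2$ with $X^{\epsilon_1}=X^{\epsilon_2}$, rigidity reduces the equality $\epsilon_1=\epsilon_2$ to geometric fibers, and there the paper invokes the argument of \cite[Thm.~5.3.2]{BL}, which ultimately rests on the positive definiteness of the Rosati trace form \cite[\S 21, Thm.~1]{mumfordAV}.

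Your argument is instead a direct structural computation: you use the isogeny $Y\times_S Z\to X$ from \Cref{L:compl} to reduce to checking $\epsilon=\epsilon_Y$ on $Y$ and on $Z$ separately, settle the $Y$-side by cancelling in the isogeny category using $\epsilon^2=\epsilon$, and settle the $Z$-side by exploiting $\epsilon^{(\dagger)}=\epsilon$ together with the description $\lambda(Z)\subseteq\ker(\widehat{\iota_Y})$ coming from \eqref{E:defcomplement}. This avoids the trace form entirely and works globally over $S$ without passage to fibers. The trade-off is that you lean on the already-constructed complement $Z$ (and hence implicitly on \Cref{comphyp}, which is in force throughout the section), whereas the paper's argument is agnostic about complements and uses only the Rosati involution and the trace. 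Both are valid; yours is more self-contained within the section, while the paper's is the standard route in the literature.
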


\begin{proof}
The fact that $X^{(-)}\circ \epsilon_{(-)}$ is the identity map comes down to the fact that by definition, given an abelian subscheme $Y$ of $X$, the image of $N_Y$ is $Y$ (see \eqref{E:NmY} and \Cref{BL:L531}(a)).  
Note that this shows also that $X^{(-)}$ is surjective.  

Therefore, for the converse, it suffices to show that $X^{(-)}$ is
injective;  i.e., given two symmetric epi-abelian idempotents $\epsilon_1$ and $\epsilon_2$ such that $X^{\epsilon_1}=X^{\epsilon_2}$, one must show $\epsilon_1=\epsilon_2$.  By rigidity, it suffices to show this when restricted to geometric fibers.
The rest of the argument is identical to \cite[Thm.~5.3.2]{BL}
  as  the Rosati involution is stable under base change, so that the proof reduces to the positive definiteness of the trace pairing on geometric fibers \cite[\S 21, Thm.~1]{mumfordAV}.
\end{proof}

\begin{cor}\label{BL:C533}
  If $f \in
  \operatorname{End}(X)$ and $Y = \operatorname{Im}(f)$ is an abelian subscheme, then $f = N_Y$
    if and only if $f^2 = e_Yf$ and $f^{(\dagger)} = f$. 
   \end{cor}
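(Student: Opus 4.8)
The plan is to reduce both directions to the bijection established in \Cref{BL:T532} between abelian subschemes of $X$ and symmetric epi-abelian idempotents of $\operatorname{End}_{\mathbb Q}(X)$, under which $Y$ corresponds to $\epsilon_Y = \frac{1}{e_Y}N_Y$. The forward implication is immediate: if $f = N_Y$, then $f^2 = e_Y f$ is precisely \Cref{BL:L531}\ref{BL:L531b} and $f^{(\dagger)} = f$ is \Cref{L:BL125(23)}(b), so no work is needed there.

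For the converse I would assume $f^2 = e_Y f$ and $f^{(\dagger)} = f$ with $Y = \operatorname{Im}(f)$ an abelian subscheme, and set $\epsilon := \frac{1}{e_Y} f \in \operatorname{End}_{\mathbb Q}(X)$. A one-line computation shows $\epsilon$ is a symmetric idempotent:
$$\epsilon^2 = \frac{1}{e_Y^2} f^2 = \frac{1}{e_Y^2}(e_Y f) = \frac{1}{e_Y} f = \epsilon, \qquad \epsilon^{(\dagger)} = \frac{1}{e_Y} f^{(\dagger)} = \frac{1}{e_Y} f = \epsilon.$$
To see that $\epsilon$ is epi-abelian I would take $n = e_Y$ in the definition $X^\epsilon := \operatorname{Im}(n\epsilon)$; then $n\epsilon = f \in \operatorname{End}(X)$, so $X^\epsilon = \operatorname{Im}(f) = Y$, which is an abelian subscheme by hypothesis. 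Hence $\epsilon$ is a symmetric epi-abelian idempotent with $X^\epsilon = Y$.

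Finally I would invoke \Cref{BL:T532}: since $\epsilon_{(-)} \circ X^{(-)}$ is the identity, $\epsilon = \epsilon_{X^\epsilon} = \epsilon_Y = \frac{1}{e_Y} N_Y$, and comparing this with $\epsilon = \frac{1}{e_Y} f$ gives $f = N_Y$. The only step requiring genuine care --- and the main potential obstacle --- is checking that $\epsilon$ is epi-abelian so that \Cref{BL:T532} genuinely applies; this is what lets us stay inside the category of abelian subschemes rather than merely in $\operatorname{End}_{\mathbb Q}(X)$. Since $e_Y\epsilon = f$ has image the assumed abelian subscheme $Y$, this verification is immediate, and the corollary follows without further input.
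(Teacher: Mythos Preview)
Your proof is correct and follows essentially the same approach as the paper: the forward implication via \Cref{BL:L531}\ref{BL:L531b} and \Cref{L:BL125(23)}, and the converse by setting $\epsilon = f/e_Y$, observing it is a symmetric epi-abelian idempotent with $X^\epsilon = Y$, and invoking \Cref{BL:T532}. You have simply spelled out in detail what the paper compresses into two lines.
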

  
  \begin{proof} The forward implication is  \Cref{BL:L531}\ref{BL:L531b} and \Cref{L:BL125(23)}.  For the converse, 
we have $\epsilon:= \frac{f}{e}$ is a symmetric epi-abelian idempotent, and then
one employs \Cref{BL:T532}.   (When $S = \spec \cx$, this is {\cite[Cor.~5.3.3, Crit.~5.3.4]{BL}}.)
\end{proof}

\begin{cor}\label{C:eps_Y}
There is an involution on the set of symmetric epi-abelian idempotents given by $\epsilon\mapsto (1-\epsilon)$.  
 If $(Y_0,Y_1)$ are complementary abelian subschemes, then $\epsilon_{Y_i}=1-\epsilon_{Y_{1-i}}$. 
\end{cor}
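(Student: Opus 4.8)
The plan is to reduce both assertions to the single identity $\epsilon_Y + \epsilon_Z = 1$ for a complementary pair $(Y,Z)$, which falls out of \Cref{L:BL125(23)}(d). The key structural input is the bijection of \Cref{BL:T532}, which guarantees that every symmetric epi-abelian idempotent arises as $\epsilon_Y$ for a unique abelian subscheme $Y$.

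First I would check that $\epsilon \mapsto 1-\epsilon$ preserves the property of being a symmetric idempotent, which is purely formal. Idempotency is immediate from $(1-\epsilon)^2 = 1 - 2\epsilon + \epsilon^2 = 1 - \epsilon$. For symmetry, recall that the involution $h \mapsto \widehat h$ on $\operatorname{End}_{\mathbb Q}(X)$ is additive and that $\widehat{\operatorname{Id}_X} = \operatorname{Id}_{\widehat X}$, so that $\operatorname{Id}_X^{(\dagger)} = \lambda^{-1}\operatorname{Id}_{\widehat X}\lambda = \operatorname{Id}_X$; hence $(1-\epsilon)^{(\dagger)} = \operatorname{Id}_X^{(\dagger)} - \epsilon^{(\dagger)} = 1 - \epsilon$.

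The substantive point is the epi-abelian property together with the complementary-pair formula. Given a symmetric epi-abelian idempotent $\epsilon$, I would set $Y = X^\epsilon$, so that $\epsilon = \epsilon_Y = \frac{1}{e_Y}N_Y$, and let $Z$ be the complement of $Y$ as in \eqref{E:defcomplement}. Then \Cref{L:BL125(23)}(d), which reads $e_Y N_Z + e_Z N_Y = [e_Y e_Z]_X$, can be divided through by $e_Y e_Z$ to give
\[
\frac{1}{e_Z}N_Z + \frac{1}{e_Y}N_Y = \operatorname{Id}_X,
\]
that is, $\epsilon_Z + \epsilon_Y = 1$, so $1 - \epsilon = \epsilon_Z$. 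Since $\epsilon_Z$ has image the abelian subscheme $Z$, it is by construction a symmetric epi-abelian idempotent; thus $1-\epsilon$ lies in the set in question, and as $1 - (1-\epsilon) = \epsilon$, the map is an involution. The second assertion is then immediate: for a complementary pair $(Y_0, Y_1) = (Y,Z)$, the identity $\epsilon_Z = 1 - \epsilon_Y$ is precisely $\epsilon_{Y_i} = 1 - \epsilon_{Y_{1-i}}$.

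I do not anticipate a genuine obstacle here, as the result is a direct consequence of the bijection established in \Cref{BL:T532} and the arithmetic identity of \Cref{L:BL125(23)}(d); the only point requiring a moment's care is the verification that the Rosati involution fixes $\operatorname{Id}_X$, which is what secures that $1-\epsilon$ remains symmetric.
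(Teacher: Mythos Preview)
Your argument is correct and is essentially the paper's approach. The paper's proof invokes \eqref{E:Sym2} together with \Cref{BL:T532}: since $1-\epsilon_{Y_i} = \tfrac{1}{e_{Y_i}}([e_{Y_i}]-N_{Y_i})$ is a symmetric idempotent with image $Y_{1-i}$ by \eqref{E:Sym2}, the bijection of \Cref{BL:T532} forces $1-\epsilon_{Y_i}=\epsilon_{Y_{1-i}}$. You instead cite \Cref{L:BL125(23)}(d) and divide through by $e_Ye_Z$ to obtain $\epsilon_Y+\epsilon_Z=1$ directly; this is the same content packaged slightly differently, and if anything a touch more direct since it bypasses the image computation.
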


\begin{proof}
The first assertion is clear.  The second follows from \Cref{BL:T532} and \eqref{E:Sym2}.  
\end{proof}

\begin{cor}\label{BL:C534}
If $\lambda$ is principal, then $N_Y$ is primitive.
\end{cor}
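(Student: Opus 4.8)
The plan is to realize $N_Y$ as, essentially, one component of the primitive quasi-inverse of the addition isogeny $\mu\colon Y\times_S Z\to X$, and then to show that divisibility of $N_Y$ would force divisibility of that quasi-inverse. The whole argument can be run over $S$, with only a harmless passage to a single fibre to settle one statement about integers. First I would record two identities. Because $\lambda$ is principal we have $e_Y=e_Z=:e$ (\Cref{R:Ki*Theta}), so \Cref{L:BL125(23)}(d) reads $eN_Z+eN_Y=[e^2]_X$; dividing by $e$ in the torsion-free ring $\operatorname{End}(X/S)$ gives the crucial relation $N_Y+N_Z=[e]_X$. Next, \Cref{L:compl} identifies $\ker\mu=Y\times_X Z$, which by \Cref{P:BL-1214} (again using principality) equals $\operatorname{K}(\iota_Y^*\lambda)$ and therefore has exponent exactly $e$. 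Thus \Cref{P:BL1.2.6} furnishes an isogeny $g_e\colon X\to Y\times_S Z$ with $\mu\circ g_e=[e]_X$, and \Cref{R:gPrim} tells us $g_e$ is primitive, precisely because $e$ is the exponent of $\mu$. Finally I identify $g_e=(M_Y,M_Z)$: indeed $\mu\circ(M_Y,M_Z)=\iota_Y M_Y+\iota_Z M_Z=N_Y+N_Z=[e]_X=\mu\circ g_e$, and $\mu$ is invertible in $\operatorname{Hom}_{\mathbb Q}$.

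Now suppose for contradiction that $N_Y=nb$ with $b\in\operatorname{End}(X/S)$ and $n\ge 2$ (we may assume $Y\neq 0$, since otherwise $N_Y=0$ and the statement does not apply). Since $N_Y=\iota_Y M_Y$ factors through $\iota_Y$, the composite of $N_Y$ with the quotient $X\to X/Y$ vanishes; hence $n$ times the composite of $b$ with $X\to X/Y$ vanishes, and torsion-freeness of $\operatorname{Hom}(X,X/Y)$ forces $b=\iota_Y b'$ for a unique $b'\colon X\to Y$. Cancelling the monomorphism $\iota_Y$ in $\iota_Y M_Y=N_Y=n\iota_Y b'$ gives $M_Y=nb'$. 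Composing with $\iota_Y$ and invoking \Cref{BL:L531}(a) yields $[e]_Y=M_Y\iota_Y=[n]_Y\circ(b'\iota_Y)$; restricting to one geometric fibre and comparing degrees gives $\deg(b'\iota_Y)=(e/n)^{2\dim Y}\in\mathbb Z_{>0}$, so the positive rational $e/n$ is a root of a monic integral polynomial, i.e. $n\mid e$. Consequently $N_Z=[e]_X-N_Y=n\bigl([e/n]_X-b\bigr)$ is itself divisible by $n$, and the identical factoring argument (now through $X\to X/Z$) gives $M_Z=nc'$. Therefore $g_e=(M_Y,M_Z)=n(b',c')$ is divisible by $n\ge 2$, contradicting its primitivity; hence $N_Y$ is primitive.

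I expect the main obstacle to be twofold. The first is pinning the exponent of $\mu$ to be \emph{exactly} $e$ rather than merely a divisor of it: this is where principality of $\lambda$ is indispensable, entering through the equality $e_Y=e_Z$ and through the identification $\ker\mu=\operatorname{K}(\iota_Y^*\lambda)$, and it is what makes $g_e=(M_Y,M_Z)$ the genuinely primitive map. The second is the integrality extraction $n\mid e$, which I would obtain by the degree computation on a fibre; note this is legitimate for the fixed integers $e,n$ without any exponent–stability hypothesis, since one merely restricts the globally valid equation $[e]_Y=[n]_Y\circ(b'\iota_Y)$. The small but essential device is the factoring step $b=\iota_Y b'$: it lets the argument proceed over an arbitrary base $S$ without appealing to \Cref{alwaysepi}, as it uses only that $N_Y$ lands in $Y$ together with torsion-freeness of the relevant Hom-group, rather than any assertion that images of endomorphisms are abelian subschemes.
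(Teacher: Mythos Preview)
Your proof is correct, but it takes a genuinely different route from the paper's.  The paper's argument is a short direct analysis of the composition
\[
N_Y = \iota_Y \circ \bigl(e_Y(\iota_Y^*\lambda)^{-1}\bigr) \circ \widehat{\iota_Y} \circ \lambda:
\]
to show $\ker N_Y \nsupseteq X[n]$ for $n\ge 2$, one simply traces $X[n]$ through the four factors, using that $\lambda$ is an isomorphism, that $\widehat{\iota_Y}$ has connected kernel (\Cref{L:dualexact}), and that $e_Y(\iota_Y^*\lambda)^{-1}$ is already known to be primitive (\Cref{R:gPrim}).  Your approach is more structural: you exploit the symmetry $e_Y=e_Z=:e$ of the complementary pair (only available when $\lambda$ is principal) to obtain $N_Y+N_Z=[e]_X$, identify the primitive quasi-inverse of $\mu$ as $(M_Y,M_Z)$, and then show that divisibility of $N_Y$ by $n$ forces $n\mid e$ and propagates to divisibility of $M_Z$, contradicting primitivity of $(M_Y,M_Z)$.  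The paper's route is quicker and uses less machinery; yours has the pleasant feature of locating the primitivity of $N_Y$ as a shadow of the primitivity of the canonical splitting $(M_Y,M_Z)$, and the factoring step $b=\iota_Y b'$ via torsion-freeness of $\operatorname{Hom}(X,X/Y)$ is a nice way to stay over an arbitrary base without invoking images.
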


\begin{proof}
Due to \Cref{T:factor}, we only have to show that $\ker N_Y\nsupseteq
X[n]$ for any integer $n\ge 2$. 
This follows from the definition \eqref{E:NmY}, since $\lambda$ is
assumed to be an isomorphism, $\widehat \iota$ has connected kernel
(\Cref{L:dualexact}), and  $e_Y(\iota_Y^*\lambda)^{-1}$ is primitive
(\Cref{R:gPrim}).    (If $S = \spec \cx$, this is \cite[Crit.~5.3.4]{BL}.)
\end{proof}

 \subsection{Complements inside principally polarized abelian
  varieties: automorphisms}\label{S:ppavAut} 
We return now to the case that  $\lambda$ is a principal polarization,
and we assume that the principally polarized abelian $S$-scheme
$(X,\lambda)$ admits an automorphism $\sigma$ over $S$  of finite
order $n$.  In particular,  $$\widehat \sigma \lambda = \lambda \sigma \ \
\ \text{ and } \ \ \ \sigma^n=1.$$
To make progress, suppose that the endomorphism
\[
  \tau := 1 + \sigma + \sigma^2 + \dots + \sigma^{n-1} \in \End(X)
\]
is epi-abelian.  (This holds in the key case of interest where $X
=\pic^0_{C/S}$ is the Picard scheme of a smooth projective curve and
$\sigma$ is induced by an automorphism of $C$, as in \Cref{R:C/sigma},
as well as over any base scheme $S$ which satisfies \Cref{alwaysepi}.)  Now consider the abelian subscheme
\[
  Y := \im(\tau)\subseteq X.
\]
For context, define the $\sigma$-fixed subgroup scheme
$$X^\sigma:= \ker (1-\sigma),$$
and observe that $(1-\sigma)(1+\sigma +\sigma^2+\dots+\sigma^{n-1})=0$, so that $Y\subseteq X^\sigma$.
At the same time, since on $X^\sigma$ we have that $\sigma =1$, it follows that on $X^\sigma$ we have 
 $\tau|_{X^\sigma} =[n]_{X^\sigma}$.  Therefore,  
$[n] X^\sigma = \tau  X^\sigma \subseteq \tau X = Y$.  In other words,
$$
[n]X^\sigma \subseteq Y\subseteq X^\sigma.
$$
As in the proof of Lemma \ref{L:Zisimage}, we conclude that
$Y=(X^\sigma)^{\abvar}$. 
         
Next observe that since $(1+\sigma+\sigma^2+\cdots+\sigma^{n-1})^2= n(1+\sigma+\sigma^2+\cdots +\sigma^{n-1})$, and  $\sigma^{(\dagger)}=\lambda^{-1}\widehat \sigma \lambda = \lambda ^{-1}\lambda \sigma = \sigma$
we have that 
$$
\frac{1}{n}\left(1+\sigma+\sigma^2+\cdots+\sigma^{n-1}\right) = \frac
1n \tau =\epsilon_Y
$$
is the symmetric idempotent associated to $Y$ (\Cref{BL:T532}).

Now setting $Z$ to be the complement of $Y$ in $X$, we have from \Cref{C:eps_Y} that $\epsilon_Z=1-\epsilon_Y$.  Consequently, $Z$ is the image of 
\begin{equation}\label{E:1-sigma}
n\epsilon_Z=(n-1)-\sigma-\sigma^2-\cdots-\sigma^{n-1}.
\end{equation}

Since $n\epsilon_Y$ is an endomorphism of $X$ and  $N_Y$ is primitive (\Cref{BL:C534}),  we have $e_Y\mid n$, and if $e_Y=1$ then both $Y$ and $Z$ are principally polarized, and 
$(X,\lambda)\cong (Y,\iota_Y^*\lambda)\times_S (Z,\iota_Z^*\lambda)$ as principally polarized abelian schemes (\Cref{C:e=1-ppav}).
As a special case, if $n$ is prime, $(X,\lambda)$ is indecomposable as a principally polarized abelian scheme, and $Y\ne 0$, then $e_Y=n=e_Z$; 
moreover, in this case,  if $\iota_Y^*\lambda$ has a type $\delta$, then $\delta = (1,\dots,1,n,\dots,n)$, and the type of $\iota_Z^*\lambda$ is $(1,\dots,1,n,\dots,n)$, where the number of times $n$ appears in each type is the same (\Cref{R:Ki*Theta}).

\subsection{Definitions of Prym schemes}\label{S:PrymDef}
For clarity, we review several definitions of Prym schemes related to various usages in the literature, where there is some ambiguity, depending on the context.  This section serves to fix the terminology we will be using.  
Roughly speaking, for varieties over a field $K$, an abelian subvariety $Z\subseteq \operatorname{Pic}^0_{C/K}$ of a Jacobian of a curve $C/K$ will be called a Prym--Tyurin variety if  the restriction to $Z$ of the canonical polarization on the Jacobian is a multiple of a principal polarization on $Z$, and will be called a Prym variety if there is a finite cover of curves $f:C\to C'$ so that $Z$ is the complement of $f^*\operatorname{Pic}^0_{C'/K}$, whether or not the restriction of the canonical polarization to $Z$  is a multiple of a principal polarization.

\subsubsection{Prym--Tyurin scheme}\label{S:PTsch}

 An \emph{embedded Prym--Tyurin scheme of exponent $e$} over $S$ is a
 tuple $$(Z,\xi,C,\iota_Z)$$ such that $(Z,\xi)$  is a principally
 polarized abelian scheme over $S$, $C$ is   a smooth proper curve over $S$,  and  $$\xymatrix{\iota_Z\colon Z\ar@{^(->}[r]& \operatorname{Pic}^0_{C/S}}$$ 
is a closed 
subabelian $S$-scheme such that 
$$
\iota_Z^*\lambda_C = e\xi,
$$
where $\lambda_C$ is the canonical principal polarization on $\operatorname{Pic}^0_{C/S}$.

\begin{rem}
We note that $e_Z=e$; i.e.,  the exponent $e_Z$ of $Z$ as a subvariety of  $\operatorname{Pic}^0_{C/S}$ with respect to $\lambda_C$, in the sense of \S \ref{S:normendomorphism},  is $e$, explaining the terminology.  Indeed, the exponent $e_Z$ was defined as the exponent of the isogeny $\iota_Z^*\lambda_C$  (\S \ref{S:isog}).
    \end{rem}

\begin{rem}\label{R:Z[e]-PT}
Given a smooth proper  curve $C/S$ and an abelian subscheme $\iota_Z\colon Z\hookrightarrow \operatorname{Pic}^0_{C/S}$ over $S$, we have from \Cref{L:eXi-X[e]}  that $(Z,\frac{1}{e}\iota_Z^*\lambda_C, C,\iota_Z)$ is an embedded Prym--Tyurin scheme of exponent $e$ over $S$ if and only if   $$\K(\iota_Z^*\lambda_C) = Z[e] :=  \ker [e]_Z,$$
if and only if for every point $s$ of $S$ we have $(Z_s,\frac{1}{e}\iota_{Z_s}^*\lambda_{C_s}, C_s,\iota_{Z_s})$ is an embedded Prym--Tyurin variety of exponent $e$.   
   \end{rem}

A \emph{Prym--Tyurin scheme of exponent $e$} over $S$ 
 is a principally polarized abelian scheme $(Z,\xi)$ over $S$ such that there exists  a tuple $(Z,\xi, C,\iota_Z)$ that is an embedded Prym--Tyurin scheme of exponent $e$.
  In this situation, we also say that $(Z,\xi)$ is a Prym--Tyurin scheme for the curve $C/S$ and that $e$ is the exponent of the Prym--Tyurin scheme $(Z,
\xi)$.

\subsubsection{Generalized Prym--Tyurin scheme}
 An \emph{embedded generalized Prym--Tyurin
  scheme  of generalized exponent $m$} is a tuple $$(Z,\xi,C,\iota_Z,\sigma)$$ where $(Z,\xi,C,\iota_Z)$ is an embedded Prym--Tyurin scheme of some exponent $e$, and 
$\sigma\in \operatorname{End}(\operatorname{Pic}^0_{C/S})$ is an
 epi-abelian endomorphism, symmetric with respect to the Rosati involution given by the canonical polarization $\lambda_C$ on $\operatorname{Pic}^0_{C/S}$, such that 
  \begin{equation}
  \label{E:blochmurre}
\sigma^2+(m-2)\sigma-(m-1)= (1-\sigma )((1-\sigma) -m)=0,
\end{equation}
and $\iota_Z(Z)=\operatorname{Im}(1-\sigma)$.  (In particular,
$1-\sigma$ is epi-abelian.)
For simplicity of notation, we will identify $Z$ with $\iota_Z(Z)$ going forward.  

\begin{wrn}
Note that the terminology in the definition for \emph{generalized} Prym--Tyurin schemes may be confusing, in that one need not have the \emph{generalized} exponent $m$ equal to the exponent $e$ (see \Cref{L:GenPT=PT}).
\end{wrn}

  \begin{rem}
  When $S=\operatorname{Spec}K$ for a field $K$, this definition is due to 
Bloch--Murre  \cite{BlMurre} and Kanev \cite{kanev}.   Note that over a field there is a surjection (discussed in \S \ref{S:regular})
   $\operatorname{CH}^1(C\times_KC)\to \operatorname{End}(\operatorname{Pic}^0_{C/K})$ given by taking a correspondence to the associated endomorphism.  Bloch--Murre and Kanev give their definition in terms of correspondences, but we prefer to work directly with the associated endomorphism. 
\end{rem}

It is worth noting that \eqref{E:blochmurre} is equivalent to $(1-\sigma)^2=m(1-\sigma)$, so that by virtue of  \Cref{BL:T532},
 the condition \eqref{E:blochmurre} is equivalent to the statement
 that  $\frac{1}{m}(1-\sigma)\in \operatorname{End}_{\mathbb
   Q}(\operatorname{Pic}^0_{C/S})$ is the symmetric epi-abelian idempotent $\epsilon_Z=\frac{1}{e}N_Z$ associated to $Z$.

\begin{lem}[Prym--Tyurin is generalized Prym--Tyurin]\label{L:GenPT=PT}
   Given an embedded Prym--Tyurin scheme $(Z,\xi,C, \iota_Z)$ of exponent $e$, then for every positive integer $a$, setting $\sigma = 1-a N_Z\in \operatorname{End}(\operatorname{Pic}^0_{C/K})$, we have that $(Z,\xi,C,\iota_Z,\sigma)$ is an embedded generalized Prym--Tyurin scheme of generalized exponent $a e$; i.e., \eqref{E:blochmurre} holds with $m=a e$.  

Conversely, if $(Z,\xi,C,\iota_Z,\sigma)$ is an embedded generalized Prym--Tyurin scheme of exponent $m$ such that  $(Z,\xi,C,\iota_Z)$ is an embedded Prym--Tyurin scheme of exponent $e$, then $e\mid m$ and $\sigma = 1-\frac{m}{e}N_Z$.   
     \end{lem}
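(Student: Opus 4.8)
The plan is to reduce both implications to the dictionary between symmetric epi-abelian idempotents of $\pic^0_{C/S}$ and abelian subschemes (\Cref{BL:T532}), together with the identities $N_Z^2 = e_Z N_Z$ (\Cref{BL:L531}) and $N_Z^{(\dagger)} = N_Z$ (\Cref{L:BL125(23)}). Throughout I write $X = \pic^0_{C/S}$, $\lambda = \lambda_C$, and $e = e_Z$, and I recall that the observation recorded immediately before the lemma identifies \eqref{E:blochmurre} with the single relation $(1-\sigma)^2 = m(1-\sigma)$, equivalently with the assertion that $\tfrac1m(1-\sigma)$ is the symmetric epi-abelian idempotent $\epsilon_Z = \tfrac1e N_Z$ attached to $Z$.

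For the forward implication, I set $\sigma = 1 - aN_Z$, so that $1-\sigma = aN_Z$, and verify in turn the defining conditions for a generalized Prym--Tyurin structure. Symmetry under the Rosati involution is immediate from $N_Z^{(\dagger)} = N_Z$; the equality $\operatorname{Im}(1-\sigma) = \operatorname{Im}(aN_Z) = Z$ holds because $[a]$ restricts to an isogeny of $Z = \operatorname{Im}(N_Z)$, and this simultaneously shows $1-\sigma$ is epi-abelian. To see that $\sigma$ itself is epi-abelian I split into two cases. If $ae \ge 2$, then one checks, using $N_Z^2 = eN_Z$, that $1 + \tfrac{a}{1-ae}N_Z$ is a two-sided inverse of $\sigma$ in $\operatorname{End}_{\mathbb Q}(X)$, so $\sigma$ is an isogeny by \Cref{C:BL1.2.7} and hence surjective with image $X$. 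If $ae = 1$, which forces $a = e = 1$, then $\sigma = [e_Z]_X - N_Z$ is epi-abelian by \Cref{L:Zisimage}. Finally the relation \eqref{E:blochmurre} with $m = ae$ is the single computation
\[
(1-\sigma)^2 = a^2 N_Z^2 = a^2 e\, N_Z = (ae)(aN_Z) = m(1-\sigma),
\]
combined with the equivalence of $(1-\sigma)^2 = m(1-\sigma)$ and \eqref{E:blochmurre}.

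For the converse, I start from the given data: $\sigma$ is symmetric and epi-abelian, $\operatorname{Im}(1-\sigma) = Z$, and \eqref{E:blochmurre} holds with parameter $m$. Then $\tfrac1m(1-\sigma)$ is symmetric, idempotent, and has image $Z$, so by the uniqueness (injectivity) half of \Cref{BL:T532} it equals $\epsilon_Z = \tfrac1e N_Z$; rearranging gives $\sigma = 1 - \tfrac{m}{e}N_Z$ in $\operatorname{End}_{\mathbb Q}(X)$. It then remains to promote $\tfrac{m}{e}$ to a positive integer. Here I invoke that $N_Z$ is primitive (\Cref{BL:C534}, valid since $\lambda_C$ is principal): because $\operatorname{End}(X)$ is a finitely generated free $\integ$-module, primitivity of the nonzero element $N_Z$ yields $\operatorname{End}(X) \cap \mathbb{Q}N_Z = \integ N_Z$; since $\tfrac{m}{e}N_Z = 1 - \sigma$ lies in $\operatorname{End}(X)$, the rational $\tfrac{m}{e}$ is forced to be an integer, i.e.\ $e \mid m$.

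The main obstacle is precisely this integrality $e \mid m$: the relation $\sigma = 1 - \tfrac{m}{e}N_Z$ is determined purely formally by \Cref{BL:T532}, but the passage from $\tfrac{m}{e} \in \mathbb{Q}$ to $\tfrac{m}{e} \in \integ$ genuinely requires primitivity of $N_Z$, which rests on the polarization being principal. A secondary point needing care is the epi-abelian property of $\sigma$ in the forward direction, where the degenerate case $ae = 1$ (handled via \Cref{L:Zisimage}) must be separated from the generic case (handled by exhibiting an explicit rational inverse and applying \Cref{C:BL1.2.7}).
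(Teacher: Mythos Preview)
Your proof is correct and follows essentially the same approach as the paper's. For the converse you argue identically to the paper (use \Cref{BL:T532} to identify $\tfrac{1}{m}(1-\sigma)$ with $\tfrac{1}{e}N_Z$, then invoke primitivity of $N_Z$ from \Cref{BL:C534}). For the forward direction the paper obtains the relation by noting $N_Z|_Y=0$ and $Y=\operatorname{Im}([e]_X-N_Z)$, hence $N_Z([e]_X-N_Z)=0$, which of course is the same identity $N_Z^2=eN_Z$ you use directly; your version is arguably cleaner, and you are more thorough in verifying the remaining clauses of the definition (symmetry of $\sigma$, $\operatorname{Im}(1-\sigma)=Z$, and epi-abelianness of $\sigma$), which the paper leaves implicit.
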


\begin{proof}
  On one hand, suppose $(Z,\xi,C,\iota_Z)$ is an embedded
  Prym--Tyurin scheme of exponent $e$, and let $a$ be a positive integer.  
  Consider the endomorphism
  $\sigma = 1-a N_Z$.  Since $N_Z|_Y=0$ and
  $Y=\operatorname{Im}([e]_X-N_Z)$, we have that $N_Z([e]_X-N_Z)=0$. 
  It follows that $a N_Z([a e]_X-a N_Z)=0$, 
  and thus
  $\sigma$ satisfies \eqref{E:blochmurre} with $m=a e$.  
  
 On the other hand, suppose
 $(Z,\xi,C,\iota_Z,\sigma)$ is an embedded generalized Prym--Tyurin scheme of exponent $m$ such that  $(Z,\xi,C,\iota_Z)$ is an embedded Prym--Tyurin scheme of exponent $e$.  Then, as observed above, due to \Cref{BL:T532}, we have $\frac{1}{m}(1-\sigma) = \frac{1}{e}N_Z$. Thus $1-\sigma = \frac{m}{e}N_Z$.  Since the left hand side is an endomorphism and $N_Z$ is primitive (\Cref{BL:C534}), we have that $e\mid m$, and $\sigma = 1-\frac{m}{e}N_Z$.  
                                \end{proof}

A \emph{generalized Prym--Tyurin scheme of exponent $m$} over $S$ 
 is a principally polarized abelian scheme $(Z,\xi)$ over $S$ such that there exists  a tuple $(Z,\xi, C,\iota_Z,\sigma)$ that is an embedded generalized Prym--Tyurin scheme of exponent $m$.  From the lemma above, this is the same as a Prym--Tyurin scheme of exponent $e$ for some $e\mid m$.  
    
\begin{rem}[Welters] \label{R:welters1}
                                 In \cite[Prop.~1.1]{welters87}, Welters gives an equivalent
formulation of the condition \eqref{E:blochmurre}; namely, the data of
a symmetric endomorphism $\sigma$ satisfying \eqref{E:blochmurre} is
equivalent to the data of an abelian subscheme $\iota \colon Z\hookrightarrow
\operatorname{Pic}^0_{C/S}$ such that $\K(\iota^*\lambda_C)\subseteq
Z[m]$.  Again, in this situation $Z=\operatorname{Im}(1-\sigma)$.
Welters' proof is given over an algebraically closed field of
characteristic $0$, but the proof as written holds over any $S$ which
satisfies \Cref{alwaysepi}, if one replaces connected components of kernels with largest  abelian subschemes  and images of abelian varieties with images of abelian schemes.  
   Note that the condition that the data give a generalized Prym--Tyurin scheme is that   $\K(\iota^*\lambda_C)=Z[e]\subseteq
Z[m]$ for some $e$.  
\end{rem}

\subsubsection{Prym scheme}\label{S:Pscheme}
An \emph{embedded Prym scheme of exponent $e$} over $S$  is a tuple $$(Z,f\colon C\to C', \iota_Z)$$ such that
$Z$ is an abelian scheme over $S$,  $C$ is a smooth proper  curve over $S$, $
\iota_Z\colon Z\hookrightarrow \operatorname{Pic}^0_{C/S}$ is a closed subabelian $S$-scheme, 
 $f\colon C\to C'$ is a finite, thus flat (see \S
 \ref{SS:mapsbetweencurves}), $S$-morphism of smooth proper $S$-curves,  and 
setting $Y:= f^*\operatorname{Pic}^0_{C'/S}\subseteq
\operatorname{Pic}^0_{C/S}$ (Lemma \ref{L:f*picflat}), we have $e_Y=e$ and, identifying $Z$ with $\iota_Z(Z)$, we have that $Z$ is the complement of $Y$:
\begin{equation}
Z:= \operatorname{Im}([e]_X-N_Y).
\end{equation}
       
A \emph{Prym scheme of exponent $e$} over $S$ is an abelian scheme $Z/S$ such that there exists a tuple $(Z,f\colon C\to C',\iota_Z)$ that is an embedded Prym scheme of exponent $e$.

Let $f\colon C \to C'$ be a finite $S$-morphism of proper $S$-curves.  We will see in \Cref{L:f*picflat} that $Y:= f^*\operatorname{Pic}^0_{C'/S}$ is an abelian scheme.  Setting $e=e_Y$, we have the  associated Prym scheme of exponent $e$, which is defined to be the complement of $Y$:
\begin{equation}\label{E:P(C/C')}
P(C/C'):= \operatorname{Im}([e]_X-N_Y)=(\ker N_Y)^{\abvar}
= \left({\operatorname{Pic}^0_{C/S}}/{f^*\operatorname{Pic}^0_{C'/S}}\right)^{\widehat {\ }}.
\end{equation}
(The second equality follows form \Cref{L:Zisimage}, while the third is our definition of the Prym.)
In 
\Cref{C:PrymC/C'Nm} we give yet another equivalent formulation.  If we have  $\lambda_C|_{P(C/C')}=e\xi$ for some positive integer  $e$ and some principal polarization $\xi$ on $P(C/C')$, then we have also a  Prym--Tyurin scheme of exponent $e$
$$
(P(C/C'),\xi),
$$
which we  call the \emph{Prym--Tyurin Prym scheme of exponent $e$} associated to the cover $C/C'$.   In particular, $(P(C/C'), \xi, C, \iota)$, where $\iota\colon P(C/C')\hookrightarrow \operatorname{Pic}^0_{C/S}$ is the natural inclusion, is an embedded Prym--Tyurin scheme of exponent $e$.

\begin{rem}[Curves with automorphisms]\label{R:C/sigma}
Let $C/S$ be a smooth projective curve over $S$, which admits a finite \'etale group of $S$-automorphisms $G\subseteq \operatorname{Aut}_S(C)$.  The quotient $C':= C/G$ exists as an algebraic space over $S$, being the quotient of a scheme by an \'etale equivalence relation, and the geometric fibers, being quotients of smooth projective curves by finite groups, are smooth projective curves.  Provided the fibers have genus $g'\ge 2$, then $C'$ is a projective scheme over $S$, via the relatively ample relative canonical bundle.   In this case one has a  morphism $f\colon C\to C'$ as in the discussion above, and one can consider the associated Prym scheme $P(C/C')$.  
In the situation where $G$ is generated by a single automorphism
$\sigma$ of order $n$ (see \S\ref{S:ppavAut}), we note that
$Y=\operatorname{Im}(1+\sigma^*+(\sigma^*)^{2}+\cdots
+(\sigma^*)^{n-1})$.  Indeed, by descent,
$Y=f^*\operatorname{Pic}^0_{C'/S}$ parameterizes exactly those line
bundles invariant under the action of $G$.
 (Note that in this case, the endomorphism $\tau$ of \S\ref{S:ppavAut} is automatically epi-abelian.)
Consequently, in addition to the descriptions of $P(C/C')$ in \eqref{E:P(C/C')}, one has $P(C/C')=\operatorname{Im}((n-1)-\sigma^*-(\sigma^*)^2-\cdots- (\sigma^*)^{n-1})$ \eqref{E:1-sigma}.   
 In this case, since $\lambda_C$ is an indecomposable polarization, and $g'\ge 2$ implies that $Y\ne 0$ (see e.g., \Cref{L:pullpush} for a reminder),  one has that $2\le e\mid n$, and therefore, if $n$ is prime one has $e=n$ (see \S\ref{S:ppavAut}).
See also \Cref{C:PrymC/C'Nm} for another description of $P(C/C')$, and  \S\ref{S:ppavAut} for a discussion of the polarization type.  
\end{rem}

\section{Abel maps for curves}
\label{S:abelmap}

Let $C/S$ be a smooth proper curve over $S$,
    with canonically 
principally polarized Jacobian $(\operatorname{Pic}^0_{C/S},\lambda_C)$. 
Recall from \S \ref{S:rudiments} that
 there is an Albanese torsor of
$C/S$: 
$$\xymatrix{\alpha^{(1)}\colon  C \ar@{^(->}[r] &  \pic^{(1)}_{C/S}}$$ with $\pic^{(1)}_{C/S}$ a torsor under $\pic^0_{C/S}$, which is initial for $S$-morphisms from $C$ into torsors under abelian schemes over $S$.  If $C/S$ admits a section $P\colon S\to C$, 
then there is a pointed Albanese scheme
$$\xymatrix{\alpha_P\colon C\ar@{^(->}[r] &  \operatorname{Pic}^0_{C/S}}$$ which is initial for $S$-morphisms from $C$ into abelian schemes over $S$ taking $P$ to the zero section.  
 
             Recall  that there is a canonical isomorphism \eqref{E:PicTors}:
\begin{equation}\label{E:muC}
\xymatrix{
 \widehat{\pic^0_{C/S}}  \ar[r]^<>(0.5)\zeta_<>(0.5)\sim  &\pic^0_{{\pic^{(1)}_{C/S}}/S}.
}
\end{equation}

\subsection{Basic properties of the Abel map}

\begin{lem}
  \label{L:BLsec11.3}
  There is an equality of morphisms
    \[
      \xymatrix@C+2pc{\pic^0_{C/S}
\ar@<.5ex>[r]^{-\lambda_C} \ar@<-.5ex>[r]_{({\alpha^{(1)}}^*\zeta)\inv} & \widehat{\pic^{0}_{C/S}}.
}
    \]
  In particular, $ \xymatrix{ \widehat{\pic^0_{C/S}} \ar[r]^<>(0.5){{\alpha^{(1)}}^*\zeta} & \pic^0_{C/S}}$ is an isomorphism.  
                   \end{lem}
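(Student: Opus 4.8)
The plan is to reduce to a single geometric fibre and there invoke the classical autoduality of the Jacobian. Rather than manipulate $({\alpha^{(1)}}^*\zeta)^{-1}$ before its existence is known, I would phrase the content of the lemma as the single identity
\[
({\alpha^{(1)}}^*\zeta)\circ(-\lambda_C)=\operatorname{Id}_{\pic^0_{C/S}}.
\]
Since $\lambda_C$ is a principal polarization, hence an isomorphism, this identity forces ${\alpha^{(1)}}^*\zeta$ to be an isomorphism with inverse $-\lambda_C$, yielding at once both the displayed equality of morphisms and the ``in particular'' clause. Both $-\lambda_C$ and ${\alpha^{(1)}}^*\zeta$ are homomorphisms of abelian schemes, and by rigidity (\S\ref{S:isog}) the group $\hom_S(\pic^0_{C/S},\widehat{\pic^0_{C/S}})$ injects into the analogous group over any geometric point $\bar s$ of $S$. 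Since the formation of $\alpha^{(1)}$, of $\zeta$ (\S\ref{S:PicTors}), and of $\lambda_C$ commutes with base change, it suffices to verify the identity after base change to each geometric fibre; thus I may assume $S=\spec k$ with $k$ algebraically closed.

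Next I would unwind ${\alpha^{(1)}}^*\zeta$ over $k$. A point $P\in C(k)$ trivialises the torsor: translation by $\mathcal O_C(P)$ gives an isomorphism $t\colon \pic^0_{C/k}\xrightarrow{\sim}\pic^{(1)}_{C/k}$ under which $\alpha^{(1)}=t\circ\alpha_P$, where $\alpha_P(Q)=\mathcal O_C(Q-P)$ is the pointed Abel map (\S\ref{S:AlbCurve}). By the construction of $\zeta$ in \S\ref{S:PicTors}, for this trivialisation one has $\zeta=(t^{-1})^*\colon \widehat{\pic^0_{C/k}}=\pic^0_{\pic^0_{C/k}/k}\to \pic^0_{\pic^{(1)}_{C/k}/k}$. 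Hence, by functoriality of pullback,
\[
{\alpha^{(1)}}^*\zeta=\alpha_P^*\,t^*(t^{-1})^*=\alpha_P^*\colon \widehat{\pic^0_{C/k}}=\pic^0_{\pic^0_{C/k}/k}\longrightarrow \pic^0_{C/k},
\]
the classical pullback of degree-zero line bundles along the Abel map; that this is independent of the choice of $P$ reflects precisely the canonicity of $\zeta$, i.e.\ the translation-invariance of $\pic^0$ used in \S\ref{S:PicTors}.

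It then remains to establish the classical autoduality \emph{with its sign}, namely $\alpha_P^*\circ\lambda_C=-\operatorname{Id}_{\pic^0_{C/k}}$, equivalently $\alpha_P^*=-\lambda_C^{-1}$; this is exactly the identity displayed above after base change, since $\alpha_P^*\circ(-\lambda_C)=-\alpha_P^*\lambda_C=\operatorname{Id}$. This statement is standard over an algebraically closed field and I would cite it (e.g.\ \cite[Prop.~III.6.1]{milneAV} and \cite{mumfordAV}; the case $S=\spec\cx$ is \cite[Ch.~11]{BL}), the mechanism being that $\lambda_C=\phi_{\mathcal O(\Theta)}$ for the theta divisor $\Theta$, which is a translate of the image $W_{g-1}$ of $\operatorname{Sym}^{g-1}C$, together with a seesaw computation of $\alpha_P^*\phi_{\mathcal O(\Theta)}$ on $\pic^0_{C/k}$ that produces the factor $-1$.

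I expect the main obstacle to be bookkeeping rather than depth. First, one must correctly match the torsor-theoretic definition of $\zeta$ with the naive pullback $\alpha_P^*$ in a way that is genuinely independent of a section of $C/S$ (which need not exist globally), so that the fibrewise identification above assembles into the stated equality over $S$. Second, and more essentially, one must pin down the sign $-1$: this is the entire arithmetic content of the lemma and the reason the statement is phrased with $-\lambda_C$ rather than $\lambda_C$, so any careless choice of conventions for the Abel map, the theta divisor, or the double-duality identification risks landing on the wrong sign.
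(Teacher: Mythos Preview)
Your proposal is correct and follows essentially the same approach as the paper: reduce via rigidity to a single geometric fibre, then invoke the classical autoduality of the Jacobian over an algebraically closed field (the paper cites \cite[Lem.~III.6.9]{milneAV} and \cite[Lem.~11.3.1, Prop.~11.3.5]{BL}). You supply more detail than the paper does in unwinding ${\alpha^{(1)}}^*\zeta$ as $\alpha_P^*$ after choosing a point, but the structure is the same.
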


\begin{proof}
Using rigidity (e.g., \cite[Prop.~6.1]{GIT}), it suffices to check equality of the morphisms along a single fiber over $S$.  
  Having restricted to a fiber, it  suffices to verify the claims after passage to the algebraic
  closure, at which point the result is standard; see
  \cite[Lem.~III.6.9]{milneAV}.  (See also {\cite[Lem.~11.3.1,
  Prop.~11.3.5]{BL}} for the case $S = \spec \cx$.)
   \end{proof}

The universal property of the Albanese torsor means that if $\beta\colon C \to
T$ is an $S$-morphism to a torsor under an abelian scheme $Z/S$, then there is a uniquely determined $S$-morphism 
$\til \beta\colon \pic^{(1)}_{C/S} \to T$ which makes the following diagram commute:
\begin{equation}\label{E:til-beta}
  \xymatrix{
    C \ar[r]^\beta \ar[d]_{\alpha^{(1)}} & T \\
    \pic^{(1)}_{C/S}\ar@{-->}[ur]_{\til\beta}
  }
\end{equation}
Pullback of line bundles by $\beta$ induces an $S$-morphism
    $\pic_{T/S} \to \pic_{C/S}$ which
restricts to give the morphism of abelian schemes $\beta^*\colon \pic^0_{T/S} \to \pic^0_{C/S}$.  Moreover, pullback of line bundles by
$\til\beta$ induces a morphism
${\til\beta}^* \colon  \pic^0_{T/S} \to \pic^0_{{\pic^{(1)}_{C/S}}/S}$.
 Letting $\zeta_Z\colon \widehat Z\to \operatorname{Pic}^0_{T/S}$ be the canonical isomorphism \eqref{E:PicTors}, we  denote by $\widehat {\tilde \beta}$ the composition
\begin{equation}\label{E:til-b-hat}
\widehat {\tilde \beta}:=  \zeta^{-1} \tilde \beta^* \zeta_Z\colon  \widehat Z \longrightarrow \widehat{\operatorname{Pic}^0_{C/S}}.
\end{equation}

\begin{lem}
  \label{L:BLcor11.4.2}
  There is an equality of morphisms $$\widehat {\til\beta} =
  -\lambda_C\beta^*\zeta_Z;$$  
 i.e., the following diagram commutes:
  \begin{equation}\label{E:BLC1142}
    \xymatrix{
      \widehat Z \ar[r]^{\zeta_Z}  &\operatorname{Pic}^0_{T/S}\ar[r]^{\beta^*}\ar[d]_{{\til\beta}^*}& \pic^0_{C/S}      \ar[d]^{-\lambda_C} \\
     & \pic^0_{{\pic^{(1)}_{C/S}}/S}\ar[r]_{\sim}^{\zeta^{-1}}& \widehat{\pic^{0}_{C/S}} 
    }
 \end{equation}
\end{lem}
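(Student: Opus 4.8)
The plan is to derive the identity directly from the functoriality of pullback together with \Cref{L:BLsec11.3}, with no need to pass to geometric fibers. The single relation at our disposal is that $\til\beta$ was constructed precisely so that $\beta=\til\beta\circ\alpha^{(1)}$ (see \eqref{E:til-beta}). Since pullback of line bundles is contravariantly functorial and, on the connected components at hand, carries $\pic^0$ to $\pic^0$ (as already recorded in the setup preceding the lemma), this yields an equality of morphisms of abelian schemes
$$
\beta^* = (\alpha^{(1)})^*\circ \til\beta^*\colon \pic^0_{T/S}\longrightarrow \pic^0_{C/S}.
$$

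Next I would precompose with the canonical isomorphism $\zeta_Z\colon\widehat Z\to\pic^0_{T/S}$ and insert $\zeta\circ\zeta^{-1}=\operatorname{Id}$ on $\pic^0_{\pic^{(1)}_{C/S}/S}$ between the two factors $(\alpha^{(1)})^*$ and $\til\beta^*$, obtaining
$$
\beta^*\zeta_Z = \big((\alpha^{(1)})^*\zeta\big)\circ\big(\zeta^{-1}\til\beta^*\zeta_Z\big).
$$
By the definition \eqref{E:til-b-hat} the second factor is exactly $\widehat{\til\beta}$, while \Cref{L:BLsec11.3} pins down the first factor: since $((\alpha^{(1)})^*\zeta)^{-1}=-\lambda_C$ and $\lambda_C$ is a (principal) isomorphism, we have $(\alpha^{(1)})^*\zeta=-\lambda_C^{-1}$ as an honest morphism of abelian schemes. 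Substituting gives $\beta^*\zeta_Z=-\lambda_C^{-1}\,\widehat{\til\beta}$, and composing on the left with $-\lambda_C$ produces the asserted equality $\widehat{\til\beta}=-\lambda_C\,\beta^*\zeta_Z$, which is exactly the commutativity of \eqref{E:BLC1142}.

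Essentially every step is formal, so I expect the only genuine difficulty to be bookkeeping. The two points requiring care are: first, confirming that the composite $(\alpha^{(1)})^*\circ\til\beta^*$ really does compute pullback along $\beta$ after restriction to the identity components $\pic^0$; and second, and more delicately, that the three occurrences of the canonical isomorphism of \eqref{E:PicTors} — namely $\zeta$ for the torsor $\pic^{(1)}_{C/S}$ under $\pic^0_{C/S}$ and $\zeta_Z$ for the torsor $T$ under $Z$ — are placed in the correct slots, so that the single sign $-\lambda_C^{-1}$ coming from \Cref{L:BLsec11.3} is tracked correctly. The main obstacle is thus purely notational: keeping straight the identifications $\widehat{\pic^0_{C/S}}\cong\pic^0_{\pic^{(1)}_{C/S}/S}$ and $\widehat Z\cong\pic^0_{T/S}$ and the sign they interact with. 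Should one prefer, the whole statement could alternatively be reduced to $S=\spec k$ with $k$ algebraically closed by rigidity (as was done in \Cref{L:BLsec11.3}) and then checked classically, but the argument above has the advantage of working over the base $S$ verbatim.
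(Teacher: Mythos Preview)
Your proof is correct and follows essentially the same approach as the paper: both arguments reduce to the functoriality identity $\beta^*=(\alpha^{(1)})^*\circ\til\beta^*$ coming from \eqref{E:til-beta}, combined with \Cref{L:BLsec11.3} to identify $(\alpha^{(1)})^*\zeta$ with $-\lambda_C^{-1}$, and the definition \eqref{E:til-b-hat} of $\widehat{\til\beta}$. The paper presents the same computation in a slightly terser backward-reduction style, but the logical content is identical.
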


\begin{proof}
The commutativity of \eqref{E:BLC1142} follows immediately from \eqref{E:til-beta} and \Cref{L:BLsec11.3}. 
Indeed, using from  \Cref{L:BLsec11.3} that $-\lambda_C= ({\alpha^{(1)}}^* \zeta)^{-1}$, it is equivalent to show that ${\alpha^{(1)}}^* {\tilde \beta}^* \zeta_Z=\beta^*\zeta_Z$. As $\zeta_Z$ is an isomorphism, it suffices to show 
${\alpha^{(1)}}^* {\tilde \beta}^*=\beta^*$; but this follows from the
commutativity of \eqref{E:til-b-hat}.  (The case $S = \spec \cx$ is {\cite[Cor.~11.4.2]{BL}}.)
     \end{proof}

\subsection{Covers of curves}
\label{SS:mapsbetweencurves}
Let $C$ and $C'$ be smooth proper curves over  $S$, and
let $f\colon C \to C'$ be a finite $S$-morphism of degree $d$.  Then $f$ is a flat
$S$-morphism.  Indeed, since $S$ is by hypothesis locally Noetherian,
it suffices to verify that for each $s\in S$, $f_s: C_s \to C'_s$ is
flat (e.g., \cite[Cor.~14.27]{GW}); but this is immediate, since $f_s$
is a finite morphism of smooth curves (e.g.,\cite[Prop.~14.14]{GW}). 

On one hand,
pullback gives an $S$-morphism
\[
  \xymatrix{
    \pic^0_{C'/S} \ar[r]^{f^*} & \pic^0_{C/S}.
  }\]
On the other hand, the universal property of the Albanese torsor means that
there is a unique morphism $f_*^{(1)}$ that makes the following
diagram commute:
\begin{equation}\label{E:fCC'}
  \xymatrix{
    C \ar[d]^f \ar[r]^<>(0.5){\alpha^{(1)}} & \pic^{(1)}_{C/S} \ar@{-->}[d]^{f_*^{(1)}} \\
C' \ar[r]^<>(0.5){\alpha^{(1)}} & \pic^{(1)}_{C'/S}      
}
\end{equation}
and moreover, $f_*^{(1)}$ is a morphism of torsors over a homomorphism $$\xymatrix{f_*\colon 
\pic^0_{C/S} \ar[r]& \pic^0_{C'/S}.}$$

\begin{rem}
The homomorphism $f_*$ is denoted by $N_f$ in \cite[\S
11.4]{BL}, and agrees with 
 the norm map associated to $f$ (e.g., \cite[\S12.6]{GW}). 
 When $S=\operatorname{Spec}k$ for an algebraically closed field $k$, then on $k$-points $f_*$ is the restriction of the map $\pic_{C/S}(k)
\to \pic_{C'/S}(k)$ given by $\sum a_PP \mapsto \sum a_P f(P)$; i.e., 
the morphism induced by proper push forward on codimension-$1$ cycle classes. 
\end{rem}

The homomorphisms  $f_*$ and $f^*$ are essentially dual:
\begin{lem}
  \label{L:f-up-lw*}
     We have
  \[
    \widehat{f_*} \lambda_{C'} = \lambda_C f^*;
  \]
     i.e., the following diagram commutes
  $$
 \xymatrix{
 \operatorname{Pic}^0_{C'/S} \ar[d]_{f^*} \ar[r]^{\lambda_{C'}}_\sim&\widehat{ \operatorname{Pic}^0_{C'/S}}  \ar[d]^{\widehat{f_*}} \\
\operatorname{Pic}^0_{C/S} \ar[r]^{\lambda_C}_\sim& \widehat{ \operatorname{Pic}^0_{C/S}}.
 } 
 $$
 In particular, $\ker(\widehat{f_*})$ and $\ker(f^*)$ are isomorphic
 group schemes.  \end{lem}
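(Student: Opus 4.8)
The plan is to deduce the identity directly from \Cref{L:BLcor11.4.2}, applied to the composite morphism $\beta := \alpha^{(1)}\circ f\colon C \to \pic^{(1)}_{C'/S}$, viewed as a morphism from $C$ into the torsor $T = \pic^{(1)}_{C'/S}$ under $Z = \pic^0_{C'/S}$. First I would observe that, by the universal property \eqref{E:til-beta} defining $\til\beta$ together with the defining diagram \eqref{E:fCC'} of $f_*^{(1)}$, the induced morphism $\til\beta\colon \pic^{(1)}_{C/S}\to \pic^{(1)}_{C'/S}$ coincides with $f_*^{(1)}$, a morphism of torsors lying over the homomorphism $f_*\colon \pic^0_{C/S}\to \pic^0_{C'/S}$. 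Consequently, unwinding the definition \eqref{E:til-b-hat}, the associated map $\widehat{\til\beta} = \zeta^{-1}\,\til\beta^*\,\zeta_Z$ is exactly the dual homomorphism $\widehat{f_*}$; this last identification is the naturality of the canonical isomorphism $\zeta$ of \eqref{E:PicTors} with respect to morphisms of torsors.

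With this identification in hand, \Cref{L:BLcor11.4.2} gives
\[
\widehat{f_*} = \widehat{\til\beta} = -\lambda_C\,\beta^*\,\zeta_Z.
\]
Since $\beta = \alpha^{(1)}\circ f$, contravariance of pullback yields $\beta^* = f^*\circ(\alpha^{(1)})^*$, whence $\beta^*\zeta_Z = f^*\bigl((\alpha^{(1)})^*\zeta_Z\bigr)$, where now $\alpha^{(1)}$ and $\zeta_Z = \zeta_{C'}$ refer to the curve $C'$. Applying \Cref{L:BLsec11.3} to $C'$, which asserts $\bigl((\alpha^{(1)})^*\zeta_{C'}\bigr)^{-1} = -\lambda_{C'}$, i.e. $(\alpha^{(1)})^*\zeta_{C'} = -\lambda_{C'}^{-1}$, I obtain
\[
\widehat{f_*} = -\lambda_C\,f^*\,(-\lambda_{C'}^{-1}) = \lambda_C\,f^*\,\lambda_{C'}^{-1}.
\]
Multiplying on the right by $\lambda_{C'}$ gives $\widehat{f_*}\,\lambda_{C'} = \lambda_C\,f^*$, which is the asserted commutativity. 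For the final statement, since $\lambda_C$ and $\lambda_{C'}$ are principal and hence isomorphisms, the relation $\widehat{f_*} = \lambda_C f^* \lambda_{C'}^{-1}$ shows $\ker(\widehat{f_*}) = \lambda_{C'}\bigl(\ker(f^*)\bigr)$, so $\lambda_{C'}$ restricts to an isomorphism $\ker(f^*)\xrightarrow{\sim}\ker(\widehat{f_*})$ of group schemes.

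The step I expect to be the main obstacle is the identification $\widehat{\til\beta} = \widehat{f_*}$: one must know that conjugating the pullback $\til\beta^* = (f_*^{(1)})^*$ by the canonical isomorphisms $\zeta$ recovers the honest dual of $f_*$. This is a functoriality property of $\zeta\colon\widehat X\xrightarrow{\sim}\pic^0_{T/S}$ from \S\ref{S:PicTors} under morphisms of torsors, and it is not literally recorded there. I would either spell out this naturality directly from the construction of $\zeta$ (the isomorphism $\iota_*$ is compatible with base change and with the pullback maps), or, following the pattern of \Cref{L:BLsec11.3}, reduce the equality of morphisms $\pic^0_{C'/S}\to\widehat{\pic^0_{C/S}}$ to a single geometric fibre via rigidity and there invoke the classical adjointness of $f^*$ and $f_*$ (e.g. \cite[Lem.~III.6.9]{milneAV}); the latter route sidesteps the functoriality bookkeeping entirely at the cost of passing to fibres.
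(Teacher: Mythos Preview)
Your proposal is correct and follows essentially the same route as the paper. The paper also reduces the claim, via \Cref{L:BLsec11.3}, to the identification of $(f_*^{(1)})^*$ (conjugated by $\zeta$) with $\widehat{f_*}$, and then verifies this naturality of $\zeta$ by rigidity on a geometric fibre, citing \cite[p.~328]{mumford74}. Your version simply packages the first step as an application of the already-proved \Cref{L:BLcor11.4.2} rather than rederiving it, and you correctly isolate the same ``main obstacle'' and the same two ways of resolving it; the paper takes the second (rigidity) route.
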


\begin{proof} (If $S = \spec \cx$, this is {\cite[p.~331,
      eq(2)]{BL}}.)  Consider the diagram \eqref{E:fCC'} and apply
  $\operatorname{Pic}^0_{(-)}$ and the canonical isomorphism $\zeta$
  \eqref{E:muC}.  Then using \Cref{L:BLsec11.3}, this reduces to
  showing that pull back of line bundles along
  $f^{(1)}_*\colon \operatorname{Pic}^{(1)}_{C/S}\to
  \operatorname{Pic}^{(1)}_{C'/S}$ corresponds to pull back of line
  bundles along $f_*\colon \operatorname{Pic}^0_{C/S}\to
  \operatorname{Pic}^0_{C'/S}$. By rigidity it suffices to check
  equality of the morphisms along a geometric fiber;   over an
  algebraically closed field this is standard (e.g.,
  \cite[p.328]{mumford74}).          \end{proof}

We further detail the relation between $f_*$ and $f^*$.

\begin{lem}
  \label{L:pullpush}
  We have
  \[
    f_*f^* = [d]_{\pic^0_{C'/S}}.
  \]
\end{lem}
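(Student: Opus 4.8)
The plan is to reduce to geometric fibers and then carry out the classical divisor-class computation. Both $f_*f^*$ and $[d]_{\pic^0_{C'/S}}$ are endomorphisms of the abelian scheme $\pic^0_{C'/S}$, so by rigidity \cite[Prop.~6.1]{GIT} it suffices to check that they agree on each geometric fiber. Thus I may assume $S=\spec k$ for an algebraically closed field $k$, and that $f\colon C\to C'$ is a finite morphism of degree $d$ of smooth proper curves over $k$.

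Over such a $k$, the group of $k$-points $\pic^0_{C'/k}(k)$ is Zariski dense in $\pic^0_{C'/k}$, and two morphisms of abelian varieties agreeing on a dense set of points coincide; hence it is enough to verify $f_*f^*=[d]$ on $k$-points. Here I would invoke the explicit descriptions recalled above: $f^*$ is pullback of line bundles, while (as noted in the remark following diagram \eqref{E:fCC'}) $f_*$ is induced on $\pic_{C/k}(k)$ by proper pushforward of divisors, $\sum a_Q Q\mapsto \sum a_Q f(Q)$.

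It then remains to compute. A $k$-point of $\pic^0_{C'/k}$ is the class of a degree-zero divisor $\sum_P a_P P$ on $C'$. For a closed point $P$ of $C'$ one has $f^*P=\sum_{Q\in f^{-1}(P)} e_Q Q$, where $e_Q$ is the multiplicity coming from the local equation of $P$, and $\sum_{Q\in f^{-1}(P)} e_Q = d$ since $f$ is finite flat of degree $d$. Applying $f_*$ collapses each fiber onto its image point, so $f_*f^*\big[\sum_P a_P P\big]=\big[\sum_P a_P\big(\sum_{Q\in f^{-1}(P)} e_Q\big)P\big]=\big[\sum_P d\,a_P P\big]=d\big[\sum_P a_P P\big]$. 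Thus $f_*f^*=[d]$ on $k$-points, and by density this equality holds as morphisms, completing the reduction.

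There is no real obstacle here; the argument is routine. The only points requiring any care are the reduction to $k$-points (Zariski density of the group of rational points of an abelian variety over an algebraically closed field) and the identification of $f_*$ with divisor pushforward, which is exactly the content of the remark cited above. The numerical identity $\sum_{Q\in f^{-1}(P)} e_Q=d$ is the fundamental degree relation for a finite flat map of smooth curves, and is what makes the degree $d$ appear.
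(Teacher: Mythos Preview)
Your proof is correct and follows essentially the same approach as the paper's: reduce to a geometric fiber by rigidity, then verify the identity via the explicit description of $f_*$ and $f^*$ on divisors, using that $f$ is finite flat of degree $d$. The paper's proof is terser but makes exactly the same moves.
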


\begin{proof} Again, by rigidity we may check along a geometric fiber.  
  This is then standard, say viewing $f_*$ and $f^*$ as being induced by proper push forward and flat pull back, respectively,  in the Chow group;  i.e., one looks at the effect on divisors, remembering that $f$ is a
  finite flat morphism of degree $d$.
\end{proof}

We can track the effect on polarizations.

\begin{lem}
  \label{L:BL12.3.1}
  We have  $(f^*)^*(\lambda_C)=d\lambda_{C'}$. 
  \end{lem}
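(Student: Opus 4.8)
The plan is to unwind the definition of the pulled-back polarization and then transport the relation of \Cref{L:f-up-lw*} across duality. Recall from \S\ref{S:normendomorphism} that for a homomorphism $g\colon A\to B$ of abelian $S$-schemes and a polarization $\lambda$ on $B$, the pullback polarization is $g^*\lambda = \widehat g\circ\lambda\circ g$. Applying this to $g = f^*\colon\pic^0_{C'/S}\to\pic^0_{C/S}$ and $\lambda=\lambda_C$ gives
\[
(f^*)^*(\lambda_C) = \widehat{f^*}\circ\lambda_C\circ f^*,
\]
so the task reduces to identifying the composite $\widehat{f^*}\circ\lambda_C$.

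First I would dualize the relation $\widehat{f_*}\,\lambda_{C'} = \lambda_C\, f^*$ of \Cref{L:f-up-lw*}. Taking duals of both sides, and using the canonical biduality identification (under which $\widehat{\widehat{f_*}}$ is identified with $f_*$, in the spirit of the involution recorded in \S\ref{S:isog}) together with the symmetry of polarizations ($\widehat{\lambda_C}=\lambda_C$ and $\widehat{\lambda_{C'}}=\lambda_{C'}$ under $X\xrightarrow{\sim}\widehat{\widehat X}$), I obtain
\[
\widehat{f^*}\circ\lambda_C = \lambda_{C'}\circ f_*.
\]
Substituting this into the previous display yields $(f^*)^*(\lambda_C) = \lambda_{C'}\circ f_*\circ f^*$. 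Now I would invoke \Cref{L:pullpush}, which gives $f_*f^* = [d]_{\pic^0_{C'/S}}$, so that
\[
(f^*)^*(\lambda_C) = \lambda_{C'}\circ [d]_{\pic^0_{C'/S}} = d\,\lambda_{C'},
\]
the last equality holding because $\lambda_{C'}$ is a homomorphism of abelian schemes and therefore commutes with multiplication by $d$.

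The only delicate point is the dualization step: one must keep careful track of the canonical isomorphisms $X\xrightarrow{\sim}\widehat{\widehat X}$ and of the symmetry $\widehat\lambda=\lambda$ for a polarization $\lambda$. Since the desired identity $\widehat{f^*}\lambda_C=\lambda_{C'}f_*$ is an equality of morphisms of abelian schemes, if one prefers to sidestep the bookkeeping it can instead be checked fiberwise: by rigidity (\cite[Prop.~6.1]{GIT}) it suffices to verify it on geometric fibers over $S$, where both the symmetry of the principal polarization of a Jacobian and the compatibility of $f^*$ and $f_*$ with duality are classical. I expect this reduction, rather than the algebra itself, to be the main place where care is needed.
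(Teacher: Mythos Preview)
Your proof is correct and is essentially the same as the paper's: both arguments unwind the definition $(f^*)^*(\lambda_C)=\widehat{f^*}\lambda_C f^*$, dualize \Cref{L:f-up-lw*} to identify $\widehat{f^*}\lambda_C$ with $\lambda_{C'}f_*$, and then invoke \Cref{L:pullpush}. The paper simply runs the chain of equalities in the reverse direction and is terser about the dualization step, noting only that it comes from applying $\widehat{(-)}$ to \Cref{L:f-up-lw*}.
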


\begin{proof}[Proof]
 As in \cite[Lem.~12.3.1]{BL}, observe that 
$$
d\lambda_{C'}=\lambda_{C'}\circ  [d]_{\operatorname{Pic}^0_{C'/K}}= \lambda_{C'}\circ f_*\circ f^* = \widehat {f^*}\circ \lambda_C\circ f^* =: (f^*)^*(\lambda_C)
$$
where the second equality is \Cref{L:pullpush}, and the third equality comes from applying $\widehat{(-)}$ to \Cref{L:f-up-lw*}. 
\end{proof}

In \Cref{L:newBLprop11.4.3} below, we calculate $\ker(f^*)$ when $f$ is a finite morphism of smooth projective curves over a field, and show that there is an
abelian cover $C^\abelian \to C'$ which is initial among all abelian
covers of $C'$ through which $f$ factors.
Under a separability hypothesis, we can use this to show that even over an arbitrary base scheme $S$, the image of $f^*$ is an abelian scheme.  (In all applications known to the authors, even if the inseparable degree of $f_s$ is nonconstant -- as in, for example, the case of the multiplication-by-$p$ morphism for a family of elliptic curves with both supersingular and ordinary fibers -- it is clear that $\ker(f^*)$ is still a finite flat group scheme, and thus the image of $\pic^0_{C'/S}$ is an abelian scheme.)

\begin{lem}
  \label{L:f*picflat}
  Let $f\colon C \to C'$ be a finite morphism of smooth projective curves over $S$ such that either $f$ is fiberwise separable or \Cref{alwaysepi} holds.  Then $\ker(f^*\colon \pic^0_{C'/S} \to \pic^0_{C/S})$ is a finite flat group scheme, and so the image $f^*\pic^0_{C'/S}$ is an abelian subscheme of $\pic^0_{C/S}$.
\end{lem}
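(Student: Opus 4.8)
The plan is to reduce both cases to the single assertion that $\ker(f^*)$ is a finite flat $S$-group scheme; granting this, the image is automatically an abelian subscheme. First I would record finiteness: by \Cref{L:pullpush} we have $f_*f^*=[d]_{\pic^0_{C'/S}}$, so on $\ker(f^*)$ the map $[d]=f_*\circ f^*$ vanishes, whence $\ker(f^*)$ is a closed subgroup scheme of $\pic^0_{C'/S}[d]$ and is quasi-finite over $S$; being closed in the proper scheme $\pic^0_{C'/S}$ it is proper, hence finite, over $S$. If moreover $\ker(f^*)$ is flat, then the fppf quotient $Q:=\pic^0_{C'/S}/\ker(f^*)$ is an abelian scheme, $f^*$ factors through $Q$ by \Cref{T:factor}, and the resulting homomorphism $Q\to\pic^0_{C/S}$ has trivial kernel, so it is a proper monomorphism and therefore a closed immersion (as in the argument of \Cref{L:eXi-X[e]}); thus $f^*\pic^0_{C'/S}=Q$ is an abelian subscheme. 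So it remains only to prove flatness of $\ker(f^*)$ in each of the two cases.

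When \Cref{alwaysepi} holds this is quick: the schematic image $Y$ of $f^*$ is an abelian scheme by hypothesis, and $f^*\colon\pic^0_{C'/S}\to Y$ is a surjection of abelian schemes whose fibers have finite kernel (by the computation above), so $\dim Y_s=g'$ and $f^*\colon\pic^0_{C'/S}\to Y$ is an isogeny; its kernel is finite flat by \S\ref{S:isog}.

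In the fiberwise separable case the hypothesis \Cref{alwaysepi} is unavailable, and this is where the real work lies. The step I expect to be the main obstacle is to show that the dual homomorphism $f_*\colon\pic^0_{C/S}\to\pic^0_{C'/S}$ is \emph{smooth}; this is the analogue, for separable covers, of the smoothness statement Ciurca establishes for \'etale covers. It is already faithfully flat, being a surjection of abelian schemes, so it suffices to check smoothness fiberwise. Over a geometric point $s$, the homomorphism $f_{*,s}$ of abelian varieties is smooth if and only if $\operatorname{Lie}(f_{*,s})\colon H^1(C_s,\mathcal O_{C_s})\to H^1(C_s',\mathcal O_{C_s'})$ is surjective. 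By Serre duality this map is dual to the pullback $f_s^*\colon H^0(C_s',\omega_{C_s'})\to H^0(C_s,\omega_{C_s})$ on regular differentials, and $f_s^*$ is injective precisely because $f_s$ is separable (so $f_s^*\Omega_{C_s'}\to\Omega_{C_s}$ is generically injective, and a nonzero global differential pulls back to a nonzero one). Hence $\operatorname{Lie}(f_{*,s})$ is surjective and $f_*$ is smooth.

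With $f_*$ smooth, the kernel $\ker(f_*)$ is a smooth proper $S$-group scheme (the fiber of a smooth morphism over the zero section, closed in the proper scheme $\pic^0_{C/S}$). Its relative identity component is an abelian subscheme $\ker(f_*)^{\abvar}$ (the maximal abelian subscheme supplied by \Cref{L:maxab} applied to $f_*$ and $Z=0$), and the quotient $\pi_0:=\ker(f_*)/\ker(f_*)^{\abvar}$ is finite \'etale over $S$. Factoring $f_*$ through $B:=\pic^0_{C/S}/\ker(f_*)^{\abvar}$ by \Cref{T:factor} exhibits $f_*$ as the composite of the quotient map $\pic^0_{C/S}\to B$ (with abelian kernel $\ker(f_*)^{\abvar}$) and an isogeny $\psi\colon B\to\pic^0_{C'/S}$ whose kernel is $\pi_0$. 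Dualizing via \Cref{L:dualexact} then identifies $\ker(\widehat{f_*})=\ker(\widehat\psi)$ with the Cartier dual $\pi_0^\vee$, which is finite flat. Finally, \Cref{L:f-up-lw*} gives $f^*=\lambda_C^{-1}\widehat{f_*}\lambda_{C'}$ with $\lambda_C,\lambda_{C'}$ isomorphisms, so $\ker(f^*)\cong\ker(\widehat{f_*})=\pi_0^\vee$ is finite flat, as desired. On geometric fibers this recovers the description of $\ker(f^*)$ through the maximal abelian subcover $C^{\abelian}$ of \Cref{L:newBLprop11.4.3}.
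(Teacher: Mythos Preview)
Your proof is correct, and it takes a genuinely different route from the paper's.  The paper factors $f$ through the maximal abelian \'etale subcover $C^{\abelian}\to C'$ (constructed in the relative setting in \Cref{L:maxabcover}), and then uses \Cref{L:newBLprop11.4.3} to see that $\ker(j^*)$ is fiberwise trivial while $\ker(h^*)$ is (a twist of) the Cartier dual of the covering group of $h$, hence finite flat.  You instead prove directly that the norm map $f_*$ is smooth: separability of $f_s$ gives injectivity of $f_s^*$ on global differentials, whose Serre dual is $\operatorname{Lie}(f_{*,s})$; smoothness of $f_*$ then lets you invoke \Cref{L:maxab} for $\ker(f_*)$, factor $f_*$ through an isogeny with \'etale kernel, and dualize via \Cref{L:dualexact} and \Cref{L:f-up-lw*}.

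Each approach has its own payoff.  Yours is more self-contained (it does not need the auxiliary \Cref{L:maxabcover}) and yields the pleasant intermediate fact that $f_*$ is smooth whenever $f$ is fiberwise separable---the natural extension to separable covers of the smoothness result attributed to Ciurca for \'etale covers.  The paper's route, by contrast, makes the maximal abelian subcover explicit, and both that cover and \Cref{L:maxabcover} are used again later (for instance in case~\ref{prym:g2} of \Cref{T:classifyprym} and in the proof of \Cref{T:classifyprymS}).  One small point worth stating explicitly in your write-up: to pass from fiberwise smoothness of $f_{*,s}$ to smoothness of $f_*$ you use that $f_*$ is already flat; you note this, but it would not hurt to also record that $f_*$ is surjective (from $f_*f^*=[d]$), since that is what makes the ``surjective $\operatorname{Lie}$ $\Rightarrow$ smooth'' step work on each fiber.
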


\begin{proof}
  Since the claim is tautologically true if \Cref{alwaysepi} holds, we assume $f$ fiberwise separable and proceed.  By \Cref{L:maxabcover}, $f$ factors as $C \stackrel{j}\to C^\abelian \stackrel{h}\to C'$ where 
  $C^\abelian$ is a smooth proper curves over $S$; $j$ and $h$ are finite; $h$ is an abelian \'etale 
  morphism; and for each $s\in S$, $(C^\abelian)_s$ is initial among all
  abelian \'etale covers through which $C_s \to C'_s$ factors.

By \Cref{L:newBLprop11.4.3}(d),
  $\ker(j^*)$ is fiberwise trivial, and thus trivial, while
  \Cref{L:newBLprop11.4.3}(a) shows that $\ker(h^*)$ is fiberwise the
  covering group of the morphism $C^\abelian \to C'$.
\end{proof}

In the previous argument, we used the fact that
the formation of $C^\abelian$ makes sense in families:

\begin{lem}
  \label{L:maxabcover}
  Let $f\colon C \to C'$ be a finite fiberwise separable morphism of
  smooth proper curves over $S$.  Then $f$ factors through an abelian
  \'etale cover $C^\abelian \to C'$ such that, for each $s\in S$,
  $(C^\abelian)_s \iso (C_s)^\abelian$.\footnote{If the fibers of $C^\abelian$ have genus $1$, then it is possible that $C^\abelian$ is not a scheme, but merely an algebraic space.  All our techniques and results still apply, and we will elide this distinction.}
\end{lem}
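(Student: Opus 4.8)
The plan is to construct $C^\abelian$ as a finite \'etale abelian cover of $C'$ over $S$ restricting fiberwise to the maximal abelian \'etale subcovers supplied by the field case, and then to deduce the factorization of $f$ and the identification of fibers by base change. Over each geometric point $s$, \Cref{L:newBLprop11.4.3} produces $(C_s)^\abelian$, the maximal abelian \'etale subcover through which $f_s$ factors; since $f_{s*}f_s^* = [d]$ (\Cref{L:pullpush}), its covering group is canonically $\ker(f_s^*)\subseteq \pic^0_{C'_s}[d]$, an \'etale group scheme because the cover is \'etale. Thus the whole problem is to show that these fiberwise covers are the fibers of a single finite \'etale abelian $S$-cover $C^\abelian \to C'$; once this is known, the fiberwise factorizations of $f$ through $C^\abelian_s$, together with the fact that a $C'$-morphism into the finite \'etale cover $C^\abelian$ can be produced fiberwise and glued over the connected base $S$, yield an $S$-morphism $C \to C^\abelian$ over $C'$, and compatibility with base change gives $(C^\abelian)_s \iso (C_s)^\abelian$.

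To build $C^\abelian$ I would first isolate the \'etale locus in a way compatible with base change. The conormal exact sequence $f^*\Omega_{C'/S}\to \Omega_{C/S}\to \Omega_{C/C'}\to 0$ exhibits the ramification of $f$ as the vanishing locus $R$ of a section of the line bundle $\Omega_{C/S}\otimes f^*\Omega_{C'/S}^{-1}$ on $C$; fiberwise separability means this section is nonzero on every fiber, so $R$ is a relative effective Cartier divisor, finite flat over $S$ of locally constant degree. Consequently the branch locus $B':=f(R)\subseteq C'$ is finite over $S$ with $B'_s$ the branch locus of $f_s$, the open $U':=C'\setminus B'$ has fibers the \'etale loci $U'_s$, and $f$ restricts to a finite \'etale cover $U:=f^{-1}(U')\to U'$. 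I would then produce the maximal abelian subcover $U^\abelian\to U'$ of this finite \'etale cover and define $C^\abelian$ to be the relative normalization of $C'$ in $U^\abelian$ (equivalently, pull back along the Abel map $C'\to \pic^{(1)}_{C'/S}$ the isogeny of $\pic^0_{C'/S}$ cut out by $\ker(f^*)$).

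The crux --- and the step I expect to be the main obstacle --- is to show that this relative construction stays finite \'etale over all of $S$ with the prescribed fibers, i.e.\ that the covering group $\ker(f^*)\subseteq \pic^0_{C'/S}[d]$ is a finite \'etale $S$-group scheme. This is exactly where positive characteristic intervenes: the \'etale (e.g.\ $p$-rank--sensitive) part of the fundamental group can \emph{a priori} jump in a family, which would make the order of $\ker(f_s^*)$ vary. I would control this by combining two inputs. First, local constancy of the ramification degree pins down, via Riemann--Hurwitz, the genus of $C^\abelian_s$, so the fibers cannot change discontinuously through a change in ramification. Second, over the \'etale locus the relevant covers are finite \'etale, hence rigid under specialization, so $\ker(f^*)$, a closed subgroup scheme of the finite flat group scheme $\pic^0_{C'/S}[d]$ with fiberwise \'etale fibers of locally constant order, is itself finite \'etale over $S$ (here one may invoke the smoothness of $f_*$ for \'etale covers, as in the work of Ciurca cited in the introduction, to see that no inseparable phenomena obstruct descent). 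Granting this, $C^\abelian\to C'$ is finite \'etale abelian, $f$ factors through it fiberwise and hence over $S$, and base change yields $(C^\abelian)_s\iso (C_s)^\abelian$; the genus-one fibers may force $C^\abelian$ to be only an algebraic space, as noted, but this does not affect the argument.
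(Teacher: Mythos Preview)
Your proposal takes a genuinely different route from the paper, and there is a real gap at the step you yourself flag as the crux.

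The paper does not attempt to control $\ker(f^*)$ directly, nor does it work over the open \'etale locus $U'$. Instead it reduces by fppf descent to the case where $S$ is the spectrum of an Artinian local ring, and then uses the topological invariance of the \'etale site under nilpotent thickenings: the maximal abelian \'etale subcover $C^a_0\to C'_0$ over the closed point extends uniquely to a finite \'etale cover $C^a\to C'$. The remaining point is that the factorization $C_0\to C^a_0$ lifts to $C\to C^a$; this is done by a self-trivialization trick for torsors, showing that $C^a\times_{C'}C$ and the unique \'etale lift of $C^a_0\times_{C'_0}C_0\cong\sqcup_{\gamma\in G}C_0$ must coincide, hence the fiber product splits as $\sqcup_{\gamma\in G} C$ and the factorization follows.

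The gap in your approach is the rigidity claim. Finite \'etale covers of the \emph{open} curve $U'$ are not rigid under specialization in positive characteristic: the \'etale fundamental group of an affine curve in characteristic $p$ is not locally constant in families (Artin--Schreier covers proliferate). So ``rigid under specialization'' cannot be invoked over $U'$. What \emph{is} true is that finite \'etale covers of the proper curve $C'$ are locally constant (SGA\,1), but you have not shown that your normalization of $U^\abelian$ is \'etale over all of $C'$, nor that the order of $\ker(f^*_s)$ is locally constant on $S$ --- that is exactly the content of the lemma, and your argument assumes it rather than proving it. Invoking Ciurca at this point is, within the logic of the paper, outsourcing the key step. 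The paper's Artinian reduction plus nilpotent invariance sidesteps all of this cleanly.
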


\begin{proof}  
  By fppf descent,   we may and do assume that $S$ is
  the spectrum of an Artinian local ring, with closed point $0$.
  Recall that, since the fundamental group is insensitive to nilpotents, if
  $D/S$ is a smooth proper curve, then an
  \'etale cover of $D_0$ extends uniquely to an \'etale cover of $D$.

  It now suffices to show that if there is an abelian \'etale cover
  $g_0:C^a_0 \to C'_0$ through which $f_0$ factors, and if $g: C^a
  \to C'$ is the unique extension of $C^a_0$ to an \'etale cover of
  $C'$, then $f$ factors through $g$.

  To see this, let $D_0 = C^a_0 \times_{C'_0} C_0$ be the fiber
  product
  \[
  \xymatrix{
    D_0 \ar[r] \ar[d] & C_0 \ar[d]^{f_0} \\ C^a_0 \ar[r]^{g_0} &C'_0.
  }\]
  It is an \'etale cover of $C_0$.

  On one hand, let $D$ be the extension of $D_0$ as an \'etale cover
  of $C$.  On the other hand, $C^a\times_{C'}C$ is also an \'etale
  cover of $C$.  Since their special fibers coincide, these covers of
  $C$ are isomorphic;
  \[
  D \iso C^a \times_{C'}C.
  \]
  However, we can analyze $D$ in greater detail.  Let $G$ be the
  covering group of $g_0$, and let $r =
  \deg(g_0) = \abs G$.  Like any torsor, the $G$-torsor $C^a_0 \to C'_0$ trivializes
  itself, so that $C^a_0\times_{C'_0}C^a_0 \iso \sqcup_{\gamma\in G} C_0^a$,
  and so
  \[
    D_0 = C^a_0 \times_{C'_0} C_0 \, \iso \,
    C^a_0 \times_{C'_0} C^a_0 \times_{C^a_0}  C_0
    \, \iso \, \sqcup_{\gamma \in G}C^a_0 \times_{C^a_0} C_0
    \, \iso \, \sqcup_{\gamma\in G} C_0.
    \]
                      Moreover $D$,
  the unique extension of $D_0$ to a cover of $C$, is visibly the disjoint
  union of $r$ copies of $C$.  We thus find that $C^a \times_{C'}C$ is
  isomorphic to a disjoint union of $r$ copies of $C$, and thus $f$
  factors through $g$.
\end{proof}

In Lemma \ref{L:pullpush}, we computed $f_*f^*$.  In a complementary fashion, we have

\begin{lem}
  \label{L:pushpull} Let $Y:= f^*\operatorname{Pic}^0_{C'/S}\subseteq
  \operatorname{Pic}^0_{C/S}$.  Then  $e_Y|d$ and 
  \[
    f^*f_* = \frac{d}{e_Y} N_Y.\]
   \end{lem}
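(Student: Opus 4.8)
The plan is to recognize $\tfrac{1}{d}f^*f_*$ as the symmetric idempotent projector onto $Y$ and then invoke the uniqueness in \Cref{BL:T532}. Throughout write $X=\operatorname{Pic}^0_{C/S}$, $X'=\operatorname{Pic}^0_{C'/S}$, $\lambda=\lambda_C$, $\lambda'=\lambda_{C'}$, and set $h:=f^*f_*\in\operatorname{End}(X)$. Since $\lambda$ is principal, $(X,\lambda)/S$ automatically satisfies \Cref{comphyp}, so the norm endomorphism $N_Y$ and the idempotent $\epsilon_Y=\tfrac{1}{e_Y}N_Y$ are available, and $Y=f^*X'$ is an abelian subscheme by \Cref{L:f*picflat}.

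First I would record three relations established above: $f_*f^*=[d]_{X'}$ (\Cref{L:pullpush}); $\widehat{f_*}\lambda'=\lambda f^*$ (\Cref{L:f-up-lw*}); and, by dualizing the latter and using that $\lambda,\lambda'$ are symmetric together with biduality, $\widehat{f^*}\lambda=\lambda'f_*$. From the first, $h^2=f^*f_*f^*f_*=f^*[d]_{X'}f_*=[d]_X h=d\,h$. Next, using the Rosati involution of $\lambda$, I compute $h^{(\dagger)}=\lambda^{-1}\widehat{f_*}\,\widehat{f^*}\lambda=(\lambda^{-1}\widehat{f_*})(\widehat{f^*}\lambda)=(f^*{\lambda'}^{-1})(\lambda'f_*)=f^*f_*=h$, so $h$ is Rosati-symmetric. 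Finally, since $hf^*=f^*f_*f^*=d f^*=[d]_Xf^*$ and $f^*$ is surjective onto $Y$, restriction gives $h|_Y=[d]_Y$; as $[d]_Y$ is surjective, $\operatorname{Im}(h)=Y$ (the reverse inclusion $\operatorname{Im}(h)\subseteq\operatorname{Im}(f^*)=Y$ being clear).

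With these in hand, $\tfrac{1}{d}h$ is a symmetric idempotent (as $h^2=dh$ and $h^{(\dagger)}=h$) whose image $Y$ is an abelian scheme, hence a symmetric epi-abelian idempotent with $X^{\frac1d h}=Y$. Since $\epsilon_Y=\tfrac{1}{e_Y}N_Y$ is the symmetric epi-abelian idempotent with $X^{\epsilon_Y}=Y$, the injectivity of $X^{(-)}$ in \Cref{BL:T532} forces $\tfrac1d h=\tfrac{1}{e_Y}N_Y$, i.e.\ $h=\tfrac{d}{e_Y}N_Y$ in $\operatorname{End}_{\mathbb Q}(X)$. To obtain the divisibility, I would use that $N_Y$ is primitive (\Cref{BL:C534}, valid since $\lambda$ is principal) and that $\operatorname{End}(X)$ is a free $\mathbb Z$-module: writing $d/e_Y=p/q$ in lowest terms gives $qh=pN_Y$, and choosing $a,b$ with $ap+bq=1$ yields $N_Y=q(ah+bN_Y)$ with $ah+bN_Y\in\operatorname{End}(X)$; primitivity forces $q=1$, so $e_Y\mid d$.

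The main obstacle is the verification that $h$ is Rosati-symmetric, since it is symmetry (not merely $h^2=dh$) that pins the factor to the \emph{symmetric} idempotent and thereby to $N_Y$; this hinges on correctly dualizing \Cref{L:f-up-lw*} to get $\widehat{f^*}\lambda=\lambda'f_*$. The only other delicate point is the passage from the rational identity $h=\tfrac{d}{e_Y}N_Y$ to the integral statement $e_Y\mid d$, which I handle via primitivity of $N_Y$ rather than a direct analysis of $\ker(\iota_Y^*\lambda)$.
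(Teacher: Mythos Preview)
Your proof is correct, but it proceeds differently from the paper's. The paper argues by a direct diagram chase: it factors $f^*=\iota_Y\circ j$ with $j$ an isogeny, uses \Cref{L:BL12.3.1} to identify $\iota_Y^*\lambda_C=\widehat{j}^{-1}\circ d\lambda_{C'}\circ j^{-1}$ in $\operatorname{End}_{\mathbb Q}$, and then rewrites $f^*f_*=(\iota_Y j)\lambda_{C'}^{-1}(\widehat{j}\,\widehat{\iota_Y})\lambda_C$ explicitly as $d\,\iota_Y(\iota_Y^*\lambda_C)^{-1}\widehat{\iota_Y}\lambda_C=\tfrac{d}{e_Y}N_Y$, appealing to the very definition of $N_Y$. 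By contrast, you never unwind the definition of $N_Y$: you verify that $h=f^*f_*$ satisfies $h^2=dh$, $h^{(\dagger)}=h$, and $\operatorname{Im}(h)=Y$, and then pin down $\tfrac{1}{d}h$ as the unique symmetric idempotent with image $Y$ via \Cref{BL:T532}. Your route is cleaner and more conceptual, essentially amounting to an application of \Cref{BL:C533}, at the cost of invoking the injectivity in \Cref{BL:T532} (which rests on positivity of the Rosati trace form on geometric fibers); the paper's computation is more hands-on but self-contained at this point. Both arguments finish the divisibility $e_Y\mid d$ in the same way, via the primitivity of $N_Y$ (\Cref{BL:C534}).
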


\begin{proof}
 The proof of the stated equality in \cite[Prop.~12.3.2]{BL} holds in this setting, as well.  Indeed, if we factor $f^*$ as an isogeny $j$ and an inclusion $\iota_Y$, then using \Cref{L:BL12.3.1} 
  we obtain the diagram:
$$
\xymatrix{
\operatorname{Pic}^0_{C'/S} \ar[r]^<>(0.5)j \ar[d]_{d \lambda_{C'}} \ar@/^1.5pc/[rr]^{f^*}& Y \ar@{^(->}[r]^<>(0.5){\iota_Y} \ar[d]_{\iota_Y^*\lambda_C}& \operatorname{Pic}^0_{C/S} \ar[d]^{\lambda_C}\\
\widehat{\operatorname{Pic}^0_{C'/S}}& \widehat Y \ar[l]_<>(0.5){\widehat j}& \widehat{\operatorname{Pic}^0_{C/S}} \ar[l]_<>(0.5){\widehat{\iota_Y}}  \ar@/^1.5pc/[ll]^{\widehat{f^*}=\lambda_{C'} f_*\lambda_C^{-1}}\\
}
$$
where the identification of $\widehat{f^*}$ comes from \Cref{L:f-up-lw*}. 
Therefore, we have
$$
f^*\circ f_* =   f^* \circ  \lambda_{C'}^{-1} \circ  (\lambda_{C'} \circ  f_*\circ  \lambda_{C}^{-1})\circ \lambda_{C}= (\iota_Y\circ  j) \circ \lambda_{C'}^{-1}\circ  (\widehat j\circ  \widehat {\iota_Y})\circ  \lambda_{C}.
$$
Now from the left hand square we have  in $\operatorname{End}_{\mathbb Q}(Y)$ that 
$
\iota_Y^*\lambda_C= \widehat j^{-1}\circ  d \lambda_{C'}\circ  j^{-1}
$
so that $\lambda_{C'}^{-1}=(\frac{1}{d} \widehat j\circ  (\iota_Y^*\lambda_C)\circ  j)^{-1}= d j^{-1}\circ  (\iota_Y^*\lambda_C)^{-1}\circ \widehat j^{-1}$, and we have in $\operatorname{End}_{\mathbb Q}(\operatorname{Pic}^0_{C/S})$ that   
$$
f^*\circ f_* = d \iota_Y \circ (\iota_Y^*\lambda_C)^{-1} \circ \widehat{\iota_Y}\circ \lambda_C = d \frac{1}{e_Y} N_Y.
$$
We conclude that $e_Y|d$ as $N_Y$ is primitive (\Cref{BL:C534}).
    \end{proof}

\begin{cor}\label{C:PrymC/C'Nm}  We have
$$
P(C/C')= (\ker f_*)^{\abvar}.
$$
 \end{cor}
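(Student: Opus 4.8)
The plan is to derive the corollary from the relation $f^* f_* = \tfrac{d}{e_Y}\,N_Y$ of \Cref{L:pushpull} together with the defining formula $P(C/C') = (\ker N_Y)^{\abvar}$ of \eqref{E:P(C/C')}. Write $X = \pic^0_{C/S}$ and $Y = f^*\pic^0_{C'/S}$. Because $\lambda_C$ is principal, $\deg\lambda_C = 1$ is invertible and $(X,\lambda_C)$ satisfies \Cref{comphyp}, so $P := P(C/C')$ exists as a maximal abelian subscheme; moreover the last equality in \eqref{E:P(C/C')} exhibits it as $P = (X/Y)^{\widehat{\ }}$, whose formation manifestly commutes with base change. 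Since a maximal abelian subscheme is detected fibrewise, I would reduce the whole statement to geometric fibres, where $P_s = (X_s/Y_s)^{\widehat{\ }} = (\ker N_{Y_s})^{\abvar}$.

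Over an algebraically closed field the engine is a single elementary remark: if $\phi$ is an endomorphism of an abelian variety and $A$ is an abelian subvariety with $\phi(A)$ finite, then $\phi(A)$, being an abelian subvariety of a finite group scheme, is trivial, so $A \subseteq \ker\phi$ and hence $A \subseteq (\ker\phi)^{\abvar}$. Working over the residue field $k(s)$, clearing denominators in \Cref{L:pushpull} gives $d\,N_{Y_s} = e_{Y_s}\, f^*_s f_{*,s}$, so $\ker(d\,N_{Y_s}) = \ker(e_{Y_s}\,f^*_s f_{*,s})$; feeding the remark successively to $\phi = N_{Y_s}$ (scalar $d$), to $\phi = f^*_s f_{*,s}$ (scalar $e_{Y_s}$), and to $\phi = f_{*,s}$ post-composed with $f^*_s$ (whose kernel is finite by \Cref{L:f*picflat}) produces the chain
\[
(\ker N_{Y_s})^{\abvar} = (\ker(d\,N_{Y_s}))^{\abvar} = (\ker(e_{Y_s}\,f^*_s f_{*,s}))^{\abvar} = (\ker(f^*_s f_{*,s}))^{\abvar} = (\ker f_{*,s})^{\abvar},
\]
the reverse inclusions being obvious. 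Thus $P_s = (\ker f_{*,s})^{\abvar}$ is the largest abelian subvariety of the fibre of $\ker f_*$.

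It remains to upgrade this to an equality of subschemes of $X$ over $S$. First I would record the global inclusion $P \subseteq \ker f_*$: from $N_Y|_P = 0$ we get $e_Y\,f^* f_*|_P = d\,N_Y|_P = 0$, and torsion-freeness of $\hom(P,X)$ forces $f^* f_*|_P = 0$, so $f_*$ carries $P$ into the finite group scheme $\ker f^*$; as the image of the abelian scheme $P$ is fibrewise an abelian subvariety of a finite group, it is trivial, whence $f_*|_P = 0$. Now $P$ is an abelian subscheme of $\ker f_*$, with base-change-compatible formation, that is fibrewise maximal; any abelian subscheme $W \subseteq \ker f_*$ then satisfies $W_s \subseteq P_s$ for all $s$, so the composite $W \hookrightarrow X \to X/P$ vanishes on every fibre, hence vanishes, giving $W \subseteq P$. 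Therefore $\ker f_*$ admits a maximal abelian subscheme and it is $P$, i.e. $(\ker f_*)^{\abvar} = P(C/C')$.

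I expect the main obstacle to lie not in the fibrewise algebra, which the remark above renders painless, but in the relative packaging of the final paragraph: one must verify that $P$, defined through the norm map, literally sits inside $\ker f_*$ and serves as the maximal abelian subscheme in the base-change-compatible sense of \S\ref{S:imab}, thereby simultaneously establishing that $(\ker f_*)^{\abvar}$ exists at all. A secondary point demanding attention, since we allow $2$ (or any $p$) to be non-invertible, is that the finite kernels $\ker f^*$ and $\ker(d\,N_Y)$ may be non-reduced; the device of passing to the largest abelian subvariety and using that the image of an abelian variety in a finite group scheme is trivial is precisely what keeps these infinitesimal contributions from interfering.
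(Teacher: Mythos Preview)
Your proof is correct and follows the same route as the paper's: both hinge on the identity $f^*f_* = \tfrac{d}{e_Y}N_Y$ from \Cref{L:pushpull} together with the finiteness of $\ker f^*$ (via \Cref{L:pullpush}), which forces any abelian subscheme of $\ker(f^*f_*)$ to already lie in $\ker f_*$. The paper compresses the argument into a two-line chain of equalities over $S$, whereas you have spelled out the fibrewise reduction and the global repackaging (including the existence of $(\ker f_*)^{\abvar}$) that the paper leaves implicit.
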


\begin{proof}
From the definition \eqref{E:P(C/C')} and \Cref{L:pushpull} we have 
$$
P(C/C')= (\ker N_Y)^{\abvar}= (\ker \frac{d}{e_Y}N_Y)^{\abvar}=(\ker (f^*f_*))^{\abvar}
$$
Clearly we have $(\ker f_*)^{\abvar}\subseteq (\ker (f^*f_*))^{\abvar}$.  From \Cref{L:pullpush} we have that $\ker f^*$ is finite over $S$, being a subscheme of the finite group scheme $\operatorname{Pic}^0_{C'/S}[d]$. Therefore, the inclusion  $(\ker f_*)^{\abvar}\subseteq (\ker (f^*f_*))^{\abvar}$ is an equality, and we are done.  
\end{proof}

\begin{rem}\label{R:PrymC/C'Nm}
If $S=\operatorname{Spec}K$ is the spectrum of a field $K$, we have $P(C/C')=((\ker f_*)^\circ)_{\operatorname{red}}$, and if moreover, $\operatorname{char}(K)=0$, then $P(C/C')=(\ker f_*)^\circ$ (\Cref{R:LAS}).  
\end{rem}

\section{Cycles and endomorphisms over fields}
\label{S:cyclefield}

The main goal of this section is to review the construction and basic
properties of the endomorphism $\delta(\alpha,\beta)$ of an abelian
variety $X$ over a field $K$ induced by cycle classes $\alpha$ and
$\beta$ of complementary dimension on $X$.  Since the construction
uses intersection theory, we prefer to work over a field $K$; one can
do intersection theory over slightly more general bases, but for our
purposes  
 we get more general results
working over a field and then spreading, and so it is more convenient
to simply do the constructions over a field.

The endomorphism $\delta(\alpha,\beta)$ was constructed
\cite{matsusaka59}, and described and analyzed in the language of
Weil's foundations for algebraic geometry. Thus, it was originally
discovered as an object over an arbitrary, ``sufficiently large''
field.  It is studied in \cite{BL} in the special case of
complex abelian varieties.  In the present treatment, we rely on a
modern reformulation of Samuel's notion of a regular homomorphism for
the initial construction of $\delta(\alpha,\beta)$.  Once the
existence of this endomorphism has been secured, it turns out that the
analysis of its properties in \cite[\S 11.6]{BL} is equally valid in the
algebraic category.

\subsection{Equivalences of algebraic cycles on abelian varieties}
\label{SS:EqRel}
We recall here the agreement of certain notions of equivalence between cycles on an abelian variety $X$ over an algebraically closed field $k$. We refer the reader to \cite[\S1.2 and App.~A]{MNPmot} for references.

To fix terminology and notation, we fix a cohomology theory $\mathcal H^\bullet$ with coefficient ring $R_{\mathcal H}$.  This can be any Weil cohomology theory, but we will also work with 
several cohomology theories with ``integral'' coefficients, which give rise to standard Weil cohomology theories.  For instance, for $\ell\ne \operatorname{char}(k)$,  we will consider $\ell$-adic cohomology with $\mathbb Z_\ell$-coefficients.  In positive characteristic, we will also consider crystalline cohomology with coefficients in the Witt ring $W(k)$, and for $k=\mathbb C$, we also consider Betti cohomology with $\mathbb Z$-coefficients.  Note that tensoring with the field of fractions of the coefficient ring of any of these cohomology theories gives rise to a Weil cohomology theory.  
Unless a particular cohomology theory is specified, if we  say a cycle or cycle class is homologically trivial, 
we mean its class in $\mathcal H^\bullet$ is trivial for all of the applicable cohomology theories.

  Now, for a divisor $D$ on $X$, we have that
  \begin{itemize}
\item    \emph{$D$ is algebraically trivial $\iff$  $D$ is homologically trivial $\iff$ $D$ is numerically trivial.}
\end{itemize}

  Indeed, on any smooth projective variety over an algebraically
  closed field, for cycles of any dimension one has the forward
  implications.   Moreover, for divisors, Matsusaka's
  theorem (e.g.,  \cite[App.~A, Thm., p.13]{MNPmot}) 
         states that numerical triviality of a divisor implies that some multiple is homologically trivial, and homological triviality of a divisor implies that some multiple is algebraically trivial.   On an abelian variety, however, the cohomology is free of finite rank over the ring of coefficients, and the N\'eron--Severi group is a free $\mathbb Z$-module of finite rank, and so numerical (resp.~homological) triviality of $D$ implies homological (resp.~algebraic) triviality of $D$.  We also recall that $D$ is algebraically trivial if and only if $\mathcal O_{X}(D)\in \operatorname{Pic}^0_{X/k}(k)$.  
  
  Similarly, for a $1$-cycle $\Gamma$ on $X$, we have 
    \begin{itemize}
\item    \emph{$\Gamma$ is algebraically trivial $\implies $ $\Gamma$ is 
  homologically trivial $\iff$  $\Gamma$ is numerically
  trivial.}
\end{itemize}

  As mentioned above, the forward implication holds for cycles of any dimension on any
  smooth projective variety over an algebraically closed field.  The
  reverse implication follows from \cite[Cor.~10]{sebastianSmash},
  which states that any numerically trivial $1$-cycle on a variety
  dominated by a product of curves has a positive integer multiple
  that is homologically trivial.  
            Since every abelian variety is
  dominated by the Jacobian of a smooth projective curve (see the
  proof of \Cref{C:PPAV=PT} for a reminder), and the
  Jacobian of a smooth projective curve of genus $g$ is dominated by the product of
  $g$ copies of the curve,
  Sebastian's result applies to abelian varieties.  We thus conclude
  that some positive multiple of $\Gamma$ is homologically
  trivial. However, again, since the cohomology of an abelian variety
  is free of finite rank over the ring of coefficients, this implies
  that $\Gamma$ itself is homologically trivial.

\subsection{Regular homomorphisms and traces of endomorphisms}
\label{S:regular}
Let $Z/K$ be a smooth geometrically irreducible projective variety, and let $A/K$ be an abelian variety. 
Denote by  $\A^i(Z) \subseteq \chow^i(Z)$ denote the group of algebraically
trivial cycle classes.
A (Galois-equivariant) regular homomorphism (in codimension $i$) is an $\aut(\bar K/K)$-equivariant group homomorphism
\[
  \xymatrix{\A^i(Z_{\bar K}) \ar[r]^\phi & A(\bar K)}
\]
such that, for each pair $((T,t_0),Z)$ with $(T,t_0)$ a pointed smooth variety over $K$, and each $\Xi \in \chow^i(T\times Z)$, the map of points
\[
  \xymatrix@R=.5em{
    T(\bar K) \ar[r] & A(\bar K)\\
    t \ar@{|->}[r] & \Phi(\Xi_t-\Xi_{t_0})
  }
\]
is induced by a $K$-morphism of varieties
\[
  \xymatrix{
    T \ar[r]^{\psi_\Xi} & A.
  }
\]
In particular, if $(Z,z_0)$ is a pointed $K$-variety of dimension $d$,
then the Albanese map is a universal regular homomorphism in
codimension $d$. We refer the reader to \cite{ACMVfunctorial} for more details on regular homomorphisms.

\medskip 
We now turn our attention to recalling Weil's trace formula \eqref{E:WeilTr}.
For this we further specialize the discussion above to the case where $Z=C$ is a pointed smooth projective curve over $K$, and recall the
Albanese property of the Picard variety (\S \ref{S:AlbCurve}).  In this situation, we recover the
relation between correspondences on curves and endomorphisms of the
Jacobian which Weil constructed (\cite{weil-vaca}; see  \cite[\S 3.2.2]{kahnlectures} and  \cite[\S 3]{schollclassical} for modern treatments): there is a canonical surjection 
 \[
  \xymatrix@R=.5em{
    \chow^1(C\times_K C) \ar@{->>}[r] & \operatorname{End}(\pic^0_{C/K})
    \\
    D \ar@{|->}[r] & \gamma_D,
  }
\]
with kernel 
 the subgroup of divisors generated by horizontal and
vertical fibers of the product $C\times_K C$.  Below, we will need to
use Weil's formulation of the Lefschetz formula for
self-correspondences on $C$; it is expressed in terms of the trace of
the corresponding endomorphism on the Jacobian.

Suppose $X$ is a $g$-dimensional abelian variety and $f \in \End(X)$.  Recall that
there is a unique monic polynomial $P_f\in \integ[T]$ of degree
$2g$, the characteristic polynomial of $f$, such that for each
integer $r$ we have $P_f(r) = \deg(f - [r]_X)$.  If $X$ is a complex
abelian variety, this is the characteristic polynomial of the action
of $f$ on $H_1(X,\rat)$; more generally, $P_f(T)$ is the
characteristic polynomial of the linear operator induced by $f$ on
$T_\ell X$ for any $\ell\not = \operatorname{char}(K)$.  The trace of
$f$, $\tr(f)$, can be read off from $P_f(T)$; indeed, 
$P_f(T) = T^{2g} - \tr(f) T^{2g-1} + \cdots$.

For $D \in \chow^1(C\times_KC)$, Weil was able to calculate the trace of the associated endomorphism using intersection theory on $C\times_K C$.
More precisely for  $D \in \chow^1(C\times_K C)$, if one defines the indices $d_1(D)$ and
$d_2(D)$ by $\operatorname{pr}_i(D) = d_i(D)[C]$
(e.g., \cite[Ex.~16.1.4]{fulton}, \cite[Def.~2.19]{kahnlectures}),  
then (e.g., \cite[Ex.~6.45]{kahnlectures} \cite[Thm.~VI.3.6]{langabvars} \cite[\S 2.II]{weil48})
\begin{equation}\label{E:WeilTr}
   \tr(\gamma_D) = d_1(D) + d_2(D) - (\Delta_C \cdot D),
\end{equation}
where $\Delta_C \in \chow^1(C\times_K C)$ is the class of the diagonal.

\subsection{Definition of $\delta$ via intersection theory and regular homomorphisms} \label{S:Def-delta}
Let $X/K$ be an abelian variety over a field $K$.  Let $\alpha\in \operatorname{CH}^a(X)$ and $\beta\in \operatorname{CH}^b(X)$ by cycle classes of complementary (co)dimension,  so that $a+b=\dim X$.  
On the product $X\times_K X$, with projections $\pi_1,\pi_2$, and addition map $\mu:X\times_K X \to X$, we can consider $\pi_2^*\alpha\in \operatorname{CH}^{a}(X\times_K X)$, and $(\operatorname{Id}_X\times \mu)_*\pi_2^*\beta \in \operatorname{CH}^{b}(X\times_K X)$, and the intersection product
\begin{equation}\label{E:G(a,b)}
\Gamma(\alpha,\beta):= (\pi_2^*\alpha)\cdot  ((\operatorname{Id}_X\times \mu)_*\pi_2^*\beta)\in \operatorname{CH}^{\dim X}(X\times_K X).
\end{equation}
Since for an abelian variety the identity map is an Albanese
map,
 the fact that the Albanese is a regular homomorphism implies that the  cycle class $\Gamma(\alpha,\beta)$ induces  a morphism of varieties over $K$:
\[
  \xymatrix{
    \hat \delta(\alpha,\beta)\colon X \ar[r] &X.
  }\]
We adjust $\hat\delta$ to construct a $K$-\emph{homomorphism}, i.e., a morphism of abelian varieties:
$$
\xymatrix{ \delta(\alpha,\beta)=\hat\delta(\alpha,\beta)-\hat \delta(\alpha,\beta)(0):X \ar[r]& X.}$$  If  $V, W\in Z^\bullet (X)$
are cycles with cycle classes $[V]$, $[W]$, then we define $\delta(V,W)=\delta([V],[W])$.  
   
\begin{rem}\label{R:dbarK}
As regular homomorphisms and intersection products are stable under base change of field \cite[Thm.~1]{ACMVfunctorial}, so is $\delta$.  In particular,  $\delta(\alpha_{\bar K},\beta_{\bar K}) = (\delta(\alpha,\beta))_{\bar K}$.  
\end{rem}

From the definition it is clear that $\delta(-,-)$ is additive in each entry.  We also have: 
  \begin{pro}\label{P:del=0}
 In the notation above, suppose that  $\alpha_{\bar K}$ or $\beta_{\bar K}$ is one of the following:
 \begin{enumerate}
 \item A torsion cycle class,
 \item A homologically trivial cycle class,
 \item A numerically trivial $1$-cycle class or divisor class.
\end{enumerate} 
Then $\delta(\alpha,\beta)=0$.   
   \end{pro}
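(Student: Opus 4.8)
The plan is to reduce to an algebraically closed field and then to play the additivity of $\delta$ against the faithfulness of the $\ell$-adic realization. First I would note that a homomorphism of abelian varieties over $K$ vanishes if and only if its base change to $\bar K$ does, and that $\delta(\alpha,\beta)_{\bar K}=\delta(\alpha_{\bar K},\beta_{\bar K})$ by \Cref{R:dbarK}; so I may assume $K=\bar K$, at which point the hypotheses read simply that $\alpha$ or $\beta$ is of one of the three listed types. I would also record that $\delta(0,\beta)=\delta(\alpha,0)=0$ directly from \eqref{E:G(a,b)}, since $\pi_2^*$ of the zero class is zero and hence $\Gamma(0,\beta)=0$.

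For case (1), additivity finishes things: if $\alpha$ is torsion, choose $n>0$ with $n\alpha=0$, so that $n\,\delta(\alpha,\beta)=\delta(n\alpha,\beta)=\delta(0,\beta)=0$; since $\operatorname{End}(X)$ is a free, hence torsion-free, $\integ$-module, this forces $\delta(\alpha,\beta)=0$, and symmetrically if $\beta$ is torsion. For case (3), I would invoke the equivalences recalled in \S\ref{SS:EqRel}: over the algebraically closed field $K$, a numerically trivial divisor class or $1$-cycle class on the abelian variety $X$ is already homologically trivial, so case (3) collapses into case (2).

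The heart of the matter is case (2). Suppose $\alpha$ is homologically trivial (the case of $\beta$ being entirely symmetric). Since $\Gamma(\alpha,\beta)$ in \eqref{E:G(a,b)} is assembled from $\alpha$ and $\beta$ purely by flat pullback, proper pushforward, and intersection product, all of which are compatible with the cycle class map into any of the applicable Weil cohomology theories, one obtains $[\Gamma(\alpha,\beta)]=(\pi_2^*[\alpha])\cup\big((\operatorname{Id}_X\times\mu)_*\pi_2^*[\beta]\big)$ in cohomology. Thus $[\alpha]=0$ forces $[\Gamma(\alpha,\beta)]=0$ (and likewise $[\beta]=0$ would kill the second factor).

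To conclude I would compare the two maps that $\Gamma(\alpha,\beta)$ determines: on one hand the endomorphism $\delta(\alpha,\beta)\in\operatorname{End}(X)$ produced through the universal regular homomorphism $\operatorname{alb}$, and on the other the action on $V_\ell X:=T_\ell X\otimes\rat_\ell$ by the relevant K\"unneth component of $[\Gamma(\alpha,\beta)]$. The compatibility of the universal regular homomorphism with $\ell$-adic realization (see \cite{ACMVfunctorial}) identifies these, so vanishing of $[\Gamma(\alpha,\beta)]$ makes $\delta(\alpha,\beta)$ act as zero on $V_\ell X$; faithfulness of the Tate representation $\operatorname{End}(X)\hookrightarrow\operatorname{End}(V_\ell X)$ (e.g.\ \cite[\S19]{mumfordAV}) then yields $\delta(\alpha,\beta)=0$. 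I expect this last identification---matching the regular-homomorphism construction of $\delta$ with its cohomological realization---to be the only substantive point, the rest being bookkeeping.
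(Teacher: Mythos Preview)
Your argument is correct and follows the same overall architecture as the paper: reduce to $\bar K$, dispatch (1) by additivity and torsion-freeness of $\End(X)$, reduce (3) to (2) via \S\ref{SS:EqRel}, and for (2) observe that $[\Gamma(\alpha,\beta)]=0$ in cohomology and then argue that a correspondence with trivial cohomology class induces the zero endomorphism.

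The one point of divergence is how you finish case (2). The paper cites \cite[Prop.~8.1]{ACMVdiagonal} directly for the statement that regular homomorphisms depend only on the cohomology class of the inducing correspondence, emphasizing that this is recent in positive characteristic (it is classical over $\cx$). You instead argue via compatibility of the Albanese regular homomorphism with $\ell$-adic realization, citing \cite{ACMVfunctorial}, and then invoke faithfulness of the Tate representation. This is essentially a concrete instance of the same principle, and your route has the mild advantage of making the mechanism visible; but you should check that the compatibility you need is actually stated in \cite{ACMVfunctorial} rather than in the later \cite{ACMVdiagonal}---the paper's choice of reference suggests the latter is where the positive-characteristic statement lives. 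If you cannot locate it in \cite{ACMVfunctorial}, update the citation accordingly. The paper also briefly notes the algebraically-trivial case separately (via rigidity), which you omit, but this is harmless since homological triviality subsumes it.
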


  \begin{proof} (See {\cite[Prop.~5.4.3]{BL}} for the case $S = \spec
    \cx$.)  
  A morphism of abelian varieties is zero if and only if the morphism is zero after base change to the algebraic closure; thus from \Cref{R:dbarK}, we may assume we are working over an algebraically closed field.  

 (1) If for some positive integer $n$ we have $\delta(n\alpha,\beta)=0$ or $\delta(\alpha,n\beta)=0$, then $\delta(\alpha,\beta)=0$,  due to the additivity of $\delta(-,-)$ in each entry, together with the fact that $\operatorname{End}(X)$ is a free $\mathbb Z$-module of finite rank (e.g., \S  \ref{S:isog}).  
 
    (2) Note first that if $\alpha$ is algebraically trivial, the assertion follows from the definitions and rigidity of endomorphisms of abelian varieties (e.g., \cite[Prop.~6.1]{GIT}).  For the case where $\alpha$ is only assumed to be homologically trivial, we use a recent result \cite[Prop.~8.1]{ACMVdiagonal}, which shows that for fields of any characteristic, regular homomorphisms depend only on the cohomology class of the correspondence inducing the regular homomorphism (this is well-known over $\mathbb C$).  Then, if $\alpha$ is homologically trivial, one has that $\pi_2^*\alpha$ is homologically trivial, and, since intersection product in Chow is taken to cup product in cohomology, one has from the definition of $\Gamma(\alpha,\beta)$ in \eqref{E:G(a,b)} that $\Gamma(\alpha,\beta)$ is homologically trivial. Therefore, \cite[Prop.~8.1]{ACMVdiagonal} implies that $\hat\delta(\alpha,\beta)$ is the zero map, and consequently,  $\delta(\alpha,\beta)=0$, as well.  The same argument works for $\beta$, as well.  
  
(3) This follows from (2) since numerically trivial divisors and  $1$-cycles on abelian varieties are homologically trivial (\S \ref{SS:EqRel}).  
  \end{proof}

\subsection{Geometric construction of $\delta$ over an algebraically closed field}
\label{S:deltapoint} Over an algebraically closed field we can give a geometric description of $\delta$.  
Let $X$ be an abelian variety over an algebraically closed field $k$.  Suppose $V, W\subseteq X$ are integral  cycles of complementary dimension, so that $\dim V + \dim W = \dim
X$.  
    
If the intersection $V\cap W$ is nonempty of pure dimension $0$, and the intersection is  $V.W=\sum _{i=1}^n r_ix_i$, then we define
\begin{equation}\label{E:S(V,W)}
S(V,W)=
r_1x_1+\cdots +r_nx_n
\end{equation}
where the sum above is addition in the abelian variety $X$.   Note that $S(-,-)$ is clearly additive in each entry (under the hypotheses on the intersections of the cycles), and symmetric.  

For $P\in X(k)$, let $t_P\colon X \to X$
   be the corresponding translation
map.
 Recall from the Moving Lemma (\cite[Thm.~2, Lem.~1]{kleiman74}) that there is a nonempty open subset $U\subseteq X$ such that, if $P\in X$,
then $V\cap t_P^*W$ is empty or of pure of dimension zero.
If for a general $P\in U$ the intersection is of pure dimension $0$, then we define for $P\in U$
$$
\hat \delta(V,W)(P)=S(V,t_P^*W)
$$
giving a morphism $\hat \delta(V,W)\colon U\to X$, which extends to give a morphism $\hat \delta(V,W)\colon X\to X$.  We define $\delta(V,W)=\hat\delta(V,W)-\hat \delta(V,W)(0)$.  
Otherwise, if for general $P\in U$ the intersection $V\cap t_P^*W$ is empty, we set $\delta(V,W)=0$.    Clearly $\hat \delta$ and $\delta$ agree with the definition in the previous section.  For clarity we note that when $V\cap t_P^*W$ and $V\cap W$ are of pure dimension $0$ we have 
\begin{equation}\label{E:d(V,W)S}
\delta(V,W)(P)= S(V,t_P^*W)-S(V,W).
\end{equation}

  \subsection{Formal properties of $\delta$ 
   }\label{S:Frml-d}
    
Here we review some of the formal properties of $\delta$ established
in \cite[\S 1]{matsusaka59}, following the treatment in \cite[\S5.4]{BL}.

\begin{pro}\label{P:BL544-6-7}
 In the notation above: 
\begin{alphabetize}
\item \label{L:BL544}
 For algebraic cycles $V,W$ on $X$ with $\dim V+\dim W=\dim X$,
 we have 
  \[
  \delta(V,W) + \delta(W,V) = -(V \cdot W)[1]_X.
  \]
  
  \item \label{L:BL546}
   For algebraic cycles $V_0, \dots, V_r$ on $X$ with $\sum
  \dim V_i = r\dim X$, we have
  \[
  \delta(V_0, V_1\cdot \ldots \cdot V_r) = \sum_{i=1}^r \delta(V_0 \cdot V_1
  \cdot \ldots \cdot \breve{V}_i \cdot \ldots  \cdot V_r, V_i)
  .\]
  
  \item  \label{P:BL547} 
   \[
  \delta(D^r,D^{g-r}) = -\frac{g-r}{g}(D^g)[1]_X.
  \]
\end{alphabetize}
\end{pro}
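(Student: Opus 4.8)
The plan is to deduce the formula formally from the symmetry relation of part (a) and the product rule of part (b); the key observation is that every instance of $\delta(D^r, D^{g-r})$ is controlled by the single endomorphism $\delta(D^{g-1}, D)$. First I would reduce to this endomorphism. Invoking part (b) with first entry $V_0 = D^r$ and with the $g-r$ remaining factors all equal to $D$, so that their product is $D^{g-r}$, I note that the dimension hypothesis is met because $\dim(D^r) + (g-r)\dim(D) = (g-r) + (g-r)(g-1) = (g-r)g$. Each of the $g-r$ resulting summands is $\delta(D^r \cdot D^{g-r-1}, D) = \delta(D^{g-1}, D)$, so
\[
\delta(D^r, D^{g-r}) = (g-r)\,\delta(D^{g-1}, D).
\]

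Next I would pin down $\delta(D^{g-1}, D)$. Applying part (b) once more, this time with $V_0 = D$ and the $g-1$ remaining factors all equal to $D$ (the dimension hypothesis holds since $\dim(D) + (g-1)\dim(D) = g(g-1)$), gives
\[
\delta(D, D^{g-1}) = (g-1)\,\delta(D^{g-1}, D).
\]
On the other hand, part (a) applied to the complementary-dimensional cycles $V = D^{g-1}$ and $W = D$ reads
\[
\delta(D^{g-1}, D) + \delta(D, D^{g-1}) = -(D^{g-1}\cdot D)\,[1]_X = -(D^g)\,[1]_X.
\]
Substituting the preceding identity into this relation collapses it to $g\,\delta(D^{g-1}, D) = -(D^g)\,[1]_X$ in $\End(X)$.

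It remains to combine these. Dividing by $g$ in $\End_{\mathbb Q}(X)$ gives $\delta(D^{g-1}, D) = -\frac{1}{g}(D^g)\,[1]_X$, and substituting into the first display yields $\delta(D^r, D^{g-r}) = -\frac{g-r}{g}(D^g)\,[1]_X$, the asserted equality. That the right-hand side lands back in the integral ring $\End(X)$, rather than merely $\End_{\mathbb Q}(X)$, is consistent with Riemann--Roch on an abelian variety, $(D^g) = g!\,\chi(\mathcal{O}_X(D))$, which forces $g \mid (D^g)$. I do not anticipate any substantive obstacle: given parts (a) and (b), the argument is a short piece of bookkeeping, and the only points demanding care are the verification of the dimension hypotheses in the two applications of (b) and the identification of the repeated intersection products $D^r \cdot D^{g-r-1}$ and $D \cdot D^{g-2}$ with $D^{g-1}$. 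The degenerate cases are automatically consistent: for $r = g$ the sum defining the first display is empty and both sides vanish, while for $r = 0$ one recovers $\delta(D^0, D^g) = -(D^g)\,[1]_X$.
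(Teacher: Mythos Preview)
Your proof is correct and follows precisely the approach the paper indicates: part (c) is derived formally from (a) and (b), and your two applications of (b) followed by one application of (a) constitute exactly the argument in \cite[Prop.~5.4.7]{BL} to which the paper defers. The dimension checks and the passage through $\End_{\mathbb Q}(X)$ are handled correctly.
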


\begin{proof}
(a)  It suffices to prove the equality after base change to the algebraic closure.  Then this is a straight forward argument using \eqref{E:d(V,W)S} and \eqref{E:S(V,W)} (see \cite[Lem.~5.4.4]{BL}).

(b) Again, it suffices to prove the equality after base change to the algebraic closure.  Then this is a straight forward argument using \eqref{E:d(V,W)S} and \eqref{E:S(V,W)} (see \cite[Lem.~5.4.6]{BL}).

(c) This follows formally from (a) and (b); see \cite[Prop.~5.4.7]{BL}.
\end{proof}

\subsection{Interaction of $\delta$ with the Abel map}
\label{S:abelanddelta}
Here we review some of the results from \cite[\S11.6]{BL}.
 We start with a  $K$-morphism $\beta: C\to Z$ from a smooth projective genus $g$ curve $C$ over $K$ to an abelian variety $Z$ over $K$,   and we will use the notation from \eqref{E:til-beta} and \eqref{E:til-b-hat}.

\begin{pro}
  \label{P:BL1161}
For  a divisor $D$ on $Z$,
     \begin{equation}
    \label{E:deltabetaL}
  \delta(\beta_*[C],D)  =\widehat{\widehat{\til\beta}} \circ \lambda_C\inv \circ  \widehat{\til\beta} \circ 
  \phi_{D},
\end{equation}
\[
  \xymatrix{
    Z \ar[r]^{\phi_D} & \widehat Z \ar[r]^<>(0.5){\widehat{\til\beta}} &
    \widehat{\pic^0_{C/K}} \ar[r]^{\lambda_C\inv} & \pic^0_{C/K}
    \ar[r]^{\widehat{\widehat{\til\beta}}} & Z  }\]
               \end{pro}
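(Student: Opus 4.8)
The plan is to reduce to an algebraically closed field and then compute both sides directly using the geometric description of $\delta$ from \S\ref{S:deltapoint}. Since both sides of \eqref{E:deltabetaL} are homomorphisms of abelian varieties over $K$, and a homomorphism of abelian varieties injects into its base change to $\bar K$ (cf.\ \S\ref{S:isog}), while $\delta$ is stable under extension of the ground field by \Cref{R:dbarK} and every map appearing on the right-hand side commutes with base change, I may and will assume $K=\bar K$.

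Over $\bar K$ I would exploit the Moving-Lemma form of the construction. For a general point $P$ the intersection of the curve $\beta_*[C]$ with the translate $t_P^*D$ is finite, and its points are precisely the $\beta(c)$ with $c$ ranging over the pullback divisor $\beta^*(t_P^*D)$ on $C$; summing in $Z$ as in \eqref{E:S(V,W)} gives $S(\beta_*[C],t_P^*D)=s_Z\big(\beta_*\,\beta^*(t_P^*D)\big)$, where $s_Z$ denotes the sum map on zero-cycles of $Z$. Subtracting the value at $P=0$ according to \eqref{E:d(V,W)S}, and using that $t_P^*D-D$ represents the class $\phi_D(P)\in\widehat Z$, so that $\beta^*(t_P^*D-D)$ is a degree-zero divisor on $C$ of class $\beta^*\phi_D(P)\in\pic^0_{C/K}$, I obtain
$$
\delta(\beta_*[C],D)(P)=s_Z\big(\beta_*\,\beta^*(t_P^*D-D)\big)=\Psi\big(\beta^*\phi_D(P)\big),
$$
where $\Psi\colon\pic^0_{C/K}\to Z$ is the homomorphism induced on Jacobians by $\beta$, i.e.\ the linearization of $\til\beta$. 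Hence, as homomorphisms, $\delta(\beta_*[C],D)=\Psi\circ\beta^*\circ\phi_D$.

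It then remains to identify $\Psi\circ\beta^*$ with $\widehat{\widehat{\til\beta}}\circ\lambda_C\inv\circ\widehat{\til\beta}$, and this is exactly where the Abel-map lemmas enter. By \Cref{L:BLcor11.4.2} we have $\widehat{\til\beta}=-\lambda_C\,\beta^*\zeta_Z$, so $\lambda_C\inv\widehat{\til\beta}=-\beta^*\zeta_Z$; combined with biduality and the identity $-\lambda_C=({\alpha^{(1)}}^*\zeta)\inv$ of \Cref{L:BLsec11.3}, this identifies $\widehat{\widehat{\til\beta}}$ with the linearization $\Psi$, up to the sign dictated by \Cref{L:BLsec11.3}. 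Matching these signs shows the two compositions agree, which yields \eqref{E:deltabetaL}.

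I expect the main obstacle to be the careful bookkeeping in this last step: verifying that the ``sum of intersection points'' homomorphism $\Psi$ really is the linearization of $\til\beta$ and that, after threading through the canonical isomorphism $\zeta_Z$ and the sign in \Cref{L:BLsec11.3}, it coincides with $\widehat{\widehat{\til\beta}}$. A secondary technical point is justifying the Moving-Lemma computation of the intersection and the fact that $s_Z\circ\beta_*$ sends a degree-zero pullback cycle to $\Psi$ of its class; once $K=\bar K$ these are precisely the manipulations of \cite[\S11.6]{BL}, which apply verbatim given the base-change-compatible constructions and the analogues \Cref{L:BLsec11.3} and \Cref{L:BLcor11.4.2} established above.
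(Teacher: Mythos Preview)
Your proposal is correct and follows essentially the same approach as the paper: reduce to $\bar K$ using base-change compatibility, then invoke the geometric description of $\delta$ from \S\ref{S:deltapoint} and carry out the computation exactly as in \cite[Prop.~11.6.1]{BL}. The paper's proof is terse and simply cites \cite{BL}; you have spelled out the content of that reference, including the key step of identifying the ``sum in $Z$ of the intersection points'' map with the linearization of $\til\beta$ and then with $\widehat{\widehat{\til\beta}}$ via \Cref{L:BLsec11.3} and \Cref{L:BLcor11.4.2}.
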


\begin{proof} It suffices to check equality of the morphisms over the algebraic closure.  In this case, using the definition of $\delta$ given in \S \ref{S:deltapoint}, the proof follows formally as in \cite[Prop.~11.6.1]{BL}.
   \end{proof}

\begin{pro}
\  \label{P:BL1162}
  With notation as above,
  \begin{alphabetize}
  \item $\tr(\delta(\beta_*[C],D)) = -2(\beta_*[C]\cdot D)$
  \item $\tr(\delta(D,\beta_*[C]) = -(2g-2)(\beta_*[C]\cdot D)$
  \end{alphabetize}
\end{pro}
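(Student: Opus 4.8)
The plan is to reduce the computation of $\tr\delta(\beta_*[C],D)$, which a priori lives in $\End(Z)$, to a trace computation on $\pic^0_{C/K}$ — where Weil's formula \eqref{E:WeilTr} applies — and then to obtain (b) formally from (a). Both sides of each identity are stable under extension of the ground field (\Cref{R:dbarK}), and a homomorphism of abelian varieties is zero iff it is zero over $\bar K$; so I would first base change to $\bar K$ and choose a rational point of $C$, which trivializes the torsor $\pic^{(1)}_{C/K}$ and promotes $\til\beta$ to a genuine homomorphism $\til\beta\colon \pic^0_{C/K}\to Z$ with $\til\beta\circ\alpha_P=\beta$. With these identifications \Cref{P:BL1161} reads $\delta(\beta_*[C],D)=\widehat{\widehat{\til\beta}}\,\lambda_C\inv\,\widehat{\til\beta}\,\phi_D=\til\beta\,\lambda_C\inv\,\widehat{\til\beta}\,\phi_D$, exhibited as a composite $Z\xrightarrow{\phi_D}\widehat Z\xrightarrow{\widehat{\til\beta}}\widehat{\pic^0_{C/K}}\xrightarrow{\lambda_C\inv}\pic^0_{C/K}\xrightarrow{\til\beta}Z$.

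For part (a), I would use that the trace of a composite of homomorphisms is cyclic (it may be read off from the induced maps on Tate modules, where $\tr(MN)=\tr(NM)$ for rectangular matrices) to move the outer $\til\beta$ to the far end, giving $\tr\delta(\beta_*[C],D)=\tr_{\pic^0_{C/K}}\!\big(\lambda_C\inv\,\widehat{\til\beta}\,\phi_D\,\til\beta\big)$. By functoriality of the $\phi$-construction, $\widehat{\til\beta}\,\phi_D\,\til\beta=\phi_{\til\beta^*D}$, so this equals $\tr\big(\phi_\Theta\inv\phi_{\til\beta^*D}\big)$, the trace of a symmetric endomorphism of the Jacobian. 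Up to the sign dictated by the $-\lambda_C$ appearing in \Cref{L:BLcor11.4.2}, this endomorphism is the correspondence endomorphism $\gamma_\Gamma$ of the class $\Gamma=(\beta\times\beta)^*\big(\mu^*D-p_1^*D-p_2^*D\big)\in\chow^1(C\times_K C)$, where $\mu^*D-p_1^*D-p_2^*D$ is the Mumford bundle on $Z\times_K Z$ defining $\phi_D$. Now \eqref{E:WeilTr} applies: both indices vanish, $d_1(\Gamma)=d_2(\Gamma)=0$, since the three pullbacks cancel under each projection of $C\times_K C$ (the $\mu$-term contributes $(\beta_*[C]\cdot D)[C]$ by a translation argument, cancelling one $p_i$-term, while the other $p_i$-term pushes forward to zero); and restricting $\Gamma$ to the diagonal gives $\beta^*([2]^*D-2D)$, whose degree is $(4-2)(\beta_*[C]\cdot D)=2(\beta_*[C]\cdot D)$, because multiplication by $2$ acts on $1$-cycles by the factor $4$ (equivalently $[2]^*=4$ on $H^2$). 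Hence $\tr\delta(\beta_*[C],D)=0+0-2(\beta_*[C]\cdot D)=-2(\beta_*[C]\cdot D)$. (As a check, Poincaré's formula $\Theta^{g-1}/(g-1)!=[\alpha_P(C)]$ and the projection formula give $(\til\beta^*D\cdot\Theta^{g-1})=(g-1)!\,(\beta_*[C]\cdot D)$, evaluating $\tr(\phi_\Theta\inv\phi_{\til\beta^*D})$ to the same magnitude.)

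Part (b) I would then deduce formally from (a) and \Cref{P:BL544-6-7}(a), applied to the complementary-dimension cycles $V=\beta_*[C]$ and $W=D$ on $Z$: the relation $\delta(\beta_*[C],D)+\delta(D,\beta_*[C])=-(\beta_*[C]\cdot D)[1]_Z$ yields, on taking traces and using $\tr([1]_Z)=2\dim Z$, that $\tr\delta(D,\beta_*[C])=-2\dim Z\,(\beta_*[C]\cdot D)+2(\beta_*[C]\cdot D)=-(2\dim Z-2)(\beta_*[C]\cdot D)$, which is the asserted identity (here it is $\dim Z$, not the genus of $C$, that plays the role of $g$). The main obstacle is the intersection-theoretic heart of (a): pinning down $(\Delta_C\cdot\Gamma)=2(\beta_*[C]\cdot D)$, in particular the factor $4$ coming from $[2]_*$ acting on a $1$-cycle, and reconciling the several sign conventions — the minus sign in \Cref{L:BLcor11.4.2}, the orientation in \eqref{E:WeilTr}, and the definition of the Mumford bundle — so that the final sign comes out $-2$ rather than $+2$; everything after that is formal trace manipulation together with the compatibilities already established in the preceding subsections.
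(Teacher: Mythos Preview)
Your approach is correct and is essentially the one the paper has in mind: reduce to $\bar K$, express $\delta(\beta_*[C],D)$ via \Cref{P:BL1161}, use the cyclicity of the trace to transport the computation to $\End(\pic^0_{C/K})$, identify the resulting endomorphism with the one coming from the self-correspondence $\Gamma=(\beta\times\beta)^*(\mu^*D-p_1^*D-p_2^*D)$ on $C\times C$, and apply Weil's formula \eqref{E:WeilTr}; then deduce (b) from (a) via \Cref{P:BL544-6-7}\ref{L:BL544}. This is exactly the content of \cite[Prop.~11.6.2]{BL}, to which the paper defers, and your computation of $d_1(\Gamma)=d_2(\Gamma)=0$ and $(\Delta_C\cdot\Gamma)=2(\beta_*[C]\cdot D)$ is correct.

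Two comments. First, your caution about signs is well placed: the identification of $\lambda_C^{-1}\phi_{\til\beta^*D}$ with $\gamma_\Gamma$ (and not its negative) is the one genuine verification to make, and it is where the convention in \Cref{L:BLsec11.3} enters. Second, your observation about (b) is correct and worth emphasizing: the relation $\delta(\beta_*[C],D)+\delta(D,\beta_*[C])=-(\beta_*[C]\cdot D)[1]_Z$ forces $\tr\delta(D,\beta_*[C])=-(2\dim Z-2)(\beta_*[C]\cdot D)$, so the $g$ in the stated formula must be read as $\dim Z$ (as in \cite[\S11.6]{BL}), not the genus of $C$ introduced at the start of \S\ref{S:abelanddelta}. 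In the applications (e.g.\ \Cref{L:BL-12.2.3}) one has $\dim Z=g_Z$ anyway, so no harm results, but your reading is the right one.
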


\begin{proof}
  As usual, it suffices to prove this over an algebraically closed
  field.  The proof in \cite[Prop.~11.6.2]{BL} proceeds without
  incident, provided one uses the intrinsic trace reviewed in 
  \S \ref{S:regular}, and then invokes Weil's trace formula \eqref{E:WeilTr}  in the analysis of a certain self-correspondence
  on $C$. 
        \end{proof}

\begin{teo}[{\cite[\S 2]{matsusaka59}; see also \cite[Thm.~11.6.4]{BL}}]\label{T:BL1164a} 
Let $Z$ be an abelian variety over a field $K$, let $D$ be a  divisor on $Z$,  and let  $\Gamma$ be an algebraic $1$-cycle on $Z$. 
  \begin{alphabetize}
\item \label{T:BL1164aa}  If $D$ is non-degenerate (i.e., $\phi_D$ is an isogeny), then   $\delta(\Gamma,D) = 0$ if and only if  $\Gamma_{\bar K}$ is numerically equivalent to
  zero.

\item \label{T:BL1164ab}  If $\Gamma$ is an integral curve  and non-degenerate (i.e., generates $Z$), then  $\delta(\Gamma,D) = 0$ if and only if   $D_{\bar K}$ is algebraically equivalent to zero.
   \end{alphabetize}

    \end{teo}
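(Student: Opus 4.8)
The plan is to reduce to the algebraically closed case, rewrite $\delta(\Gamma,D)$ using the factorization of \Cref{P:BL1161}, and then play the two non-degeneracy hypotheses off against the trace formula of \Cref{P:BL1162} and the invertibility of isogenies in $\operatorname{Hom}_{\mathbb Q}$ (\Cref{C:BL1.2.7}\ref{C:BL1.2.7b}). Since a homomorphism of abelian varieties vanishes if and only if it does after base change, and since $\delta$, numerical equivalence and algebraic equivalence are all stable under field extension (\Cref{R:dbarK} and \S\ref{SS:EqRel}), I may and do assume $K=\bar K$. Writing a general $1$-cycle as $\Gamma=\sum_i n_i[C_i]$ and replacing each integral curve $C_i$ by the composite $\beta_i\colon \tilde C_i \to Z$ of its normalization with the inclusion (so that $(\beta_i)_*[\tilde C_i]=[C_i]$ as cycle classes), additivity of $\delta$ together with \Cref{P:BL1161} gives
\[
\delta(\Gamma, D) = \Big(\sum_i n_i\, \widehat{\widehat{\til\beta_i}}\circ \lambda_{\tilde C_i}^{-1}\circ \widehat{\til\beta_i}\Big)\circ \phi_D =: \Psi\circ \phi_D,
\]
where $\Psi\in\operatorname{Hom}(\widehat Z,Z)$ does not involve $D$.

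For part \ref{T:BL1164aa}, the forward implication that a numerically trivial $\Gamma$ forces $\delta(\Gamma,D)=0$ is exactly \Cref{P:del=0}(3). Conversely, if $D$ is non-degenerate then $\phi_D$ is an isogeny, hence invertible in $\operatorname{End}_{\mathbb Q}$ (\Cref{C:BL1.2.7}\ref{C:BL1.2.7b}), so $\delta(\Gamma,D)=\Psi\circ\phi_D=0$ forces $\Psi=0$ and therefore $\delta(\Gamma,D')=\Psi\circ\phi_{D'}=0$ for \emph{every} divisor $D'$. Taking traces and applying \Cref{P:BL1162}(a) term by term yields $(\Gamma\cdot D')=-\tfrac12\tr(\delta(\Gamma,D'))=0$ for all $D'$; since a $1$-cycle on an abelian variety is numerically trivial precisely when it meets every divisor trivially, $\Gamma_{\bar K}$ is numerically trivial.

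For part \ref{T:BL1164ab}, if $D$ is algebraically (hence numerically) trivial then $\delta(\Gamma,D)=0$ again by \Cref{P:del=0}(3). For the converse I use that $\Gamma$ is an integral curve generating $Z$, so that with $\beta\colon C=\tilde\Gamma\to Z$ the homomorphism $b:=\widehat{\widehat{\til\beta}}\colon \pic^0_{C/K}\to Z$ is surjective and its dual $\widehat{\til\beta}=\widehat b$ has finite kernel. The crux is to show that $M:=\widehat{\widehat{\til\beta}}\circ\lambda_C^{-1}\circ\widehat{\til\beta}$ is an isogeny. Since $\lambda_C$ is a principal polarization, $\lambda_C^{-1}$ is a principal polarization of $\widehat{\pic^0_{C/K}}$, and using $\widehat{\widehat b}=b$ one identifies $M=\widehat b{}^{\,*}(\lambda_C^{-1})=\widehat{\widehat b}\circ\lambda_C^{-1}\circ\widehat b$ as the pullback of that polarization along the finite-kernel homomorphism $\widehat b$; a pullback of a polarization along a homomorphism with finite kernel is again a polarization, in particular an isogeny. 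Then $\delta(\Gamma,D)=M\circ\phi_D=0$ forces $\phi_D=0$, i.e. $D_{\bar K}$ is algebraically trivial.

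The main obstacle is exactly this isogeny claim for $M$ in part \ref{T:BL1164ab}: one must correctly match $\widehat{\widehat{\til\beta}}$ with the dual of $\widehat{\til\beta}$ so that $M$ is literally a pulled-back polarization, and verify that the hypothesis ``$\Gamma$ generates $Z$'' is precisely what makes $\widehat{\til\beta}$ have finite kernel. Once $M$ is known to be an isogeny (equivalently, $\phi_D$ in part \ref{T:BL1164aa}), the remaining steps are bookkeeping with the trace formula and the invertibility of isogenies in the rational endomorphism algebra.
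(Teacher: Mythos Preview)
Your proposal is correct and follows essentially the same approach as the paper, which simply notes that the argument of \cite[Thm.~11.6.4]{BL} goes through once \Cref{P:BL1161} and \Cref{P:BL1162} are available over arbitrary fields. One small tightening for part~\ref{T:BL1164ab}: rather than asserting that $\lambda_C^{-1}$ is a polarization on the dual, use \Cref{L:BLcor11.4.2} to rewrite $M=\widehat{\beta^*}\circ\lambda_C\circ\beta^*=(\beta^*)^*\lambda_C$, so that $M$ is transparently the pullback of the principal polarization $\lambda_C$ along the finite-kernel map $\beta^*$ and hence an isogeny.
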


\begin{proof}
(a) We have already shown one direction (\Cref{P:del=0}), so assume that $\delta(\Gamma,D)=0$.
 It suffices to show the result after base change to the algebraic closure (see \Cref{R:dbarK}).  
  The argument of \cite[Thm.~11.6.4(a)]{BL}, which relies only on
  \Cref{P:BL1161} and \Cref{P:BL1162}, is valid in the algebraic
  setting, too.  (In that argument, the curve $C$ ranges among the
  desingularizations of the various components of $\Gamma$.)
  
(b)  
We have already shown one direction (\Cref{P:del=0}), so assume that $\delta(\Gamma,D)=0$.  Again,  it suffices to show the result after base change to the algebraic closure. The argument of  \cite[Thm.~11.6.4(b)]{BL} holds here, as well.
\end{proof}

\section{Prym varieties over fields}
\label{S:prymfield}

In this section we work over a field $K$.  We show that every principally polarized abelian variety is a Prym--Tyurin variety of some exponent, and we  classify 
  Prym--Tyurin Prym varieties of all exponents.

\subsection{Prym--Tyurin varieties and Welters' criterion}
\label{SS:ptfield}

Recall from \S \ref{S:PTsch} that a Prym--Tyurin variety of exponent $e$ over a field $K$ 
 is a principally polarized abelian variety $(Z,\xi)/K$ together with a smooth projective curve $C/K$ and an injective $K$-homomorphism  $\iota_Z\colon Z\hookrightarrow \operatorname{Pic}^0_{C/K}$ of abelian varieties such that $\iota_Z^*\lambda_C=e\xi$, where $\lambda_C$ is the canonical principal polarization on $\operatorname{Pic}^0_{C/K}$.   
 The condition $\iota_Z^*\lambda_C=e\xi$ is equivalent to the condition that $\K(\iota_Z^*\lambda_C) = Z[e] :=  \ker [e]_Z$ (\Cref{R:Z[e]-PT}); if there exist divisors $\Theta$ and $\Xi$ such that $\phi_\Theta=\lambda_C$ and $\phi_\Xi= \xi$, e.g., if $K=\bar K$, then this condition is also equivalent to the condition
$$
\iota_Z^*\Theta \equiv e\Xi.
$$
The equivalence above is taken to be homological or numerical;   these notions all agree for divisors on abelian varieties, as they are equivalent to homological or numerical equivalence over the algebraic closure (\S \ref{SS:EqRel}).

The main tool in classifying Prym--Tyurin varieties is the following:

\begin{lem} \label{L:BL-12.2.3}
  Let $\beta\colon C \to T$ be a $K$-morphism from a smooth projective curve 
  to a torsor under a principally polarized abelian variety $(Z,\xi)/K$ of dimension $g_Z$, and let $\zeta_Z\colon \widehat Z\stackrel{\sim}{\to}\operatorname{Pic}^0_{T/K}$ be the canonical isomorphism \eqref{E:PicTors}. 
Fix an identification of $T_{\bar K}$ with $Z_{\bar K}$, and fix 
 divisors $\Theta_{\bar K}$ and $\Xi_{\bar K}$ such that $\phi_{\Theta_{\bar K}}=(\lambda_C)_{\bar K}$ and $\phi_{\Xi_{\bar K}}= \xi_{\bar K}$.

   Then the
  following are equivalent:
  
  \begin{alphabetize}
  \item \label{L:BL-12.2.3a} $(\beta^*\circ \zeta_Z \circ \xi)^*\lambda_C= e\xi$.
  
  \item \label{L:BL-12.2.3b} The following diagram commutes:
 \begin{equation}\label{E:BL-12.2.3}
      \xymatrix@R=1.5em{
Z \ar[r]^{e\xi} \ar[d]_{\xi} & \widehat Z\\
\widehat Z \ar[r]^{(\beta^*\zeta_Z)^*\lambda_C}  \ar[d]_{\beta^*\zeta_Z}& Z \ar[u]_{\xi=\widehat \xi} \\
\operatorname{Pic}^0_{C/K} \ar[r]_{\lambda_C}& \widehat {\operatorname{Pic}^0_{C/K}} \ar[u]_{\widehat{\beta^* \zeta_Z}}
}
\end{equation}

\item \label{L:BL-12.2.3c} $\delta(\beta_{\bar K*}[C_{\bar K}],\Xi_{\bar K}) = -[e]_{Z_{\bar K}}$.

\item  \label{L:BL-12.2.3d} $\beta_{\bar K*}[C_{\bar K}] \equiv e\frac{\displaystyle [\Xi_{\bar K}]^{g_Z-1}}{(g_Z-1)!} $, where equivalence is taken to be homological or numerical; these notions agree for $1$-cycles on abelian varieties over algebraically closed fields (\S \ref{SS:EqRel}).
\end{alphabetize}
\end{lem}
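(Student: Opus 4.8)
The plan is to prove the four conditions equivalent in the cyclic order (a) $\Leftrightarrow$ (b) $\Leftrightarrow$ (c) $\Leftrightarrow$ (d), where (a) $\Leftrightarrow$ (b) is a formal unwinding of definitions, (b) $\Leftrightarrow$ (c) is an explicit computation of the endomorphism $\delta(\beta_{\bar K*}[C_{\bar K}],\Xi_{\bar K})$, and (c) $\Leftrightarrow$ (d) is the step with genuine content. For (a) $\Leftrightarrow$ (b): the bottom square of \eqref{E:BL-12.2.3} commutes by the very definition of the pullback polarization $(\beta^*\zeta_Z)^*\lambda_C=\widehat{\beta^*\zeta_Z}\circ\lambda_C\circ(\beta^*\zeta_Z)$. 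Expanding, and using the symmetry $\widehat\xi=\xi$ of a principal polarization, gives $(\beta^*\circ\zeta_Z\circ\xi)^*\lambda_C=\xi\circ(\beta^*\zeta_Z)^*\lambda_C\circ\xi$, so condition (a) is literally the equality $\xi\circ(\beta^*\zeta_Z)^*\lambda_C\circ\xi=e\xi$, which (as $\xi$ is an isomorphism) is the commutativity of the top square of \eqref{E:BL-12.2.3}. Since these are morphisms of abelian varieties over $K$ and $\operatorname{Hom}(X,Y)\hookrightarrow\operatorname{Hom}(X_{\bar K},Y_{\bar K})$, the equality in (a)/(b) holds over $K$ if and only if it holds over $\bar K$, which lets me pass freely to the algebraically closed setting of (c) and (d).

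For (b) $\Leftrightarrow$ (c) I would compute $\delta(\beta_{\bar K*}[C_{\bar K}],\Xi_{\bar K})$ directly from \Cref{P:BL1161} applied with $D=\Xi$, substituting $\phi_\Xi=\xi$ and the identity $\widehat{\til\beta}=-\lambda_C\beta^*\zeta_Z$ of \Cref{L:BLcor11.4.2}. Dualizing the latter, using $\widehat{\lambda_C}=\lambda_C$, gives $\widehat{\widehat{\til\beta}}=-\widehat{\beta^*\zeta_Z}\circ\lambda_C$, and feeding both into the formula of \Cref{P:BL1161} collapses the composite to $\pm(\beta^*\zeta_Z)^*\lambda_C\circ\xi$. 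The top square of \eqref{E:BL-12.2.3} asserts precisely $(\beta^*\zeta_Z)^*\lambda_C\circ\xi=[e]_Z$, so matching this against the normalization $\delta=-[e]_{Z_{\bar K}}$ of (c) pins the sign and yields (b) $\Leftrightarrow$ (c); stability of $\delta$ under base change (\Cref{R:dbarK}) guarantees the computation over $\bar K$ detects the $K$-condition.

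Finally, (c) $\Leftrightarrow$ (d) is where the real work lies. I would first evaluate $\delta$ on the comparison cycle: by \Cref{P:BL544-6-7}\ref{P:BL547} with $g=g_Z$ and $r=g_Z-1$, together with the standard identity $(\Xi_{\bar K}^{g_Z})=g_Z!$ for a principal polarization, one gets $\delta(\Xi_{\bar K}^{g_Z-1},\Xi_{\bar K})=-(g_Z-1)!\,[1]_{Z_{\bar K}}$, hence
\[
\delta\!\left(e\,\frac{[\Xi_{\bar K}]^{g_Z-1}}{(g_Z-1)!},\,\Xi_{\bar K}\right)=-[e]_{Z_{\bar K}}.
\]
Setting $\gamma=\beta_{\bar K*}[C_{\bar K}]-e\,[\Xi_{\bar K}]^{g_Z-1}/(g_Z-1)!$ and using additivity of $\delta$ in its first slot, condition (c) becomes exactly $\delta(\gamma,\Xi_{\bar K})=0$. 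Since $\Xi$ is nondegenerate (as $\phi_\Xi=\xi$ is an isomorphism, a fortiori an isogeny), \Cref{T:BL1164a}\ref{T:BL1164aa} gives $\delta(\gamma,\Xi_{\bar K})=0$ if and only if $\gamma$ is numerically trivial, i.e.\ (d); the easy implication (d) $\Rightarrow$ (c) also drops out of \Cref{P:del=0}, while the converse is what requires the nondegeneracy input. I expect the main obstacle to be the sign bookkeeping in (b) $\Leftrightarrow$ (c): the negative sign of \Cref{L:BLsec11.3} (where $-\lambda_C=(\alpha^{(1)*}\zeta)^{-1}$) propagates through \Cref{L:BLcor11.4.2} and must be carried through every dual so that the composite endomorphism lands on $-[e]_{Z_{\bar K}}$, in agreement with the intersection-theoretic value computed in (c) $\Leftrightarrow$ (d).
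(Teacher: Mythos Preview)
Your proof is correct and follows the same strategy as the paper's: (a)$\Leftrightarrow$(b) by unwinding the definition of pullback polarization, (b)$\Leftrightarrow$(c) by computing $\delta$ via \Cref{P:BL1161} and \Cref{L:BLcor11.4.2}, and (c)$\Leftrightarrow$(d) by evaluating $\delta$ on the minimal class via \Cref{P:BL544-6-7}\ref{P:BL547} (using $(\Xi^{g_Z})=g_Z!$) and invoking \Cref{T:BL1164a}\ref{T:BL1164aa}. The sign bookkeeping you flag as the main obstacle is indeed the only delicate point, and the paper handles it by exactly the substitution you describe.
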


\begin{proof}  (See also {\cite[Lem.~12.2.3]{BL}}.)

(a)$\iff$(b). This is clear from the definition of a restriction of a polarization.

(b)$\iff$(c).  The diagam \eqref{E:BL-12.2.3} commutes if and only if it commutes over $\bar K$.  Therefore, we will work over $\bar K$.  Recalling that we have identified $T_{\bar K}$ with $Z_{\bar K}$, so that  $\beta_{\bar K}\colon  C_{\bar K}\to Z_{\bar K}$, and $(\zeta_{Z})_{\bar K}$ is the identity, 
we always  have:
\begin{align*}
 \delta(\beta_{\bar K*}[C_{\bar K}],\Xi_{\bar K}) &=  \widehat{\widehat{\til\beta}}_{\bar K}\lambda_{C_{\bar K}}\inv
    \widehat{\til\beta}_{\bar K}\xi_{\bar K} &\text{(\Cref{P:BL1161})} \\
  & =  - (\widehat {\beta_{\bar K}^*} \lambda_{C_{\bar K}})\lambda_{C_{\bar K}}\inv (\lambda_{C_{\bar K}} \beta_{\bar K}^*)
    \xi_{\bar K} &\text{(\Cref{L:BLcor11.4.2})}\\
                         &= - \widehat {\beta^*_{\bar K}}\lambda_{C_{\bar K}} \beta_{\bar K}^*\xi_{\bar K}
 \intertext{as well as the fact that }
  -\xi^{-1}e\xi
  & = -[e]_Z,
    \end{align*}
so that (b) and (c) are equivalent.
 
(c)$\iff$(d) By \Cref{P:BL544-6-7}\ref{P:BL547}
\[
  \delta(\frac{e}{(g_Z-1)!}  \Xi_{\bar K}^{g_Z-1},\Xi_{\bar K})  = -[e]_{Z_{\bar K}}.
\]
Now use  \Cref{T:BL1164a}\ref{T:BL1164aa} to see that, since $\Xi_{\bar K}$ is ample, $\delta(\Gamma_1,\Xi_{\bar K}) =
\delta(\Gamma_2,\Xi_{\bar K})$ if and only if $\Gamma_1\equiv\Gamma_2$.
\end{proof}

\begin{rem}[Welters]\label{R:welters2}
     For an abelian variety $A$ over a field $K$, denote by $e_{A,e}\colon A[e]\times A[e]\to \mmu_{e,K}$ the Weil pairing on $e$-torsion.  
In the setting of \Cref{L:BL-12.2.3}, set $W=\xi \beta^*\zeta_Z(Z)$ to be the image of $Z$ in $\operatorname{Pic}^0_{C/K}$, so that we have a factorization $\beta^*\circ \zeta_Z\circ  \xi\colon   Z\twoheadrightarrow W \hookrightarrow \operatorname{Pic}^0_{C/K}$, and let $\xi|_{\widehat W}\colon \widehat W\to  W$ be the pull-back of the principal polarization $\xi$ along  $\widehat W\to \widehat Z\stackrel{\zeta^{-1}}{\to} Z$.   If $K=\bar K$ is an algebraically closed field of characteristic $0$,
 \cite[Prop.~1.17]{welters87} shows that \Cref{L:BL-12.2.3}\ref{L:BL-12.2.3a}, \ref{L:BL-12.2.3b}, \ref{L:BL-12.2.3d} are equivalent to  
   the condition that $\ker \lambda_C|_{W}\subseteq W[e]$ (see \Cref{R:welters1}) together with the data of a maximal isotropic 
  subgroup $\ker \lambda_C|_{W}\subseteq H\subseteq W[e]$  with respect to the Weil pairing
   $e_e\colon \operatorname{Pic}^0_{C/K}[e]\times\operatorname{Pic}^0_{C/K}[e]\to   \mmu_e$.   Welters' arguments hold over an arbitrary algebraically closed field provided
one works with $\ker(-)^{\abvar}$  instead of
        $\ker(-)^\circ$ in \cite[(1.4) and (1.13)]{welters87}, and  in the calculation of the Weil pairing at the bottom of
          \cite[p.90]{welters87}, one works with the group schemes, rather than just $\bar K$ points.
                          \end{rem}

From \Cref{L:BL-12.2.3} we obtain what is known as Welters'
Criterion \cite{welters87}.  Welters works over an algebraically closed field of characteristic $0$, but his arguments hold more generally (see \Cref{R:welters2})
   over an arbitrary algebraically closed field. 
    Our presentation follows that of \cite{BL}, which in turn uses strategies from 
\cite[\S 2]{matsusaka59}.

\begin{teo}[{Welters' Criterion \cite[(1.18--1.19)]{welters87}, \cite[Crit.~12.2.2]{BL}}]\label{T:Welters}
 Let $(Z,\xi)/K$ be a principally polarized abelian variety of dimension $g_Z$ and let $C/K$ be a smooth projective curve. 
 Fix  divisors $\Theta_{\bar K}$ and $\Xi_{\bar K}$ such that $\phi_{\Theta_{\bar K}}=(\lambda_C)_{\bar K}$ and $\phi_{\Xi_{\bar K}}= \xi_{\bar K}$.
 
\begin{alphabetize}

\item  Suppose there is a $K$-morphism $\beta\colon C\to T$  to a torsor under $Z$ over $K$ such that 
\begin{enumerate}[label=(\roman*)]
\item the composition $\xymatrix{\widehat Z \ar[r]^<>(0.5){\zeta_Z}_<>(0.5)\sim& \operatorname{Pic}^0_{T/K} \ar[r]^{\beta^*}&  \operatorname{Pic}^0_{C/K}}$ is an injective $K$-homomorphism of  abelian vareties, where $\zeta_Z$ is the canonical isomorphism \eqref{E:PicTors}, and 
\item after identifying $T_{\bar K}$ and $Z_{\bar K}$, we have $\beta_{\bar K*}[C_{\bar K}]\equiv e \frac{\displaystyle [\Xi_{\bar K}]^{g_Z-1}}{(g_Z-1)!} $.
\end{enumerate}
Then the inclusion $\iota_Z:= \beta^*\zeta_Z\xi:Z\hookrightarrow \operatorname{Pic}^0_{C/K}$ makes $(Z,\xi)$ a Prym--Tyurin variety of exponent $e$; i.e., $\iota_Z^*\lambda = e\xi$, so that $(Z,\xi,C,\iota_Z)$ is an embedded Prym--Tyurin variety of exponent $e$.

\item Conversely, suppose there is an  inclusion $\iota_Z\colon Z\hookrightarrow \operatorname{Pic}^0_{C/K}$ making $(Z,\xi)$ a Prym--Tyurin variety of exponent $e$; i.e., $(Z,\xi,C,\iota_Z)$ is an embedded Prym--Tyurin variety of exponent $e$. Let $Y$ be the complement of $Z$.  Then, under the isomorphism 
$
\xymatrix@C=2em{
Z \ar[r]^<>(0.5){-\xi}_\sim& \widehat Z \ar[r]_<>(0.5)\sim& \operatorname{Pic}^0_{C/K}/Y,
}
$
(see \eqref{E:BL1213-2})  $T:= \operatorname{Pic}^{(1)}_{C/K}/Y$ is a torsor under $Z$, and the composition $\xymatrix{\beta \colon C\ar[r]^<>(0.5){\alpha^{(1)}}& \operatorname{Pic}^{(1)}_{C/K}\to T}$ of the Abel map with the quotient map satisfies conditions (i) and (ii) above.  
\end{alphabetize}

Moreover, these constructions are inverse to one another, up to canonical isomorphisms.
          \end{teo}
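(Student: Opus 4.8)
The plan is to observe that essentially all the analytic content has already been isolated in \Cref{L:BL-12.2.3}, whose four equivalent conditions relate the restricted-polarization identity (its condition \ref{L:BL-12.2.3a}), the commutativity of the diagram \eqref{E:BL-12.2.3} (condition \ref{L:BL-12.2.3b}), the value of $\delta$ (condition \ref{L:BL-12.2.3c}), and the numerical class of $\beta_*[C]$ (condition \ref{L:BL-12.2.3d}). Since condition (ii) of the theorem is verbatim condition \ref{L:BL-12.2.3d}, and the Prym--Tyurin identity $\iota_Z^*\lambda_C = e\xi$ is verbatim condition \ref{L:BL-12.2.3a}, the proof will amount to invoking this lemma in each direction and then matching up the two constructions.

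For part (a), given $\beta\colon C\to T$ satisfying (i) and (ii), I would first note that $\xi$ is an isomorphism, so $\iota_Z := \beta^*\zeta_Z\xi$ is an injective homomorphism precisely because (i) asserts that $\beta^*\zeta_Z$ is; hence $Z$ embeds as a sub-abelian variety of $\operatorname{Pic}^0_{C/K}$. Condition (ii) is condition \ref{L:BL-12.2.3d}, so the equivalence of \ref{L:BL-12.2.3a} and \ref{L:BL-12.2.3d} in \Cref{L:BL-12.2.3} yields $(\beta^*\zeta_Z\xi)^*\lambda_C = e\xi$, that is, $\iota_Z^*\lambda_C = e\xi$. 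This is exactly the statement that $(Z,\xi,C,\iota_Z)$ is an embedded Prym--Tyurin variety of exponent $e$.

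For part (b), given an embedded Prym--Tyurin variety $(Z,\xi,C,\iota_Z)$ of exponent $e$ with complement $Y$, I would use the short exact sequence \eqref{E:BL1213-2} to identify $\operatorname{Pic}^0_{C/K}/Y\cong\widehat Z$, so that $T := \operatorname{Pic}^{(1)}_{C/K}/Y$ is genuinely a torsor under $Z$ via the displayed isomorphism $Z\xrightarrow{-\xi}\widehat Z\xrightarrow{\sim}\operatorname{Pic}^0_{C/K}/Y$, and $\beta := q\circ\alpha^{(1)}$, with $q$ the quotient map, is a morphism into that torsor. The crux is to identify the homomorphism $\beta^*\zeta_Z\colon\widehat Z\to\operatorname{Pic}^0_{C/K}$ with $\iota_Z\xi^{-1}$. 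I would establish this by unwinding the canonical isomorphism $\zeta_Z$ of \eqref{E:PicTors} for the torsor $T$ and applying \Cref{L:BLcor11.4.2} to $\beta$: pullback of line bundles along $q$ followed by $\zeta_Z$ is, by construction of the quotient, the dual inclusion $\widehat{(\operatorname{Pic}^0_{C/K}/Y)}\hookrightarrow\widehat{\operatorname{Pic}^0_{C/K}}$ precomposed with the sign-twisted $-\lambda_C^{-1}$ of \Cref{L:BLsec11.3}, which under the chosen identification $\widehat Z\cong\operatorname{Pic}^0_{C/K}/Y$ is precisely $\iota_Z\xi^{-1}$; the sign $-\xi$ in the theorem is exactly what makes this identification hold on the nose rather than up to sign. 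Once this is in hand, condition (i) follows since $\iota_Z$ is a closed immersion, and condition \ref{L:BL-12.2.3a} holds because $\iota_Z^*\lambda_C = e\xi$ by hypothesis; the equivalence of \ref{L:BL-12.2.3a} and \ref{L:BL-12.2.3d} then delivers condition (ii).

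Finally, for the assertion that the two constructions are mutually inverse, the identity $\beta^*\zeta_Z\xi = \iota_Z$ established in part (b) shows immediately that applying construction (a) to the $\beta$ produced by (b) returns the original $\iota_Z$. For the reverse composite, starting from $\beta\colon C\to T$ as in (a), I would observe, via the universal property \eqref{E:til-beta} of the Albanese torsor and \Cref{L:BLcor11.4.2}, that the complement $Y$ of $Z=\iota_Z(Z)$ is the kernel of the homomorphism underlying $\til\beta\colon\operatorname{Pic}^{(1)}_{C/K}\to T$; consequently $\til\beta$ factors through an isomorphism $\operatorname{Pic}^{(1)}_{C/K}/Y\xrightarrow{\sim}T$ of torsors under $Z$ identifying $\beta$ with $q\circ\alpha^{(1)}$. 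The main obstacle throughout is purely one of bookkeeping: correctly threading the canonical isomorphisms $\zeta$, the Albanese universal property, and the signs introduced by \Cref{L:BLsec11.3}, so that the identification $\beta^*\zeta_Z\xi=\iota_Z$ is an exact equality of homomorphisms and not merely an equality up to isogeny or sign.
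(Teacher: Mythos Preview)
Your proposal is correct and follows essentially the same strategy as the paper: both directions are reduced to the equivalence of conditions \ref{L:BL-12.2.3a} and \ref{L:BL-12.2.3d} in \Cref{L:BL-12.2.3}, with the only real work being the identification $\beta^*\zeta_Z\xi=\iota_Z$ in part (b) and its use in both the inverse statements. The one difference worth noting is that the paper makes this identification concrete by first passing to $\bar K$ and choosing a point $P\in C(K)$, so that $T$ is identified with $Z$ and $\beta$ becomes the explicit map $-\xi^{-1}\widehat{\iota_Z}\lambda_C\alpha_P$; dualizing and applying \Cref{L:BLsec11.3} then gives $\beta^*=\iota_Z\xi^{-1}$ directly, whereas your torsor-level description of this step is correct in outline but less precise about how the pieces fit together.
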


\begin{proof} It suffices to check the assertions  of (a) and (b) after base change to the algebraic closure, so we will assume that $K=\bar K$, and we fix a point $P\in C(K)$.  
(a) follows immediately from \Cref{L:BL-12.2.3}.
 
For (b),  
 suppose conversely that   there is an  inclusion $\iota_Z\colon Z\hookrightarrow \operatorname{Pic}^0_{C/K}$ making $(Z,\xi)$ a Prym--Tyurin variety of exponent $e$.  First consider the map $\xymatrix{\beta \colon C\ar[r]^<>(0.5){\alpha^{(1)}}& \operatorname{Pic}^{(1)}_{C/K}\to T}$.  Again using $\beta(P)$ to identify $T$ with $Z$, unwinding the definitions, and using the short exact sequence \eqref{E:BL1213-2}, we have 
 $\beta=  -\xi^{-1} \widehat \iota_Z \lambda_C \alpha_P\colon  C\to Z$. 
   This implies that  $\beta^* =  -\alpha_P^* \lambda_C \iota_Z \xi^{-1}=  \iota_Z \xi^{-1}$,  where here we are using \Cref{L:BLsec11.3}.  Consequently,  $\beta^*\colon \widehat Z\to \operatorname{Pic}^0_{C/K}$ is an injective $K$-homomorphism giving (i),
   where we are using that  $\zeta_Z$ is the identity after we have identified $T=Z$. 
   It also follows from the identity $\beta^*= \iota_Z \xi^{-1}$ that  $\iota_Z=\beta^*\xi$, which equals  $\beta^*\zeta_Z\xi$, again since $\zeta_Z$ is the identity after we have identified $T=Z$.   Therefore, $(\beta^*\zeta_Z\xi)^*\lambda_C= \iota_Z^*\lambda_C=e\xi$, so  we can employ \Cref{L:BL-12.2.3}(a), and we obtain (ii). 
   
The equality $\iota_Z=\beta^*\zeta_Z\xi$ also shows one direction of the assertion that the two constructions in the theorem are inverses to one another.  In the other direction, given $\beta\colon C\to T$ satisfying (i) and (ii), one observes that $\beta$ factors through the Abel map $\alpha^{(1)}\colon C\to \operatorname{Pic}^{(1)}_{C/K}$ using the universal property of the Albanese torsor.  This gives an isomorphism of $T$ with $\operatorname{Pic}^{(1)}_{C/K}/Y$. 
           \end{proof}

As a consequence, we get  the Matsusaka Criterion:

\begin{cor}\label{C:WMR}
Let $(Z,\xi)/K$ be a principally polarized abelian variety of dimension $g_Z>0$ and let $C/K$ be a smooth projective curve. 
 Fix  divisors $\Theta_{\bar K}$ and $\Xi_{\bar K}$ such that $\phi_{\Theta_{\bar K}}=(\lambda_C)_{\bar K}$ and $\phi_{\Xi_{\bar K}}= \xi_{\bar K}$.
 Suppose there is a $K$-morphism $\beta\colon C\to T$  to a torsor under $Z$ over $K$ such that 
after  identifying $T_{\bar K}$ and $Z_{\bar K}$, we have 
\begin{equation}\label{E:WMR}
\beta_{\bar K*}[C_{\bar K}]\equiv \frac{[\Xi_{\bar K}]^{g_Z-1}}{(g_Z-1)!}.
\end{equation}
Then the composition  $\iota_Z:= \beta^*\zeta_Z\xi\colon Z\to\operatorname{Pic}^0_{C/K}$ is an isomorphism, and there is a unique isomorphism $\gamma\colon \operatorname{Pic}^{(1)}_{C/K}\to T$ such that $\beta = \alpha^{(1)}\gamma$.  In other words, up to an isomorphism of torsors, $\beta$ is the Abel map.  
\end{cor}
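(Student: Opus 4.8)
The plan is to recognize \eqref{E:WMR} as the $e=1$ instance of Welters' Criterion (\Cref{T:Welters}) and then to exploit the indecomposability of the Jacobian. First I would observe that the hypothesis \eqref{E:WMR} is precisely condition \Cref{L:BL-12.2.3}\ref{L:BL-12.2.3d} with $e=1$, applied to the given $\beta\colon C\to T$. Thus \Cref{L:BL-12.2.3} yields the equivalent condition \ref{L:BL-12.2.3a}, namely $\iota_Z^*\lambda_C=\xi$, where $\iota_Z:=\beta^*\zeta_Z\xi\colon Z\to\operatorname{Pic}^0_{C/K}$. Writing this out as $\widehat{\iota_Z}\,\lambda_C\,\iota_Z=\xi$ and using that $\xi$ is an isomorphism, we get $\ker\iota_Z\subseteq\ker\xi=0$; hence $\iota_Z$ has trivial (scheme-theoretic) kernel and is therefore a closed immersion. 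In particular $(Z,\xi,C,\iota_Z)$ is an embedded Prym--Tyurin variety of exponent $e_Z=1$.

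Next I would upgrade the closed immersion $\iota_Z$ to an isomorphism. The Jacobian $(\operatorname{Pic}^0_{C/K},\lambda_C)$ is (geometrically, hence over $K$) indecomposable as a principally polarized abelian variety. Since $e_Z=1$ and, as $g_Z>0$, the subvariety $Z$ is nonzero, \Cref{C:e=1-ppav} forces $\iota_Z(Z)=\operatorname{Pic}^0_{C/K}$ (the only alternative being $Z=0$); the dimension count underlying this may be checked after base change to $\bar K$. A closed immersion of abelian varieties whose image is the entire target is an isomorphism, so $\iota_Z$ is an isomorphism, as claimed.

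It then remains to identify $\beta$ with the Abel map. The universal property of the Albanese torsor \eqref{E:til-beta} produces a unique $K$-morphism $\gamma:=\til\beta\colon\operatorname{Pic}^{(1)}_{C/K}\to T$ with $\beta=\gamma\circ\alpha^{(1)}$; it is a morphism of torsors over a homomorphism $\varphi\colon\operatorname{Pic}^0_{C/K}\to Z$ of structure groups, and $\gamma$ is an isomorphism of torsors precisely when $\varphi$ is an isomorphism. By \Cref{L:BLcor11.4.2} the dual $\widehat{\til\beta}=-\lambda_C\,\beta^*\zeta_Z=-\lambda_C\,\iota_Z\,\xi^{-1}$ is a composite of isomorphisms, hence an isomorphism; as $\widehat{\til\beta}$ is, under the canonical identifications \eqref{E:til-b-hat}, the dual of $\varphi$, this shows $\varphi$, and therefore $\gamma$, is an isomorphism, with uniqueness already supplied by the universal property. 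I expect the main obstacle to be the second step: the passage from the purely numerical hypothesis to the conclusion that $Z$ fills out the entire Jacobian rests on the indecomposability of $(\operatorname{Pic}^0_{C/K},\lambda_C)$, and one must take care that the closed-immersion and isomorphism assertions, together with this indecomposability input, remain valid over the possibly non-algebraically-closed and non-perfect field $K$, descending to $\bar K$ only where the equivalences of \Cref{L:BL-12.2.3} and the dimension counts legitimately permit it.
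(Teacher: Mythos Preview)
Your proof is correct and follows essentially the same route as the paper: apply \Cref{L:BL-12.2.3} with $e=1$ to deduce injectivity of $\iota_Z$, invoke indecomposability of the Jacobian via \Cref{C:e=1-ppav} to force $\iota_Z$ to be an isomorphism, and finish with the Albanese universal property. Your final paragraph is in fact more explicit than the paper's treatment in verifying, via \Cref{L:BLcor11.4.2}, that the induced $\gamma$ is an isomorphism of torsors rather than merely a morphism.
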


\begin{rem}
The equality in \eqref{E:WMR} does not depend on the choice of identification of $T_{\bar K}$ with $Z_{\bar K}$, as any two choices will differ by translation.
\end{rem}

\begin{proof}
We use \Cref{L:BL-12.2.3}(d) to conclude that diagram \eqref{E:BL-12.2.3} commutes with $e=1$.  Therefore, since $\xi$ is an isomorphism, a diagram chase implies that $\beta^*\zeta_Z$ is injective, so that $\beta^*\zeta_Z\xi$ is injective, as well.  Setting $\iota_Z=\beta^*\zeta_Z\xi$,  \Cref{T:Welters} then implies that $(Z,\xi, C,\iota_Z)$ is an embedded Prym--Tyurin variety of exponent $1$.  We then conclude using \Cref{C:e=1-ppav} that, since $\dim Z>0$, and $\lambda_C$ is indecomposable, we have $\iota_Z(Z)=\operatorname{Pic}^0_{C/K}$.  
The final assertion follows from the universal property of the
Albanese torsor.  (See also \cite[Rem.~12.2.5]{BL} for the case $K = \cx$.)
\end{proof}

\begin{rem}[Matsusaka--Ran Criterion]\label{R:MR}
In fact, using \cite[Thm.~p.329]{collino}, one can replace the condition \eqref{E:WMR} in \Cref{C:WMR} with the weaker condition that $\beta_{\bar K*}[C_{\bar K}].[\Xi_{\bar K}]=g_Z$.  This would be the natural translation of the Matsusaka--Ran Criterion \cite[Thm.~p.329]{collino} to the case of non-closed fields.  
Note also that in \cite[Thm.~p.329]{collino},  disjoint unions of curves are also considered; one can formulate a corresponding version of the Mastsusaka--Ran Criterion over non-closed fields, as well. 
\end{rem}

\begin{cor}\label{C:PPAV=PT}
Let $(Z,\xi)/K$ be a principally polarized abelian variety of
dimension $g$ over a field.  Then $(Z,\xi)$ is a Prym--Tyurin variety
of exponent $n^{g-1}(g-1)!$ for infinitely many $n$, and for
\emph{all} $n\ge 3$ if $\operatorname{char}(K) = 0$.
\end{cor}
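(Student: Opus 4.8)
The plan is to invoke Welters' Criterion (\Cref{T:Welters}): to exhibit $(Z,\xi)$ as a Prym--Tyurin variety of exponent $e = n^{g-1}(g-1)!$ it suffices to produce a smooth projective curve $C/K$ together with a morphism $\beta\colon C\to T$ to a torsor $T$ under $Z$ over $K$ satisfying condition (i), that the composition $\widehat Z\xrightarrow{\zeta_Z}\operatorname{Pic}^0_{T/K}\xrightarrow{\beta^*}\operatorname{Pic}^0_{C/K}$ be injective, and condition (ii), that $\beta_{\bar K*}[C_{\bar K}]\equiv e\,[\Xi_{\bar K}]^{g-1}/(g-1)!$ as $1$-cycles on $Z_{\bar K}$. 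I would construct $C$ as a complete intersection curve. The case $g=1$ is trivial (take $C=Z$, $e=1$), so assume $g\ge 2$.

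The geometric heart is classical. Over $\bar K$, fix a symmetric theta divisor $\Xi$ with $\phi_{\mathcal O(\Xi)}=\xi_{\bar K}$; for $n\ge 3$ the bundle $\mathcal O(n\Xi)$ is very ample, and a general complete intersection $C_0 := D_1\cap\dots\cap D_{g-1}$ of members $D_i\in |n\Xi|$ is a smooth connected curve with
\[
[C_0] = (n[\Xi])^{g-1} = n^{g-1}(g-1)!\,\frac{[\Xi]^{g-1}}{(g-1)!},
\]
so that the inclusion $C_0\hookrightarrow Z_{\bar K}$ satisfies condition (ii) with $e=n^{g-1}(g-1)!$. Condition (i) holds because $C_0$ is an ample complete intersection: by the Grothendieck--Lefschetz theorem the pullback $\operatorname{Pic}^0_{Z} \to \operatorname{Pic}^0_{C_0}$ is injective. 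The characteristic enters only through Bertini: in characteristic $0$ the general member of $|n\Xi|$ is smooth for \emph{every} $n\ge 3$, whereas in characteristic $p$ smoothness of the complete intersection is only guaranteed along a cofinal set of $n$, which is exactly why the conclusion is stated for infinitely many $n$ there.

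The main obstacle is descent: the curve, the morphism $\beta$, and the linear system must all be defined over $K$, yet $\xi$ need not be representable by any $K$-rational line bundle (\S\ref{S:Pol}). I would resolve this in two steps. First, the line bundle $M := (1,\xi)^*\mathcal P$, pulled back along $(1,\xi)\colon Z\to Z\times_K\widehat Z$ from the Poincar\'e bundle $\mathcal P$, is ample and defined over $K$ with $\phi_M=\xi+\widehat\xi = 2\xi$, hence $M\equiv 2\Xi$ geometrically; for $k\ge 3$ the bundle $M^{\otimes k}$ is very ample over $K$, and (for $K$ infinite, the finite-field case following by a standard Bertini argument) a general complete intersection of members of $|M^{\otimes k}|$ is a smooth $K$-curve realizing the even exponent $(2k)^{g-1}(g-1)!$. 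This already yields the conclusion for the infinitely many values $n=2k$ in every characteristic. Second, to reach \emph{all} $n\ge 3$ when $\operatorname{char}(K)=0$, I would replace $Z$ by the $Z$-torsor $T$ carrying a canonical $K$-rational theta divisor $\Theta_T$ with $\Theta_{T,\bar K}\cong\Xi$ --- the abstract analogue of $W_{g-1}\subset\operatorname{Pic}^{g-1}_{C/K}$ --- so that $n\Theta_T$ is very ample over $K$ and, Bertini being unconditional in characteristic $0$, a general complete intersection $C\subset T$ is a smooth $K$-curve. Verifying the existence and $K$-rationality of $(T,\Theta_T)$, and that the resulting complete intersection has geometric class $n^{g-1}[\Xi]^{g-1}$, is the delicate point; granting it, $\beta\colon C\hookrightarrow T$ satisfies (i) (again by Grothendieck--Lefschetz, since $T_{\bar K}\cong Z_{\bar K}$) and (ii), and \Cref{T:Welters} completes the proof.
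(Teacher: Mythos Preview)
Your overall strategy---take a complete-intersection curve in the linear system of a multiple of the theta divisor and apply Welters' Criterion---is exactly the paper's.  Your treatment of the descent of the line bundle to $K$ is in fact more careful than the paper's (the paper simply writes $n\Xi$ and asserts the resulting curve is defined over $K$); your use of $M=(1,\xi)^*\mathcal P$ to secure infinitely many even $n$ is correct, and the torsor idea for odd $n$ in characteristic zero is the right direction, though as you acknowledge it is not fully worked out.

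There is, however, a genuine gap in positive characteristic.  Condition (i) of \Cref{T:Welters} requires $\beta^*\zeta_Z\colon\widehat Z\to\pic^0_{C/K}$ to be a \emph{closed immersion} of abelian varieties, i.e., to have scheme-theoretically trivial kernel.  The Grothendieck--Lefschetz theorem you invoke (surjectivity of $\pi_1^\et$ for an ample complete intersection, \cite[Cor.~XII.3.5]{sga2}) shows only that the kernel has trivial \'etale part; in characteristic $p$ an infinitesimal kernel is not thereby excluded.  The paper addresses this by a different route: it identifies $\beta^*$, up to the isomorphism $\lambda_C$, with $\widehat{\beta_P}$ for the induced map $\beta_P\colon\pic^0_{C/\bar K}\to Z_{\bar K}$ (via \Cref{L:BLcor11.4.2}), and then uses \Cref{C:hatf} and \Cref{L:nori} to reduce injectivity of $\widehat{\beta_P}$ to surjectivity of $\pi_1^\nori(C_{\bar K})\to\pi_1^\nori(Z_{\bar K})$ on Nori fundamental group \emph{schemes}.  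In characteristic zero this follows from SGA2 as above (since $\pi_1^\nori=\pi_1^\et$ there); in positive characteristic it requires \cite[Thm.~1.1]{biswasholla07}, which in general forces one to pass to a further multiple of the polarization.  Thus your assertion that ``the characteristic enters only through Bertini'' is incorrect: there are \emph{two} obstructions in positive characteristic---the Bertini-over-finite-fields issue (Gabber, Poonen) and the Nori-surjectivity issue (Biswas--Holla)---and each may independently require enlarging $n$.
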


\begin{proof}
  (See also {\cite[Cor.~12.2.4]{BL}} when $K = \cx$.)
  Let $n\ge 3$ be an integer. (During the course of the proof we will
  encounter two places where, if $K$ has positive characteristic, then
   $n$ must
  be replaced by a suitable multiple.)
 According to a theorem of Lefschetz
  $n\Xi$ is very ample (e.g., \cite[\S 17]{mumfordAV}), and thus
  defines an embedding in projective space.  If $K$ is infinite, then
  using Bertini's theorem we can take the intersection of $g-1$
  hyperplanes to obtain an embedding $\beta\colon  C\hookrightarrow Z$ of a
  smooth projective curve over $K$. (If $K$ is finite, it may be
  necessary to take a higher multiple of $\Xi$.  That this suffices is
  due to Gabber and to Poonen \cite{gabberGAFA,poonen}; indeed,
  proving that any abelian variety is a quotient of a Jacobian was the
  impetus for those works.) Then the image of $C$ generates $Z$
  (e.g,. \cite[Proof of Thm.~10.1]{milneAV}). By construction we obtain that
$$
\beta_*[C]\equiv [n\Xi]^{g-1}= n^{g-1}[\Xi]^{g-1},
$$
so that from Welters' Criterion (\Cref{T:Welters}), all we have to show is that $\beta^*\colon \widehat Z \to \operatorname{Pic}^0_{C/K}$ is injective.
We may verify this injectivity over the algebraic closure of $K$.  So we may assume that there is a $K$-point $P\in C(K)$, and we consider the diagram 
\begin{equation*}\label{E:til-beta'}
  \xymatrix{
    C \ar[r]^\beta \ar[d]_{\alpha^{(1)}} \ar@/_2pc/[dd]_{\alpha_P} & Z \\
    \pic^{(1)}_{C/K}\ar@{->}[ur]^{\til\beta} \ar[d]& \\
    \pic^{0}_{C/K} \ar@{-->}[ruu]_{\beta_P}& 
  }
\end{equation*}
where $\beta_P$ is defined as in the diagram. From
\Cref{L:BLcor11.4.2} we find that $\widehat {\beta_P}=
-\phi_{\Theta}\beta^*$.   Thus it suffices to show that $\widehat
{\beta_P}$ is injective.  Because $C$ generates $Z$, $\beta_P$ is
surjective; by \Cref{C:hatf}, it now suffices to verify that
$\ker(\beta_P)$ is an abelian variety.  

If $K$ has characteristic $0$, then a hyperplane section of a normal
variety induces a surjection of \'etale fundamental groups \cite[Cor.~XII.3.5]{sga2}, and thus of 
fundamental group schemes.  In positive characteristic, the failure of
Kodaira vanishing can present an obstacle to surjectivity on
fundamental group schemes, which can be rectified by passing to a
suitable multiple of the hyperplane section
\cite[Thm.~1.1]{biswasholla07}.  Thus, possibly after replacing
$n\Xi$ with some multiple $mn\Xi$, if $\operatorname{char}(K)>0$,  we
may and do assume that $\beta$ induces a surjection
$\beta_*\colon \pi_1^\nori(C_{\bar K}) \twoheadrightarrow \pi_1^\nori(Z_{\bar
  K})$.   Now, $\alpha_{P,*}\colon  \pi_1^\nori(C_{\bar K}) \to \pi_1^\nori(\pic^0_{C/\bar
  K})$ is the abelianization map, and in particular surjective
\cite[Cor.~3.8]{antei11}.  Therefore, $\beta_{P,*}\colon \pi_1^\nori(\pic^0_{C/\bar
  K})\to \pi_1^\nori(Z_{\bar K})$ is surjective, and so
(\Cref{L:nori}) $\ker(\beta_P)$ is an abelian variety.
\end{proof}

\begin{rem}
  \label{R:exponentbound}
In positive characteristic, it is possible to specify a large supply of
suitable numbers $n$ in the statement of \Cref{C:PPAV=PT}.  Indeed,
over a finite field, \cite{bruceli} provides effective bounds for the
degrees of hypersurfaces which meet $Z$ in a smooth subvariety; and
over an arbitrary field of positive characteristic,
\cite[Thm.~3.5]{biswasholla07} provides effective bounds which
similarly ensure that the induced map on fundamental group schemes is
surjective.

In each case, the bounds are in terms of the absolute size of the
degree, rather than divisbility properties.  Consequently, given any
$n\ge 3$, there exists some $r$ such that $(Z,\xi)$ is a Prym--Tyurin
variety of exponent $n^r(g-1)!$.
 \end{rem}

\subsection{Prym--Tyurin Prym varieties}
\label{SS:prym}
Recall from \S \ref{S:Pscheme}  that to a finite morphism $f\colon C\to C'$
of smooth projective curves over a field $K$ we have associated a Prym variety, $P(C/C')$, defined to be the complement of $Y:= f^*\operatorname{Pic}^0_{C'/K}$, and that $P(C/C')$  comes with an inclusion $\iota \colon P(C/C')\hookrightarrow \operatorname{Pic}^0_{C/K}$.   We say that $P(C/C')$ is a Prym--Tyurin Prym variety of exponent $e$ if $\iota^*\lambda_C=e\xi$ for the positive integer $e$ and some principal polarization $\xi$ on $P(C/C')$.   
       Recall that \eqref{E:P(C/C')} and \Cref{C:PrymC/C'Nm} give alternative descriptions of $P(C/C')$, and that in the case that $C/C'$ is a cyclic Galois cover, \Cref{R:C/sigma} gives yet another description.

As it turns out, there are a number of restrictions on the exponent  and the type of cover that can give rise to a Prym--Tyurin Prym variety.  
           Over an algebraically closed field of  characteristic not equal to $2$, Mumford \cite{mumford74} classified those
degree $2$ covers $f$ which give rise to a Prym--Tyurin Prym variety.  We follow the exposition of
\cite[Thm.~12.3.3]{BL}, and extend it to non-closed fields of all characteristics.  Note
that there is a slight oversight in \cite{BL}, which is subsequently
corrected in \cite{langeortega11} (the case missing in \cite{BL} corresponds  to \ref{prym:etaletriple} in \Cref{T:classifyprym}, below).

\subsubsection{Inseparable covers} \label{S:purelyinsep}

    We want to collect some facts about inseparable covers and Frobenius morphisms.   To this end, in this section assume that
$\operatorname{char}(K)=p>0$.

Let $T$ be any scheme of characteristic $p$. The $p^{th}$ power map on
$\mathcal O_T$ defines the absolute Frobenius morphism $\absfrob_T: T
\to T$.  If $X/K$ is any scheme, set
$X^{(p/K)} = X^{(p)} = X\times_{\spec K,\absfrob_{\spec K}}\spec
K$. The absolute Frobenius morphism $\operatorname{fr}_X$ of $X$ is a morphism over $\operatorname{fr}_K$, and so, by the definition of the fiber product, factors through a
canonically-defined relative Frobenius $K$-morphism $F_{X/K}$:
       \[
  \xymatrix{
X \ar[r]^{F_{X/K}} \ar@/^2pc/[rr]^{\absfrob_X} \ar[rd] &  X^{(p)} \ar[r] \ar[d] & X \ar[d] \\
&    \spec K \ar[r]^{\absfrob_K} & \spec K
  }
\]
The relative Frobenius is functorial.  In particular, a morphism $\alpha:X \to Y$ of $K$-schemes induces a morphism $\alpha^{(p)}$ such that the following diagram commutes:
\[
  \xymatrix{X \ar[r]^\alpha \ar[d]^{F_X} & Y \ar[d]^{F_Y}\\
    X^{(p)} \ar[r]^{\alpha^{(p)}} & Y^{(p)}}\]
If $r\ge 2$, we inductively define
$X^{(p^r)} = (X^{(p^{r-1})})^{(p)}$, and with a slight abuse of notation write $F^{\circ r}_X$ for $F\circ_{X^{(p^{r-1})}} \circ \cdots \circ F_{X}\colon X \to X^{(p^r)}$.

In the special case where $X$ is actually a commutative group scheme,
one can use this group structure to canonically define a so-called Verschiebung
morphism $V_{X/K}: X^{(p)} \to X$, which has the property that  $V_{X/K} \circ F_{X/K}
= [p]_{X/K}$, the multiplication-by-$p$ map.  This is worked out in generality
in \cite[VIIA.4.3]{sga3-1}, and in the special case we need in \cite[\S
5.2]{EvdGM};
   see also \cite{odaderham}.    If $X$ is an abelian
variety, then there is a canonical isomorphism $\widehat{X^{(p)}} \iso
(\widehat X)^{(p)}$ \cite[Prop.~2.1]{odaderham}, and Frobenius and Verschiebung are dual in the
sense that $\widehat{V_{X/K}} = F_{\widehat X/K}$ and
$\widehat{F_{X/K}} = V_{\widehat X/K}$ \cite[Cor.~2.2]{odaderham}.

Now consider a cover $f\colon C \to C'$ of curves over $K$.
Then $f$ factors uniquely as \cite[\href{https://stacks.math.columbia.edu/tag/0CD@}{Prop.~0CD2}]{stacks-project}
\begin{equation}
\label{E:insep-cov}\xymatrix{
  C \ar@/^2pc/[rr]^f\ar[r]_-{i=F^r_{C/K}} & C^{(p^r)} = C'' \ar[r]_-j & C' \\
}
\end{equation}
where $i$ is purely inseparable and $j$ is separable.
                
         As the genus of $C$ is equal to that of $C^{(p^r)}$ (e.g, 
 \cite[\href{https://stacks.math.columbia.edu/tag/0CD0}{Lem.~0CD0}]{stacks-project}), from \Cref{L:pullpush} we have that $(F^r_{C/K})^*:\operatorname{Pic}^0_{C^{(p^r)}/K}\to \operatorname{Pic}^0_{C/K}$ is an isogeny, 
 so that 
$i^*\colon \pic^0_{C''/K} \to \pic^0_{C/K}$ is an isogeny, and thus $P(C/C'')$ is trivial.

For later use, we will also want to recall from \cite[Prop.~2.1]{odaderham} the effect of the relative Frobenius 
$F_{C/K}$
 on Picard schemes.  There is a canonical identification
$\pic^0_{C^{(p)}/K} \iso (\pic^0_{C/K})^{(p)}$; $(F_{C/K})_*:\operatorname{Pic}^0_{C/K}\to \pic^0_{C^{(p)}/K}$ and $(F_{C/K})^*: \pic^0_{C^{(p)}/K}\to \operatorname{Pic}^0_{C/K}$ are then 
canonically identified with, respectively, the relative Frobenius $F$ and Verscheibung $V$ 
maps of $\pic^0_{C/K}$; this together with Lemma \ref{L:pullpush} becomes the
factorization $[p] = F\circ V$:
\[
\xymatrix@R=.5em{
\operatorname{Pic}^0_{C^{(p)}/K} \ar[r]^{F_{C/K}^*} \ar@{=}[d] \ar@/^2pc/[rr]^{[p]}& \operatorname{Pic}^0_{C/K} \ar[r]^{F_{C/K,*}} \ar@{=}[d]&\operatorname{Pic}^0_{C^{(p)}/K} \ar@{=}[d]\\
(\operatorname{Pic}^0_{C/K})^{(p)}\ar[r]_<>(0.5)V \ar@/_2pc/[rr]_{[p]}& \operatorname{Pic}^0_{C/K} \ar[r]_<>(0.5)F& (\operatorname{Pic}^0_{C/K})^{(p)}
}
\]
         
\begin{pro}[Inseparable covers]\label{P:insep}
  Let $f\colon C \to C'$ be a finite morphism of smooth projective curves over a field
  $K$, factored as in \eqref{E:insep-cov}.   Then we have an
  isomorphism of Prym varieties $P(C/C')^{(p^r)} = P(C^{(p^r)}/C')$.
     Moreover, $P(C/C')$ is a Prym--Tyurin Prym variety of exponent $e$, i.e., $\lambda_C|_{P(C/C')}= e\xi$ for a principal polarization $\xi$ on $P(C/C')$,   if and only if the Prym variety $P(C''/C')$ associated to the separable cover $C''/C'$ is a Prym--Tyurin Prym variety of exponent $e$.
\end{pro}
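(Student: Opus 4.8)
The plan is to reduce both statements to the functoriality of the relative Frobenius together with the fact that the twist $(-)^{(p^r)}$ is a faithfully flat base change, which lets one transport the defining data of the Prym between $\pic^0_{C/K}$ and $\pic^0_{C''/K} = (\pic^0_{C/K})^{(p^r)}$. First I would establish the identification $P(C/C')^{(p^r)} = P(C''/C')$. Writing $A = \pic^0_{C/K}$ and using $f = j\circ F^r_{C/K}$, covariance of pushforward gives $f_* = j_*\circ (F^r_{C/K})_*$, and by the discussion following \eqref{E:insep-cov} the map $(F^r_{C/K})_*$ is canonically the $r$-fold relative Frobenius $F^r\colon A \to A^{(p^r)} = \pic^0_{C''/K}$. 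By \Cref{C:PrymC/C'Nm} one has $P(C/C') = (\ker f_*)^{\abvar} \subseteq (F^r)^{-1}(\ker j_*)$, so $F^r$ carries $P(C/C')$ into $\ker j_*$. On the other hand, functoriality of the relative Frobenius applied to the inclusion $P(C/C')\hookrightarrow A$ identifies $F^r(P(C/C'))$ with the twist $P(C/C')^{(p^r)}$ (using that the relative Frobenius of $P(C/C')$ is surjective onto $P(C/C')^{(p^r)}$); this is therefore an abelian subvariety of $\ker j_*$, hence of $(\ker j_*)^{\abvar} = P(C''/C')$. A dimension count closes the argument: since $\ker f^*$ and $\ker j^*$ are finite (the first because $i^* = (F^r_{C/K})^*$ is an isogeny and $j^*$ has finite kernel by \Cref{L:f*picflat}), one has $\dim P(C/C') = g(C)-g(C')$ and $\dim P(C''/C') = g(C'')-g(C') = g(C)-g(C')$, using $g(C'') = g(C)$; as the two abelian varieties have equal dimension and one contains the other, they coincide.

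For the second statement I would first note that the canonical principal polarization $\lambda_{C''}$ of $\pic^0_{C''/K}$ is the twist $\lambda_C^{(p^r)}$, since the canonical polarization commutes with the base change $\absfrob_K^{\circ r}\colon \spec K \to \spec K$; combined with Part~1 and the fact that forming the dual and the restriction commute with base change, this gives $\lambda_{C''}|_{P(C''/C')} = (\lambda_C|_{P(C/C')})^{(p^r)}$. Now $X\mapsto X^{(p^r)}$ is base change along $\absfrob_K^{\circ r}$, which is faithfully flat because $K \to K$, $a\mapsto a^{p^r}$, is flat (every module over a field is free) and surjective on spectra. By \Cref{L:eXi-X[e]} the condition that $P(C/C')$ be a Prym--Tyurin Prym variety of exponent $e$, namely $\lambda_C|_{P(C/C')} = e\xi$ for a principal polarization $\xi$, is equivalent to the equality of closed subschemes $\ker(\lambda_C|_{P(C/C')}) = P(C/C')[e]$. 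Both the kernel and the $e$-torsion commute with base change, and equality of closed subschemes satisfies faithfully flat descent, so this equality holds over $K$ if and only if its twist holds; applying \Cref{L:eXi-X[e]} again to $P(C''/C')$ yields the asserted equivalence. (The forward implication is in fact immediate, since twisting $\lambda_C|_{P(C/C')} = e\xi$ produces $\lambda_{C''}|_{P(C''/C')} = e\,\xi^{(p^r)}$ with $\xi^{(p^r)}$ principal; only the converse genuinely uses descent.)

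The main obstacle is Part~1: one must verify carefully that the relative Frobenius $F^r$ restricts on the abelian subvariety $P(C/C')$ to \emph{its own} relative Frobenius, so that $F^r(P(C/C'))$ is exactly the twist $P(C/C')^{(p^r)}$ rather than merely some isogenous image, and that this image lands inside the maximal abelian subvariety of $\ker j_*$ (not just inside $\ker j_*$). The dimension count then forces equality, and once this identification is secured the second statement follows formally from \Cref{L:eXi-X[e]} and faithfully flat descent along the Frobenius twist.
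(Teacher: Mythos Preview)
Your proof is correct and takes a genuinely different route from the paper's.

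For Part~1, the paper reduces by induction to $r=1$ and works on the \emph{pullback} side: it tracks $Y = f^*\pic^0_{C'}$ under Verschiebung, shows $h^*\pic^0_{C'} = Y^{(p)}$, and then identifies $P(C^{(p)}/C')$ with $P(C/C')^{(p)}$ via the short exact sequence $0 \to Y \to X \to \widehat{P(C/C')} \to 0$ and its Frobenius twist. You instead use the \emph{pushforward} characterization $P(C/C') = (\ker f_*)^\abvar$ from \Cref{C:PrymC/C'Nm}, push through $F^r$ directly, and close with a dimension count; this avoids the induction entirely and is somewhat slicker. The one step you flag as delicate---that $F^r$ restricted to $P(C/C')$ is its own relative Frobenius and hence has image exactly $P(C/C')^{(p^r)}$---is indeed just functoriality of relative Frobenius for the closed immersion, together with surjectivity of Frobenius on an abelian variety, so your concern there is unfounded.

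For Part~2, the paper computes explicitly that $(Y\times_X Z)^{(p)} = Y^{(p)}\times_{X^{(p)}} Z^{(p)}$ and $(Z[e])^{(p)} = Z^{(p)}[e]$, then invokes \Cref{L:kerpullbackpolarization}. You instead observe that the entire Prym--Tyurin condition, phrased via \Cref{L:eXi-X[e]} as an equality of closed subgroup schemes, transports along the faithfully flat base change $\absfrob_K^{\circ r}$; this is cleaner and more conceptual. The paper's approach has the minor advantage of making the identification $\lambda_{C^{(p)}} = (\lambda_C)^{(p)}$ explicit via a diagram chase rather than appealing to base-change compatibility of the canonical polarization, but both arguments are complete.
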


\begin{proof}
By considering the factorization \eqref{E:insep-cov}, and by induction
on the degree of inseparability of the morphism $f$, it suffices to
prove that given morphisms of curves
\[
  \xymatrix{ C \ar[r]^{F_C} & C^{(p)} \ar[r]^h &C'}
\]
with $h$ finite, we have $P(C/C')$ is a Prym--Tyurin Prym variety of
exponent $e$ if and only if $P(C^{(p)}/C')$ is a Prym--Tyurin Prym
variety of exponent $e$.

Let $X = \pic^0_C$; then $\pic^0_{C^{(p)}}$ is canonically isomorphic
to $X^{(p)}$ \cite[Prop.~2.1]{odaderham}.
Note that $P(C^{(p)}/C')$ is the complement (with respect to the
polarization  $\lambda_{C^{(p)}}$) of $h^*\pic^0_{C'}$ in
$X^{(p)}$.  The equality of morphisms  $V_X = (F_C)^*$ implies that 
$P(C/C')$ is the complement in $X$ (with
respect to $\lambda_C$) of $(h
F_C)^*\pic^0_{C'} = V_X(h^*\pic^0_{C'})$.

Similarly, the equality of morphisms $V_X = (F_C)^*$ and  $F_X = (F_C)_*$, together with
the duality $\widehat{F_X} = V_{\widehat X}$
and the calculation of Lemma \ref{L:f-up-lw*}, shows that the upper-right hand corner in
the following diagram commutes.  (The bottom row is obtained from the
middle row by the functoriality of Frobenius.)
\[
  \xymatrix{
    h^*\pic^0_{C'} \ar[d]^{V_X} \ar@{^(->}[r] & X^{(p)} \ar[d]^{V_X} \ar[r]^{\lambda_{C^{(p)}}} &
      \widehat X^{(p)}\ar[d]^{V_{\widehat X}}\\
      V_X(h^*\pic^0_{C'}) \ar[d]^F \ar@{^(->}[r] & X \ar[d]^F
      \ar[r]^{\lambda_C} & \widehat X \ar[d]^F\\
      (V_X(h^*\pic^0_{C'}))^{(p)} \ar[r] & X^{(p)}
      \ar[r]^{(\lambda_C)^{(p)}} & \widehat X^{(p)}
    }
  \]
The composition of arrows in each column is the multiplication-by-$p$
map.  Consequently, considering the right two columns, we have $[p]\circ \lambda_{C^{(p)}} = (\lambda_C)^{(p)} \circ [p]$, so that by the freeness of the endomorphism ring as a $\mathbb Z$-module, we have   $\lambda_{C^{(p)}} = (\lambda_C)^{(p)}$; and if we
let $Y = V_X(h^*\pic^0_{C'})$, then $h^*\pic^0_{C'} = Y^{(p)}$.
 We therefore have a commutative diagram with
exact rows
\[
  \xymatrix{
    0 \ar[r] & Y \ar[r]\ar[d]^F & X \ar[r] \ar[d]^F& \widehat{P(C/C')} \ar[d]^F
    \ar[r] & 0 \\
    0 \ar[r] & Y^{(p)} \ar[r] & X^{(p)} \ar[r] &
    \widehat{P(C/C')}^{(p)} \ar[r] &0
  }
\]
and in particular, if we  set $Z = P(C/C')$, then
\[
  P(C^{(p)}/C') =   P(C/C')^{(p)} = Z^{(p)}.
\]
Moreover,
\[
  Y^{(p)}\times_{X^{(p)}} Z^{(p)} = (Y\times_XZ)^{(p)}.
\]
For any integer $e$, an easy calculation shows $([e]_Z)^{(p)} =
[e]_{Z^{(p)}}$, and thus $(Z[e])^{(p)} = Z^{(p)}[e]$.  Consequently, by \Cref{L:kerpullbackpolarization}
  and  \Cref{R:Z[e]-PT},  
     we may conclude that 
  $P(C^{(p)}/C')$ is Prym--Tyurin Prym variety of exponent $e$ if and only if $Y^{(p)}\times_{X^{(p)}} Z^{(p)}= Z^{(p)}[e]$, if and only if $Y\times_XZ= Z[e]$, if and only if  $P(C/C')$ is a
  Prym--Tyurin Prym variety of exponent $e$.  
\end{proof}

\subsubsection{Kernel of the pull-back morphism for line bundles}

Next, we recall the relation between covers of curves and the kernel of the pull-back morphism on line bundles.

Say that a finite morphism of curves $C \to C'$ is an abelian
cover if $C$ is a torsor over $C'$ under some finite commutative group
scheme. 

\begin{lem}
  \label{L:newBLprop11.4.3}
  Let $f:C \to C'$ be a finite morphism of curves over $K$, and let $f^*$ be the induced morphism $f^*: \pic^0_{C'/K} \to \pic^0_{C/K}$. 
  \begin{alphabetize}
  \item Suppose $C$ is a torsor under the finite commutative group scheme $G$.  If $C$ admits a $K$-rational point, then $\ker(f^*) \iso G^\vee$; in general, $\ker(f^*)^\vee$ is isomorphic to a twist of $G$.

\item Suppose $f:C \to C'$ is purely inseparable of degree
  $p^r$. After choosing an isomorphism $C' = C^{(p^r)}$, we have
  $\ker(f^*) =  \ker(V^{\circ r}_{\pic^0_C})$.  Moreover, $f$ is
  abelian if and only if $C$ and $C'$ have genus one.

  \item There is an abelian cover $C^\abelian \to C'$, which is initial among all abelian covers of $C'$ through which $C \to C'$ factors, and sits in the following diagram:
     \begin{equation}\label{E:MaxAbCovFact}
 \xymatrix@C=1em@R=1em{
 C \ar[rr]^f \ar[rd]_{f^\nonabelian}&& C'\\
 &C^\abelian \ar[ru]_{f^\abelian}&
 }
     \end{equation}
If $C$ admits a $K$-rational point, then the cover $C^\abelian \to C'$
is a torsor under $\ker(f^*)^\vee$; in general, $C^\abelian$ is a torsor under some twist of $\ker(f^*)^\vee$.
If $K$ is perfect or if $f$ is separable, then  formation of $C^\abelian$ is compatible with algebraic
extension of the base field.

\item If $f^*$ is an inclusion, then $C^\abelian = C'$.  If $f$ is
  separable and if $C^\abelian = C'$, then  $f^*$ is an inclusion.
  
\item Suppose $f$ is separable.  Then $f$  is an abelian cover if and only if $\deg(f) = \deg(f^*)$.
  \end{alphabetize}
\end{lem}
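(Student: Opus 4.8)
The plan is to handle the five parts in the order (a), (b), (c), (e), (d), since the later parts rest on the earlier ones. For (a), I would run fppf descent along the $G$-torsor $f\colon C \to C'$: the Hochschild--Serre (Leray) spectral sequence for $f$ and the sheaf $\mathbb{G}_m$ gives a low-degree exact sequence $0 \to H^1_{\mathrm{fppf}}(C'/C,\mathbb{G}_m) \to \pic_{C'/K} \xrightarrow{f^*} \pic_{C/K}$, and the torsor structure identifies the descent term $H^1_{\mathrm{fppf}}(C'/C,\mathbb{G}_m)$ with the Cartier dual $\mathcal{H}om(G,\mathbb{G}_m) = G^\vee$. Since $G^\vee$ is finite it lands in $\pic^0$, yielding $\ker(f^*) \iso G^\vee$ when $C$ is pointed (so the torsor splits over the base point), and a twist of $G^\vee$ in general, which is the asserted statement after dualizing. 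For (b) there is essentially nothing new to compute beyond \S\ref{S:purelyinsep}: after the chosen identification the purely inseparable $f$ of degree $p^r$ is the $r$-fold relative Frobenius, so the equality $(F_{C/K})^* = V_{\pic^0_{C/K}}$ recalled there gives $f^* = V^{\circ r}_{\pic^0_{C/K}}$ and hence $\ker(f^*) = \ker(V^{\circ r}_{\pic^0_{C/K}})$. For the ``moreover'' I would compare orders: if $f$ were abelian, part (a) forces $|\ker(f^*)| = |G^\vee| = \deg f = p^r$, whereas $\ker(V^{\circ r})$ has order $p^{rg}$ on a $g$-dimensional Jacobian; equality forces $g=1$, and conversely in genus one the relative Frobenius is an isogeny of (torsors under) elliptic curves, hence an abelian cover.

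The heart of the lemma is (c). The plan is to use the dictionary of \S\ref{SubS:KerAbSch}: pointed abelian (i.e.\ finite commutative) covers of $C'$ are classified by finite group-scheme quotients of $\pi_1^\nori(C')^\abelian$, which via the Albanese identification $\pi_1^\nori(\pic^0_{C'/K}) = \invlim N \pic^0_{C'}[N]$ and Cartier duality correspond bijectively to finite subgroup schemes $H \subseteq \pic^0_{C'/K}$; under this dictionary the cover attached to $H$ has covering group $H^\vee$ and its pullback map kills exactly $H$. Because $\ker(f^*)$ is finite (it lies in $\pic^0_{C'}[d]$ by \Cref{L:pullpush}), it singles out a distinguished subgroup scheme, and when $f$ is separable I claim $f$ factors through an abelian cover $D \to C'$ precisely when $\ker((D\to C')^*) \subseteq \ker(f^*)$; the maximal such $D$ is then the cover attached to all of $\ker(f^*)$, giving the initial $C^\abelian$, a torsor under $\ker(f^*)^\vee$ (a twist thereof when $C$ has no rational point). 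The inseparable contribution is disposed of by part (b): a purely inseparable abelian cover forces genus one, so factoring $f = j\circ i$ into its separable and purely inseparable parts as in \eqref{E:insep-cov}, the abelian covers through which $f$ factors are exactly those through which $j$ factors, and one reduces to the separable analysis. Base-change compatibility in the perfect or separable case follows because formation of $\ker(f^*)$ commutes with field extension and, in those cases, the quotient/subgroup dictionary is insensitive to the extension.

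Parts (e) and (d) are then formal. For (e) the forward direction is immediate from (a): a separable abelian cover is a $G$-torsor with $G$ \'etale, so $|\ker(f^*)| = |G^\vee| = \deg f$; for the converse I would compare degrees in the factorization $C \to C^\abelian \to C'$ of (c), where $\deg(C^\abelian \to C') = |\ker(f^*)^\vee| = |\ker(f^*)|$, so the hypothesis $\deg f = |\ker(f^*)|$ forces $\deg(C \to C^\abelian)=1$, whence $f$ is the abelian cover $C^\abelian \to C'$. For (d), if $f^*$ is an inclusion then $\ker(f^*)=0$, the subgroup attached to $C^\abelian$ is trivial, and $C^\abelian = C'$; conversely, when $f$ is separable and $C^\abelian = C'$, part (c) gives $\ker(f^*)^\vee = 0$, so $f^*$ is a monomorphism of abelian varieties and hence a closed immersion. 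The separability hypothesis in this converse is genuinely necessary, since a purely inseparable $f$ of genus $\ge 2$ has $C^\abelian = C'$ by (b) while $\ker(f^*) = \ker(V^{\circ r}) \ne 0$.

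The main obstacle is (c): making the correspondence between abelian covers and finite subgroup schemes of $\pic^0_{C'/K}$ precise for \emph{non-\'etale} group schemes and over non-algebraically-closed, possibly imperfect, fields, and—above all—cleanly separating the separable and inseparable behaviours so that the covering group of $C^\abelian$ is identified with $\ker(f^*)^\vee$ only after the purely inseparable part has been stripped off via part (b). The twisting bookkeeping in the absence of a rational point is a secondary but genuine nuisance running through (a), (c), and (d).
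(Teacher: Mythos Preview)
Your proposal is broadly correct and reaches the same conclusions, but it diverges from the paper's proof in two places worth flagging.

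For (a), the paper does \emph{not} argue by descent or a spectral sequence.  Instead it routes everything through the Nori fundamental group scheme: the pointed Abel map identifies $\pi_1^\nori(C)^\abelian \cong \pi_1^\nori(\pic^0_{C/K})$ (via \cite[Cor.~3.8]{antei11}), so the $G$-torsor structure gives $\coker\bigl(\pi_1(f_*)\colon \pi_1^\nori(\pic^0_{C/K}) \to \pi_1^\nori(\pic^0_{C'/K})\bigr) \cong G$, and then the earlier \Cref{L:kerhatf} and \Cref{L:f-up-lw*} convert this to $\ker(f^*) \cong \ker(\widehat{f_*}) \cong G^\vee$.  Your descent approach is a legitimate alternative, but be aware that it must be run functorially in the base to produce an identification of \emph{group schemes} $\ker(f^*) \cong G^\vee$, not just of $K$-points; the phrase ``Hochschild--Serre'' is loose when $G$ is not \'etale, and the precise statement you want is the functorial \v{C}ech computation for the fppf cover $C\to C'$ (using $\Gamma(C,\mathcal O_C)=K$).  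The paper's route has the advantage that it reuses machinery (\S\ref{SubS:KerAbSch}) already built for other purposes and works uniformly for all finite commutative $G$; your route is more elementary and self-contained once made precise.

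For the ``moreover'' in (b), the paper argues by Euler characteristic: a purely inseparable morphism preserves genus, while a $G$-torsor satisfies $\chi(C)=\deg(f)\,\chi(C')$, forcing $\chi=0$.  Your order comparison $p^{rg}=|\ker(V^{\circ r})|=|G^\vee|=p^r$ is a nice alternative and arguably cleaner.

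For (c), the paper constructs $C^\abelian$ directly by taking fiber products of abelian subcovers and appealing to finiteness of $f$, then reads off the covering group from (a).  Your plan to build $C^\abelian$ as the cover attached to the subgroup scheme $\ker(f^*)\subseteq \pic^0_{C'}$ via the Antei dictionary is fine, but the key step---that for separable $f$, the cover attached to $H\subseteq \pic^0_{C'}$ lies under $f$ iff $H\subseteq\ker(f^*)$---is exactly what the paper extracts from the $\pi_1^\nori$ framework, so you are not really avoiding that machinery.  Your reduction of general $f$ to separable $j$ is correct once you note that any \emph{separable} map $C\to D$ over $C'$ (with $D\to C'$ \'etale) must factor through the separable quotient $C^{(p^r)}$ by uniqueness of the purely-inseparable/separable factorization; this deserves a sentence.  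Parts (d) and (e) are essentially the same in both treatments.
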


\begin{proof}
Suppose that $C$ admits as $K$-point, $P$, and let $P' = f(P)$.
  The pointed Abel map $\alpha_P\colon  (C,P) \to
(\pic^0_{C/K},0)$ induces a map $\pi_1(\alpha_P)\colon  \pi_1^\nori(C)
  \to
\pi_1^\nori(\pic^0_{C/K})$ of fundamental group schemes, which gives
an isomorphism on the abelianization of the fundamental group scheme
of $C$ \cite[Cor.~3.8]{antei11}:
\[
  \xymatrix{
    \pi_1^\nori(C) \ar@{->>}[r] & \pi_1^\nori(C)^\abelian
    \ar[r]^-\sim & \pi_1^\nori(\pic^0_{C/K}).
  }\]
Now consider the finite morphism $f\colon (C,P) \to (C',P')$ of pointed curves.  In the following diagram, the existence of the top row is built into the theory of Nori's fundamental group; the descent to the middle row is because, by hypothesis, $G$ is abelian; and the commutativity of the bottom rectangle follows from  \eqref{E:fCC'} and the chosen trivializations of
$\pic^{(1)}_{C/K}$ and $\pic^{(1)}_{C'/K}$:
\[
  \xymatrix{
0 \ar[r]&     \pi_1^\nori(C) \ar@{->>}[d] \ar[r]^{\pi_1(f)} & \pi_1^\nori(C')
    \ar@{->>}[d] \ar@{->>}[r] & G \ar[r] \ar[d]^\sim& 0\\
&    \pi_1^\nori(C)^\abelian \ar[d]^\sim \ar[r]^{\pi_1(f)^\abelian} &
    \pi_1^\nori(C')^\abelian \ar[d]^\sim \ar[r] & G \ar[r]\ar[d]^\sim & 0\\
&    \pi_1^\nori(\pic^0_{C/K}) \ar[r]^{\pi_1(f_*)} &
    \pi_1^\nori(\pic^0_{C'/K})\ar[r] & G \ar[r] & 0
  }
\]
In particular, $\coker(\pi_1(f_*)) \iso G$. By Lemma \ref{L:kerhatf}, $\ker(\widehat{f_*}) \iso G^\vee$.  Now (a) follows because $\ker(\widehat{f_*}) \iso \ker(f^*)$ (Lemma \ref{L:f-up-lw*}).  If $C$ admits no $K$-point, then $C$ is still a torsor over $C'$ under a group scheme which becomes isomorphic to $\ker(f^*)^\vee$ after a finite extension of $K$.

For (b), suppose $f:C \to C'$ is a purely inseparable abelian cover of
degree $p^r$.  The description of $\ker(f^*)$ follows from our
discussion of Frobenius above.  For the remaining claim, on one hand, since $f$ is purely inseparable, $C$ and
$C'$ have the same genus.  On the other hand, because $C$ is a torsor
over $C'$, its Euler characteristic $\chi(C)$ satisfies $\chi(C) =
\deg(f)\chi(C')$.  Therefore, $\chi(C) = \chi(C') = 0$, and the common
genus of $C$ and $C'$ is one.

For (c), suppose $C_1$ and $C_2$ are torsors over $C'$ under
respective finite commutative group schemes $G_1$ and $G_2$ through
which $f$ factors.  Then $C_{12}:=  C_1\times_{C'} C_2$ is a torsor
under the finite commutative group scheme $G_1\times G_2$, and $f$
factors through $C_{12}$.  Since $C \to C'$ is finite, we may find an
initial such cover, and produce the factorization
\eqref{E:MaxAbCovFact}. The description of the covering group of
$C^\abelian \to C'$ is given in (a).

We investigate the compatibility of the formation of $C^\abelian$ with
algebraic 
field extensions $L/K$. If $f$ is separable, the claimed compatibility
is classical; indeed, $C^\abelian$ is the curve whose function field
$K(C^\abelian)$ is the maximal unramified abelian extension of $K(C')$
inside $K(C)$.  Otherwise, suppose that $L/K$ is finite and
separable; we may assume $L/K$ is actually Galois.
Then the collection 
of abelian covers of $C'_L$ through which $f_L$ factors is stable
under $\gal(L/K)$, and thus $(C_L)^\abelian$ descends to
$K$.

For (d), suppose $f^*$ is an inclusion.
Let $C'' \to C'$ be a torsor over $C'$ under the finite
commutative group scheme $G$.  If $C \to C'$ factors through $C''$,
then $f^*$ factors through $g^*$.  Since $\ker(g^*) \subseteq
\ker(f^*) = \st{1}$, the trivial group scheme, we find that $C''$ is a
torsor under $\ker(g_*)^\vee \iso \st{1}$.

For the converse, suppose $f$ is separable.  Then the
usual theory of the \'etale fundamental group shows that \'etale
(Galois) subcovers of $C'$ correspond to (normal) subgroups of
$\pi_1^\et(C')$ which contain the image of $\pi_1^\et(C)$, and \'etale
abelian subcovers of $C'$ correspond to subgroups of $\pi_1^\ab(C')$
which contain the image of $\pi_1^\ab(C)$.  Consequently, $C^\abelian$
is a torsor over $C'$ under $\ker(f^*)^\vee$.  In particular,
$C^\abelian = C'$ if and only if $f^*$ is an inclusion.

Finally, we address (e); suppose $f$ is separable.  Part (a) implies that if $f$ is
abelian with covering group the \'etale group scheme $G$, then
$\deg(f) =\ord(G) = \ord(G^\vee) = \deg(f^*)$.  Conversely, if
$\deg(f) = \deg(f^*)$, then $\deg(f^\nonabelian) = 1$, and thus $C \to
C'$ is an abelian cover.
\end{proof}

\begin{rem}
  \label{R:defprank}
Lemma \ref{L:newBLprop11.4.3}, via its reliance on \cite{antei11}, encodes (and is built upon) the starting point
of geometric class
field theory: for a fixed curve $C'$, there is a bijection
between cyclic \'etale covers $C \to C'$ of degree $N$ and multiplicative
cyclic subgroups of $\pic^0_{C'/K}$ of degree $N$, i.e., sub-group schemes of
$\pic^0_{C'/K}$ which are isomorphic, over $\bar K$, to
$(\integ/N)^\vee = \mmu_N$.  (In fact, this is a special case of the
general fact that, for a pointed smooth proper scheme
$X$ over $S$ and a finite commutative group scheme $G/S$, there is a
canonical bijection $H_\bullet^1(X,G) \iso \hom(G^\vee,
\pic^\tau_{X/S})$,
  where the left-hand side denotes the subgroup of
torsors which are pointed over the given section of $X$, and
$\pic^\tau_{X/S}$ denote the torsion component of the Picard scheme
\cite[Prop.~3.2]{antei11}.)  In particular, if $C'_{\bar K}$ admits
such a cover, then there exists an inclusion $\mmu_{N,\bar K}
\hookrightarrow \pic^0_{C'/\bar K}$.

If $N$ is invertible in $K$, this is always possible; but if $p =
\operatorname{char}(K)>0$, then the existence of such an inclusion is
a nontrivial constraint on $C'$.

Indeed, recall that if $A/K$ is an abelian variety over a field, then
there exists an integer $f$, $0 \le f \le \dim A$, such that the
\'etale quotient $A[p]^{\et}$ has order $p^f$; this is the $p$-rank of $A$.  Equivalently,
$A[p](\bar K) \iso (\integ/p)^f$.  Since $A[p]^\vee \iso A[p]$, the
$p$-rank may also be computed as $\dim_{\ff_p} \hom(\mmu_{p,\bar K},
A[p]_{\bar K})$.  The abelian variety is called ordinary if $f=\dim A$,
i.e., its $p$-rank is as large as possible.
The $p$-rank of a curve is by definition the
$p$-rank of its Jacobian, and a curve is called ordinary if its
Jacobian is.

Thus, $C'$ admits an \'etale cyclic $p$-cover (over $\bar K$) if and
only if its $p$-rank is positive.  We return to this point in \Cref{R:prymprank}.
\end{rem}

\subsubsection{Classification of Prym--Tyurin Prym varieties}

We now classify all Prym--Tyurin Prym varieties over a field.  The cases where the genus of the base curve is $0$, or the genus of the base curve is positive and equal to the genus of the cover (i.e., the case $g(C)=g(C')=1$ and $f$ is \'etale), or  the case where the cover has degree $1$, are trivial, giving rise to Prym varieties of exponent $1$ (see \Cref{C:e=1-ppav}), and so we exclude those cases below.  

\begin{teo}[{Classification of Prym--Tyurin Prym varieties \cite[Thm.~12.3.3]{BL}}]
  \label{T:classifyprym}
Let $f\colon C \to C'$ be a finite  
morphism of smooth projective curves over a field $K$ with respective genera $g>g'\ge 1$, let 
$$C \stackrel{i}{\to} C''
\stackrel{j}{\to} C'$$ be the unique, up to isomorphism,  factorization of $f$ 
 with $j$ separable and $i$ purely inseparable, let $d$ be the degree of $j$, and let $e$ be a positive integer.

 The Prym variety $P(C/C')$ is a Prym--Tyurin variety of exponent $e$;  i.e., $\lambda_C|_{P(C/C')}= e\xi$ for a principal polarization $\xi$ on $P(C/C')$, if and only if $C''/C'$ is one of the following types:

\begin{alphabetize}

\item\label{prym:etaledouble} $d=2$ and $j$ is \'etale, in which case $e=2$ and $\dim P(C/C')=g'-1$;

\item \label{prym:ram}  $d=2$ and the ramification divisor of $j$ has degree $2$, in which case $e=2$ and $\dim P(C/C')=g'$;

      \item\label{prym:etaletriple} $d=3$, $j$ is \'etale and noncyclic, and $g'=2$, in which case $e=3$ and $\dim P(C/C')=g'=2$;

  \item\label{prym:g2} $g = 2$ and $g'=1$, in which case $e=\deg {j^{\operatorname{n-ab}}}$ and $\dim P(C/C')=g'=1$, where ${j^{\operatorname{n-ab}}}$ is the morphism from the factorization of $j$ in \eqref{E:MaxAbCovFact} via the maximal abelian cover defined in \Cref{L:newBLprop11.4.3}.
  \end{alphabetize}
\end{teo}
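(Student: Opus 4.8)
The plan is to peel off the inseparable part of $f$, reduce to an algebraically closed field, and then convert the whole question into a single numerical identity that is analyzed by Riemann--Hurwitz. First I would invoke \Cref{P:insep}: $P(C/C')$ is a Prym--Tyurin Prym variety of exponent $e$ if and only if $P(C''/C')$ is, and since conditions \ref{prym:etaledouble}--\ref{prym:g2} are phrased purely in terms of the separable cover $C''/C'$, I may replace $f$ by $j$ and assume from the outset that $f$ is \emph{separable} of degree $d$. Next, by \Cref{L:eXi-X[e]} the condition $\lambda_C|_{P(C/C')}=e\xi$ is equivalent to the group-scheme equality $\K(\iota_Z^*\lambda_C)=Z[e]$ (writing $Z=P(C/C')$), which may be checked after base change to $\bar K$; the polarization $\xi=\frac1e\iota_Z^*\lambda_C$ is then automatically defined over $K$, and the conditions on $C''/C'$ are geometric, so I may and do assume $K=\bar k$.

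Over $\bar k$ the crucial input is an order computation. Factoring $f^*=\iota_Y\circ j_0$ with $Y:=f^*\operatorname{Pic}^0_{C'}$ and $j_0$ surjective, \Cref{L:BL12.3.1} gives $\widehat{j_0}(\iota_Y^*\lambda_C)j_0=d\lambda_{C'}$; comparing degrees of isogenies and using $|\ker f^*|=m:=\deg f^{\operatorname{ab}}$ (the degree of the maximal abelian subcover, \Cref{L:newBLprop11.4.3}) yields $|\K(\iota_Y^*\lambda_C)|=d^{2g'}/m^2$, and by \Cref{R:Ki*Theta} the same order holds for $\K(\iota_Z^*\lambda_C)$. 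Now $e_Z\ge 2$: equality $e_Z=1$ would force $Z=0$ or $Z=\operatorname{Pic}^0_C$ by indecomposability of $\lambda_C$ (\Cref{C:e=1-ppav}), contradicting $g>g'\ge 1$; and $e_Z\mid d$ (\Cref{L:pushpull}). Since $\K(\iota_Z^*\lambda_C)\subseteq Z[e_Z]$ by definition of the exponent, and a closed subgroup scheme of $Z[e_Z]$ of order $e_Z^{2(g-g')}=|Z[e_Z]|$ must equal it, I obtain the clean equivalence: $Z$ is Prym--Tyurin of exponent $e_Z$ if and only if $m\,e_Z^{\,g-g'}=d^{g'}$. $(\ast)$

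The remaining work is to enumerate the covers for which $(\ast)$ holds. The type relation of \Cref{R:Ki*Theta} forces $\dim Z=g-g'\le g'$ once $e_Z\ge 2$ (otherwise the type of $Z$ would acquire a factor $1$), so $g\le 2g'$; feeding this into Riemann--Hurwitz $2(g-g')=(d-1)(2g'-2)+\deg R$ gives $(d-1)(2g'-2)\le 2g'$, whence $d=2$ for $g'\ge 3$, $d\in\{2,3\}$ for $g'=2$, and $g=2$ when $g'=1$. For $d=2$ the cover is cyclic and $\deg R\in\{0,2\}$ (the value $1$ being excluded), giving case \ref{prym:etaledouble} ($m=2$, $\dim Z=g'-1$) or \ref{prym:ram} ($m=1$, $\dim Z=g'$), and $e_Z\mid 2$ with $e_Z\ge2$ forces $e_Z=2$, with $(\ast)$ verified directly in both. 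For $d=3,\ g'=2$, Riemann--Hurwitz forces $\deg R=0$ and $g=4$; here $(\ast)$ reads $m\,e_Z^{2}=9$ with $e_Z=3$, which fails for the cyclic cover ($m=3$, the Lange--Ortega correction) and holds for the noncyclic one ($m=1$), giving case \ref{prym:etaletriple}. Finally for $g'=1$ the curve $Z$ is elliptic, so $P$ is automatically Prym--Tyurin and $(\ast)$ gives $e_Z=d/m=\deg f^{\operatorname{n-ab}}$, which is case \ref{prym:g2}.

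The main obstacle is characteristic $2$. First, the uniform hypothesis ``the ramification divisor has degree $2$'' in case \ref{prym:ram} must be read through the different, so that it captures both two tame branch points and a single wildly ramified point; the delicate point is to confirm that Riemann--Hurwitz in this form still yields exactly $\deg R\in\{0,2\}$ and never $1$, which is where wild ramification enters. Second, every order and kernel computation must be carried out for finite \emph{group schemes} rather than $\bar k$-points, since $\operatorname{Pic}^0_{C'}[2]$ and $\ker f^*$ are non-reduced in characteristic $2$; fortunately the containment-plus-order argument establishing $(\ast)$ is insensitive to this, but the identification $|\ker f^*|=m$ and the isogeny-degree bookkeeping rely on the scheme-theoretic forms of \Cref{L:newBLprop11.4.3} and of \Cref{R:Ki*Theta}. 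Granting these checks, the forward implication (PT $\Rightarrow$ one of the four cases) and the reverse verification match up exactly as above.
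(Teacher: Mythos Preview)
Your approach is correct and uses the same toolkit as the paper (reduction via \Cref{P:insep}, the order identity $|\K(\iota_Y^*\lambda_C)|=|\K(\iota_Z^*\lambda_C)|$ from \Cref{R:Ki*Theta}, the pullback formula \Cref{L:BL12.3.1}, and Riemann--Hurwitz together with $e_Z\mid d$, $e_Z\ge 2$).  The main organizational difference is that you distill everything into the single numerical criterion $(\ast)$: $m\cdot e_Z^{\,g-g'}=d^{g'}$ with $m=|\ker f^*|$, and then verify or refute $(\ast)$ in each case.  The paper instead runs the backward direction case by case, computing $\K(\iota_Y^*\lambda_C)$ directly (e.g., ``a subgroup scheme of $Y[2]$ of index $4$'' in the \'etale double cover case) and matching orders with $\K(\iota_Z^*\lambda_C)\subseteq Z[e]$.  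Your packaging is cleaner and makes the cyclic triple cover (the Lange--Ortega correction) drop out immediately as $3\cdot 3^2\ne 3^2$; the paper handles that case by a separate degree-contradiction argument after first establishing $\K(\iota_Y^*\lambda_C)=Y[3]$.

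One small repair: your appeal to the ``type relation'' of \Cref{R:Ki*Theta} to deduce $\dim Z\le g'$ is not safe when $\operatorname{char}(K)\mid e_Z$, since the polarization need not have a type there (see \S\ref{S:DgExpTp}).  But you already have what you need without types: $\K(\iota_Y^*\lambda_C)\cong\K(\iota_Z^*\lambda_C)=Z[e_Z]$ has order $e_Z^{2(g-g')}$, and $\K(\iota_Y^*\lambda_C)\subseteq Y[e_Z]$ has order at most $e_Z^{2g'}$, whence $g-g'\le g'$.  This is exactly how the paper argues the inequality, and it fits seamlessly into your framework.
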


\begin{rem}[Degree $2$ covers] \label{R:deg2Pr}
In cases (a) and (b) the cover $j$ is cyclic of degree $2$, and so is Galois. Consequently, there is an involution $\sigma\colon C''\to C''$ with $C'=C''/\langle \sigma \rangle$, and $P(C''/C')=\operatorname{Im}(1-\sigma^*)$ (\Cref{R:C/sigma}).   
\end{rem}

\begin{rem}[Degree $2$ ramification]
  \label{R:wildram}
   In case (b) there are two possibilities. Since $\deg(j) = 2$, it is
  Galois; and if $P$ is a ramification point of $j$, then its
  ramification index is therefore exactly two. Either
  $\operatorname{char}(K)\not = 2$, and $j$ is (necessarily tamely)
  ramified at exactly two geometric points; or
  $\operatorname{char}(K) = 2$, $j$ is (necessarily wildly) ramified
  at a single point, and the lower ramification filtration is trivial
  at step $2$.  We call the latter case weakly wildly ramified;  any such cover looks formally
  locally like
  $K\powser x \hookrightarrow K\powser x
  [y]/(y^2-y-x)$.
        We will
  never need this explicit description, and refer the reader to
  \cite[Ch.~IV]{serreLF} for a reminder on ramification filtrations.
                  \end{rem}

\begin{rem}[$p$-ranks and Pryms]\label{R:prymprank}   Recall (\Cref{R:defprank}) that if $C'/K$ is a smooth projective curve
over a field of characteristic $p>0$, then $C'$ geometrically admits a cyclic
\'etale cover of degree $p$ if and only if its $p$-rank is positive.
We briefly explore the interaction between the $p$-rank  of $C'$ and the
classification in \Cref{T:classifyprym}.
    \begin{alphabetize}
\item Suppose that $K$ has characteristic $2$.  

\begin{enumerate}[label=(\roman*)]
\item  If $f\colon C\to C'$ satisfies the hypotheses of
  \Cref{T:classifyprym} case \ref{prym:etaledouble}, we have seen that $C'$
  has positive $2$-rank (\Cref{R:defprank}).  
  
  \item  If $f$ satisfies the hypotheses of 
  \Cref{T:classifyprym} case \ref{prym:etaletriple}, this also forces
  $C'$ to have positive $2$-rank.  Indeed, we assume that $C = C''$, i.e., that $C \to C'$ is separable.  Let
$\widetilde C \to C'$ be the Galois closure of $C$ over $C'$.  It is
\'etale over $C'$ and its Galois group, being a non-cyclic transitive
subgroup of $S_3$, is  $\operatorname{Aut}(\widetilde
C/C') \iso S_3$. 
   Let $D$ be the quotient of $\widetilde C$ by the
unique, thus normal, subgroup of order $3$.  We have a diagram of \'etale covers of curves
\[
  \xymatrix{
    &\widetilde C \ar[dr]_{3:1}^{\text{cyclic}} \ar[dl] \\
    C\ar[dr] && D \ar[dl]_{2:1}^{\text{cyclic}} \\
    & C'.
  }
\]
In characteristic $p=2$, the existence of the \'etale double cover $D
\to C'$ once again forces $C'$ to have positive $2$-rank.
\end{enumerate}

\item Note that in characteristic $p=3$,
 if $f$ satisfies the hypotheses of 
  \Cref{T:classifyprym} case \ref{prym:etaletriple}, then one can apply the same analysis as in the previous case, and 
 the existence of a cyclic \'etale triple
cover $\widetilde C \to D$ forces $D$ to have positive $p$-rank.
However, in characteristic $3$ this is automatic
\cite[(7.1)]{fabervandergeer04}: an \'etale double cover of a curve of
genus $2$ has positive $3$-rank.

\item 
In characteristic $p\ge 5$, we make a side remark regarding  \Cref{T:classifyprym} case \ref{prym:etaledouble} that there
exist \'etale double covers of curves of genus $2$ which have $p$-rank
zero \cite[Prop.~6.1]{ozmanpries19}.
\end{alphabetize}
\end{rem}

\begin{proof}[Proof of \Cref{T:classifyprym}]
  As promised, we follow the argument of \cite[Thm.~12.3.3]{BL}.
  First, from \Cref{P:insep}, it suffices to prove the case that $f=j$.  
We set $Z=P(C/C')$ for brevity.

We start by showing that if $Z$ is a Prym--Tyurin Prym variety of exponent $e$, then $f$ is
in one of the cases delineated in the statement of the theorem. 
First, since $\dim \operatorname{Pic}^0_{C/K}=g>g-g'=\dim Z>0$ we may assume that $e\ge 2$ (\Cref{C:e=1-ppav}).  
By hypothesis, $\iota_Z^*\lambda_C =  e\xi$ for some principal polarization $\xi$ on $Z$, which, as we noted at the beginning of \S \ref{SS:ptfield} (equivalently, \Cref{R:Z[e]-PT}) is equivalent to  $\K(\iota_Z^*\Theta) =  Z[e]$; by 
\Cref{L:kerpullbackpolarization} (see also \Cref{R:Ki*Theta}), we then have $\K(\iota_Y^*\lambda_C)
\iso Z[e]$.  In particular, $\dim Y \ge \dim Z$ (since $\K(\iota_Y^*\lambda_C)\subseteq Y[e]$, and so $ e^{2d_Z} = \deg [e]_Z= \deg \K(\iota_Y^*\lambda_C) \le \deg [e]_Y= e^{2d_Y}$), 
 i.e., $g' \ge g-g'$, and so 
 \begin{equation}\label{E:g,ge,2g'}
 2g' \ge g.
 \end{equation}
In our setting the Riemann--Hurwitz formula reads
$
  2g-2 = d (2g'-2)+\delta,
$
where $\delta$ is the degree of the ramification divisor, which combined with \eqref{E:g,ge,2g'} gives 
 \begin{equation}\label{E:RiemHurIn}
2g'-1\ge g-1 =d (g'-1)+\frac{\delta}{2}\ge d(g'-1)
 \end{equation}
 and we find that 
 \begin{equation}\label{E:bsc-est}
  d-1 \ge (d-2)g'.
\end{equation}

\begin{itemize}

  \item \emph{Suppose $d \ge 3$ and $g'\ge 3$}. Then one easily derives a numerical
contradiction in \eqref{E:bsc-est}.  

\item \emph{Suppose $d\ge 3$  and $g'=2$}.  Then from \eqref{E:bsc-est} we must have $d=3$, giving equality in \eqref{E:RiemHurIn} 
so that $\delta=0$ and $g=4$.  In particular, $f$ is \'etale, and
$\dim Y = \dim Z = 2$.  By Lemma \ref{L:pushpull}, $e|d$; since $e>1$,
we have $e=3$.  By hypothesis $Z$ is a Prym--Tyurin variety, and thus
$\K(\iota_Z^*\lambda_C)= Z[3] \iso \K(\iota_Y^*\lambda_C)$.  Therefore, since $\dim Y= \dim Z$, we have 
$\K(\iota_Y^*\lambda_C) = Y[3]$.
\begin{itemize}
  \item Suppose $f$ were cyclic; we will derive a contradiction.  Then $f^*$ is not injective (Lemma
\ref{L:newBLprop11.4.3}), so $f^*$ factors as $\iota_Y h$ with $h$ a
nontrivial isogeny.
    On one hand, this implies that $\deg ((f^*)^*\lambda_C)= \deg(
(\iota_Yh)^*\lambda_C) = \deg (\widehat h) \deg (\iota_Y^*\lambda_C)\deg(h) > \deg
\iota_Y^*\lambda_C$, and therefore the orders of their kernels satisfy
$\abs{\K((f^*)^*\lambda_C)} = \abs{\K((\iota_Yh)^*\lambda_C}>\abs{\K(\iota_Y^*\lambda_C)} =\abs{Y[3]} = \abs{\pic^0_{C'/K}[3]}$.
  On the other hand, 
\Cref{L:BL12.3.1} implies that $\K((f^*)^*\lambda_C) =
\pic^0_{C'/K}[3]$, contradicting the previous degree estimate.  

\item If $f$ is \emph{not} cyclic, then we are in case
  \ref{prym:etaletriple} of the theorem.
\end{itemize}

\item \emph{Suppose  $d \ge 2$ and $g' = 1$}.  Then by \eqref{E:g,ge,2g'} we have $g=2$.  So we are in case \ref{prym:g2}; the only thing left to show is the statement about the exponent. As we have seen, the exponents of $Z$ and $Y$ are the same (\Cref{R:Ki*Theta}), so it suffices to find the exponent of $Y$. For this, consider the factorization of \eqref{E:MaxAbCovFact}:
\begin{equation}\label{E:MaxAbCovFact-?}
 \xymatrix@C=1em@R=1em{
 C \ar[rr]^f \ar[rd]_{f^{\operatorname{n-ab}}}&& C'\\
 &C^{\abelian} \ar[ru]_{f^{\abelian}}
 }
\end{equation}
 By assumption $f$ is separable, and consequently so is
 $f^{\abelian}$.  Now, $C^{\abelian}$ is a torsor over $C'$ under a
 finite flat commutative group scheme; since $f^{\abelian}$ is
 separable, this group scheme is reduced, and so $f^\abelian$ is \'etale.
       From Riemann--Hurwitz, we conclude that $g(C^{\abelian})=1$, so that $(f^{\abelian})^*$ is an \'etale isogeny of elliptic curves, and 
 the image of $f^*$ is equal to the image of $(f^{\operatorname{n-ab}})^*$.  We then use  
\Cref{L:BL12.3.1} which says $((f^{\operatorname{n-ab}})^*)^*\lambda_C = \deg (f^{\operatorname{n-ab}})\lambda_{C^{\abelian}}$.  

\item \emph{Suppose $d=2$ and $g' \ge 2$}, then one shows that $\delta \le 2$.
Parity shows that $\delta\in \st{0,2}$, and thus we are in  case  
\ref{prym:etaledouble}  or \ref{prym:ram}; recall that it follows immediately from \eqref{L:pushpull} that $e=2$. 

\end{itemize}

Now, conversely, we show that in any of the cases \ref{prym:etaledouble}--\ref{prym:g2} of the theorem,
$\K(\iota_Z^*\lambda_C) = Z[e]$, and thus that  $P(C/C')$ is a Prym--Tryurin Prym.   Note that once we have established this, then the discussion above about the exponent applies in all of the cases.

In case \ref{prym:g2}, $Z$ is an elliptic curve, and so any polarization is
a multiple of the principal polarization. 

In case \ref{prym:ram}, since
$\delta=2$ we have by Riemann--Hurwitz that $g = 2g'$.  Moreover, 
$f$ is totally ramified at its ramification points, and thus the only
(abelian) torsor through which $C \to C'$ factors is the trivial
torsor $C'$ itself.
 Therefore, $f^*$ is injective (Lemma
\ref{L:newBLprop11.4.3}), $Y = \pic^0_{C'/K}$, and $\iota_Y=f^*$.  This means  $\iota_Y^*\lambda_C = (f^*)^*\lambda_C$, and by
 \Cref{L:BL12.3.1} we find that this is $2\lambda_{C'}$ so that  $\K(\iota_Y^*\lambda_C) =
\pic^0_{C'/K}[2]$.  Since $\dim Y = \dim Z$, by 
\Cref{L:kerpullbackpolarization} (see also \Cref{R:Ki*Theta}) we have $\K(\iota_Z^*\lambda_C)  = Z[2]$,
and $Z$ is a Prym--Tyurin variety.

Case \ref{prym:etaletriple} is similar; we have $g'=2$, so that, by Riemann--Hurwitz,  $g=4$, which implies 
that $\dim Y = \dim Z = 2$.  Since $\deg f = 3$ is prime, $f\colon C \to C'$
does not factor through a nontrivial cover of curves.  Because $f$
itself is not cyclic, $f^*$ is thus injective (Lemma
\ref{L:newBLprop11.4.3}), $Y = \pic^0_{C'/K}$, and $\iota_Y=f^*$, and so we have, again  by  \Cref{L:BL12.3.1},  that $\K(\iota_Y^*\Theta) =
\pic^0_{C'/K}[3]$.  Consequently, again by \Cref{L:kerpullbackpolarization} (see also \Cref{R:Ki*Theta}),  $\K(\iota_Z^*\lambda_C) = Z[3]$, and $Z$ is a Prym--Tyurin variety.

Finally, in case \ref{prym:etaledouble}, we have that $f$ is an \'etale double cover. 
Note that from Riemann--Hurwitz we have $g=2g'-1$.  
 From  Lemma
\ref{L:newBLprop11.4.3}
there is a factorization $f^* = \iota_Y \circ h$, where $h\colon \pic^0_{C'/K} \to Y$ is
an isogeny of degree $2$.  Since $\K(h^*\iota_Y^*\lambda_C) =
\pic^0_{C'/K}[2]$ (\Cref{L:BL12.3.1}), we have that $\K(\iota_Y^*\lambda_C)$ is a sub-group scheme
of $Y[2]$ of index $4$.  
From \Cref{R:Ki*Theta} we have $\K(\iota_Y^*\lambda_C)\cong \K(\iota_Z^*\lambda_C)$, and the containment $\K(\iota_Y^*\lambda_C)\subseteq Y[2]$ implies $\K(\iota_Z^*\lambda_C)\subseteq Z[2]$. Since $\dim Z=g-g'=g'-1$, 
 after considering the degrees of these group schemes, we see that the only
possible sub-group scheme of $Z[2]$ isomorphic to this group scheme is
$Z[2]$ itself.  Therefore, $\K(\iota_Z^*\lambda_C) = Z[2]$, and
$Z$ is a Prym--Tyurin variety of exponent $2$.
\end{proof}

As explained in \Cref{R:C/sigma}, 
Prym varieties often arise in nature from a curve with automorphisms.
  Using Theorem \ref{T:classifyprym}, it is not hard to
classify those group actions
giving rise to Prym--Tyurin Prym varieties.  
   In particular, we extract  the following converse to \Cref{R:deg2Pr},  which
is entirely classical away from characteristic two, but seems less
well-known in the case of even characteristic.

\begin{cor}[Prym--Tyurin Prym varieties associated to involutions]\label{C:PTVinv}
  Let $C$ be a smooth projective curve of genus $g$ over a field $K$, and let
  $\sigma\colon C \to C$ be a nontrivial separable involution.  Let $C' = C/\left<
    \sigma\right>$ be the quotient curve.  Then the Prym variety 
    $P(C/C')$ is a Prym--Tyurin variety 
      if and only if either:
\begin{enumerate}[label=(\alph*)]
  \item $\iota$ acts without fixed points, or
  \item
    \begin{enumerate}[label=(\roman*)]
      \item $\operatorname{char}(K)\not = 2$ and $\sigma$ has exactly two fixed points; or
        \item $\operatorname{char}(K)=2$, $\sigma$ has exactly one fixed point, and the action of
          $\sigma$ is weakly wildly ramified there, in the sense of 
      \Cref{R:wildram}, that at the  unique fixed
      point, $C$ looks formally locally like $ K\powser x[y]/(y^2-y-x)$, with local
      involution $y\mapsto y+1$.
        \end{enumerate}
      \end{enumerate}
In both cases the exponent is $e=2$.  In case (a) one has $\dim P(C/C')=g'-1=(g-1)/2$, and in case (b) one has $\dim P(C/C')=g'=g/2$.
\end{cor}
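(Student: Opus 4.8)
The plan is to deduce the corollary from the classification in \Cref{T:classifyprym} by specializing to a degree-two separable cover and then translating that theorem's ramification hypotheses into statements about the fixed points of $\sigma$. Since $\sigma$ is a nontrivial separable involution, the quotient map $f\colon C\to C'=C/\langle\sigma\rangle$ is a finite separable Galois morphism of degree $d=2$, cyclic with group $\langle\sigma\rangle\cong\integ/2$. Because $f$ is separable its purely inseparable part is trivial, so in the notation of \Cref{T:classifyprym} we have $C''=C$ and $\deg j=2$, and \Cref{R:deg2Pr} applies. Thus the only cases of \Cref{T:classifyprym} that can occur are \ref{prym:etaledouble} (where $\delta=0$) and \ref{prym:ram} (where the ramification divisor has degree $2$): case \ref{prym:etaletriple} requires $d=3$, while case \ref{prym:g2}, with $g=2$ and $g'=1$, is for a separable degree-two cover forced by Riemann--Hurwitz to have ramification divisor of degree $2$, and so already falls under \ref{prym:ram} with exponent $e=2$. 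After setting aside the trivial cases excluded in the discussion preceding \Cref{T:classifyprym}---namely $g'=0$, where $P(C/C')=\pic^0_{C/K}$ is a Jacobian, and $g=g'$, where $P(C/C')=0$---we may assume $g>g'\ge 1$ and invoke \Cref{T:classifyprym} directly.

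The heart of the argument is then to compute the degree $\delta$ of the ramification divisor of $f$ in terms of the fixed-point locus of $\sigma$. Writing Riemann--Hurwitz as $2g-2=2(2g'-2)+\delta$ shows that $\delta$ is even and, by the dichotomy above, that $P(C/C')$ is a Prym--Tyurin variety precisely when $\delta\in\{0,2\}$. In the case $\delta=0$ one gets $g=2g'-1$ and $\dim P(C/C')=g'-1=(g-1)/2$; in the case $\delta=2$ one gets $g=2g'$ and $\dim P(C/C')=g'=g/2$; these match the asserted dimensions, and in both cases $e=2$ by \Cref{T:classifyprym}. Since the geometric ramification points of $f$ are exactly the geometric fixed points of $\sigma$, it remains only to interpret the two values of $\delta$. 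The equivalence $\delta=0\iff\sigma$ is fixed-point free is immediate and yields case (a).

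For $\delta=2$ I would split on the characteristic. If $\operatorname{char}(K)\ne 2$ the ramification is tame, so each fixed point of the order-two automorphism contributes $e_P-1=1$ to $\delta$; hence $\delta$ is the number of geometric fixed points, and $\delta=2$ holds exactly when $\sigma$ has precisely two fixed points, which is case (b)(i). If $\operatorname{char}(K)=2$ every fixed point is wildly ramified, and at such a point the different exponent is $d_P=\sum_{i\ge 0}(|G_i|-1)$ with $G_0=G_1=\langle\sigma\rangle\cong\integ/2$, whence $d_P\ge 2$ with equality if and only if $G_2=1$, i.e.\ the cover is weakly wildly ramified in the sense of \Cref{R:wildram}. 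Consequently $\delta=2$ forces exactly one fixed point, necessarily weakly wildly ramified, with the standard Artin--Schreier local model $K\powser x[y]/(y^2-y-x)$ and local involution $y\mapsto y+1$; this is case (b)(ii), and conversely these conditions give $\delta=2$.

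The main obstacle is the characteristic-two computation: one must verify, via the higher ramification filtration, that a single wild fixed point of the involution contributes exactly $2$ to the ramification divisor precisely when it is weakly wildly ramified ($G_2=1$), and that two or more wild fixed points would force $\delta\ge 4$, so that the single constraint $\delta=2$ simultaneously pins down the number of fixed points and the local structure there. Everything else is a direct translation of \Cref{T:classifyprym} through Riemann--Hurwitz, and the concluding bookkeeping of exponents and dimensions is routine.
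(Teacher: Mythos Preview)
Your proposal is correct and follows exactly the approach the paper intends: the paper's own proof is the single sentence ``This is an immediate consequence of \Cref{T:classifyprym} and \Cref{R:wildram},'' and you have simply unpacked that sentence by specializing \Cref{T:classifyprym} to a separable degree-two cover and spelling out the ramification-divisor computation (including the higher-ramification argument in characteristic two) that \Cref{R:wildram} summarizes. Your handling of the edge cases ($g'=0$, $g=g'$, and the overlap of case~\ref{prym:g2} with case~\ref{prym:ram} when $d=2$) is also appropriate.
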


    \begin{proof}
      This is an immediate consequence of \Cref{T:classifyprym} and   \Cref{R:wildram}.
         \end{proof}

Consider a cyclic $p$-cover $C \to C'$ of curves in characteristic
$p$.  The Deuring--Shafarevich formula allows one to compute the
$p$-rank $r(C)$ in terms of the $p$-rank $r(C')$, and thus that of
$P(C/C')$.  (See \cite{shiomi11} for a precise statement and the history of this problem, and \cite{caisulmer23} for a recent and far-reaching generalization.) The $p$-rank is a partial invariant of the $p$-torsion
group scheme of an abelian variety. In the special
case of a Prym--Tyurin Prym variety, we can actually compute the
isomorphism class of $P(C/C')[p]$:

\begin{lem}
  \label{L:prymbt1}
  Let $f:C \to C'$ be a finite separable morphism of degree $d$ of smooth curves
  over a field $K$ of characteristic $p$.  Let $Y = f^* \pic^0_{C'/K}$, and let $Z =
  P(C/C')$ be the corresponding Prym variety. 
  \begin{alphabetize}
    \item Suppose $d=p=2$ and $f$ is \'etale. Then $Y[2]_{\bar K} \iso
      \pic^0_{C'/K}[2]_{\bar K}$, and there is an exact sequence of
      group schemes
      \[
        \xymatrix{0 \ar[r] & Z[2] \ar[r] & Y[2] \ar[r] & H \ar[r] & 0}\]
      where $H_{K^{\mathrm{perf}}} \iso (\mmu_2 \oplus \integ/2)$.
\item Suppose $d=p=2$ and the ramification divisor of $f$ has degree
  $2$.  Then $Z[2] \iso \pic^0_{C'/K}[2]$.
\item Suppose $d=p=3$, $f$ is \'etale and noncyclic, and $C'$ has genus $2$.  Then $Z[3]
  \iso \pic^0_{C'/K}[3]$.
\end{alphabetize}
\end{lem}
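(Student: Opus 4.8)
The plan is to read the three isomorphism classes off the complementary–subvariety picture set up in the proof of \Cref{T:classifyprym}, together with the isogeny $h$ produced there. Throughout write $A := \pic^0_{C'/K}$ with its canonical principal polarization $\lambda_{C'}$, $X := \pic^0_{C/K}$, $Y = f^*A\subseteq X$, $\lambda_Y := \iota_Y^*\lambda_C$, and $Z = P(C/C')$. Since $\lambda_C$ is principal, \Cref{R:Ki*Theta} gives $\K(\lambda_Y) = Y\times_X Z = Z\times_X Y = \K(\iota_Z^*\lambda_C)$; and by \Cref{T:classifyprym} we are in a case where $Z$ is a Prym--Tyurin variety of exponent $e$ (with $e=2$ in (a),(b) and $e=3$ in (c)), so $\K(\iota_Z^*\lambda_C) = Z[e]$ by \Cref{R:Z[e]-PT}.

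Cases (b) and (c) are immediate and can be treated together. Here $f^*$ is injective (\Cref{L:newBLprop11.4.3}), so $f^*\colon A\xrightarrow{\sim}Y$ and $\lambda_Y = (f^*)^*\lambda_C = d\lambda_{C'}$ by \Cref{L:BL12.3.1}; hence $\K(\lambda_Y) = \K(d\lambda_{C'}) = A[d]$, which under the isomorphism $f^*$ is exactly $Y[d]$. On the other hand $\dim Y = \dim Z$ forces $\K(\lambda_Y) = \K(\iota_Z^*\lambda_C) = Z[d]$. Comparing, $Z[d] = Y[d] = f^*(A[d]) \iso A[d] = \pic^0_{C'/K}[d]$, with $d=2$ in (b) and $d=3$ in (c).

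Case (a) is the substantive one and splits into the two assertions. Now $d=2$, $f$ is \'etale, $\dim Y = g'$ while $\dim Z = g'-1$, and $f^* = \iota_Y\circ h$ with $h\colon A\to Y$ an isogeny of degree $2$ and $\ker h = \ker f^* \iso (\integ/2)^\vee = \mmu_2$ (the Galois group of the \'etale cover is the constant group $\integ/2$, which has no nontrivial forms). For the first assertion I would pass to $\bar K$ and compare $2$-divisible groups. Since $\bar K$ is perfect, the maximal multiplicative sub-$2$-divisible group $A[2^\infty]^{\mathrm{mult}}\iso \mmu_{2^\infty}^{\,f}$ (Cartier dual to the \'etale quotient of $\widehat A[2^\infty]$) splits off as a direct factor, $A[2^\infty] = A[2^\infty]^{\mathrm{mult}}\oplus B$, and $\ker h\iso\mmu_2$, being of multiplicative type, lies in it. Because $\mmu_{2^\infty}^{\,f}/\mmu_2\iso\mmu_{2^\infty}^{\,f}$, we get $Y[2^\infty] = A[2^\infty]/\ker h \iso \mmu_{2^\infty}^{\,f}\oplus B = A[2^\infty]$, and taking $2$-torsion yields $Y[2]_{\bar K}\iso A[2]_{\bar K}$. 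For the exact sequence, $Z[2] = \K(\iota_Z^*\lambda_C) = \K(\lambda_Y) = \ker\lambda_Y\subseteq Y[2]$, of index $4$ since $\deg\lambda_Y = 2^{2g'}/(\deg\widehat h\cdot\deg h) = 2^{2g'-2}$; set $H := Y[2]/Z[2]$, of order $4$. I would then introduce the adjoint isogeny $h' := \lambda_{C'}^{-1}\,\widehat h\,\lambda_Y\in\Hom(Y,A)$, which satisfies $h'h = [2]_A$ and $hh' = [2]_Y$ (using $\widehat h\,\lambda_Y\,h = (f^*)^*\lambda_C = 2\lambda_{C'}$, \Cref{L:BL12.3.1}) and has kernel $\ker h' = \lambda_Y^{-1}(\ker\widehat h)$ with $\ker\widehat h = (\ker h)^\vee\iso\integ/2$. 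Since $Y[2] = \ker(hh') = (h')^{-1}(\ker h)$, the map $h'$ restricts to a surjection $Y[2]\twoheadrightarrow\ker h = \mmu_2$ with kernel $K_1 := Y[2]\cap\ker h'$ (of order $2^{2g'-1}$); and $\ker\lambda_Y\subseteq K_1$ with $K_1/\ker\lambda_Y\iso\lambda_Y(Y[2])\cap\ker\widehat h = \ker\widehat h\iso\integ/2$. This produces the filtration $Z[2]\subseteq K_1\subseteq Y[2]$ with successive quotients $\integ/2$ (\'etale) and $\mmu_2$ (multiplicative), i.e.\ a short exact sequence $0\to\integ/2\to H\to\mmu_2\to 0$. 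Over the perfect field $K^{\mathrm{perf}}$ the connected--\'etale sequence of $H$ splits, and an order count identifies $H^{0}\iso\mmu_2$ and $H^{\et}\iso\integ/2$, so $H_{K^{\mathrm{perf}}}\iso\mmu_2\oplus\integ/2$.

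The main obstacle is case (a): unlike (b) and (c), here $Y$ and $A$ are only $2$-isogenous and $Y$ carries a nonprincipal polarization, so $Y[2]$ is not self-dual and its structure cannot be read off formally. The two inputs doing the real work are the multiplicative-type nature of $\ker h\iso\mmu_2$—which both yields $Y[2]\iso A[2]$ via $\mmu_{2^\infty}/\mmu_2\iso\mmu_{2^\infty}$ and pins down the connected part of $H$—and the adjoint isogeny $h'$, whose kernel $\ker\widehat h\iso\integ/2$ supplies the \'etale part. The one point demanding care is tracking over which field each step is valid (over $K$, $K^{\mathrm{perf}}$, or only $\bar K$): the splitting of $H$ genuinely needs perfection, which is exactly why the statement is phrased over $K^{\mathrm{perf}}$, while the isomorphism $Y[2]\iso A[2]$ is only claimed over $\bar K$.
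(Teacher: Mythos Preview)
Your proof is correct. For parts (b) and (c), and for the first assertion of (a), you follow essentially the same line as the paper (the paper works at the level of $4$-torsion rather than the full $2$-divisible group, but the idea---that $\ker f^*\iso\mmu_2$ sits in the multiplicative part, where quotienting by it is invisible on torsion---is the same).

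The genuine divergence is in identifying $H$ in case (a). The paper argues structurally: it invokes the Deuring--Shafarevich formula to compute $r(Z)=r(C')-1$, reads off $\lvert H^{\et}\rvert=2$ from the behaviour of $p$-ranks in the exact sequence, and then uses a self-duality claim for $H$ to pin down the connected part as $\mmu_2$. Your route is more hands-on: the adjoint isogeny $h'=\lambda_{C'}^{-1}\widehat h\,\lambda_Y$ (satisfying $hh'=[2]_Y$, hence $\ker h'\subseteq Y[2]$) gives the intermediate subgroup $K_1=\ker h'$ with $K_1/Z[2]\iso\ker\widehat h\iso\integ/2$ and $Y[2]/K_1\iso\ker h\iso\mmu_2$, so $H$ is an extension of $\mmu_2$ by $\integ/2$; the connected--\'etale sequence over $K^{\mathrm{perf}}$ then forces $H^0\iso\mmu_2$ and $H^{\et}\iso\integ/2$. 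Your approach is self-contained (no Deuring--Shafarevich, no self-duality of $H$) and exhibits the pieces of $H$ concretely; the paper's approach has the virtue of computing the $p$-rank of the Prym along the way and would adapt more readily to other $p$-rank questions. Note that the two arguments produce $H$ as an extension in opposite orders (yours: $0\to\integ/2\to H\to\mmu_2\to 0$; the paper's connected--\'etale sequence: $0\to\mmu_2\to H\to\integ/2\to 0$), which is why your final step---passing from your extension to the connected--\'etale decomposition---requires the small extra observation that an \'etale subgroup cannot meet $H^0$ and an infinitesimal quotient must kill $H^{\et}$.
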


\begin{proof}
In cases (b) and (c), we have already seen in the proofs of the
corresponding cases of Theorem \ref{T:classifyprym} that $Y \iso
\pic^0_{C'}$, and that $Z[p] = \K(\iota_Z^* \lambda_C) =
\K(\iota_Y^*\lambda_C) = Y[p]$.

Thus, we now consider case (a).  Let $r(Y)$ and $r(Z)$ denote the
$p$-ranks of $Y$ and $Z$, respectively, and let $r(C)$ and $r(C')$ be
the respective $p$-ranks of $\pic^0_{C/K}$ and $\pic^0_{C'/K}$. For
later use, we recall that the Deuring--Shafarevich formula (e.g.,
\cite[Thm.~1.1]{shiomi11}) implies that $r(C) = 2 r(C')-1$.  Since the $p$-rank is an
isogeny invariant, and additive for direct sums of abelian varieties,
we find that $r(Z) = r(Y)-1 = r(C')-1$.

We have $N := \ker(f^*) \iso \mmu_2$, a group scheme of exponent two; then
\[
  Y[2] \iso \left(\frac{\pic^0_{C'}[4]}{N}\right)[2].
\]
Over $\bar K$, we have $\pic^0_{C'}[4]_{\bar K} \iso T  \oplus G$
where $T \iso (\mmu_4)^{\oplus r(C')}$ and $\hom(\mmu_2, G) = (0)$.
Therefore, $N\subset T$ and we have
\[
  (\pic^0_{C'}[4]_{\bar K}/N)[2] \iso (T/N)[2]
  \oplus G[2] \iso \pic^0_{C'}[2]_{\bar K},
\]
where the last isomorphism follows from the fact that $T/N$
is again a finite multiplicative group scheme of rank $r(C')$.

We have seen above that $\K(\iota_Z^*\lambda_{C'}) = Z[2] =
Z\times_{\pic^0_{C/K}} Y \subset Y[2]$, and so there is an exact
sequence of $2$-torsion group schemes
      \[
        \xymatrix{0 \ar[r] & Z[2] \ar[r] & Y[2] \ar[r] & H \ar[r] & 0.}\]
Rank conditions show that $H$ is a self-dual $2$-torsion group scheme
of order $2^{2 \cdot(\dim Y - \dim Z)} = 4$.   Because $r(Y) = r(Z) -1$, we have $\dim_{\ff_2}
H^\et = 1$.  Since $H$ is self-dual, $H$ is an extension of $\integ/2$
by $\mmu_2$; in particular, $H_{K^{\mathrm{perf}}} \iso \mmu_2 \oplus
\integ/2$.
\end{proof}

\begin{rem}
The analogous claim of \ref{L:prymbt1} for situation Theorem
\ref{T:classifyprym} is false in every positive characteristic.
Indeed, given a field $K$ of characteristic $p>0$, choose elliptic
curves $E_0$ and $E_1$ over $K$ with $E_0$ supersingular and $E_1$
ordinary, i.e., with $r(E_i) = i$; note that if $E'_i$ is isogenous to
$E_i$ for $i = 0, 1$, then $E'_0[p]_{\bar K}\not \iso E'_1[p]_{\bar
  K}$.  Let $A/K$ be a principally polarized abelian surface over $K$
which is isogenous to $E_0 \times E_1$, but is not \emph{isomorphic}
to a product of elliptic curves.  Then $A$ is the Jacobian of a smooth
projective curve, $C$.  Moreover, $C$ admits a nontrivial map to
$E_0$, but $P(C/E_0)$ is isogenous to $E_1$.
\end{rem}

    \section{Prym schemes}
    \label{S:prymscheme}

While the first part of this paper treated abelian schemes over
arbitrary bases, the reader will have noticed our retreat in \S
\ref{S:cyclefield} and \S \ref{S:prymfield} to objects over fields.  
We now return to working over a connected locally Noetherian scheme $S$.  For ease of notation, given a polarized abelian variety $(Z,\xi)$ over a field $K$, we will write $[\xi_{\bar K}]$ for the class $[\Xi_{\bar K}]$ for any ample divisor $\Xi_{\bar K}$ on $Z_{\bar K}$ such that $\xi_{\bar K}=\phi_{\Xi_{\bar K}}$.

\subsection{Prym--Tyurin schemes and a relative Welters' criterion}
\label{SS:pt-sch}

The starting point is a relative version of Welters' Criterion:

\begin{teo}[{Relative Welters' Criterion}]\label{T:WeltersS}
 Let $(Z,\xi)$ be a principally polarized abelian scheme of dimension
 $g_Z$ over $S$ and let $C/S$ be a smooth proper
 curve.

\begin{alphabetize}

\item  Suppose there is an $S$-morphism $\beta\colon C\to T$  to a torsor under $Z$ over $S$ such that 
\begin{enumerate}[label=(\roman*)]
\item the composition $\xymatrix{\widehat Z \ar[r]^<>(0.5){\zeta_Z}_<>(0.5)\sim& \operatorname{Pic}^0_{T/S} \ar[r]^{\beta^*}&  \operatorname{Pic}^0_{C/S}}$ is an injective $S$-homomorphism of  abelian vareties, where $\zeta_Z$ is the canonical isomorphism \eqref{E:PicTors}, and
\item for every geometric point $\bar s$ of $S$ (equivalently, for a single geometric point $\bar s$ of $S$), after identifying
  $T_{\bar s}$ and $Z_{\bar s}$, we have $\beta_{\bar s*}[C_{\bar
    s}]\equiv e \frac{\displaystyle [\xi_{\bar s}]^{g_Z-1}}{(g_Z-1)!}
  $.
   \end{enumerate}
Then the inclusion $\iota_Z:= \beta^*\zeta_Z\xi\colon Z\hookrightarrow \operatorname{Pic}^0_{C/S}$ makes $(Z,\xi)$ a Prym--Tyurin scheme of exponent $e$; i.e., $\iota_Z^*\lambda = e\xi$, so that $(Z,\xi,C,\iota_Z)$ is an embedded Prym--Tyurin scheme of exponent $e$.

\item Conversely, suppose there is an  inclusion $\iota_Z\colon Z\hookrightarrow \operatorname{Pic}^0_{C/S}$ making $(Z,\xi)$ a Prym--Tyurin scheme of exponent $e$; i.e., $(Z,\xi,C,\iota_Z)$ is an embedded Prym--Tyurin scheme of exponent $e$. Let $Y$ be the complement of $Z$.  Then, under the isomorphism 
$
\xymatrix@C=2em{
Z \ar[r]^<>(0.5){-\xi}_\sim& \widehat Z \ar[r]_<>(0.5)\sim& \operatorname{Pic}^0_{C/S}/Y,
}
$
(see \eqref{E:BL1213-2})
  $T=\operatorname{Pic}^{(1)}_{C/S}/Y$ is a torsor under $Z$, and the composition $\xymatrix{\beta \colon C\ar[r]^<>(0.5){\alpha^{(1)}}& \operatorname{Pic}^{(1)}_{C/S}\to T}$ of the Abel map with the quotient map satisfies conditions (i) and (ii) above.  
\end{alphabetize}

Moreover, these constructions are inverse to one another, up to canonical isomorphisms.
\end{teo}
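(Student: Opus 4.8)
The strategy is to deduce the relative statement from the field version \Cref{T:Welters} by working fibrewise, exploiting the base-change compatibility of all the constructions involved (the complement $Y$, the canonical isomorphisms $\zeta$ and $\zeta_Z$, the Abel torsor $\alpha^{(1)}$, and the norm/complement formalism of \S\ref{S:normendomorphism}), together with rigidity of homomorphisms of abelian schemes to pass from a single geometric fibre back to all of $S$. The one genuine danger is that ``$\iota_Z^*\lambda_C$ is $e$ times a principal polarization'' is \emph{not} a fibrewise-local condition (\Cref{R:normanEx}, \Cref{R:CEx-1FbSuf}); this is exactly why I would not try to detect the multiple-of-principal property fibre by fibre, but instead compare two honest global $S$-homomorphisms.

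For part (a), set $\iota_Z := \beta^*\zeta_Z\xi$. By hypothesis (i) the map $\beta^*\zeta_Z$ is a closed immersion, and since $\xi$ is an isomorphism so is $\iota_Z$, giving a closed subabelian $S$-scheme. Now regard $\iota_Z^*\lambda_C$ and $e\xi$ as two elements of $\Hom_S(Z,\widehat Z)$. Choosing the geometric point $\bar s$ at which (ii) holds, I would restrict to the fibre: closed immersions are stable under base change, so $\beta_{\bar s}^*\zeta_{Z_{\bar s}}$ is still injective, and (ii) supplies precisely hypothesis (ii) of \Cref{T:Welters} over $\kappa(\bar s)$; that theorem then yields $\iota_{Z,\bar s}^*\lambda_{C_{\bar s}} = e\xi_{\bar s}$. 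Thus the two homomorphisms $\iota_Z^*\lambda_C$ and $e\xi$ have equal restriction to the fibre over $\bar s$, and by rigidity the restriction map $\Hom_S(Z,\widehat Z)\hookrightarrow \Hom(Z_{\bar s},\widehat Z_{\bar s})$ is injective (cf.\ \S\ref{S:isog} and \cite[Prop.~6.1]{GIT}), so $\iota_Z^*\lambda_C = e\xi$ on all of $S$. This simultaneously settles the parenthetical ``equivalently, for a single geometric point'' assertion, since the resulting global equality forces (ii) at every geometric point.

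For part (b), given the embedded Prym--Tyurin scheme $(Z,\xi,C,\iota_Z)$ I would let $Y$ be the complement of $Z$ in $\pic^0_{C/S}$ from \S\ref{S:normendomorphism} (available since $\lambda_C$ is principal, so \Cref{comphyp} holds automatically), use the short exact sequence \eqref{E:BL1213-2} and the isomorphism $-\xi\colon Z\xrightarrow{\sim}\widehat Z\xrightarrow{\sim}\pic^0_{C/S}/Y$ to define the torsor $T := \pic^{(1)}_{C/S}/Y$ under $Z$, and set $\beta$ to be the composite $C\xrightarrow{\alpha^{(1)}}\pic^{(1)}_{C/S}\to T$. The crux is the relative identity $\beta^*\zeta_Z = \iota_Z\xi^{-1}$, which I would obtain by repeating the computation in the proof of \Cref{T:Welters}(b) but phrased torsor-theoretically so as to avoid choosing a section: it uses only the relative Abel-map lemmas \Cref{L:BLsec11.3} and \Cref{L:BLcor11.4.2}, which already hold over $S$. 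This identity gives (i) at once, since $\iota_Z\xi^{-1}$ is a closed immersion. For (ii), all ingredients are compatible with base change, so the fibre $\beta_{\bar s}$ coincides with the morphism produced by \Cref{T:Welters}(b) for the fibre $(Z_{\bar s},\xi_{\bar s},C_{\bar s},\iota_{Z,\bar s})$, whence $\beta_{\bar s *}[C_{\bar s}]\equiv e[\xi_{\bar s}]^{g_Z-1}/(g_Z-1)!$ for every $\bar s$.

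Finally, the two constructions are mutually inverse by the same formal argument as over a field: the identity $\iota_Z = \beta^*\zeta_Z\xi$ gives one direction, and the universal property of the relative Albanese torsor $\alpha^{(1)}$ (\S\ref{S:AlbTors}, \S\ref{S:AlbCurve}) gives the other, both valid over $S$. I expect the main obstacle to be bookkeeping rather than novelty: verifying that every auxiliary object ($Y$, $T$, $\zeta$, $\zeta_Z$, and the identity $\beta^*\zeta_Z=\iota_Z\xi^{-1}$) is genuinely compatible with passage to geometric fibres, so that the fibrewise invocations of \Cref{T:Welters} glue. The conceptual point, by contrast, is simply that rigidity lets one upgrade a fibrewise equality of polarizations to an equality over $S$, thereby neatly circumventing the Norman-type pathology.
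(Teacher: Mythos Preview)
Your proposal is correct and follows essentially the same approach as the paper, whose proof reads in its entirety: ``It suffices to establish the claim on geometric fibers, and so this reduces to \Cref{T:Welters}.'' Your careful invocation of rigidity to pass from a single geometric fibre to all of $S$, and your observation that one should compare the two honest $S$-homomorphisms $\iota_Z^*\lambda_C$ and $e\xi$ rather than test a multiple-of-principal property fibrewise, is exactly the content implicit in that one line (and in \Cref{R:Z[e]-PT}/\Cref{L:eXi-X[e]}), so you have simply written out the details the paper suppresses.
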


\begin{proof}
It suffices to establish the claim on geometric fibers, and so this reduces to \Cref{T:Welters}.
\end{proof}

As a consequence, we get  the Matsusaka Criterion:

\begin{cor}[{Relative Matsusaka Criterion}]\label{C:WMR-S}
Let $(Z,\xi)$ be a principally polarized abelian scheme of dimension
$g_Z>0$ over $S$ and let $C/S$ be a smooth proper curve. 
 Suppose there is an $S$-morphism $\beta\colon C\to T$  to a torsor under $Z$ over $S$ such that for every geometric point $\bar s$ of $S$ (equivalently, for a single geometric point $\bar s$ of $S$), 
after  identifying $T_{\bar s}$ and $Z_{\bar s}$, we have 
\begin{equation}\label{E:WMR-S}
\beta_{\bar s*}[C_{\bar s}]\equiv \frac{[\xi_{\bar s}]^{g_Z-1}}{(g_Z-1)!}.
\end{equation}
Then the composition  $\iota_Z:= \beta^*\zeta_Z\xi\colon Z\to\operatorname{Pic}^0_{C/S}$ is an isomorphism, and there is a unique isomorphism $\gamma\colon \operatorname{Pic}^{(1)}_{C/S}\to T$ such that $\beta = \alpha^{(1)}\gamma$.  In other words, up to an isomorphism of torsors, $\beta$ is the Abel map.  
\end{cor}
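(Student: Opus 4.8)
The plan is to deduce this relative statement from the field version \Cref{C:WMR} by a fibral argument, exactly as the Relative Welters' Criterion \Cref{T:WeltersS} was deduced from \Cref{T:Welters}. The two assertions to establish — that $\iota_Z$ is an isomorphism, and that $\beta$ is, up to a unique isomorphism of torsors, the Abel map — are both insensitive to passage to geometric fibers, once the global morphisms in question have been produced from the relative theory of the Albanese torsor.

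First I would record that the hypothesis propagates across $S$, which also justifies the parenthetical equivalence in the statement. Since $S$ is connected and numerical equivalence agrees with homological equivalence for $1$-cycles on an abelian variety (\S\ref{SS:EqRel}), the class of $\beta_{\bar s*}[C_{\bar s}]-[\xi_{\bar s}]^{g_Z-1}/(g_Z-1)!$ is the fiber of a flat (hence locally constant) section of the relevant sheaf of cycle classes — both $\beta_*[C]$ and the powers of $\xi$ having formation compatible with base change — and so it vanishes at one geometric point if and only if it vanishes at every geometric point. I may therefore assume \eqref{E:WMR-S} holds on all geometric fibers.

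Next I would invoke the universal property of the Albanese torsor (\S\ref{S:AlbTors}, diagram \eqref{E:til-beta}), which produces over $S$ a \emph{unique} $S$-morphism $\gamma:=\til\beta\colon \operatorname{Pic}^{(1)}_{C/S}\to T$ with $\beta=\gamma\circ\alpha^{(1)}$; this at once yields both the existence and the uniqueness of $\gamma$, and it remains only to see that $\gamma$ and the homomorphism $\iota_Z=\beta^*\zeta_Z\xi$ are isomorphisms. Here I would use that a homomorphism of abelian $S$-schemes is an isomorphism as soon as it is an isomorphism on every geometric fiber (its kernel is then trivial, and being a surjection of abelian schemes it is faithfully flat, hence an isomorphism), and correspondingly that a morphism of torsors is an isomorphism precisely when its underlying homomorphism of abelian schemes is. Because the formation of $\alpha^{(1)}$ and of $\til\beta$ commutes with base change (\S\ref{S:AlbTors}), $\gamma_{\bar s}$ is the analogous map $\widetilde{\beta_{\bar s}}$ on the fiber, and $(\iota_Z)_{\bar s}=(\beta_{\bar s})^*\zeta_{Z_{\bar s}}\xi_{\bar s}$ is the map to which \Cref{C:WMR} applies. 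Applying \Cref{C:WMR} to each geometric fiber via \eqref{E:WMR-S} then shows that $(\iota_Z)_{\bar s}$ is an isomorphism and that $\beta_{\bar s}$ is the Abel map up to a unique isomorphism of torsors, i.e.\ that $\gamma_{\bar s}$ is an isomorphism; hence $\iota_Z$ and $\gamma$ are isomorphisms over $S$.

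The step that requires the most care is the last one: one must be sure that the global $\gamma$ supplied by the relative Albanese genuinely restricts on each fiber to the map produced by \Cref{C:WMR}, so that fibral isomorphy can be upgraded to isomorphy over $S$. This is precisely the base-change compatibility of the Albanese construction recorded in \S\ref{S:AlbTors}; granting it, the fibral criterion for a homomorphism of abelian schemes (resp.\ a morphism of torsors) to be an isomorphism finishes the argument. If one prefers to avoid the fibral isomorphy criterion for $\gamma$, one can argue directly instead: \Cref{L:BLcor11.4.2} gives $\widehat{\til\beta}=-\lambda_C\beta^*\zeta_Z$, whence $\iota_Z=-\lambda_C^{-1}\widehat{\til\beta}\,\xi$, and since $\lambda_C$ and $\xi$ are isomorphisms, $\iota_Z$ is an isomorphism if and only if $\widehat{\til\beta}$ is, which in turn is equivalent to $\gamma$ being an isomorphism of torsors.
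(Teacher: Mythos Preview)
Your proof is correct and follows essentially the same approach as the paper: reduce to geometric fibers and invoke \Cref{C:WMR}. You have simply spelled out in detail (constructing $\gamma$ globally via the relative Albanese, then checking that both $\iota_Z$ and $\gamma$ are isomorphisms by the fibral criterion) what the paper compresses to a single sentence.
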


\begin{proof}
It suffices to check the assertions on geometric fibers, and so this
reduces to \Cref{C:WMR}. \end{proof}

\begin{rem}[Relative Matsusaka--Ran Criterion]\label{R:MR-S}
In fact, using \cite[Thm.~p.329]{collino}, one can replace the condition \eqref{E:WMR-S} in \Cref{C:WMR-S} with the weaker condition that $\beta_{\bar s*}[C_{\bar s}].[\Xi_{\bar s}]=g_Z$.  
  \end{rem}

\subsection{Prym--Tyurin Prym schemes}
\label{SS:prymS}

We now prove our main theorem.

\begin{teo}[Classification of Prym--Tyurin Prym schemes]  \label{T:classifyprymS}
Let $f\colon C \to C'$ be a finite  
morphism of smooth proper curves over $S$ with geometrically connected fibers of respective genera $g>g'\ge 1$, and suppose that $f$ is fiberwise separable.  Let $e$ be a positive integer. 
The
  following are equivalent.
  \begin{enumerate}[label=(\alph*)]
  
  \item \label{T:ClPr-a} $P(C/C')$ is a Prym--Tyurin Prym scheme of exponent $e$; i.e., $\lambda_C|_{P(C/C')}= e\xi$ for a principal polarization $\xi$ on $P(C/C')$.
  
  \item \label{T:ClPr-as} For every point $s$ in $S$, we have that $P(C_s/C'_s)$ is a Prym--Tyurin Prym variety of exponent $e$.
  
    \item\label{T:ClPr-abars}  For every geometric point $\bar s$ in $S$, we have that $P(C_{\bar s}/C'_{\bar s})$ is a Prym--Tyurin Prym variety of exponent $e$.

    \item\label{T:ClPr-bs} For every point  $s$ of $S$,
  $f_s$    is one of the four types delineated
      in Theorem \ref{T:classifyprym} with corresponding exponent $e$.

    \item\label{T:ClPr-bbars}  For every geometric point  $\bar s$ of $S$,
                    $f_{\bar s}$ is one of the four types delineated
      in Theorem \ref{T:classifyprym} with corresponding exponent $e$.

  \item\label{T:ClPr-as'} For one  point $s$ of $S$, we have that $P(C_s/C'_s)$ is a Prym--Tyurin Prym variety of exponent $e$.

    \item\label{T:ClPr-abars'}  For one  geometric point $\bar s$ of $S$, we have that $P(C_{\bar s}/C'_{\bar s})$ is a Prym--Tyurin Prym variety of exponent $e$.
  
      \item\label{T:ClPr-bs'}  For one point  $s$ of $S$,  we have that $f_s$   is one of the four types delineated
      in Theorem \ref{T:classifyprym} with corresponding exponent $e$.
      
      \item\label{T:ClPr-bbars'}  For one geometric point  $\bar s$ of $S$,  we have that $f_{\bar s}$   is one of the four types delineated
      in Theorem \ref{T:classifyprym} with corresponding exponent $e$.

\end{enumerate}
      \end{teo}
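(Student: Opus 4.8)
The plan is to reduce all nine assertions to the fiberwise classification \Cref{T:classifyprym}, and then to control the passage between a single fibre and the whole of the connected base $S$ by checking that the relevant numerical invariants are globally constant. For each point $s$ (resp.\ geometric point $\bar s$) the restriction $f_s\colon C_s\to C'_s$ is a finite separable morphism of smooth projective curves over $\kappa(s)$ (resp.\ $\kappa(\bar s)$) with constant genera $g>g'\ge 1$, so \Cref{T:classifyprym} applies verbatim on that fibre and identifies ``$P(C_s/C'_s)$ is a Prym--Tyurin Prym variety of exponent $e$'' with ``$f_s$ is one of the four types of \Cref{T:classifyprym} with exponent $e$.'' Quantifying universally yields \ref{T:ClPr-as}$\Leftrightarrow$\ref{T:ClPr-bs} and \ref{T:ClPr-abars}$\Leftrightarrow$\ref{T:ClPr-bbars}, and existentially yields \ref{T:ClPr-as'}$\Leftrightarrow$\ref{T:ClPr-bs'} and \ref{T:ClPr-abars'}$\Leftrightarrow$\ref{T:ClPr-bbars'}.

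Next I would compare points with geometric points. Every condition defining the four types---the degree $d$, \'etaleness, the degree of the ramification divisor, the genera, and noncyclicity in case~\ref{prym:etaletriple}---is unchanged under extension of the base field. For the last of these I would use \Cref{L:newBLprop11.4.3}: in the separable case $\deg(f^\abelian)=\ord(\ker f^*)$, and $\ker(f^*)$ commutes with field extension, so cyclicity is detected by $\ord(\ker f^*)$, a quantity stable under extension. Hence $f_s$ has a given type and exponent if and only if $f_{\bar s}$ does, giving \ref{T:ClPr-bs}$\Leftrightarrow$\ref{T:ClPr-bbars} and \ref{T:ClPr-bs'}$\Leftrightarrow$\ref{T:ClPr-bbars'}.

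The step where connectedness of $S$ is essential is the passage from one fibre to all fibres. The degree $\deg f=d$ and the genera $g,g'$ are locally constant, hence constant; the degree $\delta$ of the ramification divisor is constant by Riemann--Hurwitz, $2g-2=d(2g'-2)+\delta_s$, applied on each fibre (here fiberwise separability is used); and $\ord(\ker f^*)$ is constant because $\ker(f^*)$ is a finite flat $S$-group scheme by \Cref{L:f*picflat}. Each type is cut out by these constant invariants---type~\ref{prym:etaledouble} by $d=2,\delta=0$, type~\ref{prym:ram} by $d=2,\delta=2$, type~\ref{prym:etaletriple} by $d=3,\delta=0,g'=2,\ord(\ker f^*)=1$, and type~\ref{prym:g2} by $g=2,g'=1$---and in each case the exponent is determined by the same constant data (with $e=\deg(f^\nonabelian)=d/\ord(\ker f^*)$ in case~\ref{prym:g2}). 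Thus if one geometric fibre has a given type and exponent, so does every geometric fibre, which gives \ref{T:ClPr-bbars'}$\Rightarrow$\ref{T:ClPr-bbars}; the reverse is trivial. Together with the previous paragraphs this shows that \ref{T:ClPr-as}--\ref{T:ClPr-bbars'} are all equivalent.

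It remains to splice in \ref{T:ClPr-a}. The decisive point is that the formation of $P(C/C')$ commutes with base change: since $\lambda_C$ is principal and $Y=f^*\pic^0_{C'/S}$ is an abelian scheme (\Cref{L:f*picflat}), the description \eqref{E:P(C/C')} identifies $P(C/C')$ with $\bigl(\pic^0_{C/S}/Y\bigr)^{\widehat{\ }}$, whose formation is plainly compatible with base change, so $P(C/C')_{\bar s}=P(C_{\bar s}/C'_{\bar s})$ for every geometric point $\bar s$. Applying \Cref{L:eXi-X[e]} to the polarized abelian scheme $(P(C/C'),\iota^*\lambda_C)$ then shows that $\iota^*\lambda_C=e\xi$ for a principal polarization $\xi$ (condition \ref{T:ClPr-a}) if and only if $(\iota^*\lambda_C)_{\bar s}$ is $e$ times a principal polarization for every $\bar s$; by base-change compatibility this last condition is exactly \ref{T:ClPr-abars}, so \ref{T:ClPr-a}$\Leftrightarrow$\ref{T:ClPr-abars} and the equivalences close up. I expect the main obstacle to be precisely this final paragraph together with the constancy argument above: one must verify that \emph{every} discrete datum distinguishing the four types is locally constant on $S$, and the base-change compatibility of $P(C/C')$ is clean here only because the principality of $\lambda_C$ lets us sidestep the failure of the exponent $e_Y$---and hence of the norm map $N_Y$---to be stable under arbitrary base change in positive characteristic.
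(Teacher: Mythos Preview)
Your proof is correct and follows essentially the same strategy as the paper's: reduce to the fiberwise classification \Cref{T:classifyprym}, then use constancy of the discrete invariants on the connected base $S$ to bridge ``one fiber'' and ``all fibers,'' and finally invoke \Cref{L:eXi-X[e]} for the global statement~\ref{T:ClPr-a}. The only differences are cosmetic: the paper links \ref{T:ClPr-bs} and \ref{T:ClPr-bbars} by passing through \ref{T:ClPr-as}$\Leftrightarrow$\ref{T:ClPr-abars} via \Cref{L:eXi-X[e]} rather than by your direct ``stable under field extension'' argument, and for the constancy of the abelian-subcover datum it cites \Cref{L:maxabcover} directly, whereas you deduce constancy of $\operatorname{ord}(\ker f^*)$ from the finite flatness of $\ker(f^*)$ in \Cref{L:f*picflat} (whose proof in turn rests on \Cref{L:maxabcover}). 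Your explicit remark that $P(C/C')$ commutes with base change---via the description \eqref{E:P(C/C')} as $\bigl(\pic^0_{C/S}/Y\bigr)^{\widehat{\ }}$---is a point the paper leaves implicit when applying \Cref{L:eXi-X[e]}, and is a useful clarification.
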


      \begin{proof}
The equivalence of \ref{T:ClPr-a}, \ref{T:ClPr-as}, and \ref{T:ClPr-abars} (resp.~ \ref{T:ClPr-as'} and \ref{T:ClPr-abars'}) follows from \Cref{L:eXi-X[e]}.  The equivalence of \ref{T:ClPr-as} and \ref{T:ClPr-bs} (resp.~ \ref{T:ClPr-as'} and \ref{T:ClPr-bs'}) is \Cref{T:classifyprym}, as is the equivalence of \ref{T:ClPr-abars} and \ref{T:ClPr-bbars} (resp.~\ref{T:ClPr-abars'} and \ref{T:ClPr-bbars'}).  
In other words, we have \ref{T:ClPr-a}$\iff$\ref{T:ClPr-as}$\iff$\ref{T:ClPr-abars}$\iff$\ref{T:ClPr-bs}$\iff$\ref{T:ClPr-bbars} and \ref{T:ClPr-as'}$\iff$\ref{T:ClPr-abars'}$\iff$\ref{T:ClPr-bs'}$\iff$\ref{T:ClPr-bbars'}.   

We complete the proof by showing the equivalence of \ref{T:ClPr-bbars} and \ref{T:ClPr-bbars'}.  The degree of the finite flat morphism $f$ is constant on geometric fibers.  
         As a consequence,  
 Riemann--Hurwitz gives that the degrees of
 the ramification divisors are also constant.  All of the
 conditions in \Cref{T:classifyprym}, with the exception of the condition for a noncyclic cover in 
 \Cref{T:classifyprym}\ref{prym:etaletriple}, are stated in terms of
 the genera of the curves, the degrees of the covers, and the degrees
 of the ramification divisors.  
 The equivalence of \ref{T:ClPr-bbars} and \ref{T:ClPr-bbars'} then follows in all cases of  \Cref{T:classifyprym} once we recall that the degree (over $C'$) of the maximal \'etale abelian subcover of $C'' \to C'$ is constant on $S$  (\Cref{L:maxabcover}).
\end{proof}

\begin{rem}   In fact, the proof of Theorem \ref{T:classifyprymS} shows that if,
  for some point $s\in S$, $j_s$ is of type (t) for one of the four
  types delineated in Theorem \ref{T:classifyprym}, then the same is
  true for \emph{every} point $s'\in S$.  Note that the good reduction
  of an \'etale cover stays \'etale, while in mixed characteristic, it
  is possible for a degree $2$ cover ramified in exactly two geometric
  points to specialize to a degree $2$ cover weakly wildly ramified at a single
  geometric point.
\end{rem}

We now turn to Prym--Tyurin Prym schemes associated to involutions.  
 Note that we restrict the genus of $C$ to be $g\ge 3$ to ensure that the genus of the associated quotient curve is $g'\ge 2$; in this way we avoid the issue of whether the associated quotient $S$-curve exists as a scheme, and not only as an algebraic space (\Cref{R:C/sigma}).  This excludes case \ref{prym:g2}  from \Cref{T:classifyprym} (which is the same as case \ref{prym:ram} if  $g=2$), and consequently, we are excluding case \ref{prym:ram}  from \Cref{C:PTVinv} when $g=2$.  
   
\begin{cor}[Prym--Tyurin Prym schemes associated to involutions]\label{C:PTVinvS}
  Let $C$ be a smooth projective curve of genus $g\ge 3$ over  $S$, and let
  $\sigma\colon C \to C$ be a nontrivial involution such that for every geometric point $\bar s$ of $S$ we have that $\sigma_{\bar s}$ is separable.  Let $C' = C/\left<
    \sigma\right>$ be the quotient curve.  Then the Prym scheme  
    $P(C/C')$ is a Prym--Tyurin scheme 
      if and only if for every geometric point $\bar s$ of $S$ we have that $\sigma_{\bar s}$
   is one of the two types delineated
      in \Cref{C:PTVinv}.  In both cases the exponent is $e=2$; in case (a) one has $\dim_S P(C/C')=g'-1=(g-1)/2$, and in case (b) one has $\dim_S P(C/C')=g'=g/2$.     
                   \end{cor}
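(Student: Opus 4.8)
The plan is to deduce the relative corollary directly from the fiberwise corollary \Cref{C:PTVinv} by interposing the classification of Prym--Tyurin Prym schemes in \Cref{T:classifyprymS}. First I would pin down the morphism: the quotient map $f\colon C \to C' = C/\langle\sigma\rangle$ is a finite flat $S$-morphism of degree $2$, and the hypothesis that each $\sigma_{\bar s}$ is separable is precisely the statement that $f$ is fiberwise separable. The target $C'$ is a smooth proper $S$-curve with geometrically connected fibers: connectivity of the fibers of $C'$ is inherited from that of $C$, and the constraint $g \ge 3$ (whose role is spelled out in the remark preceding the corollary) guarantees that its fibers have genus $g' \ge 2$, so that $C'$ is a genuine scheme via the relatively ample relative canonical bundle as in \Cref{R:C/sigma}. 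In particular $g > g' \ge 1$, and all running hypotheses of \Cref{T:classifyprymS} hold with $d = \deg f = 2$.

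The heart of the argument is then a two-step reduction. By the equivalence of conditions \ref{T:ClPr-a} and \ref{T:ClPr-abars} in \Cref{T:classifyprymS}, the abelian scheme $P(C/C')$ is a Prym--Tyurin Prym scheme of exponent $e$ if and only if $P(C_{\bar s}/C'_{\bar s})$ is a Prym--Tyurin Prym variety of exponent $e$ for every geometric point $\bar s$ of $S$. Over each such $\bar s$ the map $\sigma_{\bar s}$ is a nontrivial separable involution of $C_{\bar s}$ over the algebraically closed field $\kappa(\bar s)$, so the field-level corollary \Cref{C:PTVinv} applies and shows that $P(C_{\bar s}/C'_{\bar s})$ is a Prym--Tyurin variety precisely when $\sigma_{\bar s}$ is one of the two types listed there, in which case the exponent is necessarily $e = 2$. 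Because $\deg f = 2$, only types \ref{prym:etaledouble} and \ref{prym:ram} of \Cref{T:classifyprym} are available -- type \ref{prym:etaletriple} requires degree $3$ and type \ref{prym:g2} requires $g = 2$, both excluded here -- and these correspond, via \Cref{R:wildram}, exactly to cases (a) and (b) of \Cref{C:PTVinv}. That the single exponent $e = 2$ is well defined and constant across the connected base is guaranteed by \Cref{L:eXi-X[e]} and \Cref{L:ExpBC}, so no ambiguity arises in matching the scheme-level and fiber-level exponents.

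Finally I would read off the numerics. The exponent is $e = 2$ by \Cref{C:PTVinv}. Since $P(C/C')$ is an abelian scheme, its relative dimension equals the dimension of any geometric fiber $P(C_{\bar s}/C'_{\bar s})$, which \Cref{T:classifyprym} computes as $g' - 1 = (g-1)/2$ in the \'etale case \ref{prym:etaledouble} and as $g' = g/2$ in the ramified case \ref{prym:ram}. The case that occurs is the same for every fiber, since the genera of $C$ and of $C'$ and the degree of the ramification divisor of $f$ are all constant on the connected scheme $S$ (Riemann--Hurwitz), so the dimension formula is uniform, exactly as claimed.

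I expect the only genuinely delicate point to be the setup recorded in the first paragraph: verifying that $C' = C/\langle\sigma\rangle$ is a smooth proper $S$-curve with geometrically connected fibers of locally constant genus $g' \ge 2$, so that $f$ legitimately satisfies the hypotheses of \Cref{T:classifyprymS}. This is where the genus restriction $g \ge 3$ and the relative-canonical construction of \Cref{R:C/sigma} do the real work; once $C'$ is known to be such a curve, the rest of the proof is a formal concatenation of \Cref{T:classifyprymS} and \Cref{C:PTVinv} requiring no new analysis.
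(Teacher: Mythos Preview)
Your proposal is correct and takes essentially the same approach as the paper, which simply records the result as ``an immediate consequence of \Cref{C:PTVinv}.''  You have faithfully unpacked that one-line citation: the passage to geometric fibers via \Cref{T:classifyprymS} (equivalently \Cref{L:eXi-X[e]}), together with the observation that $d=2$ and $g\ge 3$ rule out types \ref{prym:etaletriple} and \ref{prym:g2}, is exactly the content hidden behind the word ``immediate,'' and your care with the setup of $C'$ via \Cref{R:C/sigma} is appropriate.
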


    \begin{proof}
      This is an immediate consequence of \Cref{C:PTVinv}.
         \end{proof}

\section{Applications}
\label{S:applications}

In this section, we would like to briefly indicate how the tools
developed here allow for easy extensions of some results about abelian
varieties.  We restrict ourselves here to giving an impressionistic
summary of the available results, and hope that the interested reader
will follow the references for more details.  (Better yet, we hope
that the interested reader will develop entirely new applications we have not 
anticipated!)

\subsection{Prym--Tyurin varieties via Hecke algebras}

We have seen that a principally polarized abelian variety is a
Prym--Tyurin variety of arbitrarily large exponent; consequently, it is
of some interest to produce Prym--Tyurin varieties of small exponent.

One method, developed by Carocca et.~al., is as follows.  Consider a
$G$-Galois cover $f:D \to C$ of smooth projective curves over $\cx$.
If $H\subseteq G$ is a subgroup, then the Hecke algebra
$\rat[H\backslash G / H]$ acts on the Jacobian of the intermediate
curve $E = D/H$. To a rational irreducible representation of $G$ one
associates an idempotent of the Hecke algebra. The main result of
\cite{caroccaetalhecke09} is a sufficient condition for the image of
this idempotent to be a Prym--Tyurin variety.  The criterion is in
terms of the restriction of this data to $H$ and the ramification data
of the covering map $f$; and part of the formula there is an explicit
calculation of the Prym--Tyurin exponent which arises.

It turns out that their main theorem is valid for a cover of curves
over an arbitrary field $K$, provided the action of $G$ is defined
over $K$ and the cover is tamely ramified.  The tameness condition is
necessary for the genus calculation in
\cite[Prop.~4.8]{caroccaetalhecke09}, and for the description of
intermediate covers in \cite{rojas07}.

More generally, it seems likely to us that many constructions
involving decompositions of abelian varieties pass from the complex
numbers to arbitrary fields without serious difficulties.

\subsection{Point counts on Pryms}

Consider an abelian variety $X$ over a finite field $\ff_q$; then Weil
gives bounds on $\#X(\ff_q)$.  If $X$ is actually the Jacobian of a
curve $C$, then information about the curve (such as gonality) can be
exploited to give bounds on $\#X(\ff_q)$ which are often strictly stronger
than the usual Weil bounds \cite{lachaudmartindeschamps}.  In the case
where $q$ is \emph{odd} and $X$ is a classical Prym, i.e., a
Prym--Tyurin variety attached to an \'etale double cover of curves,
Perret \cite{perret06} and then Aubry and Haloui \cite{aubryhaloui16} used a similar strategy to bound the
number of points.  The only reason we can surmise for restricting to
odd characteristic is perhaps some uncertainly concerning the status of
the Prym construction in characteristic two, which might be traced to
the original restrictions in Mumford's paper \cite{mumford74}.  We
are happy to report that the main results of
\cite{aubryhaloui16,perret06} are valid over fields of even
characteristic as well.

More generally, constructions involving the Prym variety of an \'etale
double cover of curves can be used without fear in even
characteristic.

\subsection{Moduli spaces}

In \cite{fabervandergeer04}, Faber and van der Geer use the classical Prym
construction to study complete subvarieties of $\mathcal M_{g',k}$, the
moduli space of curves of genus ${g'}\ge 2$ over a field $k$ of
characteristic $p>2$.  To this end, they work with $\mathcal R_{g'}$,
the moduli space of \'etale double covers $C \to C'$, where $C'$ has
genus $g'$; this stack is naturally  a smooth Deligne--Mumford stack over $\operatorname{Spec}\mathbb Z[\frac{1}{2}]$ (e.g., \cite[\S (6.5.1)]{beauvilleschottky}, \cite[\S 5]{DM69}).
Faber and van der Geer work extensively with the Prym map $\mathcal R_{g'} \to
\mathcal A_{g'-1}$, which sends such a cover to the principally polarized Prym variety $(P(C/C'),\xi)$ \cite[\S (6.2)]{beauvilleschottky}. 
The construction of the Prym map is made via \cite[Cor~VI$_B$4.4]{sga3-1} using the universal family, and (implicitly) the fact that away from characteristic two, the norm map has reduced, thus smooth, kernel.
   The Prym map for a family of covers
$C/C'$ over a general base $S$ is then defined via the composition $S\to \mathcal R_{g'}\to \mathcal A_{g'-1}$.  
A result such as ours,
involving complements in arbitrary families, provides perhaps a more direct construction of the Prym map.
  Moreover, we can actually define the moduli space $\mathcal R_{g'}$, 
 and
the morphism $\mathcal R_{g'} \to \mathcal A_{g'-1}$, over $\operatorname{Spec}\integ$.

More precisely, 
define a category fibered in groupoids (CFG) $\mathcal R_{g'}$ over the \'etale site of schemes (over $\mathbb Z$) in the following way.  
Over a scheme $S$, we define $\mathcal R_{g'}(S)$ to be the category
of pairs $(C/S,\sigma)$ where $C/S$ is a smooth projective curve of genus $2g'+1$,  and $\sigma:C\to C$ is a nontrivial $S$-involution of $C$ such that
   every geometric fiber of the quotient map $C\to
C':= C/\langle \sigma\rangle$ is an \'etale double cover.
           Morphisms in $\mathcal R_{g'}$ are defined via pull-back in the obvious way.  There are natural forgetful functors of CFGs 
$F_{2g'+1}:\mathcal R_{g'}\to \mathcal M_{2g'+1}$ and $F_{g'}: \mathcal R_{g'}\to \mathcal M_{g'}$.

\begin{pro}\label{P:RgZZ}
  The CFG  $\mathcal R_{g'}$ is a Deligne--Mumford stack over  the \'etale site of schemes (over $\mathbb Z$), and each fiber of $\mathcal R_{g'} \to \spec(\integ)$ is geometrically irreducible.
\end{pro}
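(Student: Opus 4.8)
The plan is to separate the two assertions: the Deligne--Mumford property is formal and uniform in the residue characteristic, whereas the geometric irreducibility of the fibres is the substantive point, and it is precisely characteristic $2$ that presents the real difficulty. For the Deligne--Mumford property I would work through the forgetful functor $F_{2g'+1}\colon \mathcal R_{g'}\to \mathcal M_{2g'+1}$. As the curves in question have genus $\ge 2$, for any smooth proper curve $C/S$ the automorphism functor $\underline{\aut}_S(C)$ is representable by an $S$-group scheme that is finite and unramified over $S$. An object of $\mathcal R_{g'}$ over $S$ is exactly a curve $C$ together with a section $\sigma$ of $\underline{\aut}_S(C)$ satisfying three conditions: $\sigma^2=\mathrm{id}$, which is closed; $\sigma\ne\mathrm{id}$, which is open; and freeness of the action, which is open because the fixed locus $\Gamma_\sigma\cap\Delta_C$ is closed and proper over $S$, so the locus where the fibre is empty is the open complement of its image. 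Hence, for any $T\to\mathcal M_{2g'+1}$, the fibre product $\mathcal R_{g'}\times_{\mathcal M_{2g'+1}}T$ is a locally closed subscheme of the finite unramified $T$-scheme $\underline{\aut}_T(C_T)$, so $F_{2g'+1}$ is representable, separated and unramified. Since $\mathcal M_{2g'+1}$ is a Deligne--Mumford stack over $\integ$ and the relative diagonal of a representable morphism is an immersion (hence unramified), the diagonal of $\mathcal R_{g'}$ is unramified; thus $\mathcal R_{g'}$ is a Deligne--Mumford stack over $\integ$.

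Before addressing irreducibility I would record two structural facts. First, the \emph{other} forgetful map $F_{g'}\colon \mathcal R_{g'}\to\mathcal M_{g'}$, $(C,\sigma)\mapsto C'=C/\langle\sigma\rangle$, is \emph{\'etale over all of $\operatorname{Spec}\integ$}, characteristic $2$ included: because $C\to C'$ is \'etale, the topological invariance of the \'etale site shows that the cover, together with its free $\integ/2$-action, deforms uniquely along any infinitesimal deformation of $C'$, so $F_{g'}$ is formally \'etale, hence \'etale (though \emph{not} finite in characteristic $2$). As $\mathcal M_{g'}$ is smooth over $\integ$, so is $\mathcal R_{g'}$; in particular every geometric fibre $\mathcal R_{g',\bar k}$ is regular, so its connected and irreducible components coincide, and it suffices to prove each geometric fibre is connected. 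Second, since each $\mathcal M_{g',\bar k}$ is irreducible (Deligne--Mumford) and $F_{g'}$ is \'etale, hence open, every (open and closed) component of the regular stack $\mathcal R_{g',\bar k}$ has dense image in $\mathcal M_{g',\bar k}$.

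When $\operatorname{char}(\bar k)\ne 2$ the map $F_{g'}$ is finite \'etale of degree $2^{2g'}-1$, its fibre over $C'$ being $\pic^0_{C'}[2](\bar k)\setminus\{0\}\cong \mathbb F_2^{2g'}\setminus\{0\}$. Thus the components of $\mathcal R_{g',\bar k}$ are the orbits of the mod-$2$ monodromy $\pi_1^{\et}(\mathcal M_{g',\bar k})$ on $\mathbb F_2^{2g'}\setminus\{0\}$; this monodromy is the full symplectic group $\operatorname{Sp}_{2g'}(\mathbb F_2)$, which is transitive on nonzero vectors, so the fibre is connected (recovering, in characteristic $0$, the classical connectedness of $\mathcal R_{g'}$). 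In characteristic $2$, $F_{g'}$ is \'etale but not finite: its fibre over $C'$ is $H^1_{\et}(C',\integ/2)\setminus\{0\}$, of cardinality $2^{f}-1$ with $f$ the $2$-rank of $C'$, the covers escaping to infinity as $f$ drops. Because every component of $\mathcal R_{g',\bar{\mathbb F}_2}$ has dense image, it meets the dense open ordinary locus, so the components of $\mathcal R_{g',\bar{\mathbb F}_2}$ are in bijection with those of $\mathcal R_{g'}^{\mathrm{ord}}:=F_{g'}^{-1}(\mathcal M_{g',\bar{\mathbb F}_2}^{\mathrm{ord}})$. Over the ordinary locus the $2$-rank is constantly $g'$, so $\mathcal R_{g'}^{\mathrm{ord}}\to\mathcal M_{g',\bar{\mathbb F}_2}^{\mathrm{ord}}$ is finite \'etale with fibre $\mathbb F_2^{g'}\setminus\{0\}$, and connectedness reduces to transitivity of the mod-$2$ monodromy $\pi_1^{\et}(\mathcal M_{g',\bar{\mathbb F}_2}^{\mathrm{ord}})\to\gl_{g'}(\mathbb F_2)$ on the nonzero vectors of the multiplicative $2$-torsion.

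\textbf{This last transitivity is the crux and the main obstacle.} I would establish it from the largeness of the $2$-adic monodromy of the ordinary locus of $\mathcal M_{g'}$, whose mod-$2$ reduction surjects onto $\gl_{g'}(\mathbb F_2)$ and is therefore transitive on $\mathbb F_2^{g'}\setminus\{0\}$. Concretely, I would degenerate to a totally degenerate stable curve $C'_0$ of genus $g'$ — a graph of $\proj^1$'s with first Betti number $g'$ — which is ordinary in every characteristic and whose connected \'etale $\integ/2$-covers are purely combinatorial, classified by $H^1(\Gamma,\mathbb F_2)\cong\mathbb F_2^{g'}$ for the dual graph $\Gamma$ (since $\proj^1$ has no nontrivial \'etale cover); the Picard--Lefschetz transvections attached to the vanishing cycles at the nodes then act on $H^1(\Gamma,\mathbb F_2)$ through a subgroup transitive on nonzero classes. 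Transitivity yields connectedness of $\mathcal R_{g'}^{\mathrm{ord}}$, hence of $\mathcal R_{g',\bar{\mathbb F}_2}$, and combined with the regularity established above this gives geometric irreducibility of every fibre, completing the proof.
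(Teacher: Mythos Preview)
Your argument for the Deligne--Mumford property is essentially identical to the paper's: both show that $F_{2g'+1}$ is representable by exhibiting the fibre over $C/S$ as a locally closed subscheme of the finite unramified scheme $\underline{\aut}_S(C)$, cutting out the involution, nontriviality, and \'etaleness conditions in turn.

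Your treatment of geometric irreducibility is also structurally the same as the paper's --- reduce to the ordinary locus and then invoke a monodromy statement --- but two points deserve comment.

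First, your reduction to the ordinary locus is genuinely cleaner than the paper's. You observe that $F_{g'}$ is \'etale (topological invariance of the \'etale site), hence open, so any component of $\mathcal R_{g',\bar{\mathbb F}_2}$ has dense image in the irreducible $\mathcal M_{g',\bar{\mathbb F}_2}$ and therefore meets the ordinary locus. The paper instead proves density of $\mathcal R^{\mathrm{ord}}$ by an explicit deformation argument: given $(C',\chi)$ with $\chi$ a $2$-torsion point, it deforms $C'$ to an ordinary curve over a DVR and shows $\chi$ extends after an fppf base change. Your argument bypasses this and is preferable.

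Second, and this is the one place where your sketch is incomplete: for the monodromy input, the paper simply cites Ekedahl's theorem that the moduli stack $\mathcal I_{g',\ff_2}$ of ordinary curves with a full trivialization of $\pic^0[2]^{\et}$ is geometrically irreducible --- equivalently, that the mod-$2$ monodromy of the \'etale $2$-torsion over $\mathcal M_{g',\bar{\mathbb F}_2}^{\mathrm{ord}}$ surjects onto $\gl_{g'}(\mathbb F_2)$. You state this result correctly in your first sentence on the crux, and that citation would complete your proof. Your subsequent ``concretely'' paragraph, however, is problematic: the Picard--Lefschetz formula governs the local monodromy on $\ell$-adic cohomology for $\ell\ne p$, and there is no off-the-shelf analogue for the \'etale quotient of $p$-torsion in characteristic $p$. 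The \'etale $\integ/2$-covers of a totally degenerate curve are indeed combinatorial, but the action you want is the monodromy of the \emph{unit-root} local system near the boundary, and identifying it with transvections on $H^1(\Gamma,\mathbb F_2)$ is not justified by the classical Picard--Lefschetz theory you invoke. Either cite Ekedahl (as you effectively do, and as the paper does) or be prepared to supply a genuinely $p$-adic argument for the local monodromy; the degeneration sketch as written does not stand on its own.
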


\begin{proof}
  Beauville's proof (\cite[(6.5.1)]{beauvilleschottky}) (where one
  allows stable curves, but restricts to schemes over $\integ[1/2]$)
  is also valid in this context, too.  
  The key point is to show that the forgetful functor
  $F_{2g'+1}:\mathcal R_{g'}\to \mathcal M_{2g'+1}$ is representable,
  as it is a standard result that this  implies  that $\mathcal
  R_{g'}$ is a DM stack (see, e.g., \cite{CMW18} for a detailed
  exposition).  Concretely, let $S$ be a scheme and $C/S$ a relative
  curve of genus $2g'+1$, determined by a morphism $S \to \mathcal
  M_{2g'+1}$.  We must show that the fibered product $\mathcal
  R_{g'}\times_{\mathcal M_{2g'+1}} S$ is representable by a scheme.
  Insofar as this fibered product represents the functor of a special
  kind of automorphism of $C$, we may proceed as follows.  There is a chain
  of functors $\aut_S(C) \supset \Inv_S(C)  \supset
  \Inv_S^\circ(S) \supset \Inv_S^{\text{\'et}}(C)\iso \mathcal
  R_{g'}\times_{\mathcal M_{2g'+1}} S$ defined as, respectively, the
  automorphisms of $C$; the involutions of $C$; the fiberwise
  nontrivial involutions of $C$; and the fiberwise \'etale nontrivial
  involutions of $C$.  We now show successively that each of these
  functors is representable (by a scheme).

  Since $2g'+1 \ge 5 > 1$, $C$ is projective over $S$ and so the functor $\aut_S(C)$ is representable.  The
  composition $\operatorname{Aut}_S(C)\to
  \operatorname{Aut}_S(C)\times_S \operatorname{Aut}_S(C)\to
  \operatorname{Aut}_S(C)$ of the diagonal with composition of
  automorphisms shows, via pull-back of the identity, that
  $\operatorname{Inv}_S(C)$ is representable by a closed subscheme of
  $\operatorname{Aut}_S(C)$.
A similar argument shows that the sub-functor $\operatorname{Inv}^\circ_S(C)$ of non-trivial involutions is representable by an open subscheme of $\operatorname{Inv}_S(C)$.  Finally, since the \'etale locus is open, 
the sub-functor $\operatorname{Inv}^{\text{\'et}}_S(C)$ of non-trivial
involutions where the quotient map gives an \'etale double cover  is
representable by an open subscheme of
$\operatorname{Inv}^\circ_S(C)$.  Therefore, $\mathcal R_{g'}$ is a DM stack.

  For an alternative approach to showing that $\mathcal R_{g'}$ is a
  DM stack, one can directly show that the forgetful morphism $F_{g'}:\mathcal R_{g'}\to \mathcal M_{g'}$ is representable.  For this,  one can appeal to \cite[Prop.~3.2]{antei11}, which is
  the relative version of a special case of \Cref{L:newBLprop11.4.3}: if
  $\varpi: C' \to S$ is a smooth proper curve, then there is a
  bijection between nontrivial \'etale double covers of $C'$ and nonzero
  sections of $R^1\varpi_*(\mmu_{2,C'})$;
      and these sections are
  canonically identified with nonzero sections of $\pic^0_{C'/S}[2]$.

  This perspective allows us to quickly prove geometric irreducibility
  of $\mathcal R_{g'}$ in characteristic two. All other
  characteristics are settled in
  \cite[p.181]{beauvilleschottky}; see also \cite[Thm.~5.13]{DM69}.  Let $\mathcal
  M_{g',\ff_2}^\ord$ denote the ordinary locus in $\mathcal
  M_{g',\ff_2}$ (i.e., the locus where the $2$-torsion is of full
  rank); it is open and dense (e.g., \cite[Thm.~2.3]{fabervandergeer04}). 
     Let $\mathcal R_{g',\ff_2}^\ord =
  \mathcal R_{g',\ff_2}\times_{\mathcal M_{g',\ff_2}} \mathcal
  M_{g',\ff_2}^\ord$ be the open substack parametrizing \'etale double
  covers $C \to C'$ where $C'$ is ordinary.  Then $\mathcal
  R_{g',\ff_2}^\ord$ is again dense in $\mathcal R_{g',\ff_2}$, and so it suffices to show that
  $\mathcal R_{g',\ff_2}^\ord$ is geometrically irreducible.  To see
  this density, 
  it suffices to show that if $K$ is any algebraically closed field of
  characteristic two, and if $C'/K$ is a curve of genus $2g'+1$ and
  and $\chi$ is a section of $\pic^0_{C'/K}[2]^\et$, then the data
  $(C',\beta)$ deforms to the data of an ordinary curve and a point of
  order two on its Picard scheme.  Using the density of $\mathcal
  M_{2g'+1}^\ord$, we may find an equicharacteristic discrete
  valuation ring $S$ and a curve $\widetilde C'/S$ with ordinary
  generic fiber and with special fiber isomorphic to $C'$. Let $G$ be the
  finite flat group scheme $\pic^0_{\widetilde C'/S}[2]$.  After a
  suitable fppf base extension $T \to S$ (in fact, $G \to S$ itself
  will do) there is a section of $G_T \to T$ which specializes to $\chi$.

  Now let $\mathcal I_{g',\ff_2}$ be the moduli stack of data
  $(C' \to S, \alpha\colon  \pic^0_{C'/S}[2]^\et \stackrel{\sim}{\to}
  (\integ/2)_S^{g'})$.  On one hand, a monodromy calculation
  \cite[Thm.~2.1]{ekedahlmonodromy} shows that $\mathcal I_{g',\ff_2}$ is
  geometrically irreducible.  On the other hand, the forgetful functor
  $\mathcal I_{g',\ff_2} \to \mathcal M_{g'}^\ord$ admits a factorization through
  $\mathcal R_{g',\ff_2}^\ord$, say by letting the isomorphism $\alpha$
  determine the section $\alpha\inv([1, 0, \dots, 0])$; and since any
  section of $\pic^0_{C'/S}[2]^\et$ can, after fppf base extension
  $T\to S$, be completed to a basis of sections, this morphism is surjective.  Therefore,
  $\mathcal R_{g',\ff_2}^\ord$  is geometrically irreducible; by
  density, so is $\mathcal R_{g',\ff_2}$.
\end{proof}

For clarity, we summarize our discussion with the following statement.  (Note 
that outside of characteristic $2$, this is proven in
\cite{beauvilleschottky}; there the Prym map is defined \emph{a
priori} over reduced versal spaces, and then \emph{a posteriori} over
arbitrary bases via the induced morphisms to versal spaces.)

\begin{pro}
There is a Prym morphism of stacks over $\operatorname{Spec} \mathbb Z$
\[
\xymatrix{P:\mathcal R_{g'}\ar[r] &  \mathcal A_{g'-1}}\]
$$
(C\to C') \mapsto P(C/C')
$$
\end{pro}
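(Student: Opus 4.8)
The plan is to define the Prym morphism on $S$-valued points and then verify base-change compatibility, so that the assignment descends to a morphism of the associated stacks. An $S$-point of $\mathcal R_{g'}$ is a pair $(C/S,\sigma)$, equivalently a finite $S$-morphism $f\colon C\to C'$ onto $C' = C/\langle\sigma\rangle$ whose geometric fibers are \'etale double covers; in particular $f$ is finite, flat, and fiberwise separable, and each geometric fiber $f_{\bar s}$ falls under case \ref{prym:etaledouble} of \Cref{T:classifyprym}. From this data I would produce the principally polarized abelian scheme $(P(C/C'),\xi)$ of relative dimension $g'-1$ and declare its image under $P$ to be $(P(C/C'),\xi)$.

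First I would construct $P(C/C')$ as an abelian scheme over $S$ following \S\ref{S:Pscheme}. The Jacobian carries its canonical principal polarization $\lambda_C$, so \Cref{comphyp} holds with no further hypothesis on $S$ and the entire norm-endomorphism formalism of \S\ref{S:normendomorphism} is available over the arbitrary base. Since $f$ is fiberwise separable, \Cref{L:f*picflat} gives that $Y := f^*\pic^0_{C'/S}$ is an abelian subscheme of $\pic^0_{C/S}$. On each geometric fiber we are in case \ref{prym:etaledouble} of \Cref{T:classifyprym}, so the exponent of $\iota_Y^*\lambda_C$ equals $2$ fiberwise (using $e_Y=e_Z$, \Cref{R:Ki*Theta}); hence the containment $\ker(\iota_Y^*\lambda_C)\subseteq Y[2]$ may be checked on fibers exactly as in the proof of \Cref{L:eXi-X[e]}, and the norm endomorphism $N_Y$ is defined over $S$ by taking $N=2$ in \Cref{P:BL1.2.6}. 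Then $P(C/C') = \operatorname{Im}\big([2]_{\pic^0_{C/S}} - N_Y\big) = (\ker N_Y)^{\abvar}$ is an abelian scheme over $S$ of relative dimension $g'-1$ (see \eqref{E:P(C/C')}, \Cref{L:Zisimage}, and \Cref{C:PrymC/C'Nm}). The principal polarization comes from the main theorem: because $f$ is fiberwise an \'etale double cover, \Cref{T:classifyprymS} yields $\lambda_C|_{P(C/C')} = 2\xi$ for a principal polarization $\xi$ on $P(C/C')$, so $(P(C/C'),\xi)$ is a principally polarized abelian scheme of dimension $g'-1$, i.e.\ an object of $\mathcal A_{g'-1}(S)$. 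On morphisms the functor is defined by pullback in the evident way.

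It then remains to check that the construction commutes with arbitrary base change $T\to S$, which is what upgrades the assignment to a morphism of categories fibered in groupoids, hence of stacks. Each ingredient is stable under base change: the formation of $\pic^0_{C/S}$ and $\pic^0_{C'/S}$, the pullback $f^*$ and therefore $Y$, the norm map $N_Y$ (since the integer $N=2$ works uniformly and the formation of $g_N$ is compatible with base change, \Cref{P:BL1.2.6}), the maximal abelian subscheme $(\ker N_Y)^{\abvar}$, and the polarization $\xi = \frac{1}{2}\,\lambda_C|_{P(C/C')}$; these facts are recorded throughout \S\ref{S:normendomorphism} and \S\ref{S:prymscheme}. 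The one genuinely delicate issue — and the reason the earlier development is needed — is that everything must go through over a base on which $2$ need not be invertible, in particular in equal characteristic $2$. This is exactly what is secured by invoking \Cref{comphyp} for the principal polarization $\lambda_C$ (so that the complement exists without an invertibility assumption) together with the fiberwise constancy of the exponent $e_Y=2$: these guarantee that the norm map, the complement $P(C/C')$, and its induced principal polarization all exist and are stable in families over $\operatorname{Spec}\mathbb Z$, so that $P$ is a well-defined morphism of stacks.
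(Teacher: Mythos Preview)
Your proposal is correct and takes essentially the same approach as the paper, just spelled out in much greater detail: the paper's proof simply cites \Cref{P:RgZZ} for the existence of $\mathcal R_{g'}$ and \Cref{C:PTVinvS} (equivalently \Cref{T:classifyprymS} in the \'etale double cover case) for the Prym--Tyurin Prym construction, while you unpack what those references actually deliver for an arbitrary $S$-point and verify base-change compatibility explicitly.
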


\begin{proof}
The construction of $\mathcal R_{g'}$ is \Cref{P:RgZZ}; the
existence of the Prym morphism is extracted from our main theorem in \Cref{C:PTVinvS}.
\end{proof}

\subsection{Group schemes are summands of the torsion of Jacobians}

The titular claim of \cite{priesulmer22} is that every $BT_1$ group
scheme appears in a Jacobian.  We can quickly recover much of
that abstract statement, and extend it, as follows.

\begin{lem}
  Let $K$ be a field of characteristic $p>0$, and let $Z$ be a
  principally polarized
  abelian variety with $g :=  \dim Z < p$.  Then there exists a smooth
  projective curve $C/K$ such that, for each $m \ge 1$, $Z[p^m]$ is a
  summand of $\pic^0_{C/K}[p^m]$ as a principally-quasipolarized
  finite group scheme.
\end{lem}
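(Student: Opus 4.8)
The plan is to realize $(Z,\xi)$ as a Prym--Tyurin variety whose exponent is prime to $p$, and then to exploit the elementary fact that a prime-to-$p$ isogeny of polarized abelian varieties is an isomorphism on $p$-power torsion. First I would invoke \Cref{C:PPAV=PT}: choosing an integer $n\ge 3$ that is prime to $p$, \Cref{R:exponentbound} produces a smooth projective curve $C/K$ together with a closed immersion $\iota_Z\colon Z\hookrightarrow\pic^0_{C/K}$ exhibiting $(Z,\xi)$ as an embedded Prym--Tyurin variety of exponent $e=n^r(g-1)!$ for some $r\ge 1$, so that $\iota_Z^*\lambda_C=e\xi$. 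This is exactly where the hypothesis $g<p$ enters: every prime factor of $(g-1)!$ is at most $g-1<p$, so $(g-1)!$ is prime to $p$, and $n^r$ is prime to $p$ by the choice of $n$; hence the exponent $e$ is prime to $p$.

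Next I would bring in the complement $Y$ of $\iota_Z(Z)$ in $\pic^0_{C/K}$ and apply Poincar\'e reducibility. By \Cref{T:C536}, the addition map
\[
  \mu\colon (Y,\iota_Y^*\lambda_C)\times_K(Z,\iota_Z^*\lambda_C)\longrightarrow(\pic^0_{C/K},\lambda_C)
\]
is an isogeny of polarized abelian varieties, and by \Cref{L:compl} together with \Cref{P:BL-1214} and \Cref{R:Ki*Theta} its kernel is $Y\times_{\pic^0_{C/K}}Z=\K(\iota_Y^*\lambda_C)\cong\K(\iota_Z^*\lambda_C)$, which \Cref{R:Z[e]-PT} identifies with $Z[e]$, a group scheme of order prime to $p$. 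Because $\ker\mu$ is killed by $e$ while $(Y\times_K Z)[p^m]$ is killed by $p^m$, the restriction of $\mu$ to $p^m$-torsion is injective, and comparing orders upgrades it to an isomorphism of finite group schemes $Y[p^m]\oplus Z[p^m]\xrightarrow{\ \sim\ }\pic^0_{C/K}[p^m]$ for every $m\ge 1$.

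Finally I would track the quasipolarizations. Since $\mu$ is an isogeny of polarized abelian varieties, $\mu^*\lambda_C$ is the product polarization, so on $p^m$-torsion the principal quasipolarization induced by $\lambda_C$ pulls back to the orthogonal direct sum of those induced by $\iota_Y^*\lambda_C$ and $\iota_Z^*\lambda_C$; in particular $Z[p^m]$ is realized as an orthogonal direct summand of $\pic^0_{C/K}[p^m]$ carrying the quasipolarization induced by $\iota_Z^*\lambda_C=e\xi$. As $e\in(\integ/p^m)^\times$, this is the principal quasipolarization attached to $\xi$ scaled by a unit, and so $(Z[p^m],(e\xi)[p^m])$ agrees with $(Z[p^m],\xi[p^m])$ as a principally quasipolarized group scheme. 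The conceptual heart of the argument is the prime-to-$p$ realization of the first step, which is precisely what $g<p$ guarantees; I expect the only delicate point to be this last bookkeeping with the unit $e$, where one must check that the scaling by $e$ is absorbed by a symplectic similitude of the alternating quasipolarization on $Z[p^m]$ (equivalently, that the similitude character surjects onto $(\integ/p^m)^\times$).
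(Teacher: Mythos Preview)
Your proposal is correct and follows essentially the same approach as the paper: choose $n\ge 3$ prime to $p$, invoke \Cref{C:PPAV=PT} and \Cref{R:exponentbound} to realize $(Z,\xi)$ as a Prym--Tyurin subvariety of a Jacobian with exponent $e=n^r(g-1)!$ prime to $p$, and then observe that the addition isogeny $\mu\colon Y\times Z\to\pic^0_{C/K}$ has kernel killed by $e$ and hence restricts to an isomorphism on $p^m$-torsion. The paper's own proof is considerably terser and does not spell out the quasipolarization bookkeeping you supply in your final paragraph; your added detail there (and your candid flagging of the unit-scaling issue) goes beyond what the paper records.
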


(In fact, when $K = \bar K$ and $m=1$, \cite[Thm.~1.1]{priesulmer22}
is much more precise, in that the authors show that for $C$ one may take a quotient of a Fermat curve whose degree is explicitly bounded in
terms of $g$.)

\begin{proof}
Fix some $n\ge 3$ with $p\nmid n$.  By Corollary \ref{C:PPAV=PT} and
Remark \ref{R:exponentbound}, there exists a smooth projective curve
$C/K$ such that $Z$ is a sub-abelian variety of $\pic^0_{C/K}$
of exponent $e= n^r(g-1)!$ for some $r$, and $\Theta_C\rest Z= e\Xi$.  Let $Y$ be the complement of
$Z$.  Since $\gcd(p,n^r(g-1)!) = 1$, the isogeny $\mu\colon Y \oplus Z \to \pic^0_{C/K}$ induces,
for each $m$, an isomorphism $Y[p^m] \oplus Z[p^m]
\stackrel{\sim}{\to} \pic^0_{C/K}[p^m]$.
\end{proof}

 \bibliographystyle{amsalpha}
 \bibliography{DCG}

\end{document}